 \newcommand{\mymarginpar}[1]{%
    \marginpar{\ifthenelse{\isodd{\arabic{page}}}{\flushleft 
#1}{\flushright #1}}}
 \newcommand{\eps}{\varepsilon}           
 \newcommand{\IC}{\mathbb{C}}
 \newcommand{\IN}{\mathbb{N}}
 \newcommand{\IR}{\mathbb{R}}                  
 \newcommand{\IT}{\mathbb{T}}                  
 \newcommand{\IZ}{\mathbb{Z}}
\newcommand{\CO}{\mathcal{O}}
\newcommand{\CT}{\mathcal{T}}
\newcommand{\CK}{\mathcal{K}}
\newcommand{\cstar}{$C^*$}
 \theoremstyle{plain} 
 \newtheorem{Theorem}{Theorem}[section]
 \newtheorem{Lemma}[Theorem]{Lemma}
 \newtheorem{Proposition}[Theorem]{Proposition}
 \newtheorem{Corollary}[Theorem]{Corollary}
 \theoremstyle{definition} 
 \newtheorem{Definition}[Theorem]{Definition}
 \newtheorem{Remark}[Theorem]{Remark}
\begin{document}

\author{Jack Spielberg}
\title{$C^*$-algebras for categories of paths associated to the Baumslag-Solitar groups}
\date{28 November 2011}
\address{School of Mathematical and Statistical Sciences \\ Arizona State University \\ P.O. Box 871804 \\ Tempe, AZ 85287-1804}
\email{jack.spielberg@asu.edu}
\subjclass[2010]{Primary 46L05; Secondary 46L80, 46L55}
\keywords{Baumslag-Solitar group, Cuntz-Krieger algebra, Toeplitz Cuntz-Krieger algebra, K-theory}

\begin{abstract}
In this paper we describe the $C^*$-algebras associated to the Baumslag-Solitar groups with the ordering defined by the usual presentations.  These are Morita equivalent to the crossed product $C^*$-algebras obtained by letting the group act on its \textit{directed boundary}.  We use the method of categories of paths to define the algebras, and to deduce the presentation by generators and relations.  We obtain a complete description of the Toeplitz algebras, and we compute the $K$-theory of the Cuntz-Kreiger algebras.
\end{abstract}

\maketitle

\section{Introduction}

In this paper we study the $C^*$-algebras associated to the Baumslag-Solitar groups with the ordering defined by the usual presentation.  We show that in most cases this is a quasi-lattice ordering in the sense of \cite{nica}.  However we use the notion of \textit{category of paths} of \cite{spi} to describe the $C^*$-algebras.  This is a construction of Toeplitz and Cuntz-Krieger algebras in a very general setting that includes ordered groups, higher-rank graphs, and many other examples of $C^*$-algebras obtained from oriented combinatorial objects.  The chief virtue of this construction is that a presentation of the algebras by generators and relations is obtained  naturally from the category, eliminating the guesswork typically needed for their identification.  For example, if a free semigroup acts on its $\ell^2$-space, it is easy to observe that the generators define isometries having pairwise orthogonal ranges spanning a codimension-one subspace.  The rank-one projection onto this subspace generates the compact operators when pushed around by the isometries, and in the quotient, the classes of isometries have range projections adding to the identity.  Cuntz's theorem (\cite{cun}) shows that this quotient algebra is uniquely defined by this presentation.  For other semigroups, however, it may not be obvious what is the correct quotient, even if the compact operators are present in the algebra generated by the isometries on $\ell^2$.  Moreover, it may also be unclear what relations ought to be used to define the quotient.  In the present case, we obtain generators and relations for a $C^*$-algebra Morita equivalent to the crossed product algebra associated to the action of a Baumslag-Solitar group on its \textit{directed boundary}.  These presentations turn out to coincide with certain examples obtained by Katsura in his work on topological graphs (\cite{kat}).  Our method gives a new approach to the description of these algebras by generators and relations, and also gives the ideal structure of the Toeplitz versions.  In the case of the solvable examples $BS(1,d)$, this is a well-known example with core isomorphic to a Bunce-Deddens algebra (\cite{alsulami}).  The presentations of the Baumslag-Solitar groups can be thought of as arising from their status as the fundamental group of a graph of groups:  the graph consists of one vertex and one edge,  with infinite cyclic groups attached to both vertex and edge.  The Bass-Serre theory requires the choice of an orientation for this graph; this amounts to a direction on the Cayley graph (\cite{serre}).  In this case, the directed Cayley graph is a category of paths, and we use the theory developed in \cite{spi}  to study the action of the group on the boundary (see \cite{spi}, Example 8.5).

The organization of this paper is as follows.  We first use the HNN structure of a Baumslag-Solitar group to prove that, as an ordered group (relative to the subsemigroup defined by the presentation), it is suitable for our constructions.  In the process we define an associated odometer-like action by the second generator.  We then identify the maximal directed hereditary subsets of the category, and hence the boundary of the semigroup.  We prove that the restriction of the groupoid to the boundary of the category is amenable.  We then identify all directed hereditary subsets, and prove that the entire groupoid is amenable.  From this we deduce the generators and relations for the Cuntz-Krieger algebra.  We then compute the $K$-theory of the algebra by first studying the fixed-point algebra by the gauge action, and then applying the Pimsner-Voiculescu exact sequence.  We mention that the degree functors for these categories are degenerate, and thus the fixed-point algebras are not AF.  Finally we use theorems from \cite{spi} to establish the fundamental structural properties of these algebras.

We conclude this introduction with a description of the definitions and results from \cite{spi} needed for the rest of the paper.  We identify the objects of a category with the identity morphisms in that category, and we use juxtaposition to indicate composition of morphisms.  Morphisms are referred to as \textit{paths}, and objects as \textit{vertices}.  We use $s$ and $r$ to denote the \textit{source} and \textit{range} of morphisms, and $\Lambda^0$ for the vertices in the category $\Lambda$.  A \textit{category of paths} is a small category satisfying

\begin{enumerate}
\item \label{d.categoryofpathsa}
$\alpha\beta = \alpha\gamma$ implies $\beta = \gamma$ \textit{(left-cancellation)}.
\item \label{d.categoryofpathsb} $\beta\alpha = \gamma\alpha$ implies $\beta = \gamma$ \textit{(right-cancellation)}.
\item \label{d.categoryofpathsc}
$\alpha\beta = s(\beta)$ implies $\alpha = \beta = s(\beta)$ \textit{(no inverses)}.
\end{enumerate}

For any $\alpha \in \Lambda$ we define the \textit{left shift} $\sigma^\alpha : \alpha\Lambda \to s(\alpha)\Lambda$ by $\sigma^\alpha(\alpha\beta) = \beta$  ($\sigma^\alpha$ is well-defined by left-cancellation).  The \textit{right shift map} $\beta \in s(\alpha)\Lambda \mapsto \alpha\beta \in r(\alpha)\Lambda$ is the inverse of $\sigma^\alpha$.  We say that $\beta$ \textit{extends} $\alpha$ if there exists $\alpha' \in \Lambda$ such that $\beta = \alpha \alpha'$.  (We may express this by writing $\beta \in \alpha\Lambda$.)  It follows from the definition that this is a partial order on $\Lambda$.  If $\beta$ is an extension of $\alpha$, we call $\alpha$ an \textit{initial segment} of $\beta$.  The set of initial segments of $\beta$ is denoted $[\beta]$.  We write $\alpha \Cap \beta$ ($\alpha$ \textit{meets} $\beta$) if $\alpha\Lambda \cap \beta\Lambda \not= \emptyset$, and $\alpha \perp \beta$ ($\alpha$ is \textit{disjoint from} $\beta$) otherwise.  We let $\alpha \vee \beta$ denote the set of \textit{minimal common extensions} of $\alpha$ and $\beta$, i.e. the minimal elements of $\alpha\Lambda \cap \beta\Lambda$.  For a subset $F \subseteq \Lambda$ we let $\bigvee F$ denote the set of minimal common extensions of the elements of $F$.  We say that $\Lambda$ is \textit{finitely aligned} if for every pair of elements $\alpha$, $\beta \in \Lambda$, there is a finite subset $G$ of $\Lambda$ such that $\alpha\Lambda \cap \beta\Lambda = \bigcup_{\eps \in G} \eps\Lambda$.  It follows that we may take $G = \alpha \vee \beta$.  In this paper we will only consider (certain) finitely aligned examples.

A subset $C \subseteq \Lambda$ is \textit{directed} if for all $\alpha$, $\beta \in C$ there is $\gamma \in C$ extending both $\alpha$ and $\beta$.  $C$ is \textit{hereditary} if $[\alpha] \subseteq C$ for every $\alpha \in C$.  The collection of all directed hereditary subsets of $\Lambda$ is denoted $\Lambda^*$; the set of maximal elements of $\Lambda^*$ is denoted $\Lambda^{**}$.  A directed hereditary set is \textit{finite} if it contains a maximal element; in this case it must be of the form $[\alpha]$ for some $\alpha \in \Lambda$.  Otherwise it is \textit{infinite}.  We define a topology on $\Lambda^*$ as follows.  For $\alpha \in \Lambda$, and $\beta_1$, $\ldots$, $\beta_n \in \alpha \Lambda \setminus \{\alpha\}$, let $E = \alpha \Lambda \setminus \bigcup_{i = 1}^n \beta_i \Lambda$.  Set 
\[
\widehat{E} = \{ C \in \Lambda^* : E \supseteq C \cap \gamma \Lambda \text{ for some } \gamma \in C \}.
\]
Then the $\widehat{E}$ form a base of compact-open sets for a locally compact Hausdorff topology.  The \textit{boundary} of $\Lambda$ is defined to be $\partial \Lambda = \overline{\Lambda^{**}}$.

We define a groupoid with unit space $\Lambda^*$ as follows.  First define a relation on $\Lambda \times \Lambda \times \Lambda^*$ by $(\alpha, \beta, x) \sim (\alpha', \beta', x')$ if there are $y \in \Lambda^*$ and $\gamma$, $\gamma' \in \Lambda$ such that $x = \gamma y$, $x' = \gamma' y$, $\alpha \gamma = \alpha' \gamma'$, and $\beta \gamma = \beta' \gamma'$.  Then $\sim$ is an equivalence relation.  The set of equivalence classes becomes a locally compact Hausdorff \'etale groupoid, where the set of composable pairs is
\[
G^2 = \bigl\{ \bigl([\alpha,\beta,x],\;
[\gamma,\delta,y]\bigr) : \beta x = \gamma y \bigr\},
\]
and inversion is given by $[\alpha,\beta,x]^{-1} = [\beta,\alpha,x]$.   Multiplication $G^2 \to G$ is given as follows.  Let $\bigl([\alpha,\beta,x],$\allowbreak$ \; [\gamma,\delta,y]\bigr) \in G^2$.  Since $\beta x = \gamma y$, it follows from Lemma 4.12 of \cite{spi} that there are $z$, $\xi$, and $\eta$ such that $x = \xi z$, $y = \eta z$, and $\beta\xi = \gamma\eta$.  Then
\[
[\alpha,\beta,x] \, [\gamma,\delta,y]\ =\ [\alpha\xi,\delta\eta,z].
\]
A base of compact-open sets for $G$ is given by the sets $[\alpha, \beta, \widehat{E}] = \{ [\alpha,\beta,x] : x \in \widehat{E} \}$.  

The $C^*$-algebras of $\Lambda$ are defined as $\CT C^*(\Lambda) = C^*(G)$ (the \textit{Toeplitz} $C^*$-algebra), and $C^*(\Lambda) = C^*(G|_{\partial \Lambda})$ (the \textit{Cuntz-Krieger} algebra).  We have the following theorem giving generators and relations for $\CT C^*(\Lambda)$ (\cite{spi}, Theorem 6.3). 

\begin{Theorem}
\label{t.toeplitzrelations}
Let $\Lambda$ be a finitely aligned category of paths.  The representations of $\CT C^*(\Lambda)$ are in one-to-one correspondence with the families $\{T_\alpha : \alpha \in \Lambda\}$ of Hilbert space operators satisfying the relations
\begin{enumerate}
\item $T_\alpha^* T_\alpha = T_{s(\alpha)}$.
\label{t.toeplitzrelations.a}
\item $T_\alpha T_\beta = T_{\alpha\beta}$, if $s(\alpha) = r(\beta)$.
\label{t.toeplitzrelations.b}
\item $T_\alpha T_\alpha^* T_\beta T_\beta^*
= \bigvee_{\gamma \in \alpha \vee \beta} T_\gamma T_\gamma^*$.
\label{t.toeplitzrelations.c}
\end{enumerate}
\end{Theorem}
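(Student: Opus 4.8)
The plan is to prove that $\CT C^*(\Lambda) = C^*(G)$ is the universal $C^*$-algebra generated by operators $\{t_\alpha : \alpha \in \Lambda\}$ subject to (\ref{t.toeplitzrelations.a})--(\ref{t.toeplitzrelations.c}); the asserted bijection between representations and relation-satisfying families is precisely a restatement of this universal property. First I would exhibit the canonical generators inside $C^*(G)$. For $\alpha \in \Lambda$ the set $V_\alpha = [\alpha, s(\alpha), \widehat{s(\alpha)\Lambda}]$ is a compact-open bisection, and I take $t_\alpha = 1_{V_\alpha} \in C_c(G)$. Using $1_V^* = 1_{V^{-1}}$, the inversion $[\alpha,\beta,x]^{-1} = [\beta,\alpha,x]$, and the multiplication formula $[\alpha,\beta,x]\,[\gamma,\delta,y] = [\alpha\xi,\delta\eta,z]$, a direct computation of convolution products of characteristic functions of bisections gives $V_\alpha^{-1}V_\alpha = \widehat{s(\alpha)\Lambda}$ and $V_\alpha V_\beta = V_{\alpha\beta}$ when $s(\alpha) = r(\beta)$, which are relations (\ref{t.toeplitzrelations.a}) and (\ref{t.toeplitzrelations.b}). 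For (\ref{t.toeplitzrelations.c}) I would observe that $t_\alpha t_\alpha^* = 1_{V_\alpha V_\alpha^{-1}}$ is the characteristic function of the unit set $\widehat{\alpha\Lambda}$, so $t_\alpha t_\alpha^*\, t_\beta t_\beta^*$ is supported on $\widehat{\alpha\Lambda} \cap \widehat{\beta\Lambda}$; finite alignment yields $\alpha\Lambda \cap \beta\Lambda = \bigcup_{\gamma \in \alpha\vee\beta}\gamma\Lambda$, whence this intersection is the join of the $\widehat{\gamma\Lambda}$, i.e. relation (\ref{t.toeplitzrelations.c}).

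This already shows that every representation of $C^*(G)$ restricts to a relation-satisfying family. Since the characteristic function of a basic bisection $[\alpha,\beta,\widehat E]$, with $E = \alpha\Lambda \setminus \bigcup_i \beta_i\Lambda$ and $\beta_i = \alpha\beta_i'$, can be written as
\[
1_{[\alpha,\beta,\widehat E]} = t_\alpha\Bigl(t_{s(\alpha)} - \bigvee_i t_{\beta_i'}t_{\beta_i'}^*\Bigr)t_\beta^*,
\]
the $\{t_\alpha\}$ generate $C^*(G)$, so the family determines the representation and the correspondence is injective in this direction.

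The substantive direction is the converse: given a family $\{T_\alpha\}$ on a Hilbert space $H$ satisfying (\ref{t.toeplitzrelations.a})--(\ref{t.toeplitzrelations.c}), I must produce a representation $\pi$ of $C^*(G)$ with $\pi(t_\alpha) = T_\alpha$. I would define $\pi$ on the spanning characteristic functions by
\[
\pi\bigl(1_{[\alpha,\beta,\widehat E]}\bigr) = T_\alpha\Bigl(T_{s(\alpha)} - \bigvee_i T_{\beta_i'}T_{\beta_i'}^*\Bigr)T_\beta^*,
\]
extend linearly, and verify that this is a well-defined $*$-homomorphism of the convolution $*$-algebra $C_c(G)$. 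Well-definedness requires reconciling the redundancy in the description of a bisection: two presentations of the same open set, and the possibility of refining an $\widehat E$, must give the same operator. I would handle this by first deducing from (\ref{t.toeplitzrelations.a})--(\ref{t.toeplitzrelations.b}) the shift compatibility $T_{\alpha\beta'} = T_\alpha T_{\beta'}$ and the behaviour of the range projections under refinement of the sets $E$.

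The crux is multiplicativity, $\pi(f*g) = \pi(f)\pi(g)$ on basic bisections. Computing the convolution $1_{[\alpha,\beta,\widehat E]} * 1_{[\gamma,\delta,\widehat F]}$ via the multiplication formula produces a decomposition governed by the condition $\beta x = \gamma y$, which by finite alignment is indexed by the minimal common extensions in $\beta \vee \gamma$. On the operator side the matching factor $T_\beta^* T_\gamma$ must be resolved using relation (\ref{t.toeplitzrelations.c}): writing each $\zeta \in \beta\vee\gamma$ as $\zeta = \beta\zeta' = \gamma\zeta''$ and inserting $\bigvee_\zeta T_\zeta T_\zeta^*$ collapses $T_\beta^* T_\gamma$ to $\sum_{\zeta\in\beta\vee\gamma} T_{\zeta'}T_{\zeta''}^*$, matching the groupoid decomposition term by term. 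Ensuring that these two decompositions agree, including the orthogonality bookkeeping that makes the joins genuine sums of projections, is where I expect the main difficulty to lie. Once $\pi$ is shown to be a $*$-homomorphism on $C_c(G)$, boundedness follows from the partial-isometry estimate $\|T_\alpha\| \le 1$ together with the étale structure, which makes $\pi$ continuous for the $I$-norm; universality of $C^*(G)$ then extends $\pi$ to a representation with $\pi(t_\alpha) = T_\alpha$, completing the correspondence.
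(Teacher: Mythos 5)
You should first be aware that this paper contains no proof of Theorem \ref{t.toeplitzrelations}: it is imported verbatim from \cite{spi} (Theorem 6.3), so the comparison below is with the proof given there. Your forward direction is correct and is the standard one: the elements $t_\alpha = 1_{[\alpha,s(\alpha),\widehat{s(\alpha)\Lambda}]}$ are characteristic functions of compact open bisections, the bisection calculus gives relations (\ref{t.toeplitzrelations.a}) and (\ref{t.toeplitzrelations.b}), the identification $\widehat{\alpha\Lambda} = \{C \in \Lambda^* : \alpha \in C\}$ together with directedness of the sets $C$ gives (\ref{t.toeplitzrelations.c}), and your factorization of $1_{[\alpha,\beta,\widehat{E}]}$ shows that the $t_\alpha$ generate $C^*(G)$, so a representation is determined by its values on them.

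The gap is in the converse, which you rightly call the substantive direction but then leave essentially undone, and the one concrete mechanism you do propose is false at the stated level of generality. You claim that inserting $\bigvee_{\zeta \in \beta \vee \gamma} T_\zeta T_\zeta^*$ collapses $T_\beta^* T_\gamma$ to $\sum_{\zeta \in \beta \vee \gamma} T_{\zeta'} T_{\zeta''}^*$, ``matching the groupoid decomposition term by term.'' In a general finitely aligned category of paths, distinct elements of $\beta \vee \gamma$ need not be disjoint --- this is precisely why the theorem is stated with a join $\bigvee$ rather than a sum (in quasi-lattice ordered groups minimal common extensions are unique, and in higher-rank graphs distinct ones are automatically disjoint, but one can build a finite category of paths with $\beta \vee \gamma = \{\zeta_1, \zeta_2\}$, $\zeta_1 \neq \zeta_2$, and $\zeta_1 \Cap \zeta_2$; nothing in the axioms forbids this). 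Consequently $\bigvee_\zeta T_\zeta T_\zeta^*$ is not $\sum_\zeta T_\zeta T_\zeta^*$: it must be expanded by inclusion--exclusion, and each product $\prod_{\zeta \in S} T_\zeta T_\zeta^*$ appearing there must itself be resolved by a further application of (\ref{t.toeplitzrelations.c}), recursively. The same phenomenon occurs on the groupoid side: the product of two basic bisections is a union, indexed by $\beta \vee \gamma$, of basic sets that need not be pairwise disjoint, so its characteristic function is likewise an alternating sum. Proving that these two nested inclusion--exclusion expansions agree --- equivalently, that your $\pi$ is well defined on overlapping descriptions of basic sets and is multiplicative --- is not bookkeeping to be deferred; it is the entire content of the theorem. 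The proof in \cite{spi} does not attempt this term-by-term matching of convolution products at all: it is organized around the inverse semigroup of zigzag maps on $\Lambda^*$ developed there, showing that a family $\{T_\alpha\}$ satisfying the three relations induces a representation of that inverse semigroup, with the combinatorics absorbed into structural lemmas about zigzags (of which the Lemma 4.12 cited in this paper's introduction is an instance). Until you supply a correct formula for $T_\beta^* T_\gamma$ and a proof that $\pi$ respects the overlap relations among basic sets, what you have is a plausible plan rather than a proof.
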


In order to describe the analogous theorem for $C^*(\Lambda)$, we need the notion of \textit{exhaustive} set.  Let $v \in \Lambda^0$.  A subset $F \subseteq v\Lambda$ is exhaustive \textit{(at $v$)} if for every $\alpha \in v\Lambda$ there exists $\beta \in F$ such that $\alpha \Cap \beta$.  The exhaustive sets can be used to characterize the points of $\partial \Lambda$ (\cite{spi}, Theorem 7.8).  We have the following theorem giving generators and relations for $C^*(\Lambda)$ (\cite{spi}, Theorem 8.2).

\begin{Theorem}
\label{t.restrictionboundary}
Let $\Lambda$ be a countable finitely aligned category of paths.  Assume that $G$ is amenable.  The representations of $C^*(\Lambda)$ are in one-to-one correspondence with the families $\{S_\alpha : \alpha \in \Lambda\}$ of Hilbert space operators satisfying the relations
\begin{enumerate}
\item $S_\alpha^* S_\alpha = S_{s(\alpha)}$.
\label{t.restrictionboundary.a}
\item $S_\alpha S_\beta = S_{\alpha\beta}$, if $s(\alpha) = r(\beta)$.
\label{t.restrictionboundary.b}
\item $S_\alpha S_\alpha^* S_\beta S_\beta^*
= \bigvee_{\gamma \in \alpha \vee \beta} S_\gamma S_\gamma^*$.
\label{t.restrictionboundary.c}
\item $S_v = \bigvee_{\beta \in F} S_\beta S_\beta^*$ if $F $ is finite exhaustive at $v$.  (Equivalently, $0 = \prod_{\delta \in F} (S_v - S_\delta S_\delta^*)$.)
\label{t.restrictionboundary.d}
\end{enumerate}
\end{Theorem}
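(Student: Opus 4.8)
The plan is to realize $C^*(\Lambda)$ as a quotient of $\CT C^*(\Lambda)$ and to match the defining ideal with the relations in (\ref{t.restrictionboundary.d}). Since $\partial\Lambda = \overline{\Lambda^{**}}$ is a closed $G$-invariant subset of the unit space $\Lambda^*$, its complement $U = \Lambda^* \setminus \partial\Lambda$ is open and invariant, and restriction of $G$ to $U$ produces an ideal $J = C^*(G|_U)$ of $C^*(G) = \CT C^*(\Lambda)$ whose quotient is $C^*(G|_{\partial\Lambda}) = C^*(\Lambda)$; that is,
\[
0 \longrightarrow C^*(G|_U) \longrightarrow C^*(G) \longrightarrow C^*(G|_{\partial\Lambda}) \longrightarrow 0 .
\]
The hypothesis that $G$ is amenable is exactly what makes this sequence exact at the level of full $C^*$-algebras (full and reduced algebras agreeing), and it is the full algebra that Theorem~\ref{t.toeplitzrelations} presents. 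Consequently a representation of $\CT C^*(\Lambda)$ factors through $C^*(\Lambda)$ if and only if it annihilates $J$. By Theorem~\ref{t.toeplitzrelations} the representations of $\CT C^*(\Lambda)$ are precisely the families $\{T_\alpha\}$ obeying (\ref{t.restrictionboundary.a})--(\ref{t.restrictionboundary.c}), so the task reduces to showing that such a family annihilates $J$ if and only if it also satisfies (\ref{t.restrictionboundary.d}).

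First I would make the dictionary between projections and cylinder sets explicit. Writing $\{t_\alpha\}$ for the universal Toeplitz family, each $t_v$ and each $t_\alpha t_\alpha^*$ is the characteristic function of a compact-open subset of $\Lambda^*$; unwinding the definition of $\widehat{E}$ identifies these with $\{C \in \Lambda^* : v \in C\}$ and $\{C : \alpha \in C\}$ respectively. Hence for a finite set $F \subseteq v\Lambda$ the product
\[
Q_F = \prod_{\delta \in F}\bigl(t_v - t_\delta t_\delta^*\bigr)
\]
is the characteristic function of $\{C : v \in C,\ \delta \notin C \text{ for all } \delta \in F\}$. That the $t_\delta t_\delta^*$ commute, so that $Q_F$ is a projection equal to $t_v - \bigvee_{\delta \in F} t_\delta t_\delta^*$, follows from relation (\ref{t.restrictionboundary.c}), which is symmetric in its two indices; this is also exactly what makes the two forms of (\ref{t.restrictionboundary.d}) equivalent. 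Now I would invoke the characterization of $\partial\Lambda$ by exhaustive sets (\cite{spi}, Theorem 7.8): a directed hereditary set lies in $\partial\Lambda$ precisely when it meets every finite set that is exhaustive at one of its vertices. Read contrapositively, when $F$ is finite exhaustive at $v$ the support of $Q_F$ contains no boundary point, so $Q_F$ is supported in $U$ and lies in $J$. Thus any representation killing $J$ must satisfy $Q_F = 0$, which is relation (\ref{t.restrictionboundary.d}).

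For the converse --- the step I expect to be the crux --- I must show that $J$ is generated as an ideal by the projections $Q_F$, as $F$ ranges over all finite exhaustive sets at all vertices. Equivalently, the open invariant set $U$ must be exhausted by the supports of the $Q_F$ together with their $G$-translates. The hard part is the underlying point-set statement: every $C \in U$, being a non-boundary point, fails by Theorem~7.8 to meet some finite exhaustive set $F$ at some $v \in C$, so that $C$ lies in the support of $Q_F$; one then upgrades this pointwise covering to ideal generation by a compactness argument, using that $C^*(G|_U)$ is the closed span of the compact-open bisections $[\alpha,\beta,\widehat{E}]$ with $\widehat{E} \subseteq U$ and that each such bisection can be cut down by a finite sum of the $Q_F$. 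Granting this, any family $\{S_\alpha\}$ satisfying (\ref{t.restrictionboundary.a})--(\ref{t.restrictionboundary.d}) determines a representation of $\CT C^*(\Lambda)$ that kills every generator of $J$, hence kills $J$, hence factors through $C^*(\Lambda)$; the images of the universal generators under the quotient map then furnish the universal family that realizes the asserted one-to-one correspondence. The surjectivity and faithfulness of this correspondence, and its naturality, are then routine.
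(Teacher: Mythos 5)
This paper does not actually prove Theorem \ref{t.restrictionboundary}: it is quoted as background from \cite{spi} (Theorem 8.2), so there is no internal proof to compare yours against, and I can only judge your proposal against the argument supplied by that reference and the standard groupoid machinery. In outline, yours is that argument: representations of $C^*(\Lambda)=C^*(G|_{\partial\Lambda})$ are the representations of $\CT C^*(\Lambda)=C^*(G)$ annihilating the ideal $J$ attached to the open invariant set $U=\Lambda^*\setminus\partial\Lambda$, and everything reduces to showing that $J$ is the ideal generated by the projections $Q_F=\prod_{\delta\in F}(t_v-t_\delta t_\delta^*)$ over finite exhaustive sets $F$; your first half (each such $Q_F$ lies in $J$) is correct. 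One side misstatement: for an \'etale groupoid and an open invariant subset of the unit space, the sequence of \emph{full} $C^*$-algebras $0\to C^*(G|_U)\to C^*(G)\to C^*(G|_{\partial\Lambda})\to 0$ is exact with no amenability hypothesis; what amenability buys is your parenthetical (full $=$ reduced), not exactness. That is harmless here, since amenability is assumed.

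The genuine gap is in the step you yourself flag as the crux, and it stems from mis-stating \cite{spi}, Theorem 7.8. A directed hereditary set $C$ contains exactly one vertex (hereditarity gives $r(\alpha)\in C$ for all $\alpha\in C$, and directedness forces these ranges to coincide), so your formulation --- $C\in\partial\Lambda$ iff $C$ meets every finite set exhaustive at one of its vertices --- is a condition at the single vertex $v=r(C)$ only, and it is false at the generality of the theorem. Concretely, let $\Lambda$ be the path category of the directed graph with vertices $v,w$, infinitely many loops $e_1,e_2,\dots$ at $v$, one edge $f$ with $r(f)=v$, $s(f)=w$, and one loop $g$ at $w$. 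Then $C=[f]\notin\partial\Lambda$ (it is an isolated, nonmaximal point of $\Lambda^*$, hence not in $\overline{\Lambda^{**}}$), yet every finite exhaustive set at $v$ must contain $v$ itself, because a finite set of nontrivial paths fails to meet $e_n$ for all but finitely many $n$; consequently every $Q_F$ with $F$ exhaustive at $v$ is zero, while every $Q_F$ with $F$ exhaustive at $w$ is supported in $\{C':w\in C'\}$, which misses $[f]$. So the supports of the untranslated $Q_F$ do not cover $U$, and your pointwise covering claim fails as written. The correct Theorem 7.8 quantifies over all elements of $C$: $C\in\partial\Lambda$ iff for every $\alpha\in C$ and every finite exhaustive $F\subseteq s(\alpha)\Lambda$ there is $\delta\in F$ with $\alpha\delta\in C$. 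With this version every $C\in U$ lies in the support of a \emph{translate} $t_\alpha Q_F t_\alpha^*=\prod_{\delta\in F}\bigl(t_\alpha t_\alpha^*-t_{\alpha\delta}t_{\alpha\delta}^*\bigr)$, where $F$ is exhaustive at $s(\alpha)$ rather than at $r(C)$; these translates still lie in the ideal generated by the $Q_F$, so your compactness and cut-down argument then goes through, and on the relations side a family satisfying \eqref{t.restrictionboundary.d} at every vertex automatically kills all translates, since conjugating the relation at $s(\alpha)$ by $S_\alpha$ gives $\prod_{\delta\in F}\bigl(S_\alpha S_\alpha^*-S_{\alpha\delta}S_{\alpha\delta}^*\bigr)=0$. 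In short, your proof is the right one in outline, but the point-set statement it hinges on is wrong as stated, and the passage to translates --- which you mention but do not use where it is actually needed --- is essential rather than optional.
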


The last theorem requires amenability of the groupoid.  The usual way of obtaining this is to decompose $C^*(\Lambda)$ by means of a cocycle to a discrete abelian group (called a \textit{degree functor} in \cite{spi}, Definition 9.1).  We cite the following summary of the standard argument (\cite{spi}, Proposition 9.3).

\begin{Proposition}
\label{p.nuclearity}
Let $G$ be a locally compact Hausdorff \'etale groupoid, $Q$ a countable abelian group, and $c : G \to Q$ a continuous homomorphism.  Let $G^c = c^{-1}(0)$, also a locally compact Hausdorff \'etale groupoid.  Suppose that $G^c$ is amenable.  Then $G$ is amenable.
\end{Proposition}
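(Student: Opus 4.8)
The plan is to verify topological amenability of $G$ directly, using the approximate-invariance characterization together with the amenability of $G^c$ and of $Q$. Recall that an étale, locally compact, Hausdorff groupoid $G$ with unit space $X = G^0$ is (topologically) amenable if and only if there is a net $(f_i)$ of nonnegative functions in $C_c(G)$ such that $\sum_{\gamma \in G^x} f_i(\gamma) \to 1$ uniformly for $x$ in compact subsets of $X$, and $\sum_{\gamma \in G^{s(\eta)}} |f_i(\eta\gamma) - f_i(\gamma)| \to 0$ uniformly for $\eta$ in compact subsets of $G$ (Anantharaman-Delaroche and Renault). I would produce such a net for $G$ by combining two ingredients. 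First, since $G^c$ is assumed amenable and shares the unit space $X$ with $G$, fix a net $(h_j) \subseteq C_c(G^c)$ of this form for $G^c$. Second, since $Q$ is a countable abelian group it is amenable, so fix a F{\o}lner net $(F_n)$ of finite subsets of $Q$, i.e.\ $|(q + F_n) \triangle F_n| / |F_n| \to 0$ for each $q \in Q$.

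Next I would record the grading supplied by $c$. Because $Q$ is discrete and $c$ is continuous, each slice $G_q = c^{-1}(q)$ is clopen in $G$, $G$ is the disjoint union of the $G_q$, and $G^c = G_0$ is an open subgroupoid containing the units. The guiding idea is that $c$ exhibits $G$ as an extension of the amenable group $Q$ by the amenable groupoid $G^c$, and that amenability should pass through this extension. Concretely, I would build the candidate densities $f_{j,n}$ on $G$ so that on each slice they carry a transported copy of $h_j$, weighted by the F{\o}lner factor $|F_n|^{-1}\mathbf 1_{F_n}(c(\,\cdot\,))$: the transported means make each fibre sum close to $1$ after averaging the grade over $F_n$, while the F{\o}lner factor controls how the density changes when the grade is shifted. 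To transport $h_j$, which is supported on the grade-zero slice, onto the other slices \emph{continuously}, I would use that $G$ is étale: cover each $G_q$ by open bisections and use a partition of unity subordinate to them to move densities along grade-$q$ arrows, obtaining a genuinely continuous, compactly supported $f_{j,n}$.

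With the net in hand, the verification splits into two parts. The fibre-sum condition $\sum_{\gamma \in G^x} f_{j,n}(\gamma) \to 1$ follows from the normalization of the $h_j$ (which make the grade-zero contribution close to $1$) together with the averaging over the $|F_n|$ grades in $F_n$. For the almost-invariance condition, given $\eta$ with $c(\eta) = p$, the left translation $\gamma \mapsto \eta\gamma$ both multiplies in the $G^c$-direction and shifts the grade by $p$. I would estimate the resulting variation by a triangle-inequality split: the part that changes the $G^c$-contribution is controlled by the almost-invariance of the $h_j$ under the (compactly supported) grade-zero part of the translation, and the part that shifts the grade is controlled by the F{\o}lner estimate $|(p + F_n)\triangle F_n|/|F_n| \to 0$. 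Passing to the product net indexed by $(j,n)$ and using uniformity on compacta then yields both conditions for $G$.

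The main obstacle is producing a net of \emph{continuous} densities rather than merely Borel ones: there is no canonical way to transport the grade-zero mean $h_j$ onto the slice $G_q$, since in general there is no canonical bijection between the grade-zero and grade-$q$ parts of a fixed range fibre. This is exactly the point at which the clopenness of the slices $G_q$ and the étale (bisection and partition-of-unity) structure of $G$ must be used to make the transport continuous and the invariance estimates uniform on compact sets. Equivalently, it is the content of the permanence of topological amenability under extension by an amenable group, which may be invoked directly from the work of Anantharaman-Delaroche and Renault in place of carrying out the explicit construction.
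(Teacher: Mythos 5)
You propose a direct construction of approximately invariant densities; the paper, by contrast, gives no argument of its own here --- the proposition is quoted from \cite{spi}, Proposition 9.3, and the ``standard argument'' summarized there is $C^*$-algebraic: the cocycle $c$ makes $C^*_r(G)$ into a $Q$-graded algebra (equivalently, the compact dual group $\widehat{Q}$ acts by the gauge action $\beta_\chi f = (\chi\circ c)f$) whose fixed-point algebra/unit fibre is $C^*_r(G^c)$; amenability of $G^c$ makes that fibre nuclear; nuclearity then passes to $C^*_r(G)$ (a compact-group-action, or Fell-bundle-over-amenable-group, fact); and for \'etale groupoids nuclearity of $C^*_r(G)$ implies amenability of $G$ (Anantharaman-Delaroche--Renault). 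Your route is genuinely different, but it is incomplete exactly at the step you flag yourself, and neither of your patches closes it. In particular, the fallback citation is not available: $G$ is \emph{not} an extension of $Q$ by $G^c$ in any sense covered by the permanence theorems you invoke --- $c$ need not be surjective, and $G^c$ is not a normal subgroup bundle with quotient $Q$ --- so ``permanence of topological amenability under extension by an amenable group'' is not a quotable theorem here; the permanence statement for kernels of cocycles \emph{is} the proposition being proved, and citing it would be circular. What the literature supplies are ingredients (invariance of amenability under equivalence and under restriction to invariant sets; Renault's result that $Q$ amenable together with the skew product $G\times_c Q$ amenable implies $G$ amenable), and assembling them --- e.g.\ showing the restriction of $G\times_c Q$ to the saturation of each $G^0\times\{q\}$ is equivalent to $G^c$, then patching over this countable invariant open cover --- is the actual mathematical content, which the proposal does not carry out.

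The direct construction itself has two concrete defects. First, normalization: a groupoid has no coset representatives, and a fibre $G^x$ meets the slice $G_q$ only when $q\in c(G^x)$, which can be a proper subset of $Q$. Your density is supported on grades in $F_n$ and gives each occupied slice mass about $|F_n|^{-1}$, so its fibre sum is about $|F_n\cap c(G^x)|/|F_n|$, which need not tend to $1$: already for the group $G=\IZ$, $Q=\IZ$, $c(k)=2k$ (so $G^c=\{0\}$ is amenable) the recipe produces fibre sums tending to $\tfrac12$. Second, the order of quantifiers: even where transport is possible, left translation by $\eta$ with $c(\eta)=p$ compares the transported copy of $h_j$ on $G_q$ with the one on $G_{p+q}$ only up to multiplication by elements of $G^c$ of the form $u'(\cdot)^{-1}\eta\, u(\cdot)$ coming from your bisection data, and these range over a compact subset of $G^c$ that grows with $F_n$. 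Hence $h_j$ must be chosen \emph{after} $F_n$, almost invariant on a compact set depending on $F_n$; the product net over independent indices $(j,n)$ that you propose does not converge. This is the groupoid incarnation of the classical fact that F{\o}lner sets of a group extension are not products of F{\o}lner sets of kernel and quotient (consider $BS(1,2)\cong\IZ[\tfrac12]\rtimes\IZ$, where conjugation dilates the kernel). Both defects are repairable --- per-fibre normalization over $c(G^x)$, nested rather than product choices --- but those repairs, or else the skew-product assembly above, constitute the proof; as written, the proposal stops short of it.
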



An ordered group $(\Gamma,\Lambda)$ is called \textit{quasi-lattice ordered} if for each $t \in \Lambda \Lambda^{-1}$ there is an element $\alpha \in t \Lambda \cap \Lambda$ such that $t \Lambda \cap \Lambda = \alpha \Lambda$ (\cite{nica}).  This idea was generalized in \cite{spi}, Definition 8.6, as follows.  The ordered group $(\Gamma,\Lambda)$ is called \textit{finitely aligned} if for each $t \in \Gamma$ there is a finite set $F \subseteq t \Lambda \cap \Lambda$ such that $t \Lambda \cap \Lambda = \bigcup_{\alpha \in F} \alpha \Lambda$.  It is possible that $\Lambda$ is a finitely aligned category of paths even if $(\Gamma,\Lambda)$ is not a finitely aligned ordered group.  A weaker notion is given in \cite{spi}, Definition 8.11:  $(\Gamma,\Lambda)$ is \textit{locally finitely exhaustible} if for each $t \in \Gamma$, there is a finite set $F \subseteq t \Lambda \cap \Lambda$ such that every element of $t \Lambda \cap \Lambda$ meets some element of $F$.  In Lemma 8.10 of \cite{spi} it is shown how to define a locally compact $\Gamma$-space, $\partial(\Gamma,\Lambda)$, which is the \textit{directed boundary} of the ordered group.  The following appears in \cite{spi} as part of Theorem 8.13 and Corollary 8.17.

\begin{Theorem}
Let $(\Gamma,\Lambda)$ be a countable ordered group.  Suppose that $\Lambda$ is finitely aligned as a category of paths.  Then $\partial (\Gamma,\Lambda)$ is locally compact Hausdorff if and only if $(\Gamma,\Lambda)$ is locally finitely exhaustible.  Moreover, $C^*(\Lambda)$ is Morita equivalent to the crossed product algebra $C_0(\partial(\Gamma,\Lambda)) \times \Gamma$.
\end{Theorem}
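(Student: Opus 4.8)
The plan is to realize $C^*(\Lambda) = C^*(G|_{\partial\Lambda})$ as a full corner of the transformation-groupoid algebra $C_0(\partial(\Gamma,\Lambda)) \rtimes \Gamma$, after first establishing the topological dichotomy that makes the crossed product meaningful. I regard $\Lambda$ as a category of paths with a single vertex $e$, so its morphisms are the positive elements of $\Gamma$ and the degree functor $c : G \to \Gamma$, $c[\alpha,\beta,x] = \alpha\beta^{-1}$ (computed in $\Gamma$), is a continuous homomorphism. The directed boundary $\partial(\Gamma,\Lambda)$ of Lemma 8.10 of \cite{spi} is the $\Gamma$-saturation of $\partial\Lambda$ inside the space of directed hereditary subsets of $\Gamma$, where $\Gamma$ acts by left translation $t\cdot\omega = t\omega$ (the order on $\Gamma$ is left-invariant, so translation preserves the directed hereditary property). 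A boundary point $x \in \partial\Lambda$ is sent to its downward $\Gamma$-closure $\bar x$, and directedness shows that $\bar x$ is recovered from $\bar x \cap \Lambda$; consequently $\partial\Lambda$ is identified with the clopen set $\{\omega \in \partial(\Gamma,\Lambda) : e \in \omega\}$. Since every $\omega$ is nonempty, picking $g \in \omega$ gives $e \in g^{-1}\omega$, so $\partial\Lambda$ meets every $\Gamma$-orbit and is $\Gamma$-full by construction.

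First I would settle the iff. Each translate $t\cdot\partial\Lambda$ is homeomorphic to the locally compact Hausdorff space $\partial\Lambda$, so the only question is whether the translates glue to a locally compact Hausdorff space, and the gluing is governed by the overlaps $\partial\Lambda \cap t\cdot\partial\Lambda = \{\omega : e \in \omega,\ t \in \omega\}$. Translating back to $\partial\Lambda$, this overlap is $\{x \in \partial\Lambda : x \cap (t\Lambda \cap \Lambda) \neq \emptyset\}$. If $(\Gamma,\Lambda)$ is finitely aligned one gets $t\Lambda \cap \Lambda = \bigcup_{\alpha \in F} \alpha\Lambda$ with $F$ finite, and by heredity the overlap becomes $\bigcup_{\alpha \in F}(\widehat{\alpha\Lambda} \cap \partial\Lambda)$, a finite union of compact-open sets. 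Under the weaker hypothesis of local finite exhaustibility, the finite $F \subseteq t\Lambda \cap \Lambda$ no longer expresses the overlap as such a union, but I would show it still exhibits the overlap as an open set with compact closure, controlled by the finitely many $\widehat{E}$ built from $F$; this suffices to make $\partial(\Gamma,\Lambda)$ locally compact Hausdorff. Conversely, if local finite exhaustibility fails for some $t$, then $t\Lambda \cap \Lambda$ contains an infinite family no finite subset of which is exhaustive, and I would use it to produce a point of $\partial(\Gamma,\Lambda)$ admitting no compact neighborhood (equivalently, a non-closed approach violating Hausdorffness).

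With $\partial(\Gamma,\Lambda)$ locally compact Hausdorff, the transformation groupoid $H := \partial(\Gamma,\Lambda) \rtimes \Gamma$ is étale, and the heart of the Morita statement is an isomorphism $G|_{\partial\Lambda} \cong H|_{\partial\Lambda}$ of étale groupoids onto the reduction of $H$ to the clopen set $\partial\Lambda$. I would define it by $[\alpha,\beta,x] \mapsto (\alpha\cdot\bar x,\ \alpha\beta^{-1})$, checking that both $\alpha\cdot\bar x$ and $(\alpha\beta^{-1})^{-1}\cdot(\alpha\cdot\bar x) = \beta\cdot\bar x$ contain $e$ (since $\alpha g, \beta g \in \Lambda$ for $g \in x$), hence lie in $\partial\Lambda$. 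It remains to verify that this map respects range, source, inversion, and the multiplication inherited from Lemma 4.12 of \cite{spi}, that it is a homeomorphism onto its image, and that the image is exactly $H|_{\partial\Lambda}$; under the identification, $c$ corresponds to the canonical cocycle $(\omega,t) \mapsto t$.

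Finally I would invoke the groupoid reduction theorem: for an étale groupoid $H$ and a clopen $\Gamma$-full subset $U = \partial\Lambda$ of its unit space, the reduction $H|_U$ is equivalent to $H$, and $C^*(H|_U)$ is a full corner of $C^*(H)$, hence Morita equivalent to it. Combining this with $C^*(H|_{\partial\Lambda}) \cong C^*(G|_{\partial\Lambda}) = C^*(\Lambda)$ and $C^*(H) = C_0(\partial(\Gamma,\Lambda)) \rtimes \Gamma$ gives the asserted equivalence. I expect the main obstacle to be the topological dichotomy of the second step: showing that local finite exhaustibility is simultaneously strong enough to force compact closures of the overlap sets and necessary, which requires the explicit construction of a point with no compact neighborhood when exhaustibility fails. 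The groupoid isomorphism of the third step and the appeal to the reduction theorem in the fourth are then essentially formal, once the unit space is known to be locally compact Hausdorff.
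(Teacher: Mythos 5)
This statement is not proved in the paper at all: it is imported verbatim from \cite{spi} (as part of Theorem 8.13 and Corollary 8.17) as background, so your proposal can only be measured against that reference's argument. Your global architecture does agree with it: embed $\partial\Lambda$ as a compact-open subset of $\partial(\Gamma,\Lambda)$ meeting every $\Gamma$-orbit, identify $G|_{\partial\Lambda}$ with the reduction of the transformation groupoid $\partial(\Gamma,\Lambda)\rtimes\Gamma$ to that subset, and invoke the equivalence theorem of \cite{muhrenwil} for the Morita equivalence. Your map $[\alpha,\beta,x]\mapsto(\alpha\cdot\bar x,\,\alpha\beta^{-1})$ is the right one, and the verifications you defer do go through (well-definedness because $\alpha\gamma=\alpha'\gamma'$ and $\beta\gamma=\beta'\gamma'$ force $\alpha\beta^{-1}=\alpha'\beta'^{-1}$; multiplicativity because $\beta\xi=\gamma\eta$ gives $\xi\eta^{-1}=\beta^{-1}\gamma$; surjectivity by writing $t=g(t^{-1}g)^{-1}$ for $g\in x$ with $t^{-1}g\in\Lambda$). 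Those steps, and the appeal to \cite{muhrenwil}, are indeed essentially formal.

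The genuine gap is in the topological dichotomy, exactly where you hedge. You assert that under mere local finite exhaustibility the finite set $F\subseteq t\Lambda\cap\Lambda$ ``no longer expresses the overlap as such a union,'' and you propose to get by with the overlap being ``an open set with compact closure.'' Both halves of this are wrong. First, the substitute is insufficient: gluing compact Hausdorff pieces along open sets with compact closure can produce a non-Hausdorff space --- glue two copies of $\{0\}\cup\{1/n : n\ge 1\}$ along the open set $\{1/n : n\ge 1\}$ --- and what Hausdorffness actually requires is that the graph of each gluing homeomorphism be closed, which is what \emph{compact-open} overlaps provide. Second, the overlap restricted to $\partial\Lambda$ is in fact compact open, and the proof is the idea you are missing: points of $\partial\Lambda$ absorb finite exhaustive sets (\cite{spi}, Theorem 7.8, cited in the paper's introduction). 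Concretely, if $x\in\partial\Lambda$ and $g\in x\cap t\Lambda\cap\Lambda$, put $E=\bigcup_{\alpha\in F}\sigma^g(g\vee\alpha)$; this is finite because $\Lambda$ is finitely aligned, and it is exhaustive at the vertex, since for any $\beta\in\Lambda$ the element $g\beta$ again lies in $t\Lambda\cap\Lambda$, hence meets some $\alpha\in F$, and writing the common extension as $h\eta$ with $h\in g\vee\alpha$ and cancelling $g$ on the left gives $\beta\Cap\sigma^g(h)$. Therefore $x$ contains $g\eps$ for some $\eps\in E$; since $g\eps\in\alpha\Lambda$, heredity gives $\alpha\in x$. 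Thus the overlap on boundary points is exactly $\bigcup_{\alpha\in F}\widehat{\alpha\Lambda}\cap\partial\Lambda$, just as in the finitely aligned case --- this absorption property is the whole point of the hypothesis ``locally finitely exhaustible.'' Finally, your converse direction (failure of local finite exhaustibility destroys local compactness or Hausdorffness) is only a declaration of intent with no construction; since that is half of the asserted equivalence, the proposal as written does not establish the ``only if'' either.
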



\section{The category of paths of a Baumslag-Solitar group}

For nonzero integers $c$ and $d$ we consider the group $\Gamma = \langle a,b \bigm| a b^c = b^d a \rangle$ (\cite{baumsol}).  Since $a b^{-c} = b^{-d} a$ is an equivalent relation, we may as well assume that $c$ and $d$ are not both negative.  We will consider separately the cases $cd > 0$ and $cd < 0$.  Thus we formulate the situation for a pair of positive integers $c$ and $d$, and let the group be defined by the relation $a b^c = b^d a$, or by the relation $a b^c = b^{-d} a$.  The case $cd > 0$ will be further divided accordingly as $c \ge d$ or $c < d$, giving three cases overall:

\begin{description}
\item[Case (BS1)] $\Gamma = \langle a,b \bigm| a b^c = b^d a \rangle$ where $c \ge d \ge 1$.
\item[Case (BS2)] $\Gamma = \langle a,b \bigm| a b^c = b^d a \rangle$ where $d > c \ge 1$.
\item[Case (BS3)] $\Gamma = \langle a,b \bigm| a b^c = b^{-d} a \rangle$ where $c$, $d \ge 1$.
\end{description}

We denote by $\theta : G \to \IZ$ the  homomorphism given by $\theta(a) = 1$ and $\theta(b) = 0$.  We sometimes refer to $\theta(g)$ as the \textit{height} of $g$.  Let $\Lambda$ be the submonoid generated by $a$ and $b$, and let $B = \{ b^i : i \ge 0\}$ denote the submonoid generated by $b$.  The following may be found in \cite{scottwall}.


\begin{Proposition}
\label{p.scottwall}
Each element of $\Gamma$ has a unique represention in the form $b^{i_1} a^{\eps_1} \cdots b^{i_n} a^{\eps_n} b^q$, where $\eps_\mu \in \{ \pm 1\}$, $i_\mu \in [0,d)$ if $\eps_\mu = +1$, and $i_\mu \in [0,c)$ if $\eps_\mu = -1$.
\end{Proposition}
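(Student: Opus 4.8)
The plan is to recognize $\Gamma$ as an HNN extension of the infinite cyclic group $\langle b \rangle \cong \IZ$ and to read the asserted expression as its normal form. Setting $t = a$, the relation $a b^c = b^d a$ becomes $t b^c t^{-1} = b^d$, exhibiting $\Gamma$ as the HNN extension of $\langle b \rangle$ with stable letter $t$ and associated subgroups $\langle b^c \rangle$ and $\langle b^d \rangle$, via the isomorphism $b^c \mapsto b^d$. The moves that normalize a word come directly from the relation, namely $b^{kd} a = a b^{kc}$ and $b^{kc} a^{-1} = a^{-1} b^{kd}$ for all $k \in \IZ$: the first lets me reduce the exponent standing immediately to the left of an $a^{+1}$ into $[0,d)$ (this is why $i_\mu \in [0,d)$ when $\eps_\mu = +1$), and the second reduces the exponent to the left of an $a^{-1}$ into $[0,c)$ (so $i_\mu \in [0,c)$ when $\eps_\mu = -1$). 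Thus the stated form is precisely the HNN normal form written with transversal representatives placed to the left of each stable letter, and the proof divides into existence and uniqueness.

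For existence I would argue by a normalization procedure. Starting from an arbitrary word in $a^{\pm 1}$, $b^{\pm 1}$ and sweeping from left to right, I repeatedly apply the two moves above to bring each exponent preceding an $a^{\pm 1}$ into its prescribed range, pushing the resulting power of $b$ to the right, where it merges with the next $b$-block and may itself require reduction. Each reduction either leaves the number of letters $a^{\pm 1}$ unchanged while displacing a carry strictly to the right, or---when an interior block reduces to the empty word between an $a$ and a following $a^{-1}$ (or vice versa)---produces a cancellation $a a^{-1} = 1$ that strictly lowers that number. A routine complexity measure then shows the procedure terminates, delivering a word of the asserted form.

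Uniqueness is the heart of the matter, and I would obtain it by the van der Waerden permutation argument. Let $\CN$ be the set of formal words of the stated form. For each generator I define a bijection of $\CN$ realizing left multiplication: multiplication by $b^{\pm 1}$ changes the leading exponent $i_1$ and propagates a carry rightward exactly as in the normalization, while multiplication by $a^{\pm 1}$ prepends a syllable and may trigger a pinch $a a^{-1}$ or $a^{-1} a$. One must check that these maps are well defined, that the pairs $b, b^{-1}$ and $a, a^{-1}$ are mutually inverse, and that they satisfy the defining relation $a b^c = b^d a$ as transformations of $\CN$; this produces a homomorphism $\rho : \Gamma \to \mathrm{Sym}(\CN)$. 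Since a word already in normal form triggers no reduction, $\rho(v)$ applied to the identity word $1 \in \CN$ returns $v$ for every $v \in \CN$; hence if $v, w \in \CN$ are equal in $\Gamma$ then $v = \rho(v)(1) = \rho(w)(1) = w$ in $\CN$, which is uniqueness. I expect the main obstacle to be exactly the verification that left multiplication by $a$ is well defined and compatible with multiplication by $b$ through all the carrying and pinching cases; this bookkeeping is the full content of Britton's Lemma for the present extension, and may alternatively be quoted directly from the normal-form theorem for HNN extensions once the identification of the first paragraph is made.
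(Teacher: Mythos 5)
Your proposal is correct and takes essentially the same route as the paper: the paper gives no proof of this proposition at all, citing Scott--Wall for the HNN normal form theorem and noting exactly the two moves $b^{kd}a = ab^{kc}$ and $b^{kc}a^{-1} = a^{-1}b^{kd}$ that you use, so your write-up simply supplies the standard proof (rewriting for existence, van der Waerden's permutation trick/Britton's Lemma for uniqueness) of the result being cited. One caveat, which your write-up shares with the paper's own statement: uniqueness holds only among \emph{reduced} words, i.e.\ $\CN$ must exclude pinches $a^{\eps}b^{0}a^{-\eps}$ (otherwise $aa^{-1}$ and the empty word are distinct forms of the identity), and your key claim that a word of $\CN$ ``triggers no reduction'' under $\rho$ is precisely where this restriction is needed.
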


The standard from of the proposition is obtained by moving $b$'s to the right via $b^{kd} a = a b^{kc}$ and $b^{kc} a^{-1} = a^{-1} b^{kd}$.

\begin{Corollary}
\label{c.scottwall}
Let $t \in \Gamma$ have the form in Proposition \ref{p.scottwall}.  Then $t \in \Lambda$ if and only if $\eps_\mu = +1$ for all $\mu$, and $q \ge 0$ in cases (BS1) and (BS2), and in case (BS3) if $n = 0$.
\end{Corollary}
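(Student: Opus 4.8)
The plan is to prove the set equality $\Lambda = N$, where $N$ denotes the set of elements of $\Gamma$ whose normal form $b^{i_1}a^{\eps_1}\cdots b^{i_n}a^{\eps_n}b^q$ from Proposition \ref{p.scottwall} satisfies the stated conditions: all $\eps_\mu = +1$, together with $q \ge 0$ in cases (BS1) and (BS2), and in case (BS3) the requirement $q \ge 0$ only when $n = 0$. Throughout I will use two facts. First, the height $\theta$ restricted to a positive word counts its $a$-letters, so $\theta(t) \ge 0$ for $t \in \Lambda$, and $\theta(t) = 0$ forces $t$ to be a nonnegative power of $b$. Second, in each case the defining relation moves $b$'s past a single $a$ of fixed sign: in (BS1)/(BS2) it reads $b^{kd}a = a b^{kc}$, while in (BS3) it reads $b^{kd}a = a b^{-kc}$ and $b^{-kd}a = a b^{kc}$. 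The crucial observation is that every such rewriting preserves the number and the signs of the $a$-letters and never introduces an $a^{-1}$.

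For the inclusion $N \subseteq \Lambda$: in cases (BS1) and (BS2), and in case (BS3) when $q \ge 0$, an element of $N$ is literally a product of the nonnegative powers $b^{i_\mu}$, $a$, and $b^q$, hence lies in $\Lambda$. The only genuine content is case (BS3) with $n \ge 1$ and $q < 0$. Here I factor $t = w \cdot a b^q$, where $w = b^{i_1}a\cdots b^{i_n} \in \Lambda$ is a positive word, so it suffices to show $a b^q \in \Lambda$. Writing $q = -kc + s$ with $k \ge 1$ and $0 \le s < c$, and iterating the relation $a b^{-c} = b^{d} a$ to obtain $a b^{-kc} = b^{kd}a$, I get $a b^q = b^{kd} a b^s$, a positive word; hence $ab^q \in \Lambda$ and therefore $t \in \Lambda$.

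For the reverse inclusion $\Lambda \subseteq N$, I will show that $N$ contains the identity and is closed under left multiplication by $a$ and by $b$; since $\Lambda$ is the submonoid generated by $a$ and $b$, induction on word length then yields $\Lambda \subseteq N$. Left multiplication by $a$ is immediate: prepending $a$ to a normal form in $N$ gives $b^{0}a\,(b^{i_1}a^{\eps_1}\cdots)$, which is again normal (no Britton pinch occurs, since the following $a$-letter has sign $+1$), increases $n$ by one, leaves $q$ unchanged, and keeps all signs $+1$, so it lies in $N$ in every case. Left multiplication by $b$ increments $i_1$; if this reaches $d$ I apply the relation to the leading block $b^{d}a = a b^{\pm c}$, which sets the first exponent to $0$ and pushes $\pm c$ into the next slot, and I then renormalize slot by slot from left to right. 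Each slot is brought into $[0,d)$ in finitely many steps, a disturbance only ever propagates rightward, and it is eventually absorbed into the trailing exponent $q$, so the sweep terminates. Since the relation preserves all $a$-signs, the result again has all $\eps_\mu = +1$; in (BS1)/(BS2) every carry adds a nonnegative quantity, so $q$ stays $\ge 0$, whereas in (BS3) arbitrary $q$ is permitted once $n \ge 1$, and the case $n = 0$ (where $t = b^q$) keeps $q \ge 0$ by the height remark. I expect the main obstacle to be precisely this left-multiplication-by-$b$ step: verifying rigorously that the cascade of carries terminates and leaves the normal form inside $N$, and in particular that $q$ cannot turn negative in (BS1)/(BS2), is where the combinatorics of $c$ and $d$ must be controlled, whereas the statement $\eps_\mu = +1$ itself follows cleanly from sign-preservation of the relations.
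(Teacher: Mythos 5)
Your proposal is correct and amounts to the argument the paper leaves implicit: the corollary there is an immediate consequence of the normal-form rewriting $b^{kd}a = ab^{\pm kc}$ (which never introduces $a^{-1}$ and, in cases (BS1) and (BS2), only carries nonnegative exponents to the right), together with the identity $ab^{-kc} = b^{kd}a$ used for the ``if'' direction in case (BS3), which is the same trick as in Lemma \ref{l.bincaseethree}. Your induction on left multiplication by $a$ and $b$ simply organizes that rewriting argument explicitly, so the approach is essentially the same as the paper's.
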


We note the following proposition for later use.  It follows from the same kind of arguments as gives Proposition \ref{p.scottwall} and its corollary.


\begin{Proposition}
\label{p.baumslagsolitar}
Each element $\alpha \in \Lambda$ has unique representations in the two forms
\begin{itemize}
\item[(L)] $\alpha = b^{i_0} a b^{i_1} a \cdots b^{i_{k - 1}} a b^p$, $i_\mu \in [0,d)$, $p \in \IZ$;
\item[(R)] $\alpha = b^q a b^{j_1} a b^{j_2} \cdots a b^{j_k}$, $j_\mu \in [0,c)$, $q \in \IZ$,
\end{itemize}
where $k = \theta(\alpha)$.
\end{Proposition}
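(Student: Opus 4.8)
The plan is to obtain each of the two expressions as a one-sided normal form for the HNN extension $\Gamma$, mirroring the derivation of Proposition \ref{p.scottwall}. Recall that the Scott--Wall form is produced by pushing every $b$ as far to the \emph{right} as possible, using $b^{kd}a = ab^{kc}$ (and the analogous identity for $a^{-1}$) to reduce the exponent immediately to the left of each $a^{\eps}$ into the appropriate residue range. For $\alpha \in \Lambda$, Corollary \ref{c.scottwall} forces every $\eps_\mu = +1$, so the Scott--Wall form collapses to $b^{i_1}ab^{i_2}a\cdots b^{i_n}ab^q$ with each $i_\mu \in [0,d)$ and $q \in \IZ$. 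Since $\theta$ is a homomorphism annihilating $b$, we have $n = \theta(\alpha) = k$, and after the relabeling $i_\mu \to i_{\mu-1}$, $q \to p$ this is exactly form (L). Its uniqueness is inherited verbatim from the uniqueness in Proposition \ref{p.scottwall}; note that the claim $p \in \IZ$ is weaker than the bound $p \ge 0$ coming from Corollary \ref{c.scottwall} in cases (BS1), (BS2), but in case (BS3) the trailing exponent genuinely can be negative, so $\IZ$ is the correct uniform range.

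For form (R) I would run the mirror-image argument, pushing every $b$ as far to the \emph{left} as possible. The relevant moves are $ab^{kc} = b^{kd}a$ in cases (BS1), (BS2) and $ab^{kc} = b^{-kd}a$ in case (BS3) (with the corresponding identity for $a^{-1}$): writing $j = kc + j'$ with $j' \in [0,c)$ and replacing $ab^{j}$ by $b^{\pm kd}ab^{j'}$ reduces the exponent immediately to the right of each $a$ into $[0,c)$ while emitting a power of $b$ to its left. Iterating once through the $k$ gaps from right to left terminates and yields a word $b^q ab^{j_1}a\cdots ab^{j_k}$ with $j_\mu \in [0,c)$; as before the number of $a$'s is preserved and equals $\theta(\alpha)=k$. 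In case (BS3) the emitted powers $b^{-kd}$ are negative, which is precisely why the leading exponent $q$ must be allowed to range over all of $\IZ$ rather than $\IZ_{\ge 0}$.

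The main point requiring care is the \emph{uniqueness} of form (R), and I would get it by establishing the right-handed analog of Proposition \ref{p.scottwall}: every element of $\Gamma$ has a unique expression $b^q a^{\eps_1} b^{j_1} \cdots a^{\eps_n} b^{j_n}$ with $j_\mu \in [0,c)$ when $\eps_\mu = +1$ and $j_\mu \in [0,d)$ when $\eps_\mu = -1$. This is again just the HNN normal form theorem (Britton's lemma), now phrased in terms of the coset representatives read from the left end rather than the right: the reduction moves above supply existence, and the standard no-pinch argument supplies uniqueness. Restricting to $\Lambda$ via the right-handed analog of Corollary \ref{c.scottwall} once more kills every $\eps_\mu$, leaving form (R) together with its uniqueness. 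The only genuinely delicate bookkeeping is tracking the signs and residue ranges in case (BS3), where the associated subgroup isomorphism carries $b^c \mapsto b^{-d}$; everything else is the symmetric twin of the computation already performed for Proposition \ref{p.scottwall}.
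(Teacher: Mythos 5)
Your proposal is correct and takes essentially the same route as the paper, which proves this proposition only by the remark that it ``follows from the same kind of arguments as gives Proposition \ref{p.scottwall} and its corollary'': you obtain form (L) by restricting the Scott--Wall normal form to $\Lambda$ via Corollary \ref{c.scottwall}, and form (R) by the mirror-image reduction (pushing $b$'s left) with uniqueness from the right-handed version of Britton's lemma. Your sign and range bookkeeping in case (BS3), including why $p$ and $q$ must range over all of $\IZ$ there, is consistent with the paper's subsequent Remark.
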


\begin{Remark}
In cases (BS1) and (BS2), in forms (L) and (R) we have $p$, $q \ge 0$.  In case (BS3), if $\theta(\alpha) = 0$ we have $p = q \ge 0$.  It also follows that $b$ has infinite order in $G$.
\end{Remark}

The following corollary follows easily.


\begin{Corollary}
\label{c.initseq}
If $0 < k < \theta(\alpha)$, or if $k = \theta(\alpha) > 0$ and we are in case (BS1) or (BS2), then $\alpha$ has a unique initial segment of height $k$ in the form (L) of Proposition \ref{p.baumslagsolitar} with $p = 0$.  If $k = \theta(\alpha) > 0$ and we are in case (BS3), then there is a unique element $\alpha_0$ in the form (L) of Proposition \ref{p.baumslagsolitar} with $p = 0$, and $p_0 \in \IZ$, such that every initial segment of height $k$ has form (L) equal to $\alpha_0 b^q$ with $q \le p_0$.
\end{Corollary}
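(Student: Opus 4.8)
The plan is to read everything off the normal form (L), splitting the argument according to whether $k$ is strictly below or equal to $n := \theta(\alpha)$. Fix the form (L) expression $\alpha = b^{i_0} a b^{i_1} a \cdots b^{i_{n-1}} a b^p$ with $i_\mu \in [0,d)$, recalling from the Remark that $p \ge 0$ in cases (BS1), (BS2) while $p \in \IZ$ in case (BS3). In every case the distinguished height-$k$ object is the truncation $\alpha_0 = b^{i_0} a \cdots b^{i_{k-1}} a$, i.e. the form-(L) expression with trailing exponent $0$, and the task is to locate it among the initial segments of $\alpha$ of height $k$.

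First I would treat $k = n$, where the analysis is purely about trailing powers. Any initial segment $\beta$ of height $n$ satisfies $\alpha = \beta\gamma$ with $\theta(\gamma) = 0$; since a height-$0$ element of $\Lambda$ is a nonnegative power of $b$ (Corollary \ref{c.scottwall} with no $a$-letters), $\gamma = b^s$ with $s \ge 0$, so $\beta = \alpha b^{-s}$ has form (L) equal to $b^{i_0} a \cdots b^{i_{n-1}} a b^{p-s}$. In cases (BS1), (BS2), membership $\beta \in \Lambda$ at positive height forces the trailing power $p - s \ge 0$, i.e. $s \le p$; the unique representative with trailing power $0$ is then $s = p$, giving $\alpha_0$. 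In case (BS3), Corollary \ref{c.scottwall} puts $\beta \in \Lambda$ for every $s \ge 0$ (the trailing-power constraint is vacuous at positive height), so the height-$n$ initial segments are exactly the elements $\alpha_0 b^q$ with $q = p - s \le p$; taking $p_0 = p$ yields the stated description.

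Next I would treat $0 < k < n$. For existence, the complementary tail $\gamma_0 = b^{i_k} a \cdots b^{i_{n-1}} a b^p$ satisfies $\alpha = \alpha_0 \gamma_0$ and has positive height $n - k$, so its Scott--Wall form has all $\eps_\mu = +1$ and Corollary \ref{c.scottwall} gives $\gamma_0 \in \Lambda$ (the trailing constraint is again vacuous at positive height, and $p \ge 0$ in (BS1), (BS2) in any event); hence $\alpha_0$ is an initial segment of height $k$ with trailing power $0$. Uniqueness I would reduce to the claim that \emph{every} initial segment $\beta$ of height $k$ has form (L) sharing the first $k$ exponents $i_0, \ldots, i_{k-1}$ of $\alpha$. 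Granting this, a height-$k$ initial segment with trailing power $0$ must equal $b^{i_0} a \cdots b^{i_{k-1}} a = \alpha_0$ by uniqueness of form (L), settling both existence and uniqueness.

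The hard part is precisely that claim, and it is where the HNN relation is used. Writing $\beta$ and $\gamma := \sigma^\beta(\alpha)$ in form (L) and concatenating gives $\alpha = b^{i_0'} a \cdots b^{i_{k-1}'} a\, b^{p' + j_0} a \cdots$, which need not be in form (L) because the junction exponent $p' + j_0$ may leave $[0,d)$. I would reduce to form (L) by repeatedly rewriting an over- or under-sized internal power via $b^e a = b^{e \bmod d}\, a\, b^{\lfloor e/d\rfloor c}$, which follows from $b^{qd} a = a b^{qc}$ (valid for all $q \in \IZ$, derived from $a b^c = b^d a$; in case (BS3) one has instead $b^{qd} a = a b^{-qc}$ from $a b^c = b^{-d} a$). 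The point to check is that each such step only transports excess $b$'s \emph{rightward} across an $a$ and never disturbs an already-reduced exponent to its left; processing left to right, the leftmost $k$ exponents $i_0', \ldots, i_{k-1}'$ therefore survive unchanged into the reduced word. Comparing with the unique form (L) of $\alpha$ forces $i_\mu' = i_\mu$ for $\mu < k$, which is the claim. Once this directional property of normal-form reduction is in hand, the remainder is routine bookkeeping.
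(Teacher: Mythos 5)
Your proof is correct and takes the route the paper intends: the paper gives no explicit argument (the corollary is said to ``follow easily'' from the unique normal forms of Proposition \ref{p.baumslagsolitar} and the membership criterion of Corollary \ref{c.scottwall}), and your proof---truncating form (L), using Corollary \ref{c.scottwall} to identify height-$0$ elements with $B$ and to handle the trailing exponent in each of the three cases, and observing that normal-form reduction via $b^{ud}a = ab^{\pm uc}$ only transports $b$'s rightward so that the first $k$ form-(L) exponents of any initial segment must agree with those of $\alpha$---is precisely the ``same kind of argument'' the paper alludes to. There are no gaps.
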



\begin{Lemma}
\label{l.bincaseethree}
In case (BS3), if $\alpha \in \Lambda$ and $\theta(\alpha) > 0$, then $\alpha b^p \in \Lambda$ for all $p \in \IZ$.  Moreover, $B \subseteq [\alpha]$.
\end{Lemma}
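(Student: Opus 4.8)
The plan is to work throughout with the defining relation of case (BS3), namely $ab^c = b^{-d}a$, equivalently $b^{-d}a = ab^c$, which iterates to $b^{-\ell d}a = a b^{\ell c}$ for $\ell \ge 0$. The decisive structural feature I will exploit is the asymmetry recorded in Corollary \ref{c.scottwall}: in case (BS3) an element whose Scott--Wall normal form (Proposition \ref{p.scottwall}) has all $\eps_\mu = +1$ lies in $\Lambda$ \emph{irrespective of the sign of its trailing exponent}, provided $n \ge 1$. Since form (L) of Proposition \ref{p.baumslagsolitar} for an element of positive height is exactly such a normal form, both assertions will reduce to bookkeeping with this form together with a single application of the relation.

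For the first assertion, write $\alpha$ in form (L) as $\alpha = b^{i_0} a b^{i_1} a \cdots b^{i_{k-1}} a b^p$ with $i_\mu \in [0,d)$ and $k = \theta(\alpha) \ge 1$. Then for $m \in \IZ$ we have $\alpha b^m = b^{i_0} a b^{i_1} a \cdots a b^{p+m}$, which is again of form (L) with the single change $p \mapsto p+m$, and in particular is already in Scott--Wall normal form. Because $k \ge 1$ and every $\eps_\mu = +1$, Corollary \ref{c.scottwall} gives $\alpha b^m \in \Lambda$ for every $m \in \IZ$.

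For the second assertion it suffices to show $b^{-i}\alpha \in \Lambda$ for all $i \ge 0$, since this is precisely the statement that $b^i \in [\alpha]$. Writing $\alpha = b^{i_0}\gamma$ with $\gamma = a b^{i_1} a \cdots a b^p$, we get $b^{-i}\alpha = b^{i_0 - i}\gamma$. When $i \le i_0$ the exponent $i_0 - i$ is nonnegative and $b^{-i}\alpha$ is visibly a product of powers of $b$ with $\gamma \in \Lambda$, hence lies in $\Lambda$. When $i > i_0$, set $r = i - i_0 > 0$ and push the leading $b^{-r}$ across the first $a$: choosing $\ell = \lceil r/d \rceil$ and $s = \ell d - r \in [0,d)$, the iterated relation gives $b^{-r}a = b^{s} b^{-\ell d} a = b^{s} a b^{\ell c}$, so that
\[
b^{-i}\alpha = b^{s} a\, b^{\ell c + i_1} a\, b^{i_2} \cdots a\, b^p .
\]
The element $\beta := b^{s} a\, b^{\ell c + i_1} a\, b^{i_2} \cdots a$ obtained by deleting the final $b^p$ is a product of $a$'s and nonnegative powers of $b$, hence $\beta \in \Lambda$ with $\theta(\beta) = k > 0$; the first assertion then yields $b^{-i}\alpha = \beta b^p \in \Lambda$, completing both claims.

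The only real content is the pushing step in the case $i > i_0$ together with the observation that converts it into a conclusion: after moving $b^{-r}$ past a single $a$ we are left with a genuinely positive word \emph{except} for a possibly negative trailing power of $b$, and that trailing power is harmless precisely because of the (BS3)-specific freedom in Corollary \ref{c.scottwall}. I expect this interplay --- the relation producing a positive word at the cost of a negative tail, which is then absorbed by the first assertion --- to be the crux of the argument; the surrounding manipulations are routine.
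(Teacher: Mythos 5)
Your proof is correct, but it is organized differently from the paper's. For the first assertion the paper argues constructively inside the monoid: writing $\alpha = \alpha_0 a b^m$ (factoring at the last $a$) and iterating $a = b^d a b^c$, it exhibits $\alpha b^p = \alpha_0 b^{nd} a b^{nc+m+p}$ as an honest positive word once $nc+m+p \ge 0$; you instead get this assertion for free from the normal-form criterion of Corollary \ref{c.scottwall}, with no computation at all (and your reading of that corollary --- that in case (BS3) a trailing negative power of $b$ is harmless as soon as $n \ge 1$ --- is the intended one). For the second assertion the paper again works only with positive words: from $\alpha = b^i a \alpha_1$ and $a = b^{md} a b^{mc}$ it reads off arbitrarily long initial segments $b^{i+md} \in [\alpha]$, and the two assertions of the lemma are proved independently. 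You instead translate $b^i \in [\alpha]$ into the group-theoretic statement $b^{-i}\alpha \in \Lambda$, push $b^{-r}$ across the first $a$ via $b^{-\ell d}a = ab^{\ell c}$, and then invoke your first assertion to absorb the possibly negative tail $b^p$, so your second part genuinely depends on your first. The trade-off: the paper's argument is self-contained and never leaves the positive cone (it does not need the uniqueness of the Scott--Wall form or its corollary, which the paper states without proof), while yours is shorter where it can lean on the stated structural results and cleanly isolates the group-theoretic meaning of ``initial segment''; both ultimately rest on the same use of the defining relation to move powers of $b$ across $a$.
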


\begin{proof}
We can write $\alpha = \alpha_0 a b^m$ for $\alpha_0 \in \Lambda$ and $m \ge 0$.  Choose $n$ so that $nc + m + p \ge 0$.  Then using the relation $a = b^d a b^c$, we find that $\alpha b^p = \alpha_0 a b^{m + p} = \alpha_0 b^{nd} a b^{nc + m + p} \in \Lambda$.  For the second statement, we write $\alpha = b^i a \alpha_1$ for some $i \in \IN$ and $\alpha_1 \in \Lambda$.  Then $\alpha = b^{i + md} a b^{mc} \alpha_1$ for all $m \in \IN$.
\end{proof}

The following begins our study.

\begin{Lemma}
$\Lambda$ is a category of paths (\cite{spi}, Example 8.5).
\end{Lemma}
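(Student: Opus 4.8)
The plan is to check the three axioms of a category of paths directly, exploiting the fact that $\Lambda$ is a submonoid of the group $\Gamma$. I view $\Lambda$ as a category with a single object $e$ (the group identity), so that every morphism has source and range equal to $e$, composition is just multiplication in $\Gamma$, and the axioms become: (1) $\alpha\beta = \alpha\gamma \Rightarrow \beta = \gamma$; (2) $\beta\alpha = \gamma\alpha \Rightarrow \beta = \gamma$; and (3) $\alpha\beta = e \Rightarrow \alpha = \beta = e$.

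For the two cancellation axioms (1) and (2) there is nothing to prove beyond the observation that $\Lambda$ sits inside the group $\Gamma$: multiplying $\alpha\beta = \alpha\gamma$ on the left by $\alpha^{-1}$ (which exists in $\Gamma$) yields $\beta = \gamma$, and symmetrically for right-cancellation. So both hold automatically, and the only axiom with content is the no-inverses condition (3), which amounts to $\Lambda \cap \Lambda^{-1} = \{e\}$.

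To establish (3) I would use the height homomorphism $\theta : \Gamma \to \IZ$. Since $\Lambda$ is generated by $a$ and $b$, with $\theta(a) = 1$ and $\theta(b) = 0$, every element of $\Lambda$ satisfies $\theta \ge 0$. Hence if $\alpha\beta = e$ with $\alpha, \beta \in \Lambda$, then $\theta(\alpha) + \theta(\beta) = 0$ forces $\theta(\alpha) = \theta(\beta) = 0$. By Corollary \ref{c.scottwall} (the case $n = 0$) an element of $\Lambda$ of height $0$ is a nonnegative power of $b$, so I may write $\alpha = b^p$ and $\beta = b^{p'}$ with $p, p' \ge 0$. Then $b^{p + p'} = e$, and since $b$ has infinite order (as recorded in the Remark following Proposition \ref{p.baumslagsolitar}) we get $p + p' = 0$, whence $p = p' = 0$ and $\alpha = \beta = e$.

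The main obstacle, such as it is, resides entirely in axiom (3): everything reduces to identifying the height-zero part of $\Lambda$, which is precisely what the normal-form results of Proposition \ref{p.baumslagsolitar} and Corollary \ref{c.scottwall} supply. Once those are available the argument is immediate, and the cancellation axioms carry no content because we are working inside a group.
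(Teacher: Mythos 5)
Your proof is correct and follows essentially the same route as the paper: cancellation is automatic because $\Lambda$ sits inside the group $\Gamma$, and the no-inverses axiom is verified via the height homomorphism $\theta$, reducing to the observation that height-zero elements of $\Lambda$ are nonnegative powers of $b$ and hence trivial when invertible. The only difference is that you fill in the citations (Corollary \ref{c.scottwall} and the infinite order of $b$) that the paper leaves implicit.
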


\begin{proof}
Note that $\theta(\alpha) \ge 0$ for all $\alpha \in \Lambda$. We note that if $\alpha$, $\beta \in \Lambda$ are such that $\alpha \beta = e$, then $\theta(\alpha) + \theta(\beta) = 0$, and hence $\theta(\alpha) = \theta(\beta) = 0$.  But then $\alpha$, $\beta \in B$.  But then we must have $\alpha = \beta = e$.   Hence $\Lambda$ is a category of paths.  
\end{proof}

We will prove that $\Lambda$ is finitely aligned, and that in cases (BS1) and (BS2), the ordered group $(\Gamma, \Lambda)$ is quasi-lattice ordered in the sense of \cite{nica} (Theorem \ref{t.qlo} below).  (The case (BS3) is slightly different --- see Lemma \ref{l.locfinexh}.)  The argument varies by case.  We require lemmas for cases (BS1) and (BS2).

\begin{Lemma}
\label{l.clessthand}
Suppose that we are in case (BS2).  Fix $k \ge 1$.  For $i = (i_0, \ldots, i_k) \in [0,c)^{k + 1}$, let $\alpha(i) = b^{i_0} a b^{i_1} a \cdots b^{i_k} a$.  There are maps $\psi : [0,c)^{k + 1} \to [0,c)^{k + 1}$ and $r : [0,c)^{k + 1} \to \IZ^+$ such that
\begin{enumerate}
\item $b^{d r(i)} \alpha(\psi(i)) = \alpha(i) b^c$ and $\psi(i)_0 = i_0$.
\label{l.clessthand.a}
\item If $q < c$ then $b^{i_0 + 1} \not\in [\alpha(i) b^q]$.
\label{l.clessthand.b}
\item If $i_0 < m \le i_0 + d r(i)$, then $\alpha(i) b^c = b^{d r(i)} \alpha(\psi(i))$ is the unique minimal common extension of $b^m$ and $\alpha(i)$. 
\label{l.clessthand.c}
\item For $h \ge 0$, if 
\[
i_0 + d \sum_{\mu=0}^{h-1} r(\psi^\mu(i)) < m \le i_0 + d \sum_{\mu=0}^h r(\psi^\mu(i))
\]
then 
\[
\alpha(i) b^{(h+1)c} = b^{d \sum_{\mu=0}^h r(\psi^\mu(i))} \alpha(\psi^{h+1}(i))
\]
is the unique minimal common extension of $b^m$ and $\alpha(i)$.
\label{l.clessthand.d}
\end{enumerate}
\end{Lemma}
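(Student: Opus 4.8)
The plan is to treat the four assertions as one package resting on a single fact about left-divisibility by powers of $b$. Throughout write $\alpha \preceq \delta$ to mean that $\alpha$ is an initial segment of $\delta$ (so $\delta \in \alpha\Lambda$), and for $\mu \in \Lambda$ let $N(\mu)$ be the largest integer $n \ge 0$ with $b^n \preceq \mu$; note $N(b^n\mu) = n + N(\mu)$ by left-cancellation. First I would \emph{define} $\psi$ and $r$ by carrying out the reduction of $\alpha(i)b^c$ to normal form explicitly. Applying $ab^c = b^d a$ to the final syllable gives $\alpha(i)b^c = b^{i_0}ab^{i_1}a\cdots ab^{i_k + d}a$, and since $i_k + d \ge d > c$ the block $b^{i_k+d}$ exceeds the allowed range. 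One removes the excess by repeatedly using $ab^c = b^d a$ on the $a$ to its left, each application subtracting $c$ from the block and adding $d$ to the block one step further left. Running this carry cascade from right to left produces a reduced tail $\alpha(\psi(i))$, all of whose syllables lie in $[0,c)$, together with a leading power $b^{dr(i)}$ that can be pushed no further left. Because the leftmost syllable is never reduced (there is no $a$ to its left), its value $i_0$ survives, which gives \eqref{l.clessthand.a} together with $\psi(i)_0 = i_0$; tracking that each carry is positive gives $r(i) \ge 1$. Iterating \eqref{l.clessthand.a} yields $\alpha(i)b^{(h+1)c} = b^{d\sum_{\mu=0}^h r(\psi^\mu(i))}\alpha(\psi^{h+1}(i))$, the identity of \eqref{l.clessthand.d}; in particular, since the tail again begins with $b^{i_0}a$, we have $N(\alpha(i)b^{(h+1)c}) = i_0 + d\sum_{\mu=0}^h r(\psi^\mu(i)) =: \beta_h$ (set $\beta_{-1} := i_0$).

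Assertion \eqref{l.clessthand.b} is the base case of the key computation: when $q < c$ the word $b^{i_0}ab^{i_1}a\cdots ab^{i_k}ab^q$ already has all syllables in range, so by uniqueness of the normal form (Proposition \ref{p.scottwall}) it is reduced with leading exponent $i_0$, whence $b^{i_0+1} \not\preceq \alpha(i)b^q$. The heart of the proof is the following upgrade, which I would prove by induction on $\theta(\eta_0)$: \emph{if $b^c \not\preceq \eta_0$ then $N(\alpha(j)\eta_0) = j_0$ for every tuple $j$ (of any length)}. The case $\theta(\eta_0)=0$ is \eqref{l.clessthand.b}. For the inductive step, write $\eta_0 = b^{e_0}a\eta_0'$ in form (L); since $b^c\not\preceq\eta_0$ forces $e_0 \le N(\eta_0) < c$, the product $\alpha(j)b^{e_0}a$ is again of the form $\alpha(j^+)$, where $j^+$ appends $e_0$ to $j$, so $\alpha(j)\eta_0 = \alpha(j^+)\eta_0'$. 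The essential observation is that $b^c\not\preceq\eta_0$ also forces $b^c\not\preceq\eta_0'$: otherwise $\eta_0' \in b^c\Lambda$ would give $\eta_0 = b^{e_0}ab^c(\cdots) = b^{e_0+d}a(\cdots)$, so $N(\eta_0) \ge e_0 + d > c$, a contradiction. The inductive hypothesis then gives $N(\alpha(j^+)\eta_0') = (j^+)_0 = j_0$, as required.

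Finally I would assemble \eqref{l.clessthand.c} and \eqref{l.clessthand.d}, noting that \eqref{l.clessthand.c} is the case $h=0$ of \eqref{l.clessthand.d}. Let $\delta = \alpha(i)\eta$ be an arbitrary common extension of $b^m$ and $\alpha(i)$, let $H$ be the largest integer with $b^{Hc}\preceq\eta$, and factor $\eta = b^{Hc}\eta_0$ with $b^c\not\preceq\eta_0$. Using the iterated form of \eqref{l.clessthand.a} and then the key computation applied to $\psi^H(i)$ (whose $0$th coordinate is still $i_0$), one gets $N(\delta) = (\beta_{H-1}-i_0) + N(\alpha(\psi^H(i))\eta_0) = \beta_{H-1}$. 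Hence if $m$ lies in the range $\beta_{h-1} < m \le \beta_h$ of \eqref{l.clessthand.d}, then $\delta \in b^m\Lambda$ forces $\beta_{H-1} = N(\delta) \ge m > \beta_{h-1}$, so $H \ge h+1$ and therefore $b^{(h+1)c}\preceq\eta$, i.e. $\alpha(i)b^{(h+1)c}\preceq\delta$ (apply the left shift $\sigma^{\alpha(i)}$). Since $\alpha(i)b^{(h+1)c}$ is itself a common extension with $N = \beta_h \ge m$, it is the minimum of $b^m\Lambda\cap\alpha(i)\Lambda$, hence its unique minimal element.

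The main obstacle is the induction of the second paragraph. The whole difficulty of case (BS2) is that left-divisibility by $b$ is invisible from the leading syllable of the normal form (for instance $ab^c = b^d a$ is divisible by $b^d$ yet begins with $a$), so one must prove that the relation can import left powers of $b$ only in complete blocks of $b^c$, each contributing a carry of $b^d$. The step $b^c\not\preceq\eta_0 \Rightarrow b^c\not\preceq\eta_0'$ is precisely what controls this, and it is exactly what makes $N(\alpha(i)\eta)$ jump from $\beta_{h-1}$ to $\beta_h$ with no value attained in between; everything else is either the mechanical carry computation of \eqref{l.clessthand.a} or an application of normal-form uniqueness.
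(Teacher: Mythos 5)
Your proof is correct, and while it rests on the same two pillars as the paper's --- the carry identity \eqref{l.clessthand.a} extracted from the normal form, and the uniqueness argument behind \eqref{l.clessthand.b} --- the route from there to \eqref{l.clessthand.c} and \eqref{l.clessthand.d} is genuinely different. The paper proves \eqref{l.clessthand.c} by writing the tail $\gamma$ of an arbitrary common extension $\beta = \alpha(i)\gamma$ in form (R) of Proposition \ref{p.baumslagsolitar}, observing that the form (R) of $\beta$ is the form (R) of $\alpha(i)b^q$ concatenated with $ab^{\ell_1}\cdots ab^{\ell_n}$, so that $b^m \in [\beta]$ iff $b^m \in [\alpha(i)b^q]$ iff $q \ge c$; it then proves \eqref{l.clessthand.d} by induction on $h$, stripping a factor $b^{d r(\psi^h(i))}$ and re-applying \eqref{l.clessthand.c}. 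You instead prove one divisibility lemma ($b^c \not\preceq \eta_0$ implies $N(\alpha(j)\eta_0) = j_0$) by induction on $\theta(\eta_0)$, peeling a single syllable at a time via the observation that non-divisibility by $b^c$ passes from $\eta_0$ to $\eta_0'$, and then get the exact formula $N(\alpha(i)\eta) = \beta_{H-1}$ for every element of $\alpha(i)\Lambda$, from which \eqref{l.clessthand.c} and \eqref{l.clessthand.d} fall out simultaneously. In substance your induction re-derives, syllable by syllable, the concatenation property of form (R) that the paper invokes wholesale; what it buys is a cleaner endgame: the maximal left power of $b$ on $\alpha(i)\Lambda$ takes only the values $\beta_{H-1}$, so minimal common extensions jump in whole $b^c$-blocks, and the range bookkeeping in the paper's induction on $h$ (where the inductive hypothesis is applied to $m - d r(\psi^h(i))$, which can exceed the interval the hypothesis covers and strictly requires a harmless truncation) disappears entirely. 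Two small corrections to the write-up. First, the uniqueness you need in \eqref{l.clessthand.b}, and behind your function $N$, is that of form (R) in Proposition \ref{p.baumslagsolitar}, not the Scott--Wall form of Proposition \ref{p.scottwall}: the latter normalizes the exponent preceding each positive power of $a$ to $[0,d)$ (it is form (L) in character), so prepending a power of $b$ to a word in that form need not produce a word in that form; the leading-exponent comparison you make is legitimate precisely because form (R) leaves the leading exponent unconstrained. Second, your key lemma should be stated for tuples $j$ with entries in $[0,c)$ (as all the tuples you actually use are): for arbitrary tuples it fails, e.g.\ $N(b^{j_0}ab^{c}a) = N(b^{j_0 + d}a^2) = j_0 + d$.
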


\begin{proof}
For \eqref{l.clessthand.a}, the maps $\psi$ and $r$ are defined by the unique form (R) of $\alpha(i) b^c$.  That $r(i) \ge 1$ follows from the assumption $c < d$.  It follows from the defining relation of $G$ that if $i_0 = 0$ then $\alpha(i)_0 = 0$.  Hence by the uniqueness of form (L), $\psi(i)_0 = i_0$ for all $i$.  For \eqref{l.clessthand.b}, note that if $q < c$ then $\alpha(i) b^q$ is already in form (R). We next prove \eqref{l.clessthand.c}.  Let $m$ be as in the statement.  By \eqref{l.clessthand.a} we know that $\alpha(i) b^c \in b^m \Lambda \cap \alpha(i) \Lambda$.  Let $\beta \in b^m \Lambda \cap \alpha(i) \Lambda$.  Then $\beta = \alpha(i) \gamma$ for some $\gamma$.  Write $\gamma = b^q a b^{\ell_1} \cdots a b^{\ell_n}$ with $\ell_\mu \in [0,c)$.  Then the form (R) of $\beta$ equals (form (R) of $\alpha(i) b^q) \cdot a b^{\ell_1} \cdots a b^{\ell_n}$.  Therefore $b^m \in [\beta]$ if and only if $b^m \in [\alpha(i) b^q]$.  This occurs if and only if $q \ge c$, by \eqref{l.clessthand.b}.  Therefore $\beta \in \alpha(i) b^c \Lambda$.  Finally, we prove \eqref{l.clessthand.d}, by induction on $h$.  The case $h = 0$ is \eqref{l.clessthand.c}.  Suppose it is true for $h$, and let
\[
i_0 + d \sum_{\mu=0}^h r(\psi^\mu(i)) < m \le i_0 + d \sum_{\mu=0}^{h+1} r(\psi^\mu(i)).
\]
First we note that
\[
\alpha(i) b^{(h+2)c}
= b^{d \sum_{\mu=0}^h r(\psi^\mu(i))} \alpha(\psi^{h+1}(i)) b^c
= b^{d \sum_{\mu=0}^{h+1} r(\psi^\mu(i))} \alpha(\psi^{h+2}(i))
\in b^m \Lambda.
\]
Thus $\alpha(i) b^{(h+2)c}$ is a common extension of $b^m$ and $\alpha(i)$.

Now let $\beta \in b^m \Lambda \cap \alpha(i) \Lambda$ be any common extension.  Then
\[
\beta \in b^{m - d r(\psi^h(i))} \Lambda \cap \alpha(i) \Lambda.
\]
Since $i_0 + d \sum_{\mu=0}^{h-1} r(\psi^\mu(i)) < m - d r(\psi^h(i))$, the inductive hypothesis implies that $\beta \in \alpha(i) b^{(h+1)c} \Lambda$.  Define $\beta'$ by
\[
\beta = b^{d \sum_{\mu=0}^h r(\psi^\mu(i))} \beta'.
\]
Then $\beta' \in b^{i_0 + 1} \Lambda \cap \alpha(\psi^{(h+1)}(i)) \Lambda$.  By \eqref{l.clessthand.c}, $\beta' \in \alpha(\psi^{h+1}(i)) b^c \Lambda$.  Therefore $\beta \in \alpha(i) b^{(h+2)c} \Lambda$.
\end{proof}

We next consider case (BS1).  We will use the following notation:  for $i \in [0,d)^\IN$ and $k \in \IN$, let $\alpha_k(i) = b^{i_0} a \cdots b^{i_k} a$.

\begin{Lemma}
\label{l.cged}
Suppose that we are in case (BS1).  There are maps $\phi : [0,d)^\IN \to [0,d)^\IN$ and $r : [0,d)^\IN \to (\IZ^+)^\IN$ such that for all $k \in \IN$,
\begin{enumerate}
\item $b^d \alpha_k(\phi(i)) = \alpha_k(i) b^{c r(i)_k}$.  $\phi$ and $r$ are uniquely determined by this condition, and $\phi(i)_k$, $r(i)_k$ depend only on $i_0$, $\ldots$, $i_k$.
\label{l.cged.a}
\item If $q < c r(i)_k$ then $b^{i_0 + 1} \not\in [\alpha_k(i) b^q]$.
\label{l.cged.b}
\item If $i_0 < m \le i_0 + d$ then $\alpha_k(i) b^{c r(i)_k}$ is the unique minimal common extension of $\alpha_k(i)$ and $b^m$.
\label{l.cged.c}
\item If $i_0 + (h-1)d < m \le i_0 + hd$ then $\alpha_k(i) b^{c \sum_{\mu=0}^{h-1} r(\phi^\mu(i))_k} = b^{hd} \alpha_k(\phi^h(i))$ is the unique minimal common extension of $\alpha_k(i)$ and $b^m$.
\label{l.cged.d}
\item The map $\phi$ is a homeomorphism (for the product topology on $[0,d)^\IN$).
\label{l.cged.e}
\end{enumerate}
\end{Lemma}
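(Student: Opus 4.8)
The plan is to handle the five parts in order, with the real content concentrated in \eqref{l.cged.a} and \eqref{l.cged.b}; parts \eqref{l.cged.c}--\eqref{l.cged.e} are bookkeeping once those are available. For \eqref{l.cged.a} I would define $\phi$ and $r$ through the form (L) reduction of $b^d \alpha_k(i)$. Pushing the leading $b^d$ rightward through $b^d a = a b^c$ one block at a time produces a \emph{carry} recursion: set $C_0 = d$ and, at position $\mu$, write $C_\mu + i_\mu = m_\mu d + j_\mu$ with $j_\mu \in [0,d)$ and $C_{\mu+1} = m_\mu c$; then $b^d \alpha_k(i) = \alpha_k(j) b^{c m_k}$ in form (L). Since $c \ge d$ each $m_\mu \ge 1$. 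The map $i \mapsto j$ is a bijection of $[0,d)^\IN$ (invert one coordinate at a time: given $C_\mu$ and $j_\mu$, exactly one $m_\mu \in \IZ$ makes $m_\mu d + j_\mu - C_\mu \in [0,d)$, determining $i_\mu$), so I set $\phi$ equal to its inverse and $r(i)_k = m_k(\phi(i))$; this is precisely the stated equation $b^d \alpha_k(\phi(i)) = \alpha_k(i) b^{c r(i)_k}$, with uniqueness coming from uniqueness of form (L), and locality of $\phi(i)_k, r(i)_k$ immediate from the recursion. I record for later the identity $r(i)_k = \lceil (c\, r(i)_{k-1} - i_k)/d \rceil$ with $r(i)_0 = 1$, which falls out of $c\, r(i)_{k-1} + \phi(i)_k = d\, r(i)_k + i_k$ and $\phi(i)_k \in [0,d)$.

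Part \eqref{l.cged.b} is where I expect the main obstacle. It is equivalent to deciding when $b^{-(i_0+1)} \alpha_k(i) b^q = b^{-1} a b^{i_1} a \cdots b^{i_k} a b^q$ lies in $\Lambda$. I would propagate the leading negative power rightward by the \emph{borrow} recursion dual to \eqref{l.cged.a}: from $b^{-e} a = b^{md-e} a b^{-mc}$ with $m = \lceil e/d \rceil$, the deficit $e_\nu$ sitting before the $\nu$-th $a$ evolves by $e_{\nu+1} = \lceil e_\nu/d \rceil c - i_{\nu+1}$ starting from $e_1 = c - i_1$. Each intermediate leading exponent $md - e$ lies legitimately in $[0,d)$, and each deficit stays positive because $\lceil e_\nu/d\rceil c \ge c \ge d > i_{\nu+1}$; hence the word is in $\Lambda$ exactly when the trailing $b^q$ absorbs the final deficit, i.e. when $q \ge c\,\lceil e_k/d\rceil$. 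The crux is the identification $\lceil e_k/d\rceil = r(i)_k$: the borrow count obeys the same recursion and initial value as the carry count from \eqref{l.cged.a}, so they coincide and the threshold is exactly $c\, r(i)_k$.

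With \eqref{l.cged.a} and \eqref{l.cged.b} in hand, parts \eqref{l.cged.c} and \eqref{l.cged.d} follow the template of Lemma \ref{l.clessthand}. For \eqref{l.cged.c}, \eqref{l.cged.a} exhibits $\alpha_k(i) b^{c r(i)_k} = b^d \alpha_k(\phi(i)) = b^{d+i_0} a \cdots$ as a common extension of $\alpha_k(i)$ and every $b^m$ with $m \le i_0 + d$. For minimality I write an arbitrary common extension as $\beta = \alpha_k(i)\gamma$ where $\gamma$ has form (R) leading exponent $q$; since the form (R) of $\beta$ is the form (R) of $\alpha_k(i) b^q$ followed by the (already reduced) tail of $\gamma$, one has $b^m \in [\beta]$ iff $b^m \in [\alpha_k(i) b^q]$, and then \eqref{l.cged.b} forces $q \ge c\, r(i)_k$, so $\beta \in \alpha_k(i) b^{c r(i)_k}\Lambda$. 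Part \eqref{l.cged.d} is an induction on $h$ formally identical to Lemma \ref{l.clessthand}\eqref{l.clessthand.d}, exploiting the self-similarity $\alpha_k(i) b^{(h+1)c} = b^{c\sum_{\mu=0}^{h-1} r(\phi^\mu(i))_k}\alpha_k(\phi^{h}(i))\cdots$ to peel off one block of the carry per step.

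Finally, for \eqref{l.cged.e}, $\phi$ is already a bijection of $[0,d)^\IN$ by the construction in \eqref{l.cged.a}. Both $\phi$ and $\phi^{-1}$ are continuous because each output coordinate depends on only finitely many input coordinates (the locality noted in \eqref{l.cged.a} holds for the reduction map and hence for its inverse). A continuous bijection of the compact Hausdorff space $[0,d)^\IN$ is a homeomorphism, completing the proof.
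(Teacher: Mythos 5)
Your proposal is correct, and its core, part \eqref{l.cged.b}, takes a genuinely different route from the paper's. The paper proves \eqref{l.cged.b} by induction on $k$ working entirely inside $\Lambda$: it writes $q = sc + t$ with $t \in [0,c)$, rewrites $\alpha_k(i)b^q = \alpha_{k-1}(i)b^{i_k + sd}ab^t$, checks $i_k + sd < c\,r(i)_{k-1}$ from the defining equation of $\phi$ and $r$, and invokes the inductive hypothesis together with uniqueness of form (R). You instead pass to the group: you test whether $b^{-(i_0+1)}\alpha_k(i)b^q$ lies in $\Lambda$ by pushing the negative power rightward with a borrow recursion, and then identify the borrow count with the carry count because both satisfy $r_0 = 1$, $r_k = \lceil (c\,r_{k-1} - i_k)/d \rceil$. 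This is sound, and it buys you the exact threshold (an if-and-only-if) in one stroke, whereas the paper assembles that biconditional only in the proof of \eqref{l.cged.c} by combining \eqref{l.cged.a} with \eqref{l.cged.b}. One point you should make explicit: the last step of your argument --- a word with all $a$-exponents $+1$, intermediate $b$-exponents in $[0,d)$, and negative trailing exponent is not in $\Lambda$ --- is exactly Corollary \ref{c.scottwall}, and it is needed here because Proposition \ref{p.baumslagsolitar} (uniqueness of forms (L)/(R)) is stated only for elements of $\Lambda$, so it cannot by itself decide membership in $\Lambda$ of a general element of $\Gamma$. Your packaging of \eqref{l.cged.a}/\eqref{l.cged.e} also differs, though only cosmetically: the paper defines $\phi(i)_k$ directly by the congruence $\phi(i)_k \equiv i_k - c\,r(i)_{k-1} \pmod d$ and defers surjectivity to \eqref{l.cged.e}, while you build the carry (reduction) map, invert it coordinate by coordinate, and define $\phi$ as the inverse, which makes \eqref{l.cged.e} essentially free. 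Parts \eqref{l.cged.c} and \eqref{l.cged.d} follow the paper's arguments.

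One slip to repair in \eqref{l.cged.d}: the ``self-similarity'' you display, $\alpha_k(i) b^{(h+1)c} = b^{c\sum_{\mu=0}^{h-1} r(\phi^\mu(i))_k}\alpha_k(\phi^{h}(i))\cdots$, is not correct as written --- you have imported the exponents of Lemma \ref{l.clessthand} (case (BS2)). The identity actually needed, obtained by iterating \eqref{l.cged.a}, is $\alpha_k(i)\, b^{c\sum_{\mu=0}^{h-1} r(\phi^\mu(i))_k} = b^{hd}\alpha_k(\phi^h(i))$, which is what the statement of \eqref{l.cged.d} asserts; the induction then peels off one factor of $b^d$ per step (pass from $\beta \in b^m\Lambda \cap \alpha_k(i)\Lambda$ to $\beta \in b^{m-d}\Lambda \cap \alpha_k(i)\Lambda$, apply the inductive hypothesis, strip the prefix $b^{(h-1)d}$, and finish with \eqref{l.cged.c} using $\phi^{h-1}(i)_0 = i_0$). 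Since the correct identity appears in the lemma itself and your induction template is the right one, this is a notational slip rather than a gap in the method.
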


\begin{proof}
\eqref{l.cged.a}: We define $\phi$ and $r$ inductively.  Put $\phi(i)_0 = i_0$ and $r(i)_0 = 1$.  Then 
\[
b^d \alpha_0(\phi(i)) = b^d b^{i_0} a = b^{i_0} a b^c = \alpha_0(i) b^{c r(i)_0}.
\]
It is clear that $\phi(i)_0$ and $r(i)_0$ are determined uniquely.  Suppose that $\phi(i)_\mu$ and $r(i)_\mu$ have been defined for $\mu < k$ so that $b^d \alpha_\mu(\phi(i)) = \alpha_k(i) b^{c r(i)_\mu}$.  Define $\phi(i)_k \in [0,d)$ by
\[
\phi(i)_k \equiv i_k - c r(i)_{k-1} \pmod{d}.
\]
Then there is $r(i)_k \ge 0$ such that
\begin{equation} \label{e.phiandr}
\phi(i)_k - d r(i)_k = i_k - c r(i)_{k-1}. \tag{$*$}
\end{equation}
We note that 
\[
d r(i)_k = \phi(i)_k - i_k + c r(i)_{k - 1} \ge \phi(i)_k - i_k + c > \phi(i)_k,
\]
and hence that $r(i)_k \ge 1$.
Now,
\[
b^d \alpha_k(\phi(i))
= b^d \alpha_{k-1}(\phi(i)) b^{\phi(i)_k} a 
= \alpha_{k-1}(i) b^{c r(i)_{k-1}} b^{\phi(i)_k} a 
= \alpha_{k-1}(i) b^{i_k + d r(i)_k} a 
= \alpha_k(i) b^{c r(i)_k}.
\]
It is clear that $\phi(i)_k$ and $r(i)_k$ are uniquely determined by the equation $b^{\phi(i)_k + c r(i)_{k-1}} a = b^{i_k} a b^{c r(i)_k}$.  Moreover, the construction shows that they depend only on $i_0$, $\ldots$, $i_k$.

\eqref{l.cged.b}:  We again prove this by induction on $k$.  Let $q < c r(i)_0 = c$.  Then $\alpha_0(i) b^q = b^{i_0} a b^q$ is in form (R), and hence does not extend $b^{i_0 + 1}$.  Suppose that the result is true for $k-1$.  If $q < c r(i)_k$, write $q = sc + t$, with $t \in [0,c)$.  Then $s < r(i)_k$.  We have
\[
\alpha_k(i) b^q 
= \alpha_{k-1}(i) b^{i_k} a b^{sc} b^t
= \alpha_{k-1}(i) b^{i_k} b^{sd} a b^t.
\]
By \eqref{e.phiandr}, 
$i_k + sd 
\le i_k + (r(i)_k - 1)d
= i_k + r(i)_k d - d
= \phi(i)_k + c r(i)_{k-1} - d
< c r(i)_{k-1}$.
Then the inductive hypothesis implies that $\alpha_{k-1}(i) b^{i_k + sd} \not\in b^{i_0 + 1} \Lambda$.  By uniqueness of form (R), it follows that $\alpha_k(i) b^q \not\in b^{i_0 + 1} \Lambda$.  

\eqref{l.cged.c}:  Let $\beta \in \alpha_k(i) \Lambda$.  Write $\sigma^{\alpha_k(i)} \beta = b^q a b^{e_1} \cdots a b^{e_\ell}$ in form (R), so $e_\mu \in [0,c)$.  By uniqueness of form (R), $\beta \in b^m \Lambda$ if and only if $\alpha_k(i) b^q \in b^m \Lambda$.  By \eqref{l.cged.a} and \eqref{l.cged.b} this occurs if and only if $q \ge c r(i)_k$, that is, if and only if $\beta \in \alpha b^{c r(i)_k} \Lambda$.

\eqref{l.cged.d}:  The case $h = 1$ is given by \eqref{l.cged.c}.  Suppose the result is true for $h - 1$.  Let $i_0 + (h - 1)d < m \le i_0 + hd$.  The inductive hypothesis implies that $b^{(h-1)d} \alpha_k(\phi^{h - 1}(i))$ is the minimal common extension of $b^{m - d}$ and $\alpha_k(i)$.  Let $\beta \in b^m \Lambda \cap \alpha_k(i) \Lambda$.  Then $\beta \in b^{(h - 1)d} \Lambda \cap \alpha_k(i) \Lambda$.  Therefore $\beta \in b^{(h - 1)d} \alpha_k(\phi^{h - 1}(i))$.  Writing $\beta = b^{(h - 1)d} \beta'$, we have that $\beta' \in b^{m - (h - 1)d} \Lambda \cap \alpha_k(\phi^{h - 1}(i)) \Lambda$.  Since $i_0 < m - (h - 1)d \le i_0 + d$, \eqref{l.cged.c} implies that $\beta' \in \alpha_k(\phi^{h - 1}(i)) b^{c r(\phi^{h - 1}(i))_k} = b^d \alpha_k(\phi^h(i))$.  Therefore $\beta \in b^{hd} \alpha_k(\phi^h(i))$.  Since $b^{hd} \alpha_k(\phi^h(i))$ extends $b^m$ and $\alpha_k(i)$, the result is proved.

\eqref{l.cged.e}:  Given $j \in [0,d)^\IN$, the unique form (L) gives $i \in [0,d)^\IN$ and $r \in [0,d)^{\IZ^+}$ such that $b^d \alpha_k(j) = \alpha_k(i) b^{c r_k}$.  It follows that $j = \phi(i)$, and so $\phi$ is onto.  By the uniqueness of forms (L) and (R), $j$ and $i$ uniquely determine each other, so that $\phi$ is bijective.  The continuity of $\phi$ follows from the last part of \eqref{l.cged.a}.  Since $[0,d)^\IN$ is compact, $\phi$ is a homeomorphism.
\end{proof}

\begin{Proposition}
\label{p.qlo}
Let $\alpha = b^{e_0}a \cdots b^{e_s} a b^{e_{s+1}}$ and $\beta = b^{f_0}a \cdots b^{f_t}a b^{f_{t+1}}$ in form (L), i.e. with $e_\mu$, $f_\nu \in [0,d)$ for $\mu \le s$ and $\nu \le t$.  Then $\alpha \Cap \beta$ if and only if $e_\mu = f_\mu$ for $\mu \le \min\{s,t\}$.  In this case, $\alpha$ and $\beta$ have a unique minimal common extension.  (In particular, $\Lambda$ is finitely aligned.)
\end{Proposition}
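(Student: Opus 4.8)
The plan is to assume $s \le t$ (the relation $\Cap$ is symmetric) and to treat the two implications separately: the forward direction will rest on the uniqueness of canonical initial segments (Corollary \ref{c.initseq}), while the reverse direction, after cancelling a common prefix, will reduce to the explicit intersection formulas of Lemma \ref{l.cged}. Recall that $\theta(\alpha) = s+1$ and $\theta(\beta) = t+1$, and that in case (BS1) all trailing exponents satisfy $e_{s+1}, f_{t+1} \ge 0$.

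For the ``only if'' direction, suppose $\gamma \in \alpha\Lambda \cap \beta\Lambda$, so that $\alpha$ and $\beta$ are both initial segments of $\gamma$. For each $j$ with $1 \le j \le s+1$, I would note that $b^{e_0}a\cdots b^{e_{j-1}}a$ is an initial segment of $\alpha$, hence of $\gamma$, of height $j$ in form (L) with $p = 0$; likewise $b^{f_0}a\cdots b^{f_{j-1}}a$ is such an initial segment of $\beta$, hence of $\gamma$. Corollary \ref{c.initseq} says $\gamma$ has a \emph{unique} initial segment of height $j$ of this shape, so the two coincide, and uniqueness of form (L) forces $e_\mu = f_\mu$ for $\mu \le j-1$. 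Taking $j = s+1$ gives $e_\mu = f_\mu$ for all $\mu \le s = \min\{s,t\}$, which is the contrapositive of what is needed.

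For the ``if'' direction and uniqueness, assume $e_\mu = f_\mu$ for $\mu \le s$ and set $\delta = b^{e_0}a\cdots b^{e_s}a$, so that $\alpha = \delta b^{e_{s+1}}$ and $\beta = \delta\beta''$ with $\beta'' = b^{f_{s+1}}a\cdots b^{f_t}a b^{f_{t+1}}$. Left-cancellation gives $\alpha\Lambda \cap \beta\Lambda = \delta\bigl(b^{e_{s+1}}\Lambda \cap \beta''\Lambda\bigr)$, so it suffices to show the inner intersection is a single principal set $\eta\Lambda$. If $s = t$ then $\beta'' = b^{f_{s+1}}$ and, since $B$ is totally ordered, the intersection is $b^{\max(e_{s+1},f_{s+1})}\Lambda$. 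If $s < t$ and $e_{s+1} \le f_{s+1}$, then $\beta''$ already extends $b^{e_{s+1}}$, so the intersection is $\beta''\Lambda$. The remaining case $s < t$ and $e_{s+1} > f_{s+1}$ is where Lemma \ref{l.cged} is used: writing $\beta'' = \alpha_K(j)b^{f_{t+1}}$ with $\alpha_K(j) = b^{f_{s+1}}a\cdots b^{f_t}a$ and $m = e_{s+1} > f_{s+1} = j_0$, parts \eqref{l.cged.c}--\eqref{l.cged.d} of the Lemma give $b^m\Lambda \cap \alpha_K(j)\Lambda = \alpha_K(j)b^L\Lambda$ for a single explicit exponent $L$; intersecting with $\beta''\Lambda = \alpha_K(j)b^{f_{t+1}}\Lambda$ and again using left-cancellation together with the total order on $B$ yields $b^m\Lambda \cap \beta''\Lambda = \alpha_K(j)b^{\max(L,f_{t+1})}\Lambda$.

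In each case the intersection is a single set $\eta\Lambda$, so $\alpha$ and $\beta$ have the unique minimal common extension $\delta\eta$ exactly when $e_\mu = f_\mu$ for $\mu \le \min\{s,t\}$, and no common extension otherwise; since $\alpha \vee \beta$ is then either empty or a singleton, finite alignment follows. I expect the main obstacle to be bookkeeping rather than conceptual: correctly absorbing the trailing power $b^{f_{t+1}}$ of $\beta''$, which is not covered directly by Lemma \ref{l.cged} (stated for words ending in $a$), and applying the prefix reduction by $\delta$ cleanly via left-cancellation.
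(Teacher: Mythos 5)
Your skeleton --- the ``only if'' direction via Corollary \ref{c.initseq}, then cancellation of the common prefix $\delta$ and a case analysis on the trailing exponents --- is exactly the paper's, and your treatment is essentially the paper's argument \emph{for case (BS1)}: the paper shifts off $b^{f_{s+1}}$ before invoking Lemma \ref{l.cged} while you keep it inside the lemma, and your final step of intersecting $\alpha_K(j)b^L\Lambda$ with $\alpha_K(j)b^{f_{t+1}}\Lambda$ via the total order on $B$ is also how the paper absorbs the trailing power. But there is a genuine gap: the proposition is stated for all three cases (BS1), (BS2), (BS3), and your proof only covers (BS1). Lemma \ref{l.cged}, your sole tool for the hard branch $e_{s+1} > f_{s+1}$, is stated and proved only under the hypothesis $c \ge d$ of case (BS1).

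In case (BS2) your dichotomy is the wrong one: when $d > c$, rewriting $\beta''$ in form (R) pushes powers of $b$ to the \emph{left} and can increase the leading exponent (e.g.\ with $c = 1$, $d = 2$ one has $ab = b^2a$), so $\beta$ can extend $\alpha$ even though $e_{s+1} > f_{s+1}$; the branch where you would apply Lemma \ref{l.cged} is thus entered in situations where its conclusion is not even the right answer, and the lemma itself is unavailable. The paper instead converts $b^{f_{s+1}}a\cdots ab^{f_{t+1}}$ to form (R) $b^v a b^{i_1}\cdots ab^q$, compares $e_{s+1}$ with $v$ rather than with $f_{s+1}$, and in the nontrivial branch invokes Lemma \ref{l.clessthand}\eqref{l.clessthand.d}, the case-(BS2) analogue of Lemma \ref{l.cged}, which your proposal never mentions. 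In case (BS3) the picture is different again: trailing exponents may be negative (so $b^{e_{s+1}}$ need not lie in $\Lambda$ and the appeal to the order on $B$ needs care), and by Lemma \ref{l.bincaseethree} a word of positive height absorbs arbitrary powers of $b$ on its right; consequently, whenever $s < t$ and the prefixes match, one has $\beta \in \alpha\Lambda$ outright (matching paths are always comparable), so there is no branch requiring an intersection lemma at all. Your proof therefore needs two additional, case-specific arguments before it establishes the proposition as stated.
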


\begin{proof}
The ``only if'' statement follows from Corollary \ref{c.initseq}.  For the converse, suppose that $s \le t$ and $e_\mu  = f_\mu$ for $\mu \le s$.   We first consider case (BS2).  Let $b^{f_{s+1}} a \cdots b^{f_{t+1}} = b^v a b^{i_1} \cdots b^{i_k} a b^q$ in form (R), i.e. with $q, i_\mu \in [0,c)$ for $1 \le \mu \le k$.  If $e_{s+1} \le v$, then $\beta \in \alpha \Lambda$, and the result holds.  Suppose instead that $v < e_{s+1}$.  Applying $\sigma^{b^{e_0}a \cdots b^{e_s} a b^v}$ to $\alpha$ and $\beta$, and letting $m = e_{s+1} - v$, we find that it is enough to show that the paths $b^m$ and $a b^{i_1} a \cdots b^{i_k}a b^q$ meet, and have a unique minimal common extension.  Choose $h$ as in Lemma \ref{l.clessthand}\eqref{l.clessthand.d}.  Then $a b^{i_1} a \cdots b^{i_k}a b^{(h+1)c}$ is the unique minimal common extension of $b^m$ and $a b^{i_1} a \cdots b^{i_k}a$.  Since $q < c$, this is also the unique minimal common extension of $b^m$ and $a b^{i_1} a \cdots b^{i_k}a b^q$.

Now we consider case (BS1).  If $e_{s+1} \le f_{s+1}$ then $\beta \in \alpha \Lambda$, and the result holds.  Suppose instead that $e_{s+1} > f_{s+1}$.  Applying $\sigma^{b^{e_0} a \cdots b^{e_s} a b^{f_{s+1}}}$ to $\alpha$ and $\beta$, and letting $m = e_{s+1} - f_{s+1}$, $k = t - s - 1$, $i_\mu = f_{\mu + s + 1}$ for $1 \le \mu \le k$, and $q = f_{t+1}$, it is enough to show that $b^m$ and $a b^{i_1} \cdots b^{i_k} a b^q$ meet and have a unique minimal common extension.  Choose $h$ such that $(h-1)d < m \le hd$.  By Lemma \ref{l.cged} there are $j_1$, $\ldots$, $j_k \in [0,d)$, and $r \ge 0$, such that $b^{hd} a b^{j_1} \cdots b^{j_k} a = a b^{i_1} \cdots b^{i_k} a b^{cr}$ is the unique minimal common extension of $b^m$ and $a b^{i_1} \cdots b^{i_k} a$.  If $q \le cr$ this is also the unique minimal common extension of $b^m$ and $a b^{i_1} \cdots b^{i_k} a b^q$.  If $q > cr$, then $a b^{i_1} \cdots b^{i_k} a b^q$ is the unique minimal common extension.

Finally we consider case (BS3).  If $s < t$, choose $m \ge 0$ such that $f_{s+1} - e_{s+1} + md \ge 0$.  Then
\[
\beta = b^{e_0} a \cdots b^{e_s} a b^{f_{s+1}} a \cdots
= \alpha b^{f_{s+1} - e_{s+1}} a \cdots
= \alpha b^{f_{s+1} - e_{s+1} + md} a b^{mc} \cdots
\in \alpha \Lambda.
\]
If $s = t$, then $\beta \in \alpha \Lambda$ if $e_{s+1} \le f_{s+1}$, and $\alpha \in \beta \Lambda$ if $e_{s+1} > f_{s+1}$.  (Thus in case (BS3), $\alpha \Cap \beta$ if and only if $\alpha$ and $\beta$ are comparable.)
\end{proof}


\begin{Theorem}
\label{t.qlo} 
In cases (BS1) and (BS2), the ordered group $(\Gamma,\Lambda)$ is quasi-lattice ordered.
\end{Theorem}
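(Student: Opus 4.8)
The plan is to verify the definition directly: for each $t \in \Lambda\Lambda^{-1}$ I must produce $\alpha \in t\Lambda \cap \Lambda$ with $t\Lambda \cap \Lambda = \alpha\Lambda$. Write $t = \beta\gamma^{-1}$ with $\beta,\gamma \in \Lambda$. Two observations frame everything: first, $\beta = t\gamma \in t\Lambda \cap \Lambda$, so the set is nonempty; second, $t\Lambda \cap \Lambda$ is upward closed, since $w \in \Lambda$ and $t^{-1}w \in \Lambda$ force $w\mu \in \Lambda$ and $t^{-1}(w\mu) = (t^{-1}w)\mu \in \Lambda$ for all $\mu \in \Lambda$. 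Thus $t\Lambda \cap \Lambda$ is a nonempty union of sets $w\Lambda$, and the whole problem is to exhibit a \emph{least} element.

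The candidate for the least element comes from Corollary \ref{c.initseq}. In cases (BS1) and (BS2) that corollary controls the initial segments of $\beta$ of each fixed height by a single reduced ($p=0$) representative, and together with the non-negativity of the trailing $b$-exponent (the Remark following Proposition \ref{p.baumslagsolitar}) this makes the set $[\beta]$ of initial segments a \emph{finite chain}. Consequently $\beta$ and $\gamma$ possess a greatest common right factor $\eta$; write $\beta = \alpha\eta$ and $\gamma = \alpha'\eta$ with $\alpha,\alpha'$ sharing no nontrivial right factor. I take $\alpha$ as the proposed generator: indeed $t^{-1}\alpha = \gamma\beta^{-1}\alpha = \alpha'\eta\eta^{-1}\alpha^{-1}\alpha = \alpha' \in \Lambda$, so $\alpha \in t\Lambda \cap \Lambda$, and by upward closure $\alpha\Lambda \subseteq t\Lambda \cap \Lambda$. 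The maximality built into ``greatest common right factor'' is exactly the statement that $\alpha$ is the least initial segment of $\beta$ dominating $t$. It is here that (BS3) must be excluded: by Lemma \ref{l.bincaseethree} an element of positive height admits all of $B$ as initial segments, so $[\beta]$ is an infinite descending chain with no least element --- precisely why (BS3) is only locally finitely exhaustible.

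It remains to prove the reverse inclusion $t\Lambda \cap \Lambda \subseteq \alpha\Lambda$. Given $w$ in the set, the relations $w \in \Lambda$ and $t^{-1}w \in \Lambda$ produce $v = \beta^{-1}w \in \Gamma$ with $\beta v = w \in \Lambda$ and $\gamma v = t^{-1}w \in \Lambda$; setting $v' = \eta v$ this reads $\alpha v' \in \Lambda$ and $\alpha' v' \in \Lambda$, while $w \in \alpha\Lambda$ is equivalent to $v' \in \Lambda$. Thus everything reduces to the key claim: \emph{if $\alpha,\alpha' \in \Lambda$ share no nontrivial right factor and $\alpha v', \alpha' v' \in \Lambda$, then $v' \in \Lambda$.} This is where I expect the real work, and it is the step that genuinely upgrades the finite alignment of the category of paths (Proposition \ref{p.qlo}) to quasi-lattice order of the ordered group --- an upgrade the introduction warns is not automatic. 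I would prove it by analyzing how a hypothetical negative part of $v'$ (a deficit in height, or a negative trailing power of $b$ after reduction) would have to be cancelled simultaneously from the left by $\alpha$ and by $\alpha'$; running the odometer maps $\phi$ and $r$ of Lemma \ref{l.cged} in case (BS1), and $\psi$ and $r$ of Lemma \ref{l.clessthand} in case (BS2), to commute the offending $b$-powers past the $a$-runs, one sees that such simultaneous cancellation forces $\alpha$ and $\alpha'$ to end in a common factor, contradicting right-coprimeness. The non-negativity of the $b$-exponents in forms (L) and (R) is indispensable here, as it prevents the infinite $b$-descent that invalidates the argument in case (BS3).

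The main obstacle is therefore this key claim, and within it the bookkeeping of the carrying performed by $\phi$, $\psi$, and $r$: one must track, across an $a$-run of arbitrary length, exactly which powers of $b$ can be absorbed on the left, and confirm that any power absorbable by both $\alpha$ and $\alpha'$ already lies in their common right factor. The split between $c \ge d$ and $c < d$ (hence between Lemmas \ref{l.cged} and \ref{l.clessthand}) is unavoidable, since the two odometers carry in opposite directions, but I expect the overall structure --- reduce to right-coprime cofactors, then rule out common absorbable negativity --- to be uniform across both cases.
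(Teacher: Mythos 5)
Your reduction has the right overall shape --- nonemptiness and upward closure of $t\Lambda\cap\Lambda$, reduction to a right-coprime decomposition $t=\alpha(\alpha')^{-1}$, and the equivalence of $t\Lambda\cap\Lambda=\alpha\Lambda$ with your absorption claim --- but it contains one false assertion and one genuine gap, and the gap sits exactly where the content of the theorem lies. The false assertion: $[\beta]$ is \emph{not} a chain in cases (BS1)/(BS2). Take $\beta = ab^c = b^da$. Both $a$ and $b$ are initial segments of $\beta$, but $a^{-1}b$ has height $-1$, and $b^{-1}a = b^{d-1}ab^{-c}$ has negative trailing exponent in form (L), so neither quotient lies in $\Lambda$; hence $a$ and $b$ are incomparable elements of $[\beta]$. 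What is true, and all you actually need, is that $[\beta]$ is \emph{finite} in these cases (heights are bounded by $\theta(\beta)$, and for each fixed height Corollary \ref{c.initseq} together with non-negativity of the form-(R) leading exponent bounds the trailing $b$-power), which yields a \emph{maximal} common right factor of $\beta$ and $\gamma$; maximality already forces the cofactors $\alpha,\alpha'$ to be right-coprime, so this part of your argument is repairable.

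The gap: your ``key claim'' (if $\alpha,\alpha'$ share no nontrivial right factor and $\alpha v',\alpha'v'\in\Lambda$, then $v'\in\Lambda$) is never proved; the sentence ``one sees that such simultaneous cancellation forces $\alpha$ and $\alpha'$ to end in a common factor'' asserts the conclusion rather than arguing for it. Worse, the claim is not a lemma feeding into the theorem --- it is equivalent to it for the given $t$. You show it implies $t\Lambda\cap\Lambda=\alpha\Lambda$; conversely, if $t\Lambda\cap\Lambda=\delta\Lambda$ for some $\delta$, then $\alpha\in\delta\Lambda$ gives $\alpha=\delta\rho$, and $t^{-1}\delta=\alpha'\rho^{-1}\in\Lambda$ exhibits $\rho$ as a common right factor of $\alpha$ and $\alpha'$, so coprimeness forces $\rho=e$, $\delta=\alpha$, and the claim follows. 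Thus your proposal repackages the theorem; all the deferred work (tracking the carries of $\phi$, $r$, $\psi$ across $a$-runs of arbitrary length) \emph{is} the proof, and none of it is carried out. For comparison, the paper's proof avoids any coprimeness lemma: it cancels the central $b$-powers congruent to $0$ modulo $c$ to write $t=\alpha_k(i)b^n\alpha_\ell(j)^{-1}$ with $n\not\equiv 0\pmod{c}$, handles $n\le 0$ directly from uniqueness of the Scott--Wall normal form (in reducing $t\nu$ no cancellation can occur across $b^n$, so every $\mu\in t\Lambda\cap\Lambda$ lies in $\alpha_k(i)\Lambda$), and then obtains $n>0$ for free by applying the case $n\le 0$ to $t^{-1}$. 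If you want to complete your route you must actually perform the carrying analysis; the normal-form argument is substantially shorter and sidesteps it entirely.
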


\begin{proof}
Let $t \in \Lambda \Lambda^{-1}$.  Let $t = \alpha \beta^{-1}$ with $\alpha$, $\beta \in \Lambda$.  Write $\alpha = \alpha_k(i) b^p$ and $\beta = \alpha_\ell(j) b^q$ in form (L).  Then $t = \alpha_k(i) b^{p - q} \alpha_\ell(j)^{-1}$.  If $p - q \equiv 0 \pmod c$, then $a b^{p - q} a^{-1} = b^n$, with $n \in \IZ$.  Then $t = \alpha_{k-1}(i) b^n \alpha_{\ell - 1}(j)^{-1}$. If $n \equiv 0 \pmod c$, we may repeat this procedure.  Continue this until no more such cancellation is possible.  Thus we may assume that $t = \alpha_k(i) b^n \alpha_\ell(j)^{-1}$ with $n \not\equiv 0 \pmod c$.

We first consider the case that $n \le 0$.  Let $\mu \in t \Lambda \cap \Lambda$.  There is $\nu \in \Lambda$ such that $t \nu = \mu$; that is,  $\alpha_k(i) b^n \alpha_\ell(j)^{-1} \nu = \mu$.  Since the reduced form of the left hand side is obtained by moving $b$'s to the right (see the remarks after Proposition \ref{p.scottwall}), and since no cancellation is possible across $b^n$, it follows that $b^n \alpha_\ell(j)^{-1} \nu \in \Lambda$.  Then by Corollary \ref{c.initseq} we have $\mu \in \alpha_k(i) \Lambda$.  Since $\alpha_k(i) \in t \Lambda \cap \Lambda$, we have $t \Lambda \cap \Lambda = \alpha_k(i) \Lambda$.

Finally, if $n > 0$ we have $t^{-1} = \alpha_\ell(j) b^{-n} \alpha_k(i)^{-1}$.  By the previous argument we know that $t^{-1} \Lambda \cap \Lambda = \alpha_\ell(j) \Lambda$.  Let $\mu \in t \Lambda \cap \Lambda$, and let $\nu \in \Lambda$ with $t \nu = \mu$.  Then $t^{-1} \mu = \nu$, so $\nu = \alpha_\ell(j) \gamma$ for some $\gamma \in \Lambda$.  Then $\mu = t \alpha_\ell(j) \gamma = \alpha_k(i) b^n \gamma \in \alpha_k(i) b^n \Lambda$.  Since $\alpha_k(i) b^n \in t \Lambda \cap \Lambda$, we have $t \Lambda \cap \Lambda = \alpha_k(i) b^n \Lambda$.
\end{proof}

In case (BS3), it turns out that the ordered group $(\Gamma,\Lambda)$ is not usually finitely aligned, even though $\Lambda$ is finitely aligned (with unique minimal common extensions).  However $(\Gamma, \Lambda)$ is always locally finitely exhaustible.

\begin{Lemma}
\label{l.locfinexh}
Suppose we are in case (BS3).  Then $(\Gamma, \Lambda)$ is locally finitely exhaustible.  It is finitely aligned if and only if $c = 1$ (and in this case is quasi-lattice ordered).
\end{Lemma}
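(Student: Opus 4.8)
The plan is to reduce $t$ to a normal form and read off the structure of $t\Lambda\cap\Lambda$ case by case, using throughout that in case (BS3) we have $\alpha\Cap\beta$ if and only if $\alpha$ and $\beta$ are comparable (Proposition \ref{p.qlo}), together with Lemma \ref{l.bincaseethree}. We may restrict to $t$ with $t\Lambda\cap\Lambda\neq\emptyset$, i.e. $t\in\Lambda\Lambda^{-1}$, since otherwise $F=\emptyset$ witnesses both properties. Writing $t=\alpha\beta^{-1}$ in form (L) and using the identity $ab^{mc}a^{-1}=b^{-md}$ coming from the defining relation, I would cancel matching $a(\cdots)a^{-1}$ pairs across the central $b$-power exactly as in the proof of Theorem \ref{t.qlo}, whenever that power is $\equiv0\pmod c$. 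This brings $t$ to the form $\gamma b^n\delta^{-1}$ with $\gamma,\delta\in\Lambda$ each either trivial or of positive height with form (L) ending in $a$, and with $n\not\equiv0\pmod c$ whenever both $\gamma$ and $\delta$ are nontrivial.

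First I would dispose of the \emph{one-sided} cases. If $\delta$ is trivial then $t=\gamma b^n\in\Lambda$ (Lemma \ref{l.bincaseethree} if $\theta(\gamma)>0$, and $t=b^n$ with $n\ge0$ otherwise), so $t\Lambda\cap\Lambda=t\Lambda$; if $\gamma$ is trivial then $t^{-1}=\delta b^{-n}\in\Lambda$, whence $\Lambda\subseteq t\Lambda$ and $t\Lambda\cap\Lambda=\Lambda$. In either case $t\Lambda\cap\Lambda$ is a single principal cone, so the order is quasi-lattice at $t$. When $c=1$ the congruence $n\not\equiv0\pmod c$ is vacuous, so the reduction can never terminate with both sides nontrivial; hence only these one-sided cases occur, and $(\Gamma,\Lambda)$ is quasi-lattice ordered, in particular finitely aligned. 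This settles the ``$c=1$'' direction.

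The remaining, decisive situation is $c\ge2$ with $\gamma,\delta$ both nontrivial and $n\not\equiv0\pmod c$; set $k=\theta(\gamma)\ge1$. Since $\delta$ has positive height, Lemma \ref{l.bincaseethree} gives $\delta b^{-n-m}\in\Lambda$ for all $m$, and $t^{-1}(\gamma b^{-m})=\delta b^{-n}\gamma^{-1}\gamma b^{-m}=\delta b^{-n-m}$; thus $\gamma b^{-m}\in t\Lambda\cap\Lambda$ for every $m\ge0$, and taking $\mu=\gamma\rho$ gives $t^{-1}\mu=\delta b^{-n}\rho\in\Lambda$, so $\gamma\Lambda\subseteq t\Lambda\cap\Lambda$; in particular $\gamma\in t\Lambda\cap\Lambda$. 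The \emph{crux} is the converse: every $\mu\in t\Lambda\cap\Lambda$ lies in $\gamma\Lambda\cup\{\gamma b^q:q<0\}$, so in particular $\theta(\mu)\ge k$ and $\mu$ is comparable to $\gamma$. I would prove this by analyzing $t^{-1}\mu=\delta b^{-n}\gamma^{-1}\mu$ through the uniqueness of the normal form of Proposition \ref{p.scottwall}: for this word to lie in $\Lambda$ all its exponents must be $+1$ (Corollary \ref{c.scottwall}), so the $a^{-1}$'s contributed by $\gamma^{-1}$ must be absorbed against a matching initial segment of $\mu$; because the relation only transports $b^{mc}$ across an $a$, the hypothesis $n\not\equiv0\pmod c$ blocks any cancellation across the central $b^n$, forcing $\mu$ to reproduce the $a$-syllables of $\gamma$ up to height $k$. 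This bookkeeping — showing disjointness of $\mu$ from $\gamma$ (in the sense of Proposition \ref{p.qlo}) is incompatible with $t^{-1}\mu\in\Lambda$ — is the step I expect to be the main obstacle.

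Granting the crux, both conclusions follow. For local finite exhaustibility take $F=\{\gamma\}$: every $\mu\in t\Lambda\cap\Lambda$ is comparable to $\gamma$, hence meets $\gamma$ by Proposition \ref{p.qlo}. For the failure of finite alignment, note that $\{\gamma b^{-m}\}_{m\ge0}$ is an infinite properly descending chain in $t\Lambda\cap\Lambda$ (Corollary \ref{c.initseq}). If we had $t\Lambda\cap\Lambda=\bigcup_{i=1}^{N}\eta_i\Lambda$ with $\eta_i\in t\Lambda\cap\Lambda$, then $\theta(\eta_i)\ge k$ by the crux, so for $\gamma b^{-m}$ (of height $k$) to lie in some $\eta_i\Lambda$ the element $\eta_i$ must be a height-$k$ initial segment of $\gamma b^{-m}$, i.e. $\eta_i=\gamma b^{q_i}$ with $q_i\le-m$ (Corollary \ref{c.initseq}); since the finitely many $q_i$ are bounded below, this fails once $m$ is large, a contradiction. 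As $c\ge2$ guarantees this case genuinely arises (e.g. $t=aba^{-1}$), we conclude that $(\Gamma,\Lambda)$ is always locally finitely exhaustible and is finitely aligned precisely when $c=1$.
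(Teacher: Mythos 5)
Your proposal is correct and follows essentially the same route as the paper's proof: the same reduction to $t=\gamma b^n\delta^{-1}$ with $n\not\equiv 0\pmod{c}$ (borrowed from the proof of Theorem \ref{t.qlo}), the same disposal of the one-sided cases via Lemma \ref{l.bincaseethree}, and the same key claim that in the two-sided case every $\mu\in t\Lambda\cap\Lambda$ has the $a$-syllables of $\gamma$ as a prefix and hence is comparable to $\gamma$ (the paper obtains exactly this from Corollary \ref{c.initseq} together with the no-cancellation-across-$b^n$ argument you sketch), so that $F=\{\gamma\}$ witnesses local finite exhaustibility. The only cosmetic difference is at the finite-alignment step: the paper simply exhibits $t=aba^{-1}$ with $t\Lambda\cap\Lambda=\bigcup_{n\in\IZ}ab^n\Lambda$ admitting no finite subcover, while you run the same descending-chain argument $\{\gamma b^{-m}\}_{m\ge 0}$ for an arbitrary two-sided $t$ and then instantiate with $t=aba^{-1}$; both rest on the identical phenomenon.
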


\begin{proof}
Let $t \in \Lambda \Lambda^{-1}$.  As in the proof of Theorem \ref{t.qlo}, we may assume that $t = \alpha_k(i) b^n \alpha_\ell(j)^{-1}$, where $n \not\equiv 0 \pmod c$.  We first assume that $\ell \ge 0$, (so that $\theta(\alpha_\ell(j)) > 0$).  Then $\alpha_k(i) = t \alpha_\ell(j) b^{-n} \in t \Lambda \cap \Lambda$.  If $\mu \in t \Lambda \cap \Lambda$, and $t \nu = \mu$ with $\nu \in \Lambda$, then we have $\alpha_k(i) b^n \alpha_\ell(j)^{-1} \nu = \mu$.  By Corollary \ref{c.initseq} there is $p_0 \in \IZ$ such that $\alpha_k(i) b^{q_0} \in [\mu]$.  Therefore $\alpha_k(i) \Cap \mu$.  Now assume that $\ell = -1$ (that is, that $\alpha_\ell(j) \in B$).  If $k \ge 0$, then $t \in \alpha_k(i) B \subseteq \Lambda$.  If $k = -1$, then $t \in B \cup B^{-1}$.  Then $t \Lambda \cap \Lambda = t \Lambda$, if $t \in B$, and $= \Lambda$ if $t \in B^{-1}$.

To see that $(\Gamma,\Lambda)$ is not finitely aligned if $c > 1$, consider $t = a b a^{-1}$.  Then $t \Lambda \cap \Lambda = \bigcup_{n \in \IZ} a b^n \Lambda$, but for all $n \in \IZ$, $a b^n \not\in \bigcup_{k > n} a b^k \Lambda$.

In the case that $c = 1$, it is easy to see that $\Lambda \Lambda^{-1} = \Lambda \cup \Lambda^{-1}$, so that $(\Gamma, \Lambda)$ is totally ordered, hence quasi-lattice ordered.
\end{proof}

We remark that the definition of $C^*(\Lambda)$, and hence the rest of the paper, requires only that $\Lambda$ be a finitely aligned category of paths (Proposition \ref{p.qlo}).  The previous results show that the interpretation as an algebra Morita equivalent to the crossed product of $\Gamma$ acting on its directed boundary is valid.


Lemma \ref{l.clessthand} served the purpose of proving that $(\Gamma, \Lambda)$ is quasi-lattice ordered in case (BS2).  We next give results analogous to Lemma \ref{l.cged} for cases (BS2) and (BS3).

\begin{Lemma}
\label{l.cltd} 
Assume that we are in case (BS2).
\begin{enumerate}
\item \label{l.cltd.a}
There are maps $\phi : [0,d)^\IN \to [0,d)^\IN$ and $r : [0,d)^\IN \to \{0,1\}^\IN$ such that for all $k \ge 0$ we have $b^d \alpha_k(\phi(i)) = \alpha_k(i) b^{c r(i)_k}$.
\item \label{l.cltd.b}
Let $\ell = \inf\{\mu \ge 1 : i_\mu \ge c\}$ (and $\ell = \infty$ if $i_\mu < c$ for $\mu \ge 1$).  Then $r(i)_k = 1$ if and only if $k < \ell$.
\item \label{l.cltd.c}
$\phi$ is a homeomorphism of $[0,d)^\IN$ for the product topology.
\end{enumerate}
\end{Lemma}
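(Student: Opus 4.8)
The plan is to follow the construction of Lemma \ref{l.cged} almost verbatim; the defining relation $b^d a = a b^c$ is the same in both cases, and the only structural change is that the hypothesis $c < d$ of case (BS2) forces the integers $r(i)_k$ to lie in $\{0,1\}$ rather than merely being positive.

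For \eqref{l.cltd.a} I would define $\phi$ and $r$ by the same induction as in Lemma \ref{l.cged}\eqref{l.cged.a}: set $\phi(i)_0 = i_0$, $r(i)_0 = 1$, and, given $b^d \alpha_{k-1}(\phi(i)) = \alpha_{k-1}(i) b^{c r(i)_{k-1}}$, choose $\phi(i)_k \in [0,d)$ with $\phi(i)_k \equiv i_k - c\, r(i)_{k-1} \pmod d$ and let $r(i)_k$ be the integer determined by
\[
d\, r(i)_k = \phi(i)_k - i_k + c\, r(i)_{k-1}.
\]
The verification that $b^d \alpha_k(\phi(i)) = \alpha_k(i) b^{c r(i)_k}$ is then identical to the computation in Lemma \ref{l.cged}. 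The one new point is the bound: by induction $r(i)_{k-1} \in \{0,1\}$, so $c\, r(i)_{k-1} \le c < d$, and since $\phi(i)_k - i_k \in (-d,d)$ the displayed equation gives $d\, r(i)_k \in (-d, 2d)$; as $r(i)_k \ge 0$ (forced exactly as in Lemma \ref{l.cged}), we get $r(i)_k \in \{0,1\}$. This is the only place where $c < d$ enters.

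For \eqref{l.cltd.b} I would read the carry rule off the displayed equation. If $r(i)_{k-1} = 0$ then $\phi(i)_k = i_k$ and $r(i)_k = 0$; if $r(i)_{k-1} = 1$ then $\phi(i)_k \equiv i_k - c \pmod d$, and distinguishing the cases $i_k \ge c$ and $i_k < c$ shows $r(i)_k = 0$ in the former and $r(i)_k = 1$ in the latter. Hence for $k \ge 1$ we have $r(i)_k = 1$ exactly when $r(i)_{k-1} = 1$ and $i_k < c$; since $r(i)_0 = 1$, iterating gives $r(i)_k = 1$ if and only if $i_\mu < c$ for every $\mu$ with $1 \le \mu \le k$, which is precisely the condition $k < \ell$.

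For \eqref{l.cltd.c}, continuity of $\phi$ is immediate because the recursion shows $\phi(i)_k$ and $r(i)_k$ depend only on $i_0, \ldots, i_k$. For bijectivity I would argue as in Lemma \ref{l.cged}\eqref{l.cged.e}: given $j \in [0,d)^\IN$, reducing $b^d \alpha_k(j)$ to its form (L) of Proposition \ref{p.baumslagsolitar} produces $\alpha_k(i) b^{c s_k}$ with $s_k \in \{0,1\}$, where $i_0, \ldots, i_k$ are independent of $k$ because $b^d \alpha_{k+1}(j) = (b^d \alpha_k(j)) b^{j_{k+1}} a$. This reduction is the inverse of the $\phi$-recursion, so the resulting $i$ satisfies $\phi(i) = j$, and uniqueness of form (L) shows it is the only preimage; hence $\phi$ is a bijection. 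As $[0,d)^\IN$ is compact Hausdorff, a continuous bijection is a homeomorphism. The only genuine obstacle is the $\{0,1\}$-bound in \eqref{l.cltd.a}; the rest is a transcription of Lemma \ref{l.cged} together with the elementary carry bookkeeping of \eqref{l.cltd.b}.
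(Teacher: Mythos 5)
Your proposal is correct and takes essentially the same route as the paper. The only difference is organizational: the paper defines $\phi$ by the explicit case formula (which your carry-rule analysis in (2) recovers) and uses your recursion --- reducing $b^d \alpha_k(j)$ to form (L) --- only to construct the inverse, whereas you take the recursion as the definition; the substantive ingredients (the relation $b^d a = a b^c$, uniqueness of form (L), the bound $c < d$ forcing $r(i)_k \in \{0,1\}$, and compactness of $[0,d)^\IN$) are identical.
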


\begin{proof}
\eqref{l.cltd.a} and \eqref{l.cltd.b}: We define $\phi(i)$ by
\[
\phi(i)_\mu =
\begin{cases}
i_0, & \text{ if } \mu = 0 \\
i_\mu + d - c, & \text{ if } 0 < \mu < \ell \\
i_\ell - c, & \text{ if } \mu = \ell \\
i_\mu, & \text{ if } \mu > \ell.
\end{cases}
\]
Then $b^d \alpha_0(\phi(i)) = b^d b^{i_0} a = b^{i_0} a b^c = \alpha_0(i) b^c$.  Inductively, for $0 < \mu < \ell$, we have
\[
b^d \alpha_\mu(\phi(i))
= \alpha_{\mu - 1}(i) b^c b^{d - c + i_\mu} a
= \alpha_{\mu - 1}(i) b^{i_\mu} a b^c
= \alpha_\mu(i) b^c.
\]
If $\ell < \infty$,
\[
b^d \alpha_\ell(\phi(i))
= \alpha_{\ell - 1}(i) b^c b^{i_\ell - c} a
= \alpha_\ell(i),
\]
and for $\mu > \ell$,
\[
b^d \alpha_\mu(\phi(i))
= b^d \alpha_\ell(\phi(i)) b^{i_{\ell + 1}} a \cdots b^{i_\mu} a
= \alpha_\mu(i).
\]
\eqref{l.cltd.c}: Let $j \in [0,d)^\IN$.  Define $i \in [0,d)^\IN$ and $r \in \IN^\IN$ by form (L) of $b^d \alpha_k(j)$ for all $k$:
\[
b^d \alpha_k(j) = \alpha_k(i) b^{c r_k}.
\]
That $i_\mu$ is well-defined independently of the choice of $k \ge \mu$ follows from Corollary \ref{c.initseq}.  We first show that $r_k \in \{0,1\}$ for all $k$.  When $k = 0$ we have $b^d b^{j_0} a = b^{j_0} a b^c$, so $r_0 = 1$.  Suppose $r_{k-1} \in \{0,1\}$.  Then
\[
b^d \alpha_k(j)
= b^d \alpha_{k-1}(j) b^{j_k} a
=\alpha_{k-1}(i) b^{c r_{k-1} + j_k} a.
\]
If $r_{k-1} = 0$, then $b^d \alpha_k(j) = \alpha_{k-1}(i) b^{j_k} a$, so that $i_k = j_k$ and $r_k = 0$.  If $r_{k-1} = 1$, then $b^d \alpha_k(j) = \alpha_{k-1}(i) b^{c + j_k} a$.  Then, if $c + j_k \ge d$ we have $i_k = j_k + c - d$ and $r_k = 1$, while if $c + j_k < d$ we have $i_k = j_k + c$ and $r_k = 0$.

Notice that in the course of the last argument, we showed also that if $r_{k-1} = 0$ then $r_k = 0$.  Let $r_\mu = 1$ for $\mu \le k$.  Then
\[
\alpha_{k-1}(i) b^{i_k} a b^c
= \alpha_k(i) b^c
= b^d \alpha_k(j)
= b^d \alpha_{k-1}(j) b^{j_k} a
= \alpha_{k-1}(i) b^{c + j_k} a.
\]
Therefore $b^{i_k + d} a = b^{i_k} a b^c = b^{j_k + c} a$, and hence $i_k + d = j_k + c$.  Thus $i_k = c + j_k - d < c$.  It follows that $j = \phi(i)$, and thus $\phi$ is onto.  To see that $\phi$ is one-to-one, let $\phi(i) = \phi(i')$.  Then
\[
\alpha_k(i') b^{c r_k'}
= b^d \alpha_k(\phi(i'))
= b^d \alpha_k(\phi(i))
= \alpha_k(i) b^{c r_k}.
\]
By the uniqueness of form (L) we have that $i' = i$.  From the definition of $\phi$ we see that $\phi(i)_k$ is locally constant, so that $\phi$ is continuous, and hence a homeomorphism.
\end{proof}

\begin{Lemma}
\label{l.casethree}
Assume we are in case (BS3).  There are maps $\phi : [0,d)^\IN \to [0,d)^\IN$ and $r : [0,d)^\IN \to \IN^\IN$ such that for all $k \ge 0$ we have $b^d \alpha_k(\phi(i)) = \alpha_k(i) b^{(-1)^k c r(i)_k}$.  Moreover, $\phi$ is a homeomorphism of $[0,d)^\IN$ for the product topology.
\end{Lemma}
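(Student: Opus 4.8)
The plan is to construct $\phi$ and $r$ coordinate by coordinate, by induction on $k$, in exact parallel with the proofs of Lemmas \ref{l.cged} and \ref{l.cltd}; the one genuinely new phenomenon is the sign that the relation of case (BS3) produces whenever a power of $b$ is pushed to the right past an occurrence of $a$. Rewriting the defining relation as $a = b^d a b^c$ (the identity already exploited in Lemma \ref{l.bincaseethree}) gives the moving rule
\[
b^{jd} a = a b^{-jc}, \qquad j \in \IZ.
\]
Thus, unlike in cases (BS1) and (BS2), each time a $b^d$ crosses a letter $a$ the resulting power of $b$ changes sign, and it is precisely the accumulation of these sign changes along the letters $a$ of $\alpha_k(i)$ that produces the alternating factor $(-1)^k$ in the trailing exponent.

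For fixed $i \in [0,d)^\IN$ I would define $\phi(i)_k \in [0,d)$ and $r(i)_k \in \IN$ recursively so that the single identity $b^d \alpha_k(\phi(i)) = \alpha_k(i)\, b^{(-1)^k c r(i)_k}$ holds for every $k$. Appending $b^{\phi(i)_k} a$ to $\alpha_{k-1}(\phi(i))$ and invoking the identity one step down reduces the inductive step to normalizing a single expression $b^M a$, where $M$ combines the trailing exponent carried over from stage $k-1$ with $\phi(i)_k$. Writing $M = i_k + jd$ with $i_k \in [0,d)$ and $j = \lfloor M/d \rfloor$, the moving rule gives
\[
b^M a = b^{i_k} a\, b^{-jc},
\]
which pins down $i_k$ and the new trailing exponent $-jc$. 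Solving instead for $\phi(i)_k$ in terms of the prescribed $i_k$ amounts to a single congruence modulo $d$, so $\phi(i)_k$ (and then $j$ and $r(i)_k$) is uniquely determined and, by construction, depends only on $i_0, \ldots, i_k$.

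The step I expect to carry the real content is verifying that $r(i)_k$ genuinely lands in $\IN$, i.e. that the carry $j$ always has the sign prescribed by the parity of $k$. This is where the alternation is decisive: the trailing exponent handed up from stage $k-1$ changes sign with $k$, so at each stage $M$ is bounded on exactly the side that forces $j \le 0$ or $j \ge 0$ as required, and the resulting trailing exponent at stage $k$ is then a nonnegative multiple of $c$ up to an overall sign that alternates with $k$. Thus it is the \emph{sign}, not the \emph{size}, of the carry that must be tracked --- the counterpart of the bounds $r(i)_k \ge 1$ in Lemma \ref{l.cged} and $r(i)_k \in \{0,1\}$ in Lemma \ref{l.cltd} --- and this bookkeeping is the only place where the specific relation of case (BS3) is used. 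The base case $k = 0$, where $b^d \alpha_0(\phi(i)) = b^{d+\phi(i)_0} a$ normalizes in one application of the moving rule, fixes the phase of the alternation.

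Finally, to see that $\phi$ is a homeomorphism I would argue exactly as in Lemma \ref{l.cged}\eqref{l.cged.e} and Lemma \ref{l.cltd}\eqref{l.cltd.c}. Continuity is immediate from the construction, since each coordinate $\phi(i)_k$ depends only on the finitely many coordinates $i_0, \ldots, i_k$ and is therefore locally constant. For bijectivity I would run the construction backwards: given $j \in [0,d)^\IN$, for each $k$ the unique form (L) of the positive word $b^d \alpha_k(j)$ (Proposition \ref{p.baumslagsolitar}) reads off an $i$ with $\phi(i) = j$, and these determinations are consistent across $k$ because initial segments determine the low-order coordinates (Corollary \ref{c.initseq}); uniqueness of forms (L) and (R) then yields injectivity. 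Since $[0,d)^\IN$ is compact Hausdorff, a continuous bijection of it onto itself is automatically a homeomorphism.
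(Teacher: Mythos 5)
Your construction of $\phi$ and $r$ is the same as the paper's --- an inductive, coordinate-by-coordinate definition in which $\phi(i)_k$ is pinned down by a congruence modulo $d$ and $r(i)_k$ is the carry, with the real content in the \emph{sign} of the carry --- and your mechanism for controlling that sign is exactly right. But there is a step that fails the moment you compute the base case: your (correct) moving rule $b^{jd}a = ab^{-jc}$ gives
\[
b^d\alpha_0(\phi(i)) = b^{d+\phi(i)_0}a = b^{\phi(i)_0}\,a\,b^{-c},
\]
and by uniqueness of form (L) (Proposition \ref{p.baumslagsolitar}) the target identity $b^d\alpha_0(\phi(i)) = \alpha_0(i)\,b^{(-1)^0 c\,r(i)_0}$ then forces $\phi(i)_0 = i_0$ and $c\,r(i)_0 = -c$, i.e.\ $r(i)_0 = -1 \notin \IN$. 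So the identity with the factor $(-1)^k$ cannot be verified as stated: the alternation your moving rule actually produces is $(-1)^{k+1}$ (trailing exponent $\le 0$ at even $k$, $\ge 0$ at odd $k$; note $\alpha_k(i)$ contains $k+1$ letters $a$, not $k$, so your own heuristic about accumulated sign changes already points at $(-1)^{k+1}$). With that corrected phase your inductive step goes through exactly as you describe: at even $k$ the incoming exponent $+c\,r(i)_{k-1}$ makes $M \ge 0$, forcing $j \ge 0$, while at odd $k$ the incoming exponent $-c\,r(i)_{k-1}$ makes $M < d$, forcing $j \le 0$, and $r(i)_k = |j|$. You should be aware that the paper's own proof commits the same slip: it sets $r(i)_0 = 1$, $\phi(i)_0 = i_0$ and declares the $k=0$ identity true with exponent $+c$, which contradicts $a = b^d a b^c$; its floor/ceiling recursion has even and odd interchanged for the same reason. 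So what your method proves (and what is true) is the statement with $(-1)^{k+1}$ in place of $(-1)^k$. This does not propagate: Corollary \ref{c.closedboundary} and Lemma \ref{l.openinvariant} use only that $b^d$ acts as $\phi^{-1}$ and that the carries are multiples of $c$, both insensitive to the phase.

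On the homeomorphism assertion your route genuinely differs from the paper's, and is valid. The paper constructs the inverse explicitly: it defines a mirrored pair of maps $\psi, s$ by the analogous recursion and verifies $r\circ\psi = s$, $s\circ\phi = r$, and then $\psi\circ\phi = \phi\circ\psi = \mathrm{id}$ by direct computation. You instead transplant the argument of Lemma \ref{l.cged}\eqref{l.cged.e}: surjectivity by reading $i$ off the unique form (L) of the positive word $b^d\alpha_k(j)$, consistently across $k$ by Corollary \ref{c.initseq}; injectivity from uniqueness of form (L); continuity because $\phi(i)_k$ depends only on $i_0,\dots,i_k$; and then compactness of $[0,d)^\IN$. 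That argument works in case (BS3) --- $b^d\alpha_k(j)$ is a positive word, and form (L) with trailing exponent in $\IZ$ is unique there --- and it is shorter than the paper's verification, at the cost of not exhibiting $\phi^{-1}$ by a formula (which the paper never uses anyway).
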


\begin{proof}
For $t \in \IR$ let $f_0(t) = \lfloor t \rfloor$ and $f_1(t) = \lceil t \rceil$.  Let $\tau(k) = \tfrac{1}{2} \bigl( 1 - (-1)^k \bigr)$.  We define $r$ and $\phi$ recursively by $r(i)_0 = 1$, $\phi(i)_0 = i_0$ and 
\begin{align*}
r(i)_k &= f_{\tau(k)} \left( \frac { (-1)^k i_k + r(i)_{k-1} c} {d} \right) \\
\phi(i)_k &= i_k + (-1)^k \bigl( r(i)_{k-1} c - r(i)_k d \bigr).
\end{align*}
We first show that $r(i)_k \ge 0$ for all $k$.  This is true for $k = 0$; suppose it is true for $k-1$.  If $k$ is even, then $r(i)_k = \bigl\lfloor \tfrac{1}{d} \bigl( i_k + r(i)_{k-1} c \bigr) \bigr\rfloor \ge \lfloor 0 \rfloor = 0$.  If $k$ is odd, $r(i)_k = \bigl\lceil \tfrac{1}{d} \bigl( -i_k + r(i)_{k-1} c \bigr) \bigr\rceil \ge \bigl\lceil -\tfrac {i_k} {d} \bigr \rceil \ge 0$, since $-\tfrac {i_k} {d} > -1$.

We next show that $\phi(i)_k \in [0,d)$ for all $k$.  Again, it is true for $k = 0$.  Suppose it is true for $k - 1$.  If $k$ is even, $r(i)_k = \bigl\lfloor \tfrac{1}{d} \bigl( i_k + r(i)_{k-1} c \bigr) \bigr\rfloor$, hence $r(i_k) \le \tfrac{1}{d} \bigl( i_k + r(i)_{k-1} c < r(i)_k + 1$, and hence $0 \le i_k + r(i)_{k-1}c - r(i)_k d < d$, which is the statement that $\phi(i)_k \in [0,d)$.  If $k$ is odd, $r(i)_k = \bigl\lceil \tfrac{1}{d} \bigl( -i_k + r(i)_{k-1} c \bigr) \bigr\rceil$, hence $r(i)_k - 1 < \tfrac{1}{d} \bigl( -i_k + r(i)_{k-1} c \bigr) \le r(i)_k$, and hence $0 \le i_k - \bigl( r(i)_{k-1} c - r(i)_k d \bigr) < d$, which is the statement that $\phi(i)_k \in [0,d)$.

Now we check the equation of the statement.  It is true for $k = 0$.  Suppose it is true for $k-1$.  Then
\[
b^d \alpha_k(\phi(i))
= b^d \alpha_{k-1}(\phi(i)) b^{\phi(i)_k} a
= \alpha_{k-1}(i) b^{(-1)^{k-1} c r(i)_{k-1}} b^{\phi(i)_k} a. 
\]
Note that $(-1)^{k-1} c r(i)_{k-1} + \phi(i)_k = i_k - (-1)^k r(i)_k d$.  Thus
\[
b^d \alpha_k(\phi(i))
= \alpha_{k-1}(i) b^{i_k - (-1)^k r(i)_k d} a
= \alpha_{k-1}(i) b^{i_k} a b^{(-1)^k c r(i)_k}
= \alpha_k(i) b^{(-1)^k c r(i)_k}.
\]
From the definition of $\phi$ we see that $\phi(i)_k$ is locally constant, so that $\phi$ is continuous.  We show that $\phi$ is bijective, and hence is a homeomorphism.  For this, we construct the inverse of $\phi$.  We define maps $\psi : [0,d)^\IN \to [0,d)^\IN$ and $s : [0,d)^\IN \to \{0,1\}^\IN$ recursively by $s(i)_0 = 1$, $\psi(i)_0 = i_0$ and 
\begin{align*}
s(i)_k &= f_{\tau(k-1)} \left( \frac { (-1)^{k-1} i_k + s(i)_{k-1} c} {d} \right) \\
\psi(i)_k &= i_k + (-1)^{k-1} \bigl( s(i)_{k-1} c - s(i)_k d \bigr).
\end{align*}
The proof that $s(i)_k \ge 0$ and $\psi(i)_k \in [0,d)$ for all $k$ is similar to the proof of the analogous facts for $r$ and $\phi$ above.    We show that $r \circ \psi = s$ and $s \circ \phi = r$.  We have
\begin{align*}
r (\psi (i))_k
&= f_{\tau(k)} \left( \frac {(-1)^k \psi(i)_k + r(\psi(i))_{k-1} c} {d} \right) \\
&= f_{\tau(k)} \left( \frac {(-1)^k \bigl(  i_k + (-1)^{k-1} s(i)_{k-1} c + (-1)^k s(i)_k d \bigr) + s(i)_{k-1} c} {d} \right) \\
&= f_{\tau(k)} \left( \frac {(-1)^k i_k} {d} \right) + s(i)_k \\
&= s(i)_k; \\
s(\phi(i))_k
&=  f_{\tau(k-1)} \left( \frac { (-1)^{k-1} \phi(i)_k + s(\phi(i))_{k-1} c} {d} \right) \\
&=  f_{\tau(k-1)} \left( \frac { (-1)^{k-1} \bigl(  i_k + (-1)^k r(i)_{k-1} c -(-1)^k r(i)_k d \bigr) + ri())_{k-1} c} {d} \right) \\
&=  f_{\tau(k-1)} \left( \frac { (-1)^{k-1} i_k} {d} \right) + r(i)_k \\
&= r(i)_k,
\end{align*}
since $f_{\tau(n)} \bigl( (-1)^n t \bigr) = 0$ for $0 \le t < 1$.  Now we have 
\[
\psi(\phi(i))_k
= \phi(i)_k + (-1)^{n-1} \bigl( s(\phi(i))_{k-1} c - s(\phi(i))_k d \bigr)
= \phi(i)_k + (-1)^{k-1} \bigl( r(i)_{k-1} c - r(i)_k d \bigr) 
= i_k,
\]
and similarly, $\phi(\psi(i))_k = i_k$.
\end{proof}

\section{Directed hereditary subsets of $\Lambda$}
\label{s.directedhereditarysets}

We pause a moment to describe the standard picture of $\Lambda$.  This is just the portion of the Cayley graph of $G$ given by $\Lambda$, with direction given by the generators.  The set $\Lambda / B$ then becomes a directed tree --- the portion of the Bass-Serre tree for $G$ corresponding to $\Lambda$.  We think of the Cayley graph as made up of branching \textit{sheets}, which we will describe precisely in a moment.  The tree is then the side view, where each sheet becomes an infinite path.  (See e.g. \cite{farmos} for a sketch of the case (BS2), $c = 1$, $d = 2$.)  The precise description of these objects is found in the identification of the directed hereditary subsets of $\Lambda$, i.e. of $\Lambda^*$.

For $\alpha \in \Lambda$ we have the directed hereditary subset $[\alpha]$.  Moreover, since $\alpha\Lambda \setminus \alpha a \Lambda \setminus \alpha b \Lambda = \{ \alpha \}$, it follows that $[\alpha]$ is an open point of $\Lambda^*$.  Thus $\Lambda$ is a discrete subset of $\Lambda^*$ (where we identify $\Lambda$ with the set of finite directed hereditary subsets of $\Lambda$).

Next, for $\alpha B \in \Lambda / B$ we let $[\alpha B] = \bigcup_{n=0}^\infty [\alpha b^n]$.  It is clear that $[\alpha B]$ is hereditary and directed, hence is an element of $\Lambda^*$.  Letting $\alpha = b^{i_0} a \cdots b^{i_k} a$ in form (L), it follows from Corollary \ref{c.initseq} that $\alpha$ is uniquely determined by $[\alpha B]$.  Thus we may identify $\{ [\alpha B] : \alpha \in \Lambda \}$ with $\Lambda / B$. For $m \ge 0$ set $U_m = \alpha b^m \Lambda \setminus \bigcup_{\mu = 0}^{d-1} \alpha b^\mu a \Lambda$.  Then $U_m = \{ \alpha b^j : j \ge m \}$ defines a neighborhood $\widehat{U_m}$ of $[\alpha B]$ in $\Lambda^*$.  Thus $\Lambda / B$ is a relatively discrete subset of $\Lambda^* \setminus \Lambda$.

\begin{Definition}
\label{d.infiniteheight}
An element $C \in \Lambda^*$ is of \textit{finite height} if $\theta$ is bounded on $C$.  An element of $\Lambda^*$ not of finite height is said to be of \textit{infinite height}.
\end{Definition}

\begin{Lemma}
\label{l.finiteheight}
The elements of $\Lambda^*$ of finite height are $\{ [\alpha] : \alpha \in \Lambda \} \cup \{ [\xi] : \xi \in \Lambda / B \}$.
\end{Lemma}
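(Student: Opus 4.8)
The plan is to verify the two inclusions making up the set equality. The containment $\supseteq$ is routine: each $[\alpha]$ is finite, so $\theta$ is bounded by $\theta(\alpha)$ on it, and each $[\xi] = [\alpha B] = \bigcup_{n \ge 0}[\alpha b^n]$ has $\theta$ bounded by $\theta(\alpha)$ since appending powers of $b$ does not change height. The substance is the reverse containment: given $C \in \Lambda^*$ of finite height, I must show $C$ equals some $[\alpha]$ or some $[\alpha B]$. If $C$ contains a maximal element it is finite, hence equals $[\alpha]$ by the fact recalled in the preliminaries, and we are done; so I assume $C$ is infinite. Since $\theta \ge 0$ on $\Lambda$ and is bounded on $C$, it attains a maximum value $k$ on $C$; set $D = \{\gamma \in C : \theta(\gamma) = k\}$.

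The first key step is to show the elements of $D$ share a single normalized form-(L) prefix. Fix $\delta_0 \in D$; for any $\gamma \in D$, directedness of $C$ yields $\delta \in C$ extending both, and since $\theta(\delta) \ge k$ while $k$ is maximal we get $\theta(\delta) = k$, whence $\delta = \gamma b^m = \delta_0 b^{m'}$ for some $m, m' \ge 0$ (a height-preserving extension appends only powers of $b$, as the height-zero elements of $\Lambda$ are exactly $B$ by Corollary \ref{c.scottwall}). Writing $\gamma$ and $\delta_0$ in the form (L) of Proposition \ref{p.baumslagsolitar} with trailing $b$-exponent normalized to $0$ and comparing the two expressions for $\delta$ via the uniqueness of form (L) forces the $a$-portions of $\gamma$ and $\delta_0$ to coincide. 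Hence there is $\alpha_0 \in \Lambda$, in form (L) with $p = 0$ and $\theta(\alpha_0) = k$, and a set $P \subseteq \IZ$ with $D = \{\alpha_0 b^p : p \in P\}$. Because $C$ has no maximal element, neither does $D$ (a maximal element of $D$ would be maximal in $C$ by the same height argument), so $P$ is unbounded above.

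The second key step identifies $C$ itself. Directedness again gives $C = \bigcup_{p \in P}[\alpha_0 b^p]$: any $\gamma \in C$ has a common extension in $C$ with some element of $D$, that extension lies in $D$ by the height argument, so $\gamma$ is an initial segment of some $\alpha_0 b^{p'}$. It remains to match this with $[\alpha_0 B] = \bigcup_{n \ge 0}[\alpha_0 b^n]$. Since $P$ is unbounded above, each $[\alpha_0 b^n]$ with $n \ge 0$ sits inside some $[\alpha_0 b^p]$, giving $[\alpha_0 B] \subseteq C$, while each $[\alpha_0 b^p]$ with $p \ge 0$ lies in $[\alpha_0 B]$ directly. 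The one delicate point, and the main obstacle, is case (BS3), where $P$ may contain negative integers because $\alpha_0 b^p \in \Lambda$ for all $p \in \IZ$ once $\theta(\alpha_0) > 0$ by Lemma \ref{l.bincaseethree}; for such $p < 0$, $\alpha_0 b^p$ is an initial segment of $\alpha_0 = \alpha_0 b^0$, so $[\alpha_0 b^p] \subseteq [\alpha_0] \subseteq [\alpha_0 B]$, and the identification survives. (When $k = 0$ one has $C \subseteq B$ with $\alpha_0 = e$, recovering $C = B = [eB]$.) Thus $C = [\xi]$ with $\xi = \alpha_0 B \in \Lambda / B$, completing the classification.
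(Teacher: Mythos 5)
Your proof is correct and takes essentially the same route as the paper's: pick out the maximal height $k$, show that the height-$k$ elements of $C$ all have the form $\alpha_0 b^p$ for a single form-(L) prefix $\alpha_0$ (you re-derive this from directedness and uniqueness of form (L), where the paper simply cites Corollary \ref{c.initseq}), and then conclude $C = [\alpha_0 b^q]$ or $C = [\alpha_0 B]$ according to whether the set of exponents is bounded above. Your explicit handling of negative exponents in case (BS3) is a detail the paper leaves implicit, but it is not a different argument.
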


\begin{proof}
Let $C \in \Lambda^*$ have finite height.  Let $\alpha \in C$ have maximal height.  By the hereditary property of $C$ we may assume that $\alpha = b^{i_0} a \cdots b^{i_k} a$ in form (L).  By Corollary \ref{c.initseq}, every element of $C$ of height $k$ is of the form $\alpha b^p$ for some $p$.  If $\{ p : \alpha b^p \in C \}$ has a maximum element $q$, then $C = [\alpha b^q]$.  Otherwise, we have $C = [\alpha B]$.
\end{proof}

\begin{Remark}
The elements $\Lambda / B$ form a directed tree when ordered by containment; this follows from Corollary \ref{c.initseq}.  For $\alpha = b^{i_0} a \cdots b^{i_k} a$, the immediate successors of $[\alpha B]$ in this tree are $[\alpha a B]$, $[\alpha b a B]$, $\ldots$, $[\alpha b^{d-1} a B]$.  Thus if we orient the tree so as to be directed upward, each vertex has $d$ upward edges (and, apart from $B$, one downward edge).
\end{Remark}

We now consider the the directed hereditary subsets having infinite height.

\begin{Definition}
\label{d.infht}
Let $i \in [0,d)^\IN$.  
\begin{align*}
C_0(i) &= \bigcup_{k = 0}^\infty [\alpha_k(i)] \\
C_\infty(i) &= \bigcup_{k,p = 0}^\infty [\alpha_k(i) b^p].
\end{align*}
\end{Definition}

\begin{Lemma}
\label{l.minmax}
\begin{enumerate}
\item \label{l.minmax.a} $C_0(i)$ and $C_\infty(i)$ are directed hereditary subsets of $\Lambda$.
\item \label{l.minmax.b} $C_0(i) \subseteq C_\infty(i)$.
\item \label{l.minmax.c} $\bigl\{ C_\infty(i) : i \in [0,d)^\IN \bigr\} = \Lambda^{**}$.
\item \label{l.minmax.d} If $C \in \Lambda^*$ is of infinite height, there is a unique $i \in [0,d)^\IN$ such that $C \subseteq C_\infty(i)$.  Moreover $C_0(i) \subseteq C$.
\end{enumerate}
\end{Lemma}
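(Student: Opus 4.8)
The plan is to establish the four assertions in the order \eqref{l.minmax.a}, \eqref{l.minmax.b}, \eqref{l.minmax.d}, \eqref{l.minmax.c}, since the maximality statement \eqref{l.minmax.c} is a formal consequence of \eqref{l.minmax.d} together with the classification of finite-height sets in Lemma \ref{l.finiteheight}. For \eqref{l.minmax.a}, heredity is automatic: $C_0(i)$ and $C_\infty(i)$ are unions of sets $[\alpha]$, and any union of hereditary sets is hereditary. For directedness it is enough, by heredity, to produce a common extension lying in the set for any two generating paths $\alpha_k(i)b^p$, $\alpha_\ell(i)b^q$ (taking $p=q=0$ for $C_0(i)$). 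Using the nesting $\alpha_k(i)\in[\alpha_\ell(i)]$ for $k\le\ell$, I would reduce by induction on $\ell-k$ to the single step of bounding $\alpha_\ell(i)b^{q'}$ and $\alpha_{\ell+1}(i)=\alpha_\ell(i)b^{i_{\ell+1}}a$ above by a common element. Applying $\sigma^{\alpha_\ell(i)b^{i_{\ell+1}}}$ turns this into the question of whether $b^m$ and $a$ meet; in cases (BS1) and (BS2) the identity $b^{hd}a=ab^{hc}$ (a consequence of the defining relation, made precise in Lemma \ref{l.cged}\eqref{l.cged.d} and Lemma \ref{l.clessthand}\eqref{l.clessthand.d}) exhibits $b^{hd}a$ as a common extension, and translating back gives the common extension $\alpha_{\ell+1}(i)b^{hc}\in C_\infty(i)$ --- the essential point being that it lands back among the generators of $C_\infty(i)$. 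In case (BS3) directedness is easier: by Lemma \ref{l.bincaseethree} every $\alpha_k(i)b^p$ ($p\in\IZ$) lies in $\Lambda$, and a short manipulation with $b^{-s}a=b^{Nd-s}ab^{Nc}$ shows $\alpha_k(i)b^p\in[\alpha_{k+1}(i)b^q]$ for every $q$, so $\alpha_K(i)$ with $K$ the largest index present dominates all lower generators. Assertion \eqref{l.minmax.b} is then immediate, as $[\alpha_k(i)]=[\alpha_k(i)b^0]\subseteq C_\infty(i)$.

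For \eqref{l.minmax.d}, I would first read $i$ off from $C$ canonically. Since $C$ has infinite height, for each $k$ there is $\beta\in C$ with $\theta(\beta)>k+1$, and by Corollary \ref{c.initseq} its unique height-$(k+1)$ initial segment in form (L) with $p=0$ is some $\alpha_k(i)$; directedness together with this uniqueness (pass to a common extension and restrict) shows the coefficients are independent of $\beta$, so $i\in[0,d)^\IN$ is well-defined, and since $\alpha_k(i)\in[\beta]\subseteq C$ we obtain $C_0(i)\subseteq C$. For $C\subseteq C_\infty(i)$, take $\mu\in C$ of height $n$. If $n=0$ then $\mu=b^p$ with $p\ge0$, and one checks $B\subseteq C_\infty(i)$ (because $\alpha_0(i)b^{hc}=b^{i_0+hd}a$ has every $b^s$ with $s\le i_0+hd$ as an initial segment, and $h$ is arbitrary), so $\mu\in C_\infty(i)$. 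If $n\ge1$, choose $\gamma\in C$ extending $\mu$ with $\theta(\gamma)>n$; writing the form (L) of $\mu$ as $\alpha_{n-1}(e)b^p$, I would show $\alpha_{n-1}(e)\in[\gamma]$ (immediate when $p\ge0$, and in case (BS3) when $p<0$ by absorbing the negative $b$-power as in Lemma \ref{l.bincaseethree}), whence uniqueness of the $p=0$ height-$n$ initial segment of $\gamma$ forces $\alpha_{n-1}(e)=\alpha_{n-1}(i)$ and $\mu=\alpha_{n-1}(i)b^p$. Then $\mu\in C_\infty(i)$ directly if $p\ge0$, and if $p<0$ (only in (BS3)) because $\mu$ is an initial segment of $\alpha_{n-1}(i)b^P$ for any $P\ge0$. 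Uniqueness of $i$ is built into the construction: any $j$ with $C\subseteq C_\infty(j)$ must reproduce the same canonical $p=0$ initial segments, so $j=i$.

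Finally, for \eqref{l.minmax.c}, Lemma \ref{l.finiteheight} shows the finite-height members of $\Lambda^*$ are the $[\alpha]$ and the $[\alpha B]$, none of which is maximal (each is properly contained in some $C_\infty(i)$); hence every maximal element has infinite height, and by \eqref{l.minmax.d} it lies in some $C_\infty(i)$, so equals it by maximality. Conversely each $C_\infty(i)$ is maximal: any $C\in\Lambda^*$ containing it has infinite height (it contains $C_0(i)$), so by \eqref{l.minmax.d} lies in some $C_\infty(j)$, and comparing the canonical coefficients forces $j=i$ and thus $C=C_\infty(i)$.

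I expect the main obstacle to be the bookkeeping of $b$-exponents needed to keep common extensions and initial segments inside the prescribed sets. In \eqref{l.minmax.a} the content is precisely that the common extension of $b^m$ and $a$ reabsorbs into the sequence $\bigl(\alpha_k(i)\bigr)_k$; in \eqref{l.minmax.d} the delicate points are the free movement of $b$ in case (BS3) (so that negative exponents must be tracked) and the initially surprising fact that $B\subseteq C_\infty(i)$, which is what makes the height-$0$ elements fall into place.
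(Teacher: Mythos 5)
Your proposal is correct, and it rests on the same underlying facts as the paper's proof --- uniqueness of form-(L) initial segments (Corollary \ref{c.initseq}), the common-extension lemmas \ref{l.cged}\eqref{l.cged.d} and \ref{l.clessthand}\eqref{l.clessthand.d}, and Lemma \ref{l.bincaseethree} in case (BS3) --- but your logical organization is genuinely different. The paper proves \eqref{l.minmax.c} first, and gets both halves of it in one stroke from Proposition \ref{p.qlo}: if $\beta \notin C_\infty(i)$ then some form-(L) coefficient of $\beta$ differs from the corresponding entry of $i$, so $\beta \perp \alpha_k(i)$ and no directed set can contain both (maximality); and since any two elements of a directed set $C$ meet, Proposition \ref{p.qlo} forces their coefficients to be consistent, so $C \subseteq C_\infty(i)$ for some $i$ --- for \emph{every} $C \in \Lambda^*$, not only those of infinite height. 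Part \eqref{l.minmax.d} is then nearly immediate. You instead prove \eqref{l.minmax.d} first, extracting $i$ from Corollary \ref{c.initseq} plus directedness, and then derive \eqref{l.minmax.c} from \eqref{l.minmax.d} together with Lemma \ref{l.finiteheight} and the observation that no finite-height set is maximal; maximality of $C_\infty(i)$ itself is obtained by pushing a hypothetical proper superset back through \eqref{l.minmax.d}, rather than by the paper's direct disjointness argument. Both routes are sound. The paper's buys brevity and uniformity: no case analysis on the height of elements of $C$, and no need to verify $B \subseteq C_\infty(i)$ (which it defers to Lemma \ref{l.containsb}). Yours never invokes the disjointness half of Proposition \ref{p.qlo}, and it surfaces explicitly the two points the paper's terse proof leaves implicit --- the reabsorption of $b$-powers into the sheet (including negative exponents in case (BS3)) and the fact that $B \subseteq C_\infty(i)$ --- at the cost of the extra input Lemma \ref{l.finiteheight} and a longer argument.
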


\begin{proof}
\eqref{l.minmax.a} $C_0(i)$ is directed since it is an increasing union of directed sets.  $C_\infty(i)$ is directed by Lemmas \ref{l.clessthand}\eqref{l.clessthand.d} and \ref{l.cged}\eqref{l.cged.d}.  Both are hereditary since they are unions of hereditary sets.

\eqref{l.minmax.b} is immediate.

\eqref{l.minmax.c} To see that $C_\infty(i)$ is a maximal directed hereditary subset, let $\beta \in \Lambda \setminus C_\infty(i)$.  Write $\beta = b^{j_0} a \cdots b^{j_k} a b^p$ in form (L).  Then there is $\ell \le k$ such that $j_\ell \not= i_\ell$.  By Proposition \ref{p.qlo} we have that $\beta \perp \alpha_k(i)$, and hence there cannot exist a directed hereditary subset containing $\beta$ and $C_\infty(i)$.  Now we show that these are all of the maximal elements of $\Lambda^{**}$.  Let $C \in \Lambda^{*}$.  Since any two elements of $C$ have a common extension, Proposition \ref{p.qlo} implies that there is $i \in [0,d)^\IN$ such that $C \subseteq \bigcup_{k,p=0}^\infty [\alpha_k(i) b^p] = C_\infty(i)$.

\eqref{l.minmax.d} If $C \in \Lambda^*$ is of infinite height, the sequence $i$ in the proof of part \eqref{l.minmax.c} is uniquely determined by $C$.  For each $k$ there is $p \ge 0$ such that $\alpha_k(i) b^p \in C$.  Therefore $\alpha_k(i) \in C$ for all $k$, and hence $C_0(i) \subseteq C$.
\end{proof}

We now describe precisely the sheets mentioned above.  Namely, the \textit{sheets} are the subsets $C_\infty(i)$ of $\Lambda$, for $i \in [0,d)^\IN$.  The cosets $\bigl\{ \alpha_k(i) B : k \in \IN \bigr\}$ define an infinite path in the tree $\Lambda / B$.  Thus we may identify the boundary of $\Lambda$ with the boundary of the directed tree $\Lambda / B$.

\begin{Lemma}
\label{l.containsb}
Let $i \in [0,d)^\IN$ and let $C \in \Lambda^*$ with $C_0(i) \subseteq C$.  Then $C = C_\infty(i)$ if and only if $B \subseteq C$.
\end{Lemma}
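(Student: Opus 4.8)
The plan is to trap $C$ between $C_0(i)$ and $C_\infty(i)$ and then read off the biconditional from the location of $B$. Since $C_0(i)\subseteq C$ and $\theta$ is unbounded on $C_0(i)$, the set $C$ has infinite height, so Lemma \ref{l.minmax}\eqref{l.minmax.d} applies and gives $C\subseteq C_\infty(i)$ (with the same $i$, by the uniqueness clause of that lemma together with $C_0(i)\subseteq C$). Thus throughout we have $C_0(i)\subseteq C\subseteq C_\infty(i)$, and the two directions amount to: (forward) the free-standing fact $B\subseteq C_\infty(i)$; and (reverse) the implication ``$B\subseteq C\Rightarrow C_\infty(i)\subseteq C$''. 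I would flag at the outset that case (BS3) behaves differently from (BS1) and (BS2) and is best dispatched separately.

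For the forward direction I would prove $B\subseteq C_\infty(i)$. In case (BS3) this is immediate from Lemma \ref{l.bincaseethree}, since $B\subseteq[\alpha_0(i)]\subseteq C_0(i)\subseteq C_\infty(i)$. In cases (BS1) and (BS2), fix $m\ge 1$; by Lemma \ref{l.cged}\eqref{l.cged.d} (respectively Lemma \ref{l.clessthand}\eqref{l.clessthand.d}) the paths $b^m$ and $\alpha_0(i)$ meet, and their minimal common extension has the form $\alpha_0(i)b^{P}$, which lies in $[\alpha_0(i)b^P]\subseteq C_\infty(i)$; since this extension belongs to $b^m\Lambda$, the path $b^m$ is one of its initial segments, so $b^m\in C_\infty(i)$ by heredity. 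Letting $m$ vary gives $B\subseteq C_\infty(i)$, and hence $B\subseteq C$ whenever $C=C_\infty(i)$.

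The reverse direction carries the real content. Assume $B\subseteq C$; I must show $\alpha_k(i)b^p\in C$ for all $k,p\ge 0$. Case (BS3) is again separate and in fact easier: by the second assertion of Lemma \ref{l.bincaseethree} applied to the height-one path $b^{i_{k+1}}a$, each $b^p$ is an initial segment of $b^{i_{k+1}}a$, so $\alpha_k(i)b^p$ is an initial segment of $\alpha_{k+1}(i)=\alpha_k(i)b^{i_{k+1}}a\in C_0(i)\subseteq C$, whence $\alpha_k(i)b^p\in C$; this in fact shows $C_\infty(i)=C_0(i)$ in case (BS3), so there the biconditional holds trivially (both sides always true). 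In cases (BS1) and (BS2) I would fix $k$ and consider the downward-closed set $\{p\ge 0:\alpha_k(i)b^p\in C\}$, arguing it is unbounded and hence all of $\IN$. For each $m$, directedness of $C$ produces $\gamma\in C$ extending both $\alpha_k(i)\in C$ and $b^m\in C$; by Proposition \ref{p.qlo} the unique minimal common extension of $\alpha_k(i)$ and $b^m$ has the form $\alpha_k(i)b^{P(m)}$, and since $\gamma$ extends it, heredity gives $\alpha_k(i)b^{P(m)}\in C$.

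The crux — the step I expect to be the main obstacle — is showing $P(m)\to\infty$ as $m\to\infty$, which forces the downward-closed set to exhaust $\IN$ and hence $C_\infty(i)\subseteq C$. I would argue this cleanly and case-uniformly (for (BS1) and (BS2)) as follows: $b^m$ is an initial segment of $\alpha_k(i)b^{P(m)}$, but in these cases the unique normal forms of Proposition \ref{p.baumslagsolitar} imply that any fixed path has only finitely many powers of $b$ among its initial segments; so if the $P(m)$ stayed bounded, the infinitely many distinct paths $b^m$ (recall $b$ has infinite order) would all be initial segments of the finitely many paths $\alpha_k(i)b^{P}$ with $P\le\sup_m P(m)$, which is impossible. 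Alternatively one reads $P(m)\to\infty$ directly off the explicit exponents in Lemma \ref{l.cged}\eqref{l.cged.d} and Lemma \ref{l.clessthand}\eqref{l.clessthand.d}, using that the relevant multiplicities $r$ are at least $1$. Combining the two directions completes the proof.
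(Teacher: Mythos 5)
Your proof is correct and takes essentially the same route as the paper's: case (BS3) is dispatched trivially via Lemma \ref{l.bincaseethree}, and in cases (BS1) and (BS2) both directions reduce to the fact that the unique minimal common extension of $b^m$ and $\alpha_k(i)$ has the form $\alpha_k(i)b^{P(m)}$ with $P(m)$ unbounded in $m$, exactly as in Lemmas \ref{l.cged}\eqref{l.cged.d} and \ref{l.clessthand}\eqref{l.clessthand.d}. Your counting argument for unboundedness (a fixed path has only finitely many powers of $b$ among its initial segments, by uniqueness of form (R) with leading exponent $q \ge 0$) is a harmless self-contained variant of the paper's reading of the explicit exponents; the only points to tighten are that the form $\alpha_k(i)b^{P(m)}$ is established in the cited lemmas rather than in the statement of Proposition \ref{p.qlo}, and that in case (BS2) one must first rewrite $\alpha_k(i) = b^q\alpha(i')$ in form (R) before invoking Lemma \ref{l.clessthand}, since its entries are required to lie in $[0,c)$ --- as the paper does.
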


\begin{proof}
Of course, the hypothesis $C_0(i) \subseteq C$ is equivalent to assuming that $C \subseteq C_\infty(i)$ and that $C$ is of infinite height.  We first consider case (BS1).  By Lemma \ref{l.cged}\eqref{l.cged.d} we have
\[
B \vee C_0(i)
\supseteq \bigl[ b^m \vee \alpha_k(i) \bigr]
= \bigl[ \alpha_k(i) b^{c \sum_{\mu = 0} ^{h -1} r(\phi^\mu(i))_k} \bigr].
\]
Thus $B \vee C_0(i) \supseteq C_\infty(i)$.  Conversely, given $h \ge 0$ let $m = i_0 + hd$.  Then
\[
b^m
\in \bigl[ b^m \vee \alpha_k(i) \bigr]
= \bigl[ \alpha_k(i) b^{c \sum_{\mu = 0} ^{h -1} r(\phi^\mu(i))_k} \bigr] 
\subseteq C_\infty(i).
\]
Hence $B \subseteq C_\infty(i)$.

Now we consider case (BS2).  Fix $k$.  There are $i' \in [0,c)^k$ and $q \ge 0$ such that $\alpha_k(i) = b^q \alpha(i')$ in form (R).  Let $h \ge 0$, and choose $m = q + d \sum_{\mu = 0}^h r(\psi^\mu(i))$ (where $\psi$ is as in Lemma \ref{l.clessthand}).  By Lemma \ref{l.clessthand}\eqref{l.clessthand.d} we have
\[
b^m \vee \alpha_k(i)
= b^q \bigl( b^{m - q} \vee \alpha(i') \bigr)
= b^q \alpha(i') b^{(h + 1)c}
= \alpha_k(i) b^{(h+1)c}.
\]
If $B \subseteq C$, then since $h$ was arbitrary, we see that $C_\infty(i) \subseteq B \vee C_0(i) \subseteq C$.  Conversely, for all $h$ we have
\[
b^{hd} \in \bigl[ b^{d \sum_{\mu = 0}^{h-1} r(\psi^\mu(i))} \alpha(\psi^h(i)) \bigr]
= \bigl[ \alpha(i) b^{hc} \bigr]
\subseteq C_\infty(i).
\]
Thus $B \subseteq C_\infty(i)$.

Finally we consider case (BS3).  By Lemma \ref{l.bincaseethree}, $[\alpha_{k+1}(i)] = [\alpha_k(i) b^{i_{k+1}} a] \supseteq [\alpha_k(i) B]$.  Thus $C_0(i) = C_\infty(i)$, and of course, $B \subseteq C_\infty(i)$.
\end{proof}

\begin{Corollary}
\label{c.closedboundary}
$\Lambda^{**}$ is a closed subset of $\Lambda^*$ (and thus $\partial \Lambda = \Lambda^{**}$).  The map $C_\infty(i) \mapsto i$ of $\partial \Lambda \to [0,d)^\IN$ is a homeomorphism, equivariant for the maps $b^d \cdot$ on $\partial \Lambda$, and $\phi^{-1}$ on $[0,d)^\IN$ (from Lemmas \ref{l.cged}, \ref{l.cltd}, and \ref{l.casethree}).
\end{Corollary}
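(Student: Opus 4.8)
The plan is to realize the bijection $\Phi : [0,d)^\IN \to \Lambda^{**}$, $\Phi(i) = C_\infty(i)$, as a continuous map from a compact space into the Hausdorff space $\Lambda^*$, and then let point-set topology finish the job. That $\Phi$ is a bijection onto $\Lambda^{**}$ is Lemma \ref{l.minmax}\eqref{l.minmax.c} together with the uniqueness in \eqref{l.minmax.d}: if $C_\infty(i) = C_\infty(j)$ then this common set contains both $C_0(i)$ and $C_0(j)$, forcing $i = j$. Since $[0,d)$ is finite and discrete, $[0,d)^\IN$ is compact. Once continuity of $\Phi$ is established, a continuous injection from a compact space into a Hausdorff space is a homeomorphism onto its image and has closed (indeed compact) image; this simultaneously yields that $\Lambda^{**}$ is closed, whence $\partial\Lambda = \overline{\Lambda^{**}} = \Lambda^{**}$, and that $C_\infty(i)\mapsto i$ is a homeomorphism.

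The technical heart is continuity, which I would reduce to one clean observation: for a basic open set $\widehat E$ with $E = \alpha\Lambda\setminus\bigcup_{\nu=1}^n\beta_\nu\Lambda$ (so each $\beta_\nu\in\alpha\Lambda\setminus\{\alpha\}$), one has $C_\infty(i)\in\widehat E$ if and only if $\alpha\in C_\infty(i)$ and $\beta_\nu\notin C_\infty(i)$ for every $\nu$. For the forward implication, a witness $\gamma\in C_\infty(i)$ satisfies $\gamma\in C_\infty(i)\cap\gamma\Lambda\subseteq E\subseteq\alpha\Lambda$, so $\alpha\in[\gamma]\subseteq C_\infty(i)$; and if some $\beta_\nu\in C_\infty(i)$, then directedness of $C_\infty(i)$ produces a common extension of $\gamma$ and $\beta_\nu$ lying in $C_\infty(i)\cap\gamma\Lambda\cap\beta_\nu\Lambda$, so no $\gamma$ can witness membership in $\widehat E$. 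For the reverse implication, take $\gamma=\alpha$ and note that $\beta_\nu\notin C_\infty(i)$ forces $\beta_\nu\Lambda\cap C_\infty(i)=\emptyset$ by heredity. Finally, membership $\delta\in C_\infty(i)$ is equivalent to the form-(L) digits of $\delta$ agreeing with $i_0,\ldots,i_{\theta(\delta)-1}$ (Proposition \ref{p.qlo} and Corollary \ref{c.initseq}, with Lemma \ref{l.minmax}); hence both defining conditions of $\widehat E$ depend on only finitely many coordinates of $i$, so $\Phi^{-1}(\widehat E)$ is a union of cylinder sets and is open.

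For equivariance I would use that $b^d$ acts by left translation on representatives, so $b^d\cdot C_\infty(i)$ is again a maximal directed hereditary set. Applying the defining relation $b^d\alpha_k(\phi(\cdot)) = \alpha_k(\cdot)\,b^{(\cdots)}$ from Lemma \ref{l.cged}\eqref{l.cged.a}, Lemma \ref{l.cltd}\eqref{l.cltd.a}, or Lemma \ref{l.casethree} (according to the case), with $\phi^{-1}(i)$ in place of the variable, gives $b^d\cdot\bigl(\alpha_k(i)b^p\bigr) = \alpha_k(\phi^{-1}(i))\,b^{p'}$ for a suitable $p'$; choosing $p$ large makes $p'\ge 0$, so $\alpha_k(\phi^{-1}(i))$ is an initial segment and therefore lies in $b^d\cdot C_\infty(i)$. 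As this holds for every $k$, we get $C_0(\phi^{-1}(i))\subseteq b^d\cdot C_\infty(i)$, and Lemma \ref{l.minmax}\eqref{l.minmax.d} forces $b^d\cdot C_\infty(i) = C_\infty(\phi^{-1}(i))$, which is exactly the asserted equivariance for $b^d\cdot$ and $\phi^{-1}$.

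The main obstacle I anticipate is the continuity step, more precisely the bookkeeping behind the claim that membership in $C_\infty(i)$ depends on only finitely many coordinates: one must check that the form-(L) digit characterization is uniform across the three cases. In case (BS3) the final $b$-power may be any integer (Lemma \ref{l.bincaseethree}), which is also the source of the $p\ge 0$ adjustment needed in the equivariance computation above. Once that uniformity is granted, no estimates remain and the compactness argument closes the proof.
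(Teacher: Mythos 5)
Your proposal is correct in outline, but it reaches the conclusion by a genuinely different route than the paper. The paper proves closedness of $\Lambda^{**}$ by hand: the finite-height elements form an open set (remarks before Definition \ref{d.infiniteheight}), and for a non-maximal $C$ of infinite height, Lemma \ref{l.containsb} makes $m = \max\{\ell : b^\ell \in C\}$ finite, so $\Lambda^* \setminus \widehat{b^{m+1}\Lambda}$ is a neighborhood of $C$ missing $\Lambda^{**}$; the homeomorphism is then read off from the correspondence between the sets $(b^{j_0} a \cdots b^{j_k} a \Lambda)^{\widehat{\null}}$ and cylinder sets. You instead prove continuity of $i \mapsto C_\infty(i)$ on \emph{all} basic open sets $\widehat{E}$ and let compactness of $[0,d)^\IN$ do the rest: a continuous injection of a compact space into the Hausdorff space $\Lambda^*$ is a homeomorphism onto a compact, hence closed, image, which delivers closedness and the homeomorphism in one stroke. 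Your membership criterion ($C \in \widehat{E}$ iff $\alpha \in C$ and every $\beta_\nu \notin C$, by directedness and heredity) is correct and is more than the paper writes down; it subsumes both the paper's separation argument and its sub-basis correspondence. Note, however, that your digit characterization of membership in $C_\infty(i)$ still needs $B \subseteq C_\infty(i)$ to dispose of height-zero paths, and that is exactly Lemma \ref{l.containsb}, so you do not actually bypass that lemma --- you only repackage its use.

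The one thin spot is equivariance. You assert that $b^d \cdot C_\infty(i)$ is again maximal because ``$b^d$ acts by left translation on representatives,'' but left translation does not \emph{obviously} preserve maximality, and without maximality your appeal to Lemma \ref{l.minmax}\eqref{l.minmax.d} only yields the inclusion $b^d \cdot C_\infty(i) \subseteq C_\infty(\phi^{-1}(i))$, not equality. Two quick repairs: (a) since $B \subseteq C_\infty(i)$, heredity gives $B \subseteq [b^{d+m}] \subseteq b^d \cdot C_\infty(i)$ for all $m$, so Lemma \ref{l.containsb}, applied to $C = b^d \cdot C_\infty(i)$ together with your inclusion $C_0(\phi^{-1}(i)) \subseteq b^d \cdot C_\infty(i)$, forces equality; or (b) prove the general fact that $C \mapsto \alpha C$ preserves maximality: if $D \supseteq \alpha C$ is directed and hereditary, then $\sigma^\alpha(D \cap \alpha\Lambda)$ is directed, hereditary, and contains $C$, hence equals $C$ by maximality, and then directedness of $D$ (every $\delta \in D$ has a common extension with $\alpha$ inside $D$) forces $D \subseteq \alpha C$. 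Either repair closes the argument; option (a) is the mechanism the paper itself relies on when it declares equivariance ``clear from the three lemmas.''
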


\begin{proof}
It was pointed out in the remarks before Definition \ref{d.infiniteheight} that the subset of $\Lambda^*$ consisting of elements of finite height is an open subset of $\Lambda^*$.  Thus we must show that if $C \not\in \Lambda^{**}$ is of infinite height, then $C$ has a neighborhood disjoint from $\Lambda^{**}$.  By Lemma \ref{l.containsb} we know that $m = \max \{ \ell : b^\ell \in C \}$ is finite.  Then $\Lambda^* \setminus \widehat{b^{m + 1}\Lambda}$ is a neighborhood of $C$ disjoint from $\Lambda^{**}$.

It is clear that $C_\infty(i) \mapsto i$ is a bijection of $\partial \Lambda$ with $[0,d)^\IN$.  Given $(j_0, \ldots, j_k) \in [0,d)^{k+1}$, the open set $(b^{j_0} a \cdots b^{j_k} a \Lambda)^{\widehat{\null}}$ corresponds to the cylinder set defined by $(j_0, \ldots, j_k)$, showing that the map is a homeomorphism.  The equivariance is clear from the three lemmas mentioned.
\end{proof}

\begin{Remark}
\label{r.odometer}
It follows from Corollary \ref{c.closedboundary} that left-concatenation by $b$ defines a homeomorphism of $\partial \Lambda$.
\end{Remark}

\begin{Definition}
\label{d.sigma}
Let $\Sigma \equiv \Sigma(\Lambda) = \Lambda^* \setminus \bigl( \Lambda \cup (\Lambda / B) \cup \Lambda^{**} \bigr)$.
\end{Definition}

\begin{Lemma}
\label{l.sigmaopen}
$\Sigma$ is a relatively open subset of $\Lambda^* \setminus \Lambda$.
\end{Lemma}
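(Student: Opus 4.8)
The plan is to produce, for each $C \in \Sigma$, a single basic set of the form $\widehat{b^{m+1}\Lambda}$ whose complement isolates $C$ from $\Lambda/B$ and from $\Lambda^{**}$ inside $\Lambda^* \setminus \Lambda$. First I would pin down what $\Sigma$ is. By Lemma~\ref{l.finiteheight} the finite-height elements of $\Lambda^*$ are exactly $\Lambda \cup (\Lambda/B)$, so every $C \in \Sigma$ has infinite height. By Lemma~\ref{l.minmax} such a $C$ satisfies $C_0(i) \subseteq C \subseteq C_\infty(i)$ for a unique $i$, and by Lemma~\ref{l.containsb} it is maximal (equals $C_\infty(i)$) precisely when $B \subseteq C$. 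Hence $\Sigma = \{ C \in \Lambda^* : C \text{ has infinite height and } B \not\subseteq C \}$. For such a $C$ the set $\{\ell \ge 0 : b^\ell \in C\}$ contains $0$ (the unique vertex $e = b^0$ lies in every nonempty hereditary set), is downward closed by heredity, and is proper; so $m := \max\{\ell : b^\ell \in C\}$ is a well-defined nonnegative integer.

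Next I would check that $C \in V := \Lambda^* \setminus \widehat{b^{m+1}\Lambda}$, an open set since $\widehat{b^{m+1}\Lambda}$ is compact-open and hence closed. This repeats the argument in the proof of Corollary~\ref{c.closedboundary}: if some $\gamma \in C$ witnessed $C \cap \gamma\Lambda \subseteq b^{m+1}\Lambda$, then from $\gamma \in \gamma\Lambda$ we would get $\gamma \in b^{m+1}\Lambda$, whence $b^{m+1} \in [\gamma] \subseteq C$, contradicting the maximality of $m$. Thus $C \notin \widehat{b^{m+1}\Lambda}$, i.e.\ $C \in V$.

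The crucial step is to show that $V$ meets $\Lambda^* \setminus \Lambda$ only inside $\Sigma$, i.e.\ that $(\Lambda/B) \cup \Lambda^{**} \subseteq \widehat{b^{m+1}\Lambda}$. I claim every element of $(\Lambda/B) \cup \Lambda^{**}$ contains $B$. For $\Lambda^{**}$ this is Lemma~\ref{l.containsb} applied to $C_\infty(i)$ itself. For $[\beta B] \in \Lambda/B$ with $\theta(\beta) = 0$ we have $[\beta B] = B$, while for $\theta(\beta) \ge 1$ the defining relation lets one push powers of $b$ to the left (as in Lemma~\ref{l.bincaseethree} for case (BS3), and by the same computation $a b^{kc} = b^{kd} a$ in cases (BS1) and (BS2)); thus $\beta b^n$ has an initial segment $b^{s_n}$ with $s_n \to \infty$, so for each fixed $\ell$ we get $b^\ell \in [\beta b^n]$ once $n$ is large, giving $B \subseteq [\beta B]$. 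In every case $b^{m+1}$ lies in each $D \in (\Lambda/B) \cup \Lambda^{**}$, and taking $\gamma = b^{m+1}$ yields $D \cap b^{m+1}\Lambda \subseteq b^{m+1}\Lambda$, so $D \in \widehat{b^{m+1}\Lambda}$. Since $\Lambda^* \setminus \Lambda = (\Lambda/B) \sqcup \Sigma \sqcup \Lambda^{**}$ by Definition~\ref{d.sigma}, we conclude $V \cap (\Lambda^* \setminus \Lambda) \subseteq \Sigma$; as $C \in V \cap (\Lambda^* \setminus \Lambda)$, this is a relative neighborhood of $C$ contained in $\Sigma$, and $C$ was arbitrary, so $\Sigma$ is relatively open.

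The main obstacle is precisely this absorption fact. A naive reading of the generators would suggest that $[\beta B]$ contains only the powers $b^\ell$ with $\ell$ at most the leading $b$-exponent of $\beta$, which would make $\Sigma$ fail to be open; it is exactly the Baumslag--Solitar relation, converting trailing powers of $b$ into unboundedly large leading powers, that forces $B \subseteq [\beta B]$ and lets the single set $\widehat{b^{m+1}\Lambda}$ do all of the separating.
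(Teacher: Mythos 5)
Your proof is correct and takes essentially the same route as the paper: the paper's own proof also sets $m = \sup\{j : b^j \in C\} < \infty$ and uses $\Lambda^* \setminus \widehat{b^{m+1}\Lambda}$ as the separating neighborhood. The only difference is that you spell out the two facts the paper leaves implicit, namely that $m < \infty$ follows from Lemmas \ref{l.finiteheight}, \ref{l.minmax} and \ref{l.containsb}, and that every element of $(\Lambda/B) \cup \Lambda^{**}$ contains $B$ (for $\Lambda/B$ via the rewriting $a b^{kc} = b^{kd}a$, respectively Lemma \ref{l.bincaseethree} in case (BS3)).
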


\begin{proof}
Let $C \in \Sigma$.  Then $m = \sup \{ j : b^j \in C \} < \infty$.  Then $\Lambda^* \setminus \widehat{b^{m+1}\Lambda}$ is a neighborhood of $C$ disjoint from $(\Lambda / B) \cup \Lambda^{**}$.
\end{proof}

We now see that $G^0 = \Lambda^*$ is the disjoint union of four invariant subsets:  $\Lambda^* = \Lambda \sqcup (\Lambda / B) \sqcup \partial \Lambda \sqcup \Sigma$, where the meanings of the first two subsets were specified in the remarks before Definition \ref{d.infiniteheight} (in case (BS3), the set $\Sigma$ is empty).  The subset $\Lambda$ is discrete.  It was shown in the remarks before Definition \ref{d.infiniteheight} that the subset $\Lambda / B$ is relatively discrete, hence open, in $\Lambda^* \setminus \Lambda$.  We now have seen that $\Sigma$ is also open in $\Lambda^* \setminus \Lambda$.  

Let $G = G(\Lambda)$ be the groupoid of $\Lambda$.  We wish to prove that $G$ is amenable.  We note that since the maximal degree functor $\theta$ is degenerate,  the fixed-point groupoid is not AF.  However, we may still use Proposition \ref{p.nuclearity}.  The first step is to show that the restriction of $G$ to $\partial \Lambda$ is amenable.

\begin{Theorem}
\label{t.boundaryamenable}
$G |_{\partial \Lambda}$ is amenable.
\end{Theorem}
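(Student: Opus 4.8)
The plan is to deduce amenability of $G|_{\partial\Lambda}$ from Proposition \ref{p.nuclearity}, using the height as a degree functor. I set $c\colon G|_{\partial\Lambda}\to\IZ$, $c([\alpha,\beta,x])=\theta(\alpha)-\theta(\beta)$. This is well defined on equivalence classes (if $(\alpha,\beta,x)\sim(\alpha',\beta',x')$ then $\alpha\gamma=\alpha'\gamma'$ and $\beta\gamma=\beta'\gamma'$ force $\theta(\alpha)-\theta(\beta)=\theta(\alpha')-\theta(\beta')$), is continuous on the basic sets $[\alpha,\beta,\widehat E\,]$, and is a homomorphism since in the product $[\alpha,\beta,x][\gamma,\delta,y]=[\alpha\xi,\delta\eta,z]$ the relation $\beta\xi=\gamma\eta$ gives $\theta(\alpha\xi)-\theta(\delta\eta)=(\theta(\alpha)-\theta(\beta))+(\theta(\gamma)-\theta(\delta))$. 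Hence by Proposition \ref{p.nuclearity} it suffices to prove that the kernel groupoid $H:=c^{-1}(0)$ is amenable.

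Next I would give a concrete model for $H$. A germ $[\alpha,\beta,x]$ lies in $H$ exactly when $\theta(\alpha)=\theta(\beta)$; it is then implemented by the element $\alpha\beta^{-1}$ of $\ker\theta$ and carries the boundary point $\beta x$ to $\alpha x$ without changing height. Using the homeomorphism $C_\infty(i)\mapsto i$ of Corollary \ref{c.closedboundary} I identify $\partial\Lambda$ with the end space $[0,d)^\IN$ of the directed tree $\Lambda/B$, in which every vertex has exactly $d$ immediate successors. Under this identification $H$ is generated by the \emph{tail moves}, which replace a finite initial segment of $i$ by another of the same height (these assemble into the AF tail-equivalence relation of the $d$-ary tree), together with the \emph{odometer} coming from left multiplication by $b$, which by Remark \ref{r.odometer} and the equivariance $b^d\leftrightarrow\phi^{-1}$ acts through the homeomorphism $\phi$ of Lemmas \ref{l.cged}, \ref{l.cltd} and \ref{l.casethree}. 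It is this odometer, with its infinite height-preserving orbits, that prevents $H$ from being AF.

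To prove $H$ amenable I would build an approximate invariant mean directly from the tree, an ``amenable-by-amenable'' construction reflecting that $\ker\theta$ acts on the boundary of $\Lambda/B$ with amenable stabilizers. For $i\in[0,d)^\IN$ and $n,M\in\IN$, let $\mu^{i}_{n,M}$ be the uniform probability measure on the finite set $\{\,b^{k}j : |k|\le M,\ j_m=i_m\ \text{for all}\ m\ge n\,\}$, whose points all lie in the $H$-orbit of $i$; the factor $\{j : j_m=i_m\ (m\ge n)\}$ is a single level-$n$ cylinder, finite of cardinality $d^{\,n}$ because the branching is finite, and $\{b^k : |k|\le M\}$ is a F\o lner interval for the $\IZ$ generated by the odometer. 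Sending $n\to\infty$ makes these measures asymptotically invariant under the tail moves (a finite change to $i$ leaves the level-$n$ cylinder fixed once $n$ is large), and sending $M\to\infty$ makes them asymptotically invariant under the odometer; the combined net is a continuous topological approximate invariant mean, giving amenability of $H$ and, via Proposition \ref{p.nuclearity}, of $G|_{\partial\Lambda}$.

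The crux is Step 3, and the main obstacle is the interaction of the odometer with the cylinders. Because $b$ propagates carries, $b\!\cdot\!\{j:j_m=i_m\ (m\ge n)\}$ is not exactly a level-$n$ cylinder: on the (relatively small) set of $j$ whose leading coordinates force a carry past level $n$ the image escapes, and one must show the resulting symmetric difference has relative size tending to $0$. This is precisely where the three cases diverge and where Lemmas \ref{l.cged}, \ref{l.cltd} and \ref{l.casethree} do their work: the locally constant carry data $r(i)_k$ controls how far a carry can travel on each cylinder --- unbounded in case (BS1), confined to $\{0,1\}$ in case (BS2), and sign-alternating in case (BS3) --- and the quantitative F\o lner estimate must be extracted case by case. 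Since $\theta$ is degenerate, $H$ is genuinely non-AF, so there is no shortcut through an increasing union of compact open subgroupoids; the finite branching of $\Lambda/B$ is what ultimately makes the shadows finite and the estimates go through.
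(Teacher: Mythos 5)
Your first step---inducing the cocycle $c([\alpha,\beta,x])=\theta(\alpha)-\theta(\beta)$ and invoking Proposition \ref{p.nuclearity} to reduce to the kernel groupoid $H=c^{-1}(0)$---is exactly how the paper begins, and it is correct. But from there the proposal is not a proof: its entire content is concentrated in your Step 3, and you explicitly defer that step (``the quantitative F\o lner estimate must be extracted case by case''). The obstacle you name is real and is worse than you suggest. In case (BS1), Lemma \ref{l.cged} gives $r(i)_k \ge 1$ for \emph{every} $k$ and \emph{every} $i$, so each application of $b^{\pm d}$ carries at every level; consequently $b^k j$ never lies in the same co-cylinder as $j$, and a tail move $T$ applied to $b^k j$ lands in a set whose relation to your product sets $\{\,b^k j : |k|\le M,\ j \text{ in a co-cylinder}\,\}$ is controlled by the full carry data, not by a boundary-of-interval count. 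Nothing in the proposal bounds the resulting symmetric difference, and it is not clear the sets as written are F\o lner at all. There is a second gap: your measures are supported on \emph{orbit points}, so at best you are producing invariance data for the orbit equivalence relation rather than for the groupoid $H$ itself; these differ precisely when there is isotropy, which does occur here (e.g.\ when $d \mid c$, cf.\ Theorem \ref{t.groupoidproperties}\eqref{t.groupoidpropertiesc}), so the argument would also need to handle the isotropy fibers.

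The paper's proof shows that no such estimates are needed, and also that your closing remark---that the degeneracy of $\theta$ forecloses any increasing-union argument---is mistaken. What fails is an increasing union of \emph{compact or elementary} open subgroupoids (the AF shortcut); the paper instead writes $H=\bigcup_{k}H_k$ where $H_k=\{[\alpha,\beta,x]\in H:\theta(\alpha)=\theta(\beta)=k\}$, checks that each $H_k$ is an \emph{open} subgroupoid with $H_k\subseteq H_{k+1}$, and identifies $H_k\cong\bigl([0,d)^k\times[0,d)^k\bigr)\times H_0$ with $H_0\cong\IZ\ltimes\partial\Lambda$. Amenability of $H_0$ is free, because $\IZ$ is an amenable group acting on a compact space; amenability of each $H_k$ follows by tensoring with an elementary groupoid; and an increasing union of open amenable subgroupoids is amenable (\cite{ren}, III.1). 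All the carry complications you wrestle with are absorbed into the single homeomorphism by which $b$ acts on $\partial\Lambda$ (Remark \ref{r.odometer}), where they are harmless. If you want to salvage your approach, the cleanest repair is to prove amenability of $H_0$ first (which for a $\IZ$-action needs no F\o lner analysis of the space) and then recover $H$ by the open increasing union---which is precisely the paper's argument.
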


\begin{proof}
By Proposition \ref{p.nuclearity}, it suffices to show that $G^\theta |_{\partial \Lambda}$ is amenable.  Let $H = G^\theta |_{\partial \Lambda}$.  Thus $H = \bigl\{ [\alpha,\beta,x] : \theta(\alpha) = \theta(\beta),\ x \in \partial \Lambda \bigr\}$.  For $k \in \IN$ let $H_k = \bigl\{ [\alpha,\beta,x] \in H : \theta(\alpha) = \theta(\beta) = k \bigr\}$.  Then $H = \bigcup_{k = 0}^\infty H_k$.  First we show that $H_k$ is a subgroupoid of $H$.  It is clear that $H_k$ is closed under inversion.  Let $\bigl( [\alpha,\beta,x],[\gamma,\delta,y] \bigr) \in H_k^{(2)}$.  Since $\beta x = \gamma y$, we have that in form (L), $\beta = b^{i_0} a \cdots \beta^{i_k} a b^p$ and $\gamma = b^{i_0} a \cdots b^{i_k} a b^q$.  Thus $y = b^{p - q} x$ (cf. Remark \ref{r.odometer}).  Therefore, letting $\eps = b^{i_0} a \cdots b^{i_k} a$, we have
\[
[\alpha,\beta,x] [\gamma,\delta,y] 
= [\alpha,\eps b^p x] [\eps b^q,\delta,b^{p - q}x] 
= [\alpha b^q,\eps b^{p + q}, b^{-q} x] [\eps b^{p + q},\delta b^p, b^{-q}x] 
= [\alpha b^q,\delta b^p, b^{-q}x] 
\in H_k.
\]
Next we show that $H_k \subseteq H_{k + 1}$.  Let $\theta(\alpha) = \theta(\beta) = k$, and $[\alpha,\beta,x] \in H_k$.  Write $x = C_\infty(i)$ and $y = C_\infty(\sigma(i))$, where we use $\sigma$ to denote the (noninvertible) left shift on $[0,d)^\IN$.  Then
\[
[\alpha,\beta,x]
= [\alpha b^{i_0} a, \beta b^{i_0} a, y]
\in H_{k + 1}.
\]
Note also that
\[
H_k 
= \bigcup_{i,j \in [0,d)^k} \bigcup_{p,q \in \IN} 
[b^{i_0} a \cdots b^{i_{k - 1}} a b^p, b^{j_0} a \cdots b^{j_{k - 1}} a b^q, \partial \Lambda]
\]
is an open subgroupoid of $H$, hence also of $H_{k + 1}$.

Now we observe that the map $[b^p, b^q, x] \mapsto (p - q, x)$ is an isomorphism of $H_0$ onto the transformation groupoid $\IZ \ltimes \partial \Lambda$.  Thus $H_0$ is amenable.  Finally, if we write the multiplication in $H_k$ as
\[
[b^{i_1} a \cdots b^{i_k} a b^p, b^{j_1} a \cdots b^{j_k} a b^q, x]
[ b^{j_1} a \cdots b^{j_k} a b^q,  b^{\ell_1} a \cdots b^{\ell_k} a b^r, x]
= [b^{i_1} a \cdots b^{i_k} a b^p, b^{\ell_1} a \cdots b^{\ell_k} a b^r, x],
\] 
we see that $H_k$ is isomorphic to the product groupoid
$\bigl( [0,d)^k \times [0,d)^k \bigr) \times H_0$, which is amenable.  Therefore $H$ is amenable (by \cite{ren}, III.1).
\end{proof}

Before proving that $G$ is amenable, we require a detailed description of the remaining elements of $\Lambda^*$ having infinite height, that is, the elements of $\Sigma$.  We first consider case (BS1).  Let $i \in [0,d)^\IN$.  Let $n_0$, $n_1$, $\ldots \in \IN$ satisfy
\begin{equation}
\label{e.inequalities}
\frac{c n_{\ell - 1} - i_\ell}{d}
\le n_\ell
< \frac{c(n_{\ell - 1} + 1) - i_\ell}{d}, \quad \ell \ge 1. \tag{$**$}
\end{equation}
\vspace*{.05 in}

Note that since the outside terms of these inequalities differ by $\tfrac{c}{d} \ge 1$, such sequences exist for any choice of $n_0$.  We let $n = (n_0, n_1, \ldots)$, and set
\[
C_n(i) = \bigcup_{\ell = 0}^\infty [\alpha_{\ell - 1}(i) b^{n_\ell d + i_\ell}].
\]

\begin{Lemma}
\label{l.sequencea}
The sets in the above union increase, and hence $C_n(i)$ is directed and hereditary.
\end{Lemma}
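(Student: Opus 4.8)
The plan is to reduce the whole statement to a single inclusion between consecutive terms of the union, since once the union is shown to be increasing, both directedness and heredity follow formally. Write $m_\ell = n_\ell d + i_\ell$, so that the $\ell$-th term of the union is $[\alpha_{\ell-1}(i) b^{m_\ell}]$, where by convention $\alpha_{-1}(i) = e$. It then suffices to show, for every $\ell \ge 0$, that $\alpha_\ell(i) b^{m_{\ell+1}}$ extends $\alpha_{\ell-1}(i) b^{m_\ell}$; this yields $[\alpha_{\ell-1}(i) b^{m_\ell}] \subseteq [\alpha_\ell(i) b^{m_{\ell+1}}]$ at once.

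First I would put the defining inequalities \eqref{e.inequalities} into a usable form: multiplying through by $d$ and adding $i_\ell$ turns them into $c n_{\ell-1} \le m_\ell < c(n_{\ell-1}+1)$ for $\ell \ge 1$. Reindexing at position $\ell+1$ gives $c n_\ell \le m_{\ell+1} < c(n_\ell+1)$, and the only part needed for the inclusion is the lower bound $c n_\ell \le m_{\ell+1}$, which allows me to set $s := m_{\ell+1} - c n_\ell \ge 0$.

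The heart of the argument is then a one-line calculation with the defining relation written as $a b^{c n_\ell} = b^{d n_\ell} a$. Using $\alpha_\ell(i) = \alpha_{\ell-1}(i) b^{i_\ell} a$,
\[
\alpha_\ell(i) b^{m_{\ell+1}}
= \alpha_{\ell-1}(i) b^{i_\ell} a b^{c n_\ell} b^s
= \alpha_{\ell-1}(i) b^{i_\ell + d n_\ell} a b^s
= \alpha_{\ell-1}(i) b^{m_\ell} a b^s,
\]
where I have used $i_\ell + d n_\ell = m_\ell$. Since $s \ge 0$ we have $a b^s \in \Lambda$ (in case (BS1) an element of the form $a b^s$ lies in $\Lambda$ precisely when $s \ge 0$), so $\alpha_\ell(i) b^{m_{\ell+1}}$ is an extension of $\alpha_{\ell-1}(i) b^{m_\ell}$, as required; the case $\ell = 0$ is identical with $\alpha_{-1}(i) = e$.

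With the union shown to be increasing, the remaining conclusions are formal. Each $[\gamma]$ is hereditary, and a union of hereditary sets is hereditary, so $C_n(i)$ is hereditary. For directedness, each $[\gamma]$ is directed because $\gamma$ itself is a common extension of all its initial segments; and given any $\alpha$, $\beta \in C_n(i)$, the increase of the union places both in a single term $[\alpha_{\ell-1}(i) b^{m_\ell}]$, where a common extension lying in that term (hence in $C_n(i)$) is found. I do not anticipate a real obstacle here; the only point demanding care is the correct reformulation of \eqref{e.inequalities} and the observation that merely the lower bound $c n_\ell \le m_{\ell+1}$ is needed to secure $s \ge 0$ --- the upper bound is irrelevant for this lemma, though it will become essential later when $C_n(i)$ is identified as an element of $\Sigma$.
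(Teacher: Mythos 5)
Your proof is correct and follows essentially the same route as the paper: both hinge on the lower bound $c n_\ell \le n_{\ell+1} d + i_{\ell+1}$ from \eqref{e.inequalities} together with the relation $b^{d n_\ell} a = a b^{c n_\ell}$, your identity $\alpha_\ell(i) b^{m_{\ell+1}} = \alpha_{\ell-1}(i) b^{m_\ell} \, a b^s$ being just a condensed form of the paper's chain $[\alpha_{\ell-1}(i) b^{n_\ell d + i_\ell}] \subseteq [\alpha_{\ell-1}(i) b^{n_\ell d + i_\ell} a] = [\alpha_\ell(i) b^{n_\ell c}] \subseteq [\alpha_\ell(i) b^{n_{\ell+1} d + i_{\ell+1}}]$. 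The formal deduction of heredity and directedness from the increasing union is also exactly what the paper intends.
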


\begin{proof}
Since $c n_\ell \le n_{\ell + 1} d + i_{\ell + 1}$, we have
\[
\alpha_{\ell - 1}(i) b^{n_\ell d + i_\ell}
\in [\alpha_{\ell - 1}(i) b^{n_\ell d + i_\ell} a]
= [\alpha_\ell(i) b^{n_\ell c}]
\subseteq [\alpha_\ell(i) b^{n_{\ell + 1} d + i_{\ell + 1}}]. \qedhere
\]
\end{proof}

We remark that what we have denoted $C_0(i)$ equals the directed hereditary set defined as above for the sequence $n$ consisting entirely of zeros.

\begin{Lemma}
\label{l.sequenceb}
$\alpha_{\ell - 1}(i) b^{(n_\ell + 1)d} \not\in C_n(i)$.
\end{Lemma}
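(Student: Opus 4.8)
The plan is to show that $\mu_0 := \alpha_{\ell-1}(i)\,b^{(n_\ell+1)d}$ is not an initial segment of any of the paths $\gamma_{\ell'} := \alpha_{\ell'-1}(i)\,b^{n_{\ell'}d + i_{\ell'}}$ whose initial segments make up $C_n(i)$ (recall $C_n(i)$ is the increasing union of the $[\gamma_{\ell'}]$). Since $\theta(\mu_0) = \ell$ and an initial segment cannot exceed the height of the path it refines, I may restrict attention to $\ell' \ge \ell$. For $\ell' = \ell$, the paths $\mu_0$ and $\gamma_\ell$ have the same height and the same initial part $\alpha_{\ell-1}(i)$, so $\mu_0 \in [\gamma_\ell]$ would force $n_\ell d + i_\ell = (n_\ell+1)d + j$ for some $j \ge 0$ by left-cancellation and uniqueness of form (L) (Proposition \ref{p.baumslagsolitar}), contradicting $i_\ell < d$. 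This leaves the main range $\ell' > \ell$.

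For $\ell' > \ell$, both $\mu_0$ and $\gamma_{\ell'}$ extend $\alpha_{\ell-1}(i)$, so by left-cancellation $\mu_0 \in [\gamma_{\ell'}]$ if and only if $b^{(n_\ell+1)d}$ is an initial segment of the tail $\tau_{\ell'} := b^{i_\ell} a b^{i_{\ell+1}} a \cdots b^{i_{\ell'-1}} a\, b^{n_{\ell'}d + i_{\ell'}}$. The next step is the observation that, writing $\tau = b^{\lambda(\tau)} a b^{j_1} a \cdots a b^{j_k}$ in form (R) (with $j_\mu \in [0,c)$), a power $b^m$ is an initial segment of $\tau$ precisely when $m \le \lambda(\tau)$; this follows from the uniqueness of form (R) in Proposition \ref{p.baumslagsolitar}. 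Hence it suffices to prove the uniform bound $\lambda(\tau_{\ell'}) \le n_\ell d + i_\ell$, since $(n_\ell+1)d > n_\ell d + i_\ell$.

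To compute $\lambda(\tau_{\ell'})$ I would reduce $\tau_{\ell'}$ to form (R) from right to left, repeatedly rewriting $a b^{pc + s} = b^{pd} a b^s$ (with $s \in [0,c)$) to push full multiples of $c$ leftward through each $a$. This produces the recursion $N_{\ell'} = n_{\ell'}d + i_{\ell'}$ and $N_m = i_m + \lfloor N_{m+1}/c\rfloor\, d$ for $\ell \le m < \ell'$, with $\lambda(\tau_{\ell'}) = N_\ell$. Writing $p_{m+1} = \lfloor N_{m+1}/c\rfloor$, a downward induction on $m$ shows $p_m \le n_{m-1}$ for $\ell + 1 \le m \le \ell'$: the base case $m = \ell'$ and each inductive step use only the right-hand inequality of \eqref{e.inequalities}, namely $n_m d + i_m < c(n_{m-1}+1)$, which gives $N_m/c < n_{m-1}+1$ (using $N_m \le i_m + n_m d$ at the inductive step) and hence $p_m = \lfloor N_m/c\rfloor \le n_{m-1}$. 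Taking $m = \ell+1$ yields $\lambda(\tau_{\ell'}) = i_\ell + p_{\ell+1}d \le i_\ell + n_\ell d$, as required.

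The only delicate point is the bookkeeping in the form-(R) reduction and anchoring the induction correctly: the inequalities \eqref{e.inequalities} are indexed for $\ell \ge 1$, so I must check that every application occurs at an index $\ge 1$, which holds because the computation runs only for $\ell' > \ell \ge 0$, so the smallest index used is $\ell + 1 \ge 1$. Everything else is routine cancellation in $\Lambda$; the heart of the argument is recognizing that the right-hand half of \eqref{e.inequalities} is exactly the statement that each leftward carry $p_m$ is bounded by $n_{m-1}$.
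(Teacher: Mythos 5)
Correct, and essentially the paper's own argument: both proofs reduce the question (after left-cancellation of $\alpha_{\ell-1}(i)$) to the form (R) of the tail $b^{i_\ell} a b^{i_{\ell+1}} a \cdots a\, b^{n_{\ell'}d + i_{\ell'}}$ and conclude by uniqueness of form (R), since its leading $b$-exponent is at most $n_\ell d + i_\ell < (n_\ell+1)d$. The only difference is bookkeeping: the paper's induction runs left-to-right, appending $a b^{j_{m+1}}$ at each stage and tracking the \emph{exact} leading exponent $n_\ell d + i_\ell$ (which uses both halves of the inequalities \eqref{e.inequalities}), whereas your right-to-left carry estimate $p_m \le n_{m-1}$ tracks only an upper bound and needs just the strict half of \eqref{e.inequalities} --- a mild economy, but the same decomposition and the same key lemma.
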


\begin{proof}
We will show that for $m \ge \ell$ we have $\alpha_{\ell - 1}(i) b^{(n_\ell + 1)d} \not\in [\alpha_{m - 1}(i) b^{n_m d + i_m}]$.  Since $i_\ell < d$, this is true when $m = \ell$.  Let $m \ge \ell$, and suppose inductively that 
$b^{i_\ell} a \cdots b^{i_{m - 1}} a b^{n_m d + i_m} 
= b^{n_\ell d + i_\ell} a b^{j_{\ell + 1}} \cdots a b^{j_m}$,
where $j_{\ell + 1}$, $\ldots$, $j_m \in [0,c)$ (this is true vacuously when $m = \ell$).  Since
$c n_m \le d n_{m + 1} + i_{m + 1} < c(n_m + 1)$,
we have that
$a b^{n_{m + 1} d + i_{m + 1}} = b^{n_m d} a b^{j_{m + 1}}$,
where $j_{m + 1} \in [0,c)$.  Then
\[
b^{i_\ell} a \cdots b^{i_m} a b^{n_{m + 1} d + i_{m + 1}}
= b^{i_\ell} a \cdots b^{i_{m - 1}} a b^{n_m d + i_m} a b^{j_{m + 1}} 
= b^{n_\ell d + i_\ell} a b^{j_{\ell + 1}} \cdots a b^{j_{m + 1}}.
\]
Since this is in form (R), we see that $b^{(n_\ell + 1)d} \not\in [b^{i_\ell} a \cdots b^{n_{m + 1} d + i_{m + 1}}]$, since $(n_\ell + 1)d > n_\ell d + i_\ell$.  Therefore $\alpha_{\ell - 1}(i) b^{(n_\ell + 1)d} \not\in [\alpha_m(i) b^{n_{m + 1} d + i_{m + 1}}]$.
\end{proof}

\begin{Corollary}
\label{c.sequencec}
$n_\ell = \max \bigl\{ m : \alpha_{\ell - 1}(i) b^{md} \in C_n(i) \bigr\}$.
\end{Corollary}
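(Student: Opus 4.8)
The plan is to read this off directly from the two preceding lemmas together with the hereditary property of $C_n(i)$, so the argument is essentially a packaging step. There are two things to verify: that $n_\ell$ belongs to the set $\bigl\{ m : \alpha_{\ell - 1}(i) b^{md} \in C_n(i) \bigr\}$, and that no larger value does.

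First I would show $n_\ell$ is in the set. By Lemma \ref{l.sequencea} the defining union for $C_n(i)$ is increasing, so in particular the $\ell$-th term $\alpha_{\ell - 1}(i) b^{n_\ell d + i_\ell}$ lies in $C_n(i)$. Since $i_\ell \ge 0$, the path $\alpha_{\ell - 1}(i) b^{n_\ell d}$ is an initial segment of $\alpha_{\ell - 1}(i) b^{n_\ell d + i_\ell}$, and heredity of $C_n(i)$ gives $\alpha_{\ell - 1}(i) b^{n_\ell d} \in C_n(i)$. Thus $n_\ell$ witnesses membership, and the maximum (if it exists) is at least $n_\ell$.

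Next I would rule out every $m > n_\ell$. Lemma \ref{l.sequenceb} states precisely that $\alpha_{\ell - 1}(i) b^{(n_\ell + 1)d} \not\in C_n(i)$. For any $m \ge n_\ell + 1$ we may factor $\alpha_{\ell - 1}(i) b^{md} = \alpha_{\ell - 1}(i) b^{(n_\ell + 1)d} \, b^{(m - n_\ell - 1)d}$, exhibiting $\alpha_{\ell - 1}(i) b^{(n_\ell + 1)d}$ as an initial segment of $\alpha_{\ell - 1}(i) b^{md}$. If $\alpha_{\ell - 1}(i) b^{md}$ were in $C_n(i)$, heredity would force its initial segment $\alpha_{\ell - 1}(i) b^{(n_\ell + 1)d}$ into $C_n(i)$ as well, contradicting Lemma \ref{l.sequenceb}. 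Hence no $m > n_\ell$ occurs, and combining with the previous paragraph gives $n_\ell = \max \bigl\{ m : \alpha_{\ell - 1}(i) b^{md} \in C_n(i) \bigr\}$.

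I do not anticipate any genuine obstacle here, since all the combinatorial content has already been absorbed into Lemmas \ref{l.sequencea} and \ref{l.sequenceb}; the only point requiring a moment's care is to invoke heredity in both directions, once to \emph{lower} the exponent from $n_\ell d + i_\ell$ to $n_\ell d$ and secure membership, and once to \emph{propagate} non-membership \emph{upward} from $(n_\ell + 1)d$ to all larger multiples of $d$.
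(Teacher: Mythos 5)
Your proof is correct and is exactly the argument the paper intends: the corollary is stated without proof because it follows immediately from Lemma \ref{l.sequencea} (membership of $\alpha_{\ell-1}(i)b^{n_\ell d}$ via heredity from the union term $[\alpha_{\ell-1}(i)b^{n_\ell d + i_\ell}]$) and Lemma \ref{l.sequenceb} (exclusion of all $m > n_\ell$, again via heredity, i.e.\ the contrapositive of downward closure). Your write-up supplies precisely these two steps, so it matches the paper's approach.
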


\begin{Lemma}
\label{l.sequenced}
Let $n$ and $n'$ both satisfy the inequalities \eqref{e.inequalities}. Suppose that $n_k = n'_k$ for $k < \ell$, and that $n_\ell < n'_\ell$.  Then $C_n(i) \subsetneq C_{n'}(i)$.
\end{Lemma}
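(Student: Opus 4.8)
The plan is to split the statement into two parts: the inclusion $C_n(i) \subseteq C_{n'}(i)$ and its strictness. The strict part will come essentially for free from Corollary~\ref{c.sequencec}, so the real work is the inclusion.

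For the inclusion, write $g_m = \alpha_{m-1}(i) b^{n_m d + i_m}$ and $g'_m = \alpha_{m-1}(i) b^{n'_m d + i_m}$, so that $C_n(i) = \bigcup_{m \ge 0} [g_m]$ and $C_{n'}(i) = \bigcup_{m \ge 0} [g'_m]$. Since each $[g_m]$ is hereditary, it suffices to show $g_m \in C_{n'}(i)$ for every $m$; I will in fact show $g_m \in [g'_m]$, which holds as soon as $n_m \le n'_m$, for then $g'_m = g_m\, b^{(n'_m - n_m)d}$ exhibits $g_m$ as an initial segment of $g'_m$. Thus the inclusion reduces to the numerical claim that $n_m \le n'_m$ for all $m$.

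First I would establish the propagation step: if $n_{m-1} < n'_{m-1}$ then $n_m < n'_m$. This is immediate from the defining inequalities~\eqref{e.inequalities}: the upper bound for $n_m$ gives $n_m < \frac{c(n_{m-1}+1) - i_m}{d}$, while $n_{m-1} + 1 \le n'_{m-1}$ together with $c > 0$ and the lower bound for $n'_m$ gives $n'_m \ge \frac{c n'_{m-1} - i_m}{d} \ge \frac{c(n_{m-1}+1) - i_m}{d} > n_m$. Now I assemble the numerical claim: for $m < \ell$ the hypothesis gives $n_m = n'_m$, and starting from $n_\ell < n'_\ell$ the propagation step and induction yield $n_m < n'_m$ for every $m \ge \ell$. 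Hence $n_m \le n'_m$ for all $m$, and the inclusion follows as above.

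For strictness, Corollary~\ref{c.sequencec} identifies $n_\ell$ and $n'_\ell$ as $\max\{m : \alpha_{\ell-1}(i) b^{md} \in C_n(i)\}$ and $\max\{m : \alpha_{\ell-1}(i) b^{md} \in C_{n'}(i)\}$ respectively. Since $n_\ell < n'_\ell$, the element $\alpha_{\ell-1}(i) b^{n'_\ell d}$ lies in $C_{n'}(i)$ but not in $C_n(i)$, so the inclusion is proper. The only place requiring any thought is the propagation step, and even there the one-line estimate above settles it; the key observation that makes everything go through smoothly is that the strict inequality at index $\ell$ propagates to all larger indices, so I never have to worry about $n$ and $n'$ sharing a common value $n_{m-1} = n'_{m-1}$ while diverging at $n_m$.
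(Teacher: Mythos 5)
Your proof is correct and takes essentially the same route as the paper's: the same propagation of the strict inequality $n_m < n'_m$ for $m \ge \ell$ via the inequalities \eqref{e.inequalities}, and the same appeal to Corollary \ref{c.sequencec} to exhibit $\alpha_{\ell-1}(i)\, b^{n'_\ell d} \in C_{n'}(i) \setminus C_n(i)$. The only difference is expository: you spell out why $n_m \le n'_m$ for all $m$ gives the inclusion $C_n(i) \subseteq C_{n'}(i)$ (via $g'_m = g_m\, b^{(n'_m - n_m)d}$), a step the paper leaves implicit.
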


\begin{proof}
By Corollary \ref{c.sequencec} we know that $\alpha_{\ell - 1}(i) b^{n'_\ell d} \in C_{n'}(i) \setminus C_n(i)$, and hence $C_n(i) \not= C_{n'}(i)$. Since $n'_\ell \ge n_\ell + 1$, we have
\[
n_{\ell + 1}
< \frac{c (n_\ell + 1) - i_{\ell + 1}}{d}
\le \frac{c n'_\ell - i_{\ell + 1}}{d}
\le n'_{\ell + 1}.
\]
Inductively we find that $n_k < n'_k$ for $k \ge \ell$, and hence that $C_n(i) \subseteq C_{n'}(i)$.
\end{proof}

\begin{Lemma}
\label{l.sequencee}
Let $C \in \Lambda^*$ with $C_0(i) \subseteq C \subsetneq C_\infty(i)$.  Then there exists $n = (n_0, n_1, \ldots)$ satisfying the inequalities \eqref{e.inequalities} such that $C = C_n(i)$.
\end{Lemma}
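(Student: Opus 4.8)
The plan is to read off the sequence $n$ directly from the \emph{top elements} of $C$ at each height. Since $C$ has infinite height and $C\subseteq C_\infty(i)$ (Lemma \ref{l.minmax}\eqref{l.minmax.d}), every element of $C$ of height $\ell$ has the form $\alpha_{\ell-1}(i)b^p$ with $p\ge 0$ (Corollary \ref{c.initseq}), where $\alpha_{-1}(i)=e$. For each $\ell\ge 0$ set $D_\ell=\{p\ge 0:\alpha_{\ell-1}(i)b^p\in C\}$. Because $\alpha_{\ell-1}(i)b^{p'}$ is an initial segment of $\alpha_{\ell-1}(i)b^p$ whenever $p'\le p$, heredity shows each $D_\ell$ is downward closed; and since $C_0(i)\subseteq C$ gives $\alpha_\ell(i)=\alpha_{\ell-1}(i)b^{i_\ell}a\in C$, we have $[0,i_\ell]\subseteq D_\ell$.

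The heart of the argument is the recursion $(sd+i_\ell)\in D_\ell \iff sc\in D_{\ell+1}$ for every $s\ge 0$, which comes from the identity $\alpha_\ell(i)b^{sc}=\alpha_{\ell-1}(i)b^{sd+i_\ell}a$ (using $b^{sd}a=ab^{sc}$). The implication $(\Leftarrow)$ is immediate: if $\alpha_\ell(i)b^{sc}\in C$, then dropping its final $a$ exhibits $\alpha_{\ell-1}(i)b^{sd+i_\ell}$ as an initial segment, which lies in $C$ by heredity. The implication $(\Rightarrow)$ is where directedness is essential: given $\alpha_{\ell-1}(i)b^{sd+i_\ell}\in C$ and $\alpha_\ell(i)\in C$, directedness produces a common extension in $C$; by Lemma \ref{l.cged} the unique minimal common extension of $b^{sd}$ and $a$ is $b^{sd}a$, so the unique minimal common extension of $\alpha_{\ell-1}(i)b^{sd+i_\ell}$ and $\alpha_\ell(i)$ is exactly $\alpha_\ell(i)b^{sc}$; since $\Lambda$ has unique minimal common extensions (Proposition \ref{p.qlo}), this minimal extension is an initial segment of the one furnished by directedness, hence lies in $C$.

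With the recursion in hand I would finish as follows. First, each $D_\ell$ is bounded: $D_0$ is bounded because $B\not\subseteq C$ (Lemma \ref{l.containsb}), and if $D_\ell$ is bounded then the recursion bounds the multiples of $c$ lying in $D_{\ell+1}$, forcing $D_{\ell+1}$ to be bounded as well (a downward-closed unbounded subset of $\IN$ would contain arbitrarily large multiples of $c$). Put $M_\ell=\max D_\ell$. Next, $M_\ell\equiv i_\ell\pmod d$: applying directedness to the top element $\alpha_{\ell-1}(i)b^{M_\ell}$ and to $\alpha_\ell(i)$ and computing the minimal common extension as above produces an element $\alpha_{\ell-1}(i)b^{hd+i_\ell}\in C$ with $hd+i_\ell\ge M_\ell$, whence maximality forces $M_\ell=hd+i_\ell$. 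Hence $n_\ell:=(M_\ell-i_\ell)/d\in\IN$ is well defined and $M_\ell=n_\ell d+i_\ell$. The inequalities \eqref{e.inequalities} now fall out of the recursion between levels $\ell-1$ and $\ell$: taking $s=n_{\ell-1}$ gives $cn_{\ell-1}\le M_\ell=n_\ell d+i_\ell$ (the left inequality), while $s=n_{\ell-1}+1$ gives $(n_{\ell-1}+1)c\notin D_\ell$, i.e. $n_\ell d+i_\ell<c(n_{\ell-1}+1)$ (the right inequality). Finally, since at each height the elements of $C$ are precisely the $\alpha_{\ell-1}(i)b^p$ with $p\le M_\ell$, i.e. exactly the initial segments of $\alpha_{\ell-1}(i)b^{M_\ell}=\alpha_{\ell-1}(i)b^{n_\ell d+i_\ell}$, we obtain $C=\bigcup_\ell[\alpha_{\ell-1}(i)b^{n_\ell d+i_\ell}]=C_n(i)$.

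The main obstacle is the $(\Rightarrow)$ direction of the recursion (equivalently, the congruence $M_\ell\equiv i_\ell\pmod d$): heredity alone only propagates information downward in height, so one genuinely needs directedness, together with the uniqueness of minimal common extensions from Proposition \ref{p.qlo} and the explicit extension $b^{sd}a=ab^{sc}$ from Lemma \ref{l.cged}, to promote a purely $b$-type element at height $\ell$ to the corresponding branching element at height $\ell+1$ inside $C$. Everything else is bookkeeping with downward-closed subsets of $\IN$.
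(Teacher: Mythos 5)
Your proof is correct and follows essentially the same route as the paper's: both identify the maximal $b$-power at each height, use directedness together with the uniqueness of minimal common extensions (Proposition \ref{p.qlo}, Lemma \ref{l.cged}, via the identity $\alpha_{\ell-1}(i)b^{sd+i_\ell}a = \alpha_\ell(i)b^{sc}$) to show these maxima have the form $n_\ell d + i_\ell$, read off the inequalities \eqref{e.inequalities} from maximality at consecutive heights, and conclude $C = C_n(i)$. The only difference is cosmetic: your explicit induction showing each $D_\ell$ is bounded makes precise a step the paper leaves implicit after invoking Lemma \ref{l.containsb}.
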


\begin{proof}
Since $C \not= C_\infty(i)$, Lemma \ref{l.containsb} implies that $B \not\subseteq C$.  Thus we may define $n_\ell = \max \bigl\{m : \alpha_{\ell - 1}(i) b^{md} \in C \bigr\}$.  Thus $\alpha_{\ell - 1}(i) b^{n_\ell d} \in C$ and $\alpha_{\ell - 1}(i) b^{(n_\ell + 1)d} \not\in C$.  Let $C' = \sigma^{\alpha_{\ell - 1}(i)} C$.  Then $b^{n_\ell d} \in C'$ and $b^{i_\ell} a \in C'$.  Therefore
\[
b^{n_\ell d + i_\ell} a = b^{n_\ell d} \vee b^{i_\ell} a \in C'.
\]
Thus $b^{n_\ell d + i_\ell} \in C'$, and hence $\alpha_{\ell - 1}(i) b^{n_\ell d + i_\ell} \in C$.

We claim that the sequence $n = (n_0,n_1,\ldots)$ satisfies the inequalities \eqref{e.inequalities}.  For the strict inequality, suppose otherwise; i.e. suppose that $i_\ell + d n_\ell \ge c(n_{\ell - 1} + 1)$.  Then $\alpha_{\ell - 1}(i) b^{c(n_{\ell - 1} + 1)} \in C$, and hence $\alpha_{\ell - 2}(i) b^{i_{\ell - 1} + d(n_{\ell - 1} + 1)} \in C$.  But this contradicts the definition of $n_{\ell - 1}$.

For the weak inequality, we already know that $\alpha_{\ell - 2}(i) b^{i_{\ell - 1} + d n_{\ell - 1}} \in C$.  Thus $b^{i_{\ell - 1} + d n_{\ell - 1}}$, $b^{i_{\ell - 1}} a \in \sigma^{\alpha_{\ell - 2}(i)} C$.  Therefore 
\[
b^{i_{\ell - 1}} a b^{c n_{\ell - 1}}
= b^{i_{\ell - 1} + d n_{\ell - 1}} \vee b^{i_{\ell - 1}} a
\in \sigma^{\alpha_{\ell - 2}(i)} C.
\]
Hence $\alpha_{\ell - 1}(i) b^{c n_{\ell - 1}} \in C$.

On the other hand, we know that $\alpha_{\ell - 1}(i) b^{i_\ell + n_\ell d} \in C$.  Moreover, by Lemma \ref{l.cged}\eqref{l.cged.d}, we have that
\[
b^{n_\ell d + i_\ell + 1} \vee b^{i_\ell} a = b^{i_\ell} a b^{c(n_\ell + 1)}.
\]
Hence if $\alpha_{\ell - 1}(i) b^{n_\ell d + i_\ell + 1} \in C$, then
$\alpha_{\ell - 1}(i) b^{(n_\ell + 1)d + i_\ell} a
= \alpha_\ell(i) b^{c(n_\ell + 1)} \in C$,
contradicting the definition of $n_\ell$.  Therefore $n_\ell d + i_\ell = \max \bigl\{ m : \alpha_{\ell - 1}(i) b^m \in C \bigr\}$.  Thus we conclude that $c n_{\ell - 1} \le n_\ell d + i_\ell$.  This finishes the demonstration that $n$ satisfies the inequalities \eqref{e.inequalities}.  By its definition, we have $C_n(i) \subseteq C$.

Finally, we claim that $C_n(i) = C$.  For this consider a typical element $\beta \in C$.  Then $\beta = \alpha_{\ell - 1}(i) b^q$ for some $\ell$ and $q$.  Then from the above argument we must have $q \le n_\ell d + i_\ell$, and hence that $\beta \in C_n(i)$.
\end{proof}

\begin{Corollary}
\label{c.cgedsigma}
$\Sigma = \{ C_n(i) : i \in [0,d)^\IN,\ n$ satisfies the inequalities \eqref{e.inequalities} for $i \}$.  
\end{Corollary}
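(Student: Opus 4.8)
The plan is to establish the asserted set equality by verifying the two inclusions separately, treating the preceding lemmas as a package; essentially no fresh computation should be needed. Throughout we are in case (BS1), and we use the partition $\Lambda^* = \Lambda \sqcup (\Lambda/B) \sqcup \partial\Lambda \sqcup \Sigma$ recorded before Definition \ref{d.sigma}, together with Lemma \ref{l.finiteheight}, which identifies $\Lambda \cup (\Lambda/B)$ as exactly the finite-height elements of $\Lambda^*$. Consequently membership in $\Sigma$ is equivalent to being of infinite height and not maximal, and this is what the two inclusions must match against the right-hand side.

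First I would prove the inclusion $\supseteq$. Fix $i \in [0,d)^\IN$ and a sequence $n$ satisfying \eqref{e.inequalities}. By Lemma \ref{l.sequencea}, $C_n(i)$ is directed and hereditary, hence lies in $\Lambda^*$. It has infinite height, since it contains $\alpha_{\ell-1}(i) b^{n_\ell d + i_\ell}$ for every $\ell$ and $\theta(\alpha_{\ell-1}(i) b^{n_\ell d + i_\ell}) = \ell$; by Lemma \ref{l.finiteheight} this already rules out $C_n(i) \in \Lambda \cup (\Lambda/B)$. To exclude $C_n(i) \in \Lambda^{**}$, note that $C_0(i) \subseteq C_n(i) \subseteq C_\infty(i)$, and that Corollary \ref{c.sequencec} with $\ell = 0$ gives $n_0 = \max\{m : b^{md} \in C_n(i)\} < \infty$, so $B \not\subseteq C_n(i)$. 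Lemma \ref{l.containsb} then forces $C_n(i) \neq C_\infty(i)$, hence $C_n(i) \subsetneq C_\infty(i)$ is not maximal. Therefore $C_n(i) \in \Sigma$.

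For the reverse inclusion $\subseteq$, let $C \in \Sigma$. Since $C \notin \Lambda \cup (\Lambda/B)$, Lemma \ref{l.finiteheight} shows $C$ is of infinite height, so Lemma \ref{l.minmax}\eqref{l.minmax.d} supplies a unique $i \in [0,d)^\IN$ with $C_0(i) \subseteq C \subseteq C_\infty(i)$. Because $C \notin \Lambda^{**}$ while $C_\infty(i) \in \Lambda^{**}$ by Lemma \ref{l.minmax}\eqref{l.minmax.c}, the containment is proper, that is, $C_0(i) \subseteq C \subsetneq C_\infty(i)$. Lemma \ref{l.sequencee} then produces a sequence $n$ satisfying \eqref{e.inequalities} with $C = C_n(i)$, completing this inclusion and the proof.

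The substantive work has already been absorbed into the lemmas, so I do not expect a genuine obstacle here; the only point requiring care is the classification bookkeeping, namely confirming that in case (BS1) the elements of $\Sigma$ are precisely those of infinite height that fail to be maximal. Once the four-part partition of $\Lambda^*$ is in hand, both inclusions reduce to invoking Lemmas \ref{l.sequencea}, \ref{l.containsb}, \ref{l.minmax}, and \ref{l.sequencee} and Corollary \ref{c.sequencec} in the order above.
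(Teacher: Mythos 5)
Your proposal is correct and is essentially the paper's intended argument: the paper states this corollary without proof precisely because it is the direct assembly of Lemmas \ref{l.sequencea}, \ref{l.containsb}, \ref{l.minmax}, \ref{l.sequencee} and Corollary \ref{c.sequencec} that you carry out. Both inclusions are handled exactly as the surrounding text anticipates, including the key observation that non-maximality of $C_n(i)$ follows from $B \not\subseteq C_n(i)$ via Lemma \ref{l.containsb}.
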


We will now study the restriction of $G$ to $\Sigma$.

\begin{Lemma}
\label{l.bdonsigma}
Recall the maps $\phi$ and $r$ from Lemma \ref{l.cged}.  Let $i \in [0,d)^\IN$.
\begin{enumerate}
\item \label{l.bdonsigmaone}
Let $n$ satisfy the inequalities \eqref{e.inequalities} for $\phi(i)$.  Set $n' = n + r(i)$.  Then $n'$ satisfies the inequalities \eqref{e.inequalities} for $i$.  Moreover $b^d C_n(\phi(i)) = C_{n'}(i)$.
\item \label{l.bdonsigmatwo}
Let $n'$ satisfy the inequalities \eqref{e.inequalities} for $i$, and suppose that $n'_0 \ge 1$.  Then $n' \ge r(i)$, and $n = n' - r(i)$ satisfies the inequalities \eqref{e.inequalities} for $\phi(i)$.
\end{enumerate}
\end{Lemma}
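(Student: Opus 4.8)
The plan is to reduce both parts to the single arithmetic identity \eqref{e.phiandr}, namely $\phi(i)_\ell - d\,r(i)_\ell = i_\ell - c\,r(i)_{\ell-1}$, combined with the intertwining relation $b^d \alpha_{\ell-1}(\phi(i)) = \alpha_{\ell-1}(i) b^{c\,r(i)_{\ell-1}}$ of Lemma \ref{l.cged}\eqref{l.cged.a}. Throughout I read $b^d C$ as $\bigcup_{\gamma \in C}[b^d\gamma]$ (left-concatenation followed by hereditary closure), so that $b^d[\eta] = [b^d\eta]$, and I adopt the conventions $\alpha_{-1} = e$ together with $r(i)_0 = 1$, $\phi(i)_0 = i_0$ from the construction in Lemma \ref{l.cged}.

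First I would establish the one computation on which everything rests: for every $\ell \ge 0$,
\[
b^d\,\alpha_{\ell-1}(\phi(i))\,b^{n_\ell d + \phi(i)_\ell} = \alpha_{\ell-1}(i)\,b^{(n_\ell + r(i)_\ell)d + i_\ell}.
\]
For $\ell \ge 1$ this is immediate from Lemma \ref{l.cged}\eqref{l.cged.a} once one rewrites the exponent $c\,r(i)_{\ell-1} + \phi(i)_\ell = i_\ell + d\,r(i)_\ell$ via \eqref{e.phiandr}; the case $\ell = 0$ is the trivial $b^d b^{n_0 d + i_0} = b^{(n_0+1)d + i_0}$, where $n'_0 = n_0 + 1 = n_0 + r(i)_0$.

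For part \eqref{l.bdonsigmaone} I would first verify the inequalities. Writing $n' = n + r(i)$, the identities $d\,n'_\ell + i_\ell = d\,n_\ell + \phi(i)_\ell + c\,r(i)_{\ell-1}$ and $c\,n'_{\ell-1} = c\,n_{\ell-1} + c\,r(i)_{\ell-1}$ show that the two-sided inequality \eqref{e.inequalities} for $i$ at $n'$ is term-by-term identical to \eqref{e.inequalities} for $\phi(i)$ at $n$ (the common summand $c\,r(i)_{\ell-1}$ cancels from both sides of both inequalities). Hence $n'$ satisfies \eqref{e.inequalities} for $i$, and $n' \in \IN^\IN$ since $r(i)_\ell \ge 1$. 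The set equality then follows from the displayed computation: by Lemma \ref{l.sequencea} the sets $[\alpha_{\ell-1}(i)b^{n'_\ell d + i_\ell}]$ increase to $C_{n'}(i)$, and applying $b^d(\,\cdot\,)$ to the defining union of $C_n(\phi(i))$ carries its $\ell$-th generator exactly onto the $\ell$-th generator of $C_{n'}(i)$, so $b^d C_n(\phi(i)) = C_{n'}(i)$.

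The genuinely new ingredient is in part \eqref{l.bdonsigmatwo}: I must show $n' \ge r(i)$ componentwise, so that $n = n' - r(i)$ is a legitimate sequence in $\IN^\IN$. I would prove $n'_\ell \ge r(i)_\ell$ by induction on $\ell$, the base case being the hypothesis $n'_0 \ge 1 = r(i)_0$. For the step, the lower inequality in \eqref{e.inequalities} for $i$ gives $d\,n'_\ell \ge c\,n'_{\ell-1} - i_\ell \ge c\,r(i)_{\ell-1} - i_\ell = d\,r(i)_\ell - \phi(i)_\ell$ (using the inductive hypothesis and \eqref{e.phiandr}), whence $d(n'_\ell - r(i)_\ell) \ge -\phi(i)_\ell > -d$ and, $n'_\ell - r(i)_\ell$ being an integer, $n'_\ell \ge r(i)_\ell$. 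Once $n \ge 0$ is known, the term-by-term equivalence of the two inequality systems established in part \eqref{l.bdonsigmaone}, read in the reverse direction, shows that $n$ satisfies \eqref{e.inequalities} for $\phi(i)$. The only real obstacle here is this non-negativity induction, which is exactly where the hypothesis $n'_0 \ge 1$ enters; the remaining computations are the same elementary manipulations of \eqref{e.phiandr} used in part \eqref{l.bdonsigmaone}.
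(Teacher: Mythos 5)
Your proposal is correct and takes essentially the same route as the paper's proof: both parts rest on the term-by-term equivalence of the two inequality systems via \eqref{e.phiandr}, the set equality follows from Lemma \ref{l.cged}\eqref{l.cged.a} applied generator-by-generator to the defining union, and the induction showing $n'_\ell \ge r(i)_\ell$ (using $d(n'_\ell - r(i)_\ell) \ge -\phi(i)_\ell > -d$) is exactly the paper's argument for part \eqref{l.bdonsigmatwo}. Your explicit treatment of the $\ell = 0$ case and of the meaning of $b^d C$ is just careful bookkeeping the paper leaves implicit.
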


\begin{proof}
\eqref{l.bdonsigmaone}:  $n$ satisfies the inequalities \eqref{e.inequalities} for $\phi(i)$ if and only if (for all $\ell$,)
\[
c n_{\ell - 1} \le d n_\ell + \phi(i)_\ell \le c n_{\ell - 1} + c.
\]
Using equation \eqref{e.phiandr} from Lemma \ref{l.cged}, this is equivalent to
\[
c n_{\ell - 1} \le d n_\ell + i_\ell + d r(i)_\ell - c r(i)_{\ell - 1} \le c n_{\ell - 1} + c,
\]
and hence to
\[
c n'_{\ell - 1} \le d n'_\ell + i_\ell \le c n'_{\ell - 1} + c.
\]
Then we have
\begin{align*}
b^d C_n(\phi(i))
&= \bigcup_{\ell = 0}^\infty \bigl[ b^d \alpha_{\ell - 1}(\phi(i)) b^{n_\ell d + \phi(i)_\ell} \bigr] 
= \bigcup_{\ell = 0}^\infty \bigl[ \alpha_{\ell - 1}(i) b^{n_\ell d + \phi(i)_\ell + c r(i)_{\ell - 1}} \bigr], \text{ by Lemma \ref{l.cged}\eqref{l.cged.a},} \\
&= \bigcup_{\ell = 0}^\infty \bigl[ \alpha_{\ell - 1}(i) b^{n_\ell d + i_\ell + d r(i)_\ell} \bigr], \text{ by equation \eqref{e.phiandr},} \\
&= \bigcup_{\ell = 0}^\infty \bigl[ \alpha_{\ell - 1}(i) b^{n'_\ell d + i_\ell} \bigr] 
= C_{n'}(i).
\end{align*}

\eqref{l.bdonsigmatwo}:  We show that $n'_\ell \ge r(i)_\ell$ for all $\ell \in \IN$.  By assumption this holds for $\ell = 0$.  Suppose it holds for $\ell - 1$.  Then using the inequalities \eqref{e.inequalities} for $n'$, and equation \eqref{e.phiandr}, we have
\[
d n'_\ell
\ge c r(i)_{\ell - 1} - i_\ell
= d r(i)_\ell - \phi(i)_\ell
> d r(i)_\ell - d.
\]
Therefore $n'_\ell > r(i)_{\ell} - 1$, and hence $n'_\ell \ge r(i)_\ell$.  Thus $n = n' - r(i) \ge 0$.  The proof of \eqref{l.bdonsigmaone} shows that $n$ satisfies the inequalities \eqref{e.inequalities} for $\phi(i)$.
\end{proof}

\begin{Theorem}
\label{t.sigmaamenable}
Assume that we are in case (BS1).  Then  $G |_\Sigma$ is amenable.
\end{Theorem}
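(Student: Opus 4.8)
The plan is to follow the template of the proof of Theorem \ref{t.boundaryamenable}. By Proposition \ref{p.nuclearity} applied to the restriction of the cocycle $\theta$ to $G|_\Sigma \to \IZ$, it suffices to prove that $H := G^\theta|_\Sigma = \bigl\{ [\alpha,\beta,x] : \theta(\alpha) = \theta(\beta),\ x \in \Sigma \bigr\}$ is amenable; here $\Sigma$ is one of the four invariant subsets of $\Lambda^*$, so $G|_\Sigma$ and $H$ are genuine \'etale groupoids, with unit space $\{C_n(i)\}$ by Corollary \ref{c.cgedsigma}. As before I would set $H_k = \{[\alpha,\beta,x] \in H : \theta(\alpha) = \theta(\beta) = k\}$, verify that each $H_k$ is an open subgroupoid with $H_k \subseteq H_{k+1}$, and exhibit the product decomposition $H_k \cong \bigl( [0,d)^k \times [0,d)^k \bigr) \times H_0$ by writing $\alpha$, $\beta$ in form (L) and reducing via the defining relation exactly as in Theorem \ref{t.boundaryamenable}; that computation transfers with no change, the only difference being that $x$ now ranges over $\Sigma$ rather than $\partial\Lambda$. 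Granting amenability of $H_0$, each $H_k$ is then amenable (a product of a finite groupoid with $H_0$), and $H = \bigcup_k H_k$ is amenable as an increasing union of open amenable subgroupoids by \cite{ren}, III.1; a final appeal to Proposition \ref{p.nuclearity} then yields the amenability of $G|_\Sigma$.

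The one genuinely new point is the base groupoid $H_0 = \{[b^p,b^q,x] : x \in \Sigma,\ p,q \ge 0\}$. In the boundary case this was $\IZ \ltimes \partial\Lambda$, because left-concatenation by $b$ is a homeomorphism of $\partial\Lambda$ (Remark \ref{r.odometer}). On $\Sigma$ this fails: prepending $b$ is injective by left-cancellation, but by Lemma \ref{l.bdonsigma}\eqref{l.bdonsigmatwo} it is not surjective --- a point $C_n(i)$ with $n_0 = 0$ cannot be written as $b \cdot z$ --- so $H_0$ is not the transformation groupoid of a $\IZ$-action. Instead I would apply Proposition \ref{p.nuclearity} a second time, now inside $H_0$, using the cocycle $c_1 : H_0 \to \IZ$ given by $c_1([b^p,b^q,x]) = p - q$. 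This is well defined: if $[b^p,b^q,x] = [b^{p'},b^{q'},x']$, the defining relation produces $\gamma$, $\gamma'$ with $b^{p - p'} = \gamma'\gamma^{-1} = b^{q - q'}$, and since $b$ has infinite order we get $p - q = p' - q'$; and $c_1$ is locally constant on the basic sets $[b^p,b^q,\widehat{E}]$, hence a continuous homomorphism.

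The key computation is that $c_1^{-1}(0)$ is precisely the unit space of $H_0$. If $p = q$, then taking $y = x$, $\gamma = e$, $\gamma' = b^p$ in the defining relation shows $[b^p,b^p,x] = [e,e,b^p x]$, the unit at $b^p x \in \Sigma$. Thus $H_0^{c_1}$ is the trivial (unit-space) groupoid on $\Sigma$, which is amenable, and since $\IZ$ is amenable, Proposition \ref{p.nuclearity} gives the amenability of $H_0$, completing the chain.

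I expect the main obstacle to lie not in the $H_0$ analysis above but in the bookkeeping behind the decomposition $H_k \cong \bigl( [0,d)^k \bigr)^2 \times H_0$: one must check that the reduction ``$y = b^{p - q} x$'' used on $\partial\Lambda$ still makes sense on $\Sigma$, where $b^{-1}$ is only partially defined. This should be harmless, because within a fixed element of $H_k$ the identity $\beta x = \gamma y$ is given, and after cancelling the common form-(L) prefix it collapses to the relation $b^p x = b^q y$ defining $H_0$; but justifying that the prefixes of $\beta$ and $\gamma$ coincide (so the cancellation is legitimate) requires Proposition \ref{p.qlo} and Corollary \ref{c.initseq}, exactly as in the matching step ``$\beta = b^{i_0} a \cdots a b^p$, $\gamma = b^{i_0} a \cdots a b^q$'' of Theorem \ref{t.boundaryamenable}.
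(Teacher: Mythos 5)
Your proposal is correct and follows essentially the same route as the paper's proof: reduce via Proposition \ref{p.nuclearity} to $G^\theta|_\Sigma$, decompose that as an increasing union of open subgroupoids $H_k \cong \bigl( [0,d)^k \times [0,d)^k \bigr) \times H_0$, and handle $H_0$ by a second application of Proposition \ref{p.nuclearity} with the cocycle $[b^p,b^q,x] \mapsto p-q$, whose kernel is exactly the unit space. The only (harmless) difference is in proving this cocycle well defined: you argue in the group $\Gamma$ using that $b$ has infinite order, whereas the paper compares sources and ranges and uses injectivity of the $b$-action on $\Lambda^*$.
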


\begin{proof}
Since the action of $b$ on $\Sigma$ is not surjective, the argument differs in a few places from that of Theorem \ref{t.boundaryamenable}.  By Proposition \ref{p.nuclearity}, it suffices to show that $G^\theta |_\Sigma$ is amenable.  Let $M = G^\theta |_\Sigma = \bigl\{ [\beta,\gamma,x] : x \in \Sigma,\ \theta(\beta) = \theta(\gamma) \bigr\}$.  Write $M = \bigcup_{k = 0}^\infty M_k$, where $M_k = \bigl\{ [\beta,\gamma,x] \in M : \theta(\beta) = \theta(\gamma) = k \bigr\}$.  Letting $\sigma$ denote the left shift on sequences (as well as the left shift on $\Lambda$), we have
\begin{align*}
\sigma^{b^{i_0} a} C_n(i)
&= \sigma^{b^{i_0} a} \bigcup_{\ell = 0}^\infty \bigl[ \alpha_{\ell - 1}(i) b^{n_\ell d + i_\ell} \bigr] 
= \bigcup_{\ell = 0}^\infty \bigl[ b^{i_1} a \cdots b^{i_\ell} a b^{n_{\ell + 1} d + i_{\ell + 1}} \bigr] \\
&= \bigcup_{\ell = 0}^\infty \bigl[ \alpha_{\ell - 1} (\sigma(i)) b^{\sigma(n)_\ell d + \sigma(i)_\ell} \bigr] 
= C_{\sigma(n)}(\sigma(i)).
\end{align*}
Thus if $\theta(\beta) = \theta(\gamma) = k$,
\[
[\beta,\gamma,C_n(i)]
= \bigl[ \beta, \gamma, b^{i_0} a C_{\sigma(n)}(\sigma(i)) \bigr]
= \bigl[ \beta b^{i_0} a, \gamma b^{i_0} a, C_{\sigma(n)}(\sigma(i)) \bigr]
\in M_{k + 1}.
\]
It is clear that $M_k$ is closed under inversion.  If $[\alpha,\beta,C_m(i)]$ and $[\gamma,\delta, C_n(j)]$ are composable elements of $M_k$, then $\beta C_m(i) = \gamma C_n(j)$ and $\theta(\beta) = \theta(\gamma) = k$.  Then in form (L) we have $\beta = b^{\mu_0} a \cdots b^{\mu_{k - 1}} a b^p$ and $\gamma = b^{\mu_0} a \cdots b^{\mu_{k - 1}} a b^q$, with $p$, $q \ge 0$.  Without loss of generality, assume that $p \le q$.  Then $\gamma = \beta b^{q - p}$, and $C_m(i) = b^{q - p} C_n(j)$.  Then
\[
[\alpha,\beta,C_m(i)] \, [\gamma,\delta, C_n(j)]
= [\alpha,\beta, b^{q - p} C_n(j)] \, [\beta b^{q - p},\delta, C_n(j)]
= [\alpha b^{q - p}, \delta, C_n(j)]
\in M_k.
\]
In the same way as in the proof of Theorem \ref{t.boundaryamenable} we have that $M_k$ is isomorphic to $\bigl( [0,d)^k \times [0,d)^k \bigr) \times M_0$, and also that $M_k$ is an open subgroupoid of $M_{k + 1}$.  Thus it suffices to prove that $M_0$ is amenable.  We have
\[
M_0 = \bigl\{ [b^p,b^q,x] : x \in \Sigma,\ p,\; q \ge 0 \bigr\}.
\]
We claim that $[b^p,b^q,x] \mapsto p - q$ is a continuous homomorphism from $M_0$ to $\IZ$.  Since the kernel is just the diagonal $\{(x,x) : x \in \Sigma\}$, hence amenable, Proposition \ref{p.nuclearity} will imply that $M_0$ is amenable.

We have only to show that the homomorphism is well-defined.  Let $[b^p,b^q,C_m(i)] = [b^r,b^s,C_n(j)]$.  Then comparing sources gives $b^q C_m(i) = b^s C_n(j)$.  Without loss of generality we suppose that $q \le s$.  Then $C_m(i) = b^{s - q} C_n(j)$.  Comparing ranges, we then have $b^r C_n(j) = b^p C_m(i) = b^{p - q + s} C_n(j)$.  Since the action of $b$ on $\Lambda^*$ is injective, we have that $r = p - q + s$, hence $r - s = p - q$.
\end{proof}

We now turn to case (BS2).

\begin{Lemma}
\label{l.cltdsigma}
Suppose that we are in case (BS2).  Let $i \in [0,d)^\IN$.  For $n \in \IN$, let $C_n(i) = \bigcup_{k = 0}^\infty \bigl[ \alpha_{k - 1}(i) b^{nc} \bigr]$.  Put $s = \limsup_{\mu \to \infty} i_\mu$.
\begin{enumerate}
\item $C_n(i)$ is directed and hereditary, and $C_n(i) \subseteq C_{n+1}(i) \subseteq C_\infty(i)$ for $n \in \IN$.
\label{l.cltdsigmaone}
\item If $s \ge c$ then $C_0(i) = C_\infty(i)$.
\label{l.cltdsigmatwo}
\item If $s < c$ then $C_0(i) \not= C_1(i)$.  If $C$ is a directed hereditary set with $C_0(i) \subsetneq C$ then $C_1(i) \subseteq C$.
\label{l.cltdsigmathree}
\item Suppose $s < c$, and let $m = \bigl\lceil \tfrac{c - s}{d - c} \bigr\rceil$.  Then $C_0(i) \subsetneq C_1(i) \subsetneq \cdots \subsetneq C_{m - 1}(i) \subsetneq C_\infty(i)$, and these are the only directed hereditary sets of infinite height contained in $C_\infty(i)$.
\label{l.cltdsigmafour}
\end{enumerate}
\end{Lemma}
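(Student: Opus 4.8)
The plan is to encode each infinite‑height $C \in \Lambda^*$ with $C \subseteq C_\infty(i)$ (these are exactly the $C$ with $C_0(i) \subseteq C \subseteq C_\infty(i)$ for a unique $i$, by Lemma \ref{l.minmax}) by its \emph{exponent profile}
\[
P_k(C) = \sup\{p \ge 0 : \alpha_{k-1}(i) b^p \in C\} \in \IN \cup \{\infty\}, \qquad k \ge 0,
\]
and to translate all four parts into statements about these profiles. The computation underlying everything is that, since $ab^{nc} = b^{nd}a$, one has $\alpha_k(i) b^{nc} = \alpha_{k-1}(i) b^{i_k + nd} a$; combined with the form (R) of Proposition \ref{p.baumslagsolitar} (where the largest power of $b$ that is an initial segment of a path equals the leading exponent $q$ of its form (R)) this lets me read off the initial segments of the generators $\alpha_{k-1}(i)b^{nc}$ of $C_n(i)$ directly.

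For \eqref{l.cltdsigmaone} the displayed identity gives $[\alpha_{k-1}(i) b^{nc}] \subseteq [\alpha_k(i) b^{nc}]$ (as $nc \le i_k + nd$), so the defining union increases and $C_n(i)$ is directed; it is hereditary as a union of hereditary sets; and $C_n(i) \subseteq C_{n+1}(i) \subseteq C_\infty(i)$ is immediate from $b^{nc} \in [b^{(n+1)c}]$ and the definition of $C_\infty(i)$. For \eqref{l.cltdsigmatwo}, $s \ge c$ means $i_\mu \ge c$ for infinitely many $\mu$, and each such index forces a reduction in passing to form (R) that scales the available leading exponent by a factor $\ge d/c > 1$; hence the leading exponent of $\alpha_k(i)$ is unbounded in $k$, so $b^p \in C_0(i)$ for every $p$, i.e.\ $B \subseteq C_0(i)$, and Lemma \ref{l.containsb} yields $C_0(i) = C_\infty(i)$.

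The heart of the matter is the recursion
\[
P_k(C) = i_k + d\Bigl\lfloor \tfrac{P_{k+1}(C)}{c}\Bigr\rfloor ,
\]
valid for every infinite‑height directed hereditary $C \subseteq C_\infty(i)$; in particular $P_k(C) \in \{i_k, i_k+d, i_k+2d,\dots\}\cup\{\infty\}$. The ``$\ge$'' direction is hereditariness applied to $\alpha_k(i) b^{P_{k+1}}$, and the ``$\le$'' direction uses directedness together with the minimal‑common‑extension formulas of Lemma \ref{l.clessthand}, which compute $b^p \vee \alpha(i')$ exactly in case (BS2). Granting this, part \eqref{l.cltdsigmathree} is quick: when $s < c$ the tail of $i$ lies in $[0,c)$, so those paths are already in form (R) and $P_k(C_0(i)) = i_k$ for large $k$; thus $\alpha_{k-1}(i) b^c \in C_1(i)\setminus C_0(i)$ for large $k$, giving $C_0(i) \ne C_1(i)$. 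If $C_0(i) \subsetneq C$, choose $\gamma = \alpha_{k-1}(i) b^q \in C\setminus C_0(i)$; then $q > P_k(C_0(i))$, and since profiles jump in steps of $d$ the recursion propagates this strict inequality to all higher levels, forcing $P_{k'}(C) \ge i_{k'} + d > c$ at every tail level. Propagating $b^c$ downward through $\alpha_j(i) b^c = \alpha_{j-1}(i) b^{i_j+d} a$ then gives $\alpha_{k'-1}(i) b^c \in C$ for all $k'$, i.e.\ $C_1(i) \subseteq C$.

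Finally, for \eqref{l.cltdsigmafour} I would solve the recursion. In the tail (where $i_\mu \le s$, with $i_\mu = s$ infinitely often) it is governed by $v \mapsto s + d\lfloor v/c\rfloor$, whose fixed points are exactly $s + nd$ for $0 \le n < m = \lceil (c-s)/(d-c)\rceil$, while any seed $\ge mc$ escapes to $\infty$. Since the recursion determines $P_k$ from $P_{k+1}$ deterministically, a profile is pinned down by its (bounded) tail limit, so there are exactly $m$ finite bounded profiles, realized by $C_0(i),\dots,C_{m-1}(i)$ whose seeds $nc$ converge to the distinct values $s+nd$, together with the profile $\equiv\infty$ of $C_\infty(i)$. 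This yields both the strict chain $C_0(i) \subsetneq \cdots \subsetneq C_{m-1}(i) \subsetneq C_\infty(i)$ (nesting from \eqref{l.cltdsigmaone}, strictness from distinct limits) and exhaustiveness (any such $C$ has a profile, hence equals one of these). I expect the main obstacle to be exactly this count: passing from the clean constant‑tail picture to the genuine non‑constant tail, where one must sandwich the varying recursion between its $i_\mu\equiv 0$ and $i_\mu\equiv s$ versions and use the infinitely many kicks $i_\mu = s$ to show that no further bounded profiles appear and that seeds $\ge mc$ still escape.
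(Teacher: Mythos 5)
Your profile framework is sound, and parts \eqref{l.cltdsigmaone}--\eqref{l.cltdsigmathree} of your argument go through.  For $C$ of infinite height with $C_0(i) \subseteq C \subseteq C_\infty(i)$, the level sets $\{p : \alpha_{k-1}(i)b^p \in C\}$ are down-sets, $C$ is determined by $(P_k(C))_k$, and the recursion $P_k = i_k + d\lfloor P_{k+1}/c\rfloor$ is correct: the $\ge$ direction is hereditariness, and the $\le$ direction follows from directedness plus Lemma \ref{l.clessthand}\eqref{l.clessthand.d} applied to common extensions of $b^{q-i_k}$ and the single letter $a$.  Your proof of \eqref{l.cltdsigmathree} (propagate the strict inequality upward through the recursion, use that profile values lie in $i_k + d\IN \cup \{\infty\}$, then pull $b^c$ into every tail level) is a correct alternative to the paper's direct appeal to Lemma \ref{l.clessthand}\eqref{l.clessthand.c}.

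The gap is in \eqref{l.cltdsigmafour}, exactly where you flag it, and the plan you sketch would not close it as stated.  First, ``any seed $\ge mc$ escapes to $\infty$'' is false pointwise once the tail varies: take $c=3$, $d=4$, $s=2$, so $m=1$ and $mc=3$; at a level with $i_k=0$ the map is $v \mapsto 4\lfloor v/3\rfloor$, which sends $5 \mapsto 4 < 5$.  So escape is not monotone and the constant-tail fixed-point picture does not transfer level by level.  Second, sandwiching between the $i_\mu \equiv 0$ and $i_\mu \equiv s$ recursions cannot yield the count $m$: in the same example the $i_\mu \equiv 0$ recursion has three bounded fixed profiles ($v = 0, 4, 8$, i.e. $n < c/(d-c) = 3$) while $m = 1$; the sandwich therefore leaves room for spurious bounded profiles, and excluding them using the kicks $i_\mu = s$ is precisely the step that is asserted but not proven.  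One way to close it within your framework: write $P_k = i_k + dq_k$ with $q_k = \lfloor P_{k+1}/c\rfloor$; then $q_{k-1} = \lfloor (i_k + dq_k)/c\rfloor \ge q_k$, so along any finite-valued solution $(q_k)$ is non-increasing in $k$, hence eventually constant $= q^*$, and any of the infinitely many levels with $i_k = s$ forces $\lfloor (s+dq^*)/c\rfloor = q^*$, i.e. $q^*(d-c) < c-s$, i.e. $q^* \le m-1$; downward determinism then identifies the solution with $(i_k + dq^*)_k$, which is the tail profile of $C_{q^*}(i)$.  Note that the paper sidesteps the tail dynamics entirely: with $j = \sigma^h(i)$, Lemma \ref{l.cltd}\eqref{l.cltd.a} gives $\alpha_k(j)b^{nc} = b^{nd}\alpha_k(\phi^n(j))$ for $n < m$, so that $\sigma^{b^{nd}}C_n(j) = C_0(\phi^n(j))$ and $\sigma^{b^{nd}}C_{n+1}(j) = C_1(\phi^n(j))$; part \eqref{l.cltdsigmathree} applied to $\phi^n(j)$ shows nothing lies strictly between $C_n$ and $C_{n+1}$, and part \eqref{l.cltdsigmatwo} applied to $\phi^m(j)$, whose entries are $\ge c$ infinitely often, gives $C_m = C_\infty$.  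Either route works, but yours still needs the argument above to be a proof.
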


\begin{proof}
\eqref{l.cltdsigmaone}:  We have
\[
\alpha_k(i) b^{nc}
= \alpha_{k - 1}(i) b^{i_k} a b^{nc}
= \alpha_{k - 1}(i) b^{i_k + nd} a
= \alpha_{k - 1}(i) b^{nc} \cdot b^{i_k + n(d - c)} a.
\]
Therefore $[\alpha_{k - 1}(i) b^{nc}] \subseteq [\alpha_k(i) b^{nc}]$, and therefore the union defining $C_n(i)$ is increasing.  Hence $C_n(i)$ is a hereditary directed set.  Since $[\alpha_k(i) b^{nc}] \subseteq [\alpha_k(i) b^{(n + 1)c}]$, it follows that $C_n(i) \subseteq C_{n + 1}(i)$.  It is clear that $C_n(i) \subseteq C_\infty(i)$.

\eqref{l.cltdsigmatwo}:  We note that for any $\beta \in \Lambda$ we have $b^d \in [\beta a b^c]$ (e.g. from Lemma \ref{l.cltd}\eqref{l.cltd.a}, since $r(i)_k \le 1$ in that lemma).  Thus if $1 \le l_1 < \cdots < l_m$ are such that $i_{\ell_\mu} \ge c$ for $1 \le \mu \le m$, then $b^{md} \in [\alpha_{\ell_m}(i)]$.  If $p \ge 0$ is given, there is $m$ such that $md \ge p$.  Since $s \ge c$ there are $\ell_\mu$ as above, $1 \le \mu \le m$.  Then 
\[
b^p \in [b^{md}] \subseteq [\alpha_{\ell_m}(i)] \subseteq C_0(i).
\]
Therefore $B \subseteq C_0(i)$, and by Lemma \ref{l.containsb} it follows that $C_0(i) = C_\infty(i)$.

\eqref{l.cltdsigmathree}:  Since $s < c$ there is $\ell$ such that $i_\mu < c$ for $\mu \ge \ell$.  Let $\beta = \alpha_{\ell - 1}(i)$.  Since $b^{i_\ell} a \cdots b^{i_k} a$ is in form (R) for $k \ge \ell$, we have
\[
b^d \not\in \bigcup_{k \ge \ell} [b^{i_\ell} a \cdots b^{i_k} a]
= \sigma^\beta \bigcup_{k \ge \ell} [\alpha_k(i)]
= \sigma^\beta (C_0(i)).
\]
On the other hand,
\[
\beta b^{d + i_\ell} a
= \beta b^{i_\ell} a b^c
= \alpha_\ell(i) b^c
\in C_1(i),
\]
so that $b^d \in \sigma^\beta (C_1(i))$.  Therefore $C_0(i) \not= C_1(i)$.

Let $C\supsetneq C_0(i)$ be a directed hereditary set.  Choose $\gamma \in C \setminus C_0(i)$.  We may assume that $k = \theta(\gamma) \ge \ell$.  Then $\gamma = b^{i_0} a \cdots b^{i_k} a b^q$, where $q > i_{k + 1}$.  Let $i' = \sigma^{k + 1}(i)$ and $C' = \sigma^{\alpha_k(i)}(C)$.  Then $C_0(i') \subsetneq C'$ and $b^q \in C'$, where $q > i'_0 = i_{k+1}$.  By Lemma \ref{l.clessthand}\eqref{l.clessthand.c}, for each $\mu$,
\[
\alpha_\mu(i') b^c \in [b^q \vee \alpha_\mu(i')] \subseteq C'.
\]
Therefore $C_1(i') \subseteq C'$.  Hence
\[
C_1(i)
= \bigcup_\mu [\alpha_\mu(i) b^c]
= \alpha_k(i) \bigcup_\mu [\alpha_\mu(i') b^c]
\subseteq \alpha_k(i) C'
= C.
\]

\eqref{l.cltdsigmafour}:  We have that $s + (m - 1)(d - c) < c \le s + m(d - c)$.  Let $h$ be such that $i_k \le s$ for $k \ge h$.  Set $j = \sigma^h (i) \in [0,s]^\IN$.  Then for each $n$, $C_n(i) = \alpha_{h - 1}(i) C_n(j)$.   The definition of $\phi$ in the proof of Lemma \ref{l.cltd} implies that $\phi^n(j)_k = j_k + n(d - c)$, $k \ge 1$, for $0 \le n \le m$, and that $\phi^n(j) \in [0,c)^\IN$ for $n < m$, while $\phi^m(j)_k \ge c$ for infinitely many $k$.  Hence by Lemma \ref{l.cltd}\eqref{l.cltd.a}, for $0 \le n < m$ we have
\[
\alpha_k(j) b^{nc} = b^{nd} \alpha_k(\phi^n(j)), \text{ for } k \ge 1,
\]
and hence by \eqref{l.cltdsigmathree} that
\[
\sigma^{b^{nd}} C_n(j) 
= C_0(\phi^n(j)) 
\subsetneq C_1(\phi^n(j)) 
= \sigma^{b^{nd}} C_{n + 1}(j),
\]
with no directed hereditary set strictly between them.  Therefore $C_n(j) \subsetneq C_{n + 1}(j)$ with no directed hereditary set strictly between them.  Since $\phi^m(j)_k \ge c$ infinitely often, it follows from \eqref{l.cltdsigmatwo} that $C_m(j) = C_\infty(j)$, finishing the proof.
\end{proof}

\begin{Theorem}
\label{t.cltdsigmaamenable}
Suppose that we are in case (BS2).  Then $G |_\Sigma$ is amenable.  (In fact, it is Morita equivalent to the standard groupoid of $\CO_c$.)
\end{Theorem}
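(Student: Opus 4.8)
The plan is to obtain amenability by the method of Theorem~\ref{t.sigmaamenable}, and then to upgrade this to the Morita equivalence by reading off the chain structure of Lemma~\ref{l.cltdsigma}. For amenability I would apply Proposition~\ref{p.nuclearity} to reduce to showing that $M := G^\theta|_\Sigma$ is amenable. Writing $M = \bigcup_{k \ge 0} M_k$ with $M_k = \{[\beta,\gamma,x] \in M : \theta(\beta) = \theta(\gamma) = k\}$, I would verify, exactly as in Theorems~\ref{t.boundaryamenable} and~\ref{t.sigmaamenable}, that each $M_k$ is an open subgroupoid of $M_{k+1}$ and that $M_k \cong \bigl( [0,d)^k \times [0,d)^k \bigr) \times M_0$, so that it suffices to handle $M_0 = \{[b^p,b^q,x] : x \in \Sigma,\ p,q \ge 0\}$. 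The assignment $[b^p,b^q,x] \mapsto p - q$ is a continuous homomorphism $M_0 \to \IZ$, whose well-definedness uses only the injectivity of left concatenation by $b$ on $\Lambda^*$ (as in the proof of Theorem~\ref{t.sigmaamenable}), and whose kernel is the amenable diagonal $\{(x,x) : x \in \Sigma\}$; Proposition~\ref{p.nuclearity} then yields amenability of $M_0$, hence of $M$, hence of $G|_\Sigma$. The only point requiring care relative to case (BS1) is that the $b$-chains in $\Sigma$ are now finite (Lemma~\ref{l.cltdsigma}\eqref{l.cltdsigmafour}); but since left concatenation by $b$ remains injective, the homomorphism argument is unchanged.

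For the Morita equivalence I would identify $G|_\Sigma$ with the Deaconu--Renault groupoid $\mathcal{G}_c$ of the full one-sided shift on $c$ symbols. The guiding point is that, although $\partial\Lambda$ carries the $d$-shift (Corollary~\ref{c.closedboundary}), the combinatorics of $\Sigma$ is governed by form~(R), whose digits lie in $[0,c)$, and this is the source of $c$-fold rather than $d$-fold branching. Lemma~\ref{l.cltdsigma} presents $\Sigma$ as a disjoint union, over sheets $i$ with $\limsup_\mu i_\mu < c$, of the finite chains $C_0(i) \subsetneq \cdots \subsetneq C_{m(i)-1}(i)$, and I would first record two families of groupoid morphisms. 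The shift morphisms $\sigma^{b^{i_0}a}$ give $C_n(i) \mapsto C_n(\sigma i)$ of degree $\theta = -1$, while the $b$-translations give degree-zero morphisms $C_n(i) \mapsto C_0(\phi^n(i))$ and $C_0(i) \mapsto C_1(\phi^{-1}(i))$, using the identity $\sigma^{b^{nd}} C_n(i) = C_0(\phi^n(i))$ from the proof of Lemma~\ref{l.cltdsigma}\eqref{l.cltdsigmafour} and the fact that $\phi$ is a homeomorphism (Lemma~\ref{l.cltd}\eqref{l.cltd.c}). I would then take the clopen transversal $T = \{C_0(i) : \limsup_\mu i_\mu < c\}$ of the bottoms of the chains; the degree-zero morphisms $C_n(i) \mapsto C_0(\phi^n(i))$ show that $T$ meets every orbit, so $T$ is full. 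Since the reduction of an \'etale groupoid to a clopen full transversal is equivalent to the groupoid, it then remains to identify $(G|_\Sigma)|_T$ with $\mathcal{G}_c$, from which the equivalence (and, since $\mathcal{G}_c$ is amenable, amenability again) follows.

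The main obstacle is this last identification. I would produce a homeomorphism $h : T \to [0,c)^\IN$ by reading off, via form~(R), the eventually stable right-hand digits $(j_1,j_2,\ldots) \in [0,c)^\IN$ attached to $C_0(i)$, and then check that $h$ conjugates the return dynamics of $(G|_\Sigma)|_T$ to the full $c$-shift, with the height cocycle $\theta$ supplying exactly the $\IZ$-valued lag of $\mathcal{G}_c$. Concretely this means translating the form~(L) description of the $C_n(i)$ into form~(R), tracking how $\phi$, the $\{0,1\}$-valued map $r$ of Lemma~\ref{l.cltd}, and the chain index $n$ interact, and verifying that each point of $T$ has precisely $c$ preimages (with the correct clopen domains) under the shift induced by $\sigma^{b^{i_0}a}$ together with the level-changing moves $C_0(i) \mapsto C_1(\phi^{-1}(i)) \mapsto C_0(\sigma^p\phi^{-1}(i)) \mapsto \cdots$. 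Fullness of $T$ and the matching of the degree cocycle with the $c$-shift lag are comparatively routine once these morphisms are in hand; the genuine work is showing that the $d$-shift on the proper subset $\{i : \limsup_\mu i_\mu < c\}$, enriched by the $\phi$-conjugated level moves, is conjugate under $h$ to the unrestricted $c$-shift.
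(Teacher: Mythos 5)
Your amenability argument is correct, and it is a genuinely different route from the paper's. The paper never runs the height filtration on $\Sigma$ in case (BS2): its proof exhibits a compact-open transversal and identifies the reduction of $G|_\Sigma$ to it with the standard groupoid of $\CO_c$, so that amenability falls out of the Morita equivalence. Your transplant of the proof of Theorem~\ref{t.sigmaamenable} does go through: the only case-specific ingredient is that every $x \in \Sigma$ factors as $x = b^{i_0}a\,x'$ with $x' \in \Sigma$, and in case (BS2) one checks directly from the definition of $C_n(i)$ in Lemma~\ref{l.cltdsigma} that $\sigma^{b^{i_0}a}C_n(i) = C_n(\sigma i)$, where $\sigma i$ has the same $\limsup$, so $C_n(\sigma i) \in \Sigma$; the subgroupoid and openness checks, the isomorphism $M_k \cong \bigl([0,d)^k \times [0,d)^k\bigr) \times M_0$, the cocycle $[b^p,b^q,x] \mapsto p-q$ on $M_0$ (whose well-definedness needs, as in the paper's (BS1) argument, that no $C \in \Sigma$ satisfies $C = b^jC$ for $j \ge 1$, which holds because $B \cap C$ is finite for $C \in \Sigma$), and the two applications of Proposition~\ref{p.nuclearity} are verbatim. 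So for the amenability assertion alone your proposal is a complete alternative proof; what it does not deliver is the parenthetical assertion of the theorem.

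For the Morita equivalence there is a genuine gap, and it is not merely that the identification with the Cuntz groupoid is deferred: the transversal you chose cannot work. The set $T = \{C_0(i) : \limsup_\mu i_\mu < c\}$ of chain bottoms is open and full in $\Sigma$, but it is not compact (nor, in general, relatively closed). Take $\mathbf{0} = (0,0,\ldots)$ and let $i^{(k)} \in [0,d)^\IN$ have $i^{(k)}_k = c$ and all other digits $0$; each $C_0(i^{(k)})$ lies in $T$. By Lemma~\ref{l.cltd} (and the formula for $\phi$ in its proof), $r(\mathbf{0})_j = 1$ for all $j$, $r(i^{(k)})_{j'} = 0$ for $j' \ge k$, and $\phi(i^{(k)})$ agrees with $\phi(\mathbf{0})$ in all coordinates $< k$. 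Hence for $j < k \le j'$,
\[
a^{j+1}b^c = \alpha_j(\mathbf{0})\,b^c = b^d\alpha_j(\phi(\mathbf{0})) = b^d\alpha_j(\phi(i^{(k)})) \in \bigl[\, b^d\alpha_{j'}(\phi(i^{(k)})) \,\bigr] = \bigl[\, \alpha_{j'}(i^{(k)}) \,\bigr] \subseteq C_0(i^{(k)}),
\]
so every element of $C_1(\mathbf{0}) = \bigcup_j [a^{j+1}b^c]$ belongs to $C_0(i^{(k)})$ once $k$ is large. Consequently every cluster point of the sequence $\bigl(C_0(i^{(k)})\bigr)_k$ in $\Lambda^*$ contains $C_1(\mathbf{0})$; but the only bottom that could contain $C_1(\mathbf{0})$ is $C_0(\mathbf{0})$ (uniqueness of the ambient sheet, Lemma~\ref{l.minmax}), and $C_1(\mathbf{0}) \not\subseteq C_0(\mathbf{0})$ by Lemma~\ref{l.cltdsigma}. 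So this sequence has no cluster point in $T$: $T$ is not compact. (When $d < 2c$ it converges to $C_1(\mathbf{0}) \in \Sigma \setminus T$, so $T$ is not even closed in $\Sigma$; when $d \ge 2c$ one has $T = \Sigma$ and the sequence escapes to $\partial\Lambda$.) Since the unit space $[0,c)^\IN$ of the standard groupoid of $\CO_c$ is compact, your proposed homeomorphism $h : T \to [0,c)^\IN$ cannot exist, and $(G|_\Sigma)|_T$ is not isomorphic to that groupoid. The step you yourself single out as ``the genuine work'' is thus not just unproven but impossible in your setup; at best $(G|_\Sigma)|_T$ is equivalent to the Cuntz groupoid, which is circular without an independent argument.

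The repair is the paper's transversal: $Z = \Sigma \setminus \widehat{b^d\Lambda}$, the set of $C \in \Sigma$ with $b^d \notin C$, which the paper identifies with $\{C_0(i) : i \in [0,c)^\IN\}$. What your $T$ misses is that a single digit $i_\mu \ge c$ (even one) already forces $b^d \in C_0(i)$ --- exactly the mechanism behind the escape above --- so one must demand all digits $< c$, not $\limsup < c$; the resulting $Z$ is compact-open essentially by the definition of the topology on $\Lambda^*$. On $Z$ the identification becomes a short computation rather than a conjugacy problem: $\beta\,C_0(i) = C_0(f(m,i))$ with $f(m,i) = (m_0,\ldots,m_k,m_{k+1}+i_0,i_1,i_2,\ldots)$, and $[\beta,\gamma,C] \mapsto \bigl(f(m,i),\,\theta(\beta)-\theta(\gamma),\,f(n,i)\bigr)$ is an isomorphism of $G|_Z$ onto the standard groupoid of $\CO_c$, which yields the Morita equivalence and a second proof of amenability in one stroke.
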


\begin{proof}
We let $Z = \bigl\{ C_0(i) : i \in [0,c)^\IN \bigr\} \subseteq \Sigma$.  Note that for $C \in \Sigma$, $C \in Z$ if and only if $b^d \not\in C$.  Thus $Z = (\widehat{b^d\Lambda})^c$ is a compact-open subset of $\Sigma$.  We claim that it is a transversal, in the sense of \cite{muhrenwil}.  To see this, let $C \in \Sigma$.  Then $m = \sup \{ \mu : b^\mu \in C \} < \infty$.  Let $C' = \sigma^{b^m} (C)$.  Then $b^d \not\in C'$, so $C' \in Z$.  Then $[e,b^m,C']$ has source $C$ and range in $Z$.  It follows from \cite{muhrenwil} that $G|_\Sigma$ is equivalent to $G|_Z$.

To analyze $G|_Z$ we first consider a pair $\beta \in \Lambda$ and $C \in \Sigma$ such that $\beta C \in Z$.  Since $b^d \not\in \beta C$, we must have $b^d \not\in C$, hence $C \in Z$.  Moreover, writing $\beta = b^{m_0} a \cdots b^{m_k}a b^{m_{k + 1}}$ in form (L), and $C = C_0(i)$ with $i \in [0,c)^\IN$, we must have $m_0$, $\ldots$, $m_k \in [0,c)$ and $m_{k + 1} \in [0,c - i_0)$.  Then $\beta C = C_0(f(m,i))$, where $f(m,i) = (m_0, \ldots, m_k, m_{k + 1} + i_0, i_1, i_2, \ldots)$.  Now, if $[\beta,\gamma, C] \in G|_Z$, let $\beta$ and $C$ be as above, and let $\gamma = b^{n_0} a \cdots b^{n_\ell} a b^{n_{\ell + 1}}$ in form (L).  Then the map $[\beta,\gamma,C] \mapsto \bigl(f(m,i), k - \ell, f(n,i)\bigr)$ defines an isomorphism of $G|_Z$ onto $\bigl\{ (x,p,y) \in [0,c)^\IN \times \IZ \times [0,c)^\IN : x_{\mu - p} = y_\mu$ for all large enough $\mu \bigr\}$, namely the standard groupoid for the Cuntz algebra $\CO_c$.  This groupoid is well-known to be amenable (e.g. \cite{ren}).  Therefore $G|_Z$ is amenable.
\end{proof}

\begin{Theorem}
\label{t.gisamenable}
$G$ is amenable.
\end{Theorem}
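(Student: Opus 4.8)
The plan is to reduce the amenability of $G$ to that of its restrictions to the four $G$-invariant subsets comprising $\Lambda^* = \Lambda \sqcup (\Lambda/B) \sqcup \partial \Lambda \sqcup \Sigma$, three of which have already been handled. First I would record the separation properties of this decomposition: $\partial \Lambda = \Lambda^{**}$ is closed (Corollary \ref{c.closedboundary}), $\Lambda$ is open (it is discrete in $\Lambda^*$), and $\Sigma$ is relatively open in $\Lambda^* \setminus \Lambda$ (Lemma \ref{l.sigmaopen}); consequently $\Lambda \cup \Sigma$ is open, $W := \Lambda^* \setminus \partial\Lambda = \Lambda \cup (\Lambda/B) \cup \Sigma$ is open, and $\Lambda/B = W \setminus (\Lambda \cup \Sigma)$ is closed in $W$. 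The structural fact I would invoke is that amenability of locally compact Hausdorff \'etale groupoids is closed under extensions: if $F \subseteq G^0$ is closed invariant with open invariant complement $U$, and both $G|_F$ and $G|_U$ are amenable, then $G$ is amenable (see, e.g., \cite{ren}; this is the groupoid counterpart of amenability passing through the sequence $0 \to C^*(G|_U) \to C^*(G) \to C^*(G|_F) \to 0$).

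With this in hand I would peel off the pieces in three stages. Since $\partial\Lambda$ is closed invariant with open invariant complement $W$, and $G|_{\partial\Lambda}$ is amenable (Theorem \ref{t.boundaryamenable}), it suffices to prove $G|_W$ amenable. Within $W$, the set $\Lambda/B$ is closed invariant with open invariant complement $\Lambda \cup \Sigma$, so it suffices to prove $G|_{\Lambda/B}$ and $G|_{\Lambda \cup \Sigma}$ amenable. Finally, within $\Lambda \cup \Sigma$ the set $\Sigma$ is closed invariant with open invariant complement $\Lambda$; since $G|_\Sigma$ is amenable (Theorem \ref{t.sigmaamenable} in case (BS1), Theorem \ref{t.cltdsigmaamenable} in case (BS2), and $\Sigma = \emptyset$ in case (BS3)), the whole theorem reduces to the two finite-height pieces $G|_\Lambda$ and $G|_{\Lambda/B}$.

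These two remaining groupoids sit over discrete unit spaces, so I would argue their amenability directly from the isotropy. For $G|_\Lambda$: since a finite directed hereditary set $[\alpha]$ has a maximum, a loop $[\gamma,\delta,x]$ at $[\alpha]$ forces $x = [\xi]$ finite with $\gamma\xi = \alpha = \delta\xi$, whence $\gamma = \delta$ by right-cancellation, so the loop is a unit. Thus $G|_\Lambda$ is principal over a countable discrete space, hence a disjoint union of pair groupoids over its orbits, and so is amenable. For $G|_{\Lambda/B}$: here the unit space is relatively discrete and identifies with the relevant portion of the Bass--Serre tree, and a loop at $[\alpha B]$ lies in the $\Gamma$-stabilizer of the corresponding tree vertex, which is a conjugate of $\langle b \rangle \cong \IZ$; hence each isotropy group is a subgroup of $\IZ$ and is amenable. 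An \'etale groupoid over a discrete space with amenable isotropy decomposes over its orbits into transitive groupoids, each Morita equivalent to its (amenable) isotropy group, so $G|_{\Lambda/B}$ is amenable.

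Assembling the three extension steps then yields that $G$ is amenable. I expect the main obstacle to be verifying that the isotropy of $G|_{\Lambda/B}$ is exactly a subgroup of $\IZ$ rather than something larger: this requires unwinding the groupoid equivalence relation together with the normal-form calculus of Proposition \ref{p.baumslagsolitar} and Corollary \ref{c.initseq} at a fixed vertex of the tree. The remaining bookkeeping (the separation properties above and the triviality of isotropy on $\Lambda$) is routine.
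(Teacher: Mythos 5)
Your proposal is correct and follows essentially the same route as the paper: the same invariant decomposition $\Lambda^* = \Lambda \sqcup (\Lambda/B) \sqcup \partial\Lambda \sqcup \Sigma$, the same extension/restriction principle from \cite{ren}, the same appeal to Theorems \ref{t.boundaryamenable}, \ref{t.sigmaamenable}, and \ref{t.cltdsigmaamenable} for $\partial\Lambda$ and $\Sigma$, and the same identification of $G|_\Lambda$ as a union of pair groupoids and of $G|_{\Lambda/B}$ as (orbit-wise) transitive with isotropy inside $\IZ$. Your treatment is in fact slightly more careful than the paper's on the two points it leaves implicit --- the nested open/closed separation steps and the triviality of isotropy on $\Lambda$ via right-cancellation.
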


\begin{proof}
Since $G|_{\partial \Lambda}$ is amenable (Theorem \ref{t.boundaryamenable}), by \cite{ren} it suffices to prove that $G|_{(\partial \Lambda)^c}$ is amenable.  It is easy to see that $G|_\Lambda$ is amenable --- for any finitely aligned category of paths, $G|_\Lambda$ is the direct sum over $v \in \Lambda^0$ of the elementary groupoids $s^{-1}(v) \times s^{-1}(v)$.  Since $\Lambda / B$ and $\Sigma$ are disjoint relatively open subsets of $\Lambda^c$, it remains to show that $G|_{\Lambda / B}$ is amenable ($G_\Sigma$ was shown to be amenable in Theorems \ref{t.sigmaamenable} and \ref{t.cltdsigmaamenable}).  This follows since $G|_{\Lambda / B}$ is a transitive groupoid, with isotropy isomorphic to $\IZ$ (generated by $b$).
\end{proof}

We now apply Theorem \ref{t.restrictionboundary} to give  generators and relations for $C^*(\Lambda)$.

\begin{Theorem}
\label{t.baumsolrelations}
Let $\Lambda$ be the category of paths associated to the Baumslag-Solitar group $G$.  The representations of $C^*(\Lambda)$ are in one-to-one correspondence with pairs $\{ S_a, S_b \}$ of Hilbert space operators satisfying the relations
\begin{enumerate}
\item   \label{t.baumsolrelsa} 
$S_a$ and $S_b$ are isometries.
\item   \label{t.baumsolrelsb} 
$S_b$ is a unitary.
\item   \label{t.baumsolrelsc} 
$S_a S_b^c = S_b^d S_a$ in cases (BS1) and (BS2); $S_b^d S_a S_b^c = S_a$ in case (BS3).
\item   \label{t.baumsolrelsd} 
$\displaystyle \sum_{i = 0}^{d - 1} S_b^i S_a S_a^* S_b^{-i} = 1$.
\end{enumerate}
Moreover, in case (BS3), relation \eqref{t.baumsolrelsb} is redundant.
\end{Theorem}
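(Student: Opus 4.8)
The plan is to deduce the theorem from Theorem~\ref{t.restrictionboundary} by showing that, for a family $\{S_\alpha : \alpha \in \Lambda\}$, the Cuntz--Krieger relations \eqref{t.restrictionboundary.a}--\eqref{t.restrictionboundary.d} are equivalent to the two-generator relations \eqref{t.baumsolrelsa}--\eqref{t.baumsolrelsd}. Since $\Lambda$ has the single vertex $e$, relation~\eqref{t.restrictionboundary.a} makes each $S_e = S_\alpha^* S_\alpha$ a projection, and in the nondegenerate (unital) picture $S_e = 1$; so isometries are forced. I would set $S_a, S_b$ to be the images of $a, b$, and conversely use the normal form (L) of Proposition~\ref{p.baumslagsolitar} to define $S_\alpha := S_b^{i_0} S_a S_b^{i_1} S_a \cdots S_b^{i_{k-1}} S_a\, S_b^{p}$, reading $S_b^{p}$ for $p < 0$ in case (BS3) as $(S_b^*)^{-p}$. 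Relation~\eqref{t.baumsolrelsa} is then exactly Theorem~\ref{t.restrictionboundary}\eqref{t.restrictionboundary.a} for $\alpha \in \{a,b\}$, and relation~\eqref{t.baumsolrelsc} is Theorem~\ref{t.restrictionboundary}\eqref{t.restrictionboundary.b} applied to the defining identity $ab^c = b^{\pm d}a$ of the monoid.

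For the direction sending a Cuntz--Krieger family to the pair, the two nonobvious relations are \eqref{t.baumsolrelsb} and \eqref{t.baumsolrelsd}, both of which arise from Theorem~\ref{t.restrictionboundary}\eqref{t.restrictionboundary.d} applied to suitable finite exhaustive sets at $e$. First I would check that $\{b\}$ is exhaustive: given $\alpha\in\Lambda$, the set $[\alpha]$ is directed hereditary, hence contained in some $C_\infty(i)$ by Lemma~\ref{l.minmax}; since $B \subseteq C_\infty(i)$ (Lemma~\ref{l.containsb} and its proof), $\alpha$ and $b$ have a common extension, so $\alpha \Cap b$. Thus Theorem~\ref{t.restrictionboundary}\eqref{t.restrictionboundary.d} with $F=\{b\}$ gives $1 = S_e = S_b S_b^*$, i.e.\ \eqref{t.baumsolrelsb}. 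Next, $F_0 = \{\,b^i a : 0\le i< d\,\}$ is exhaustive (each $C_\infty(i)$ contains $\alpha_0(i) = b^{i_0}a$, so one argues as before), and its elements are pairwise disjoint by Proposition~\ref{p.qlo}; Theorem~\ref{t.restrictionboundary}\eqref{t.restrictionboundary.c} then makes the range projections $S_b^i S_a S_a^* S_b^{-i}$ pairwise orthogonal, whereupon Theorem~\ref{t.restrictionboundary}\eqref{t.restrictionboundary.d} for $F_0$ collapses the join to the sum in \eqref{t.baumsolrelsd}.

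The substantial direction is the converse: starting only from \eqref{t.baumsolrelsa}--\eqref{t.baumsolrelsd}, one must verify Theorem~\ref{t.restrictionboundary}\eqref{t.restrictionboundary.a}--\eqref{t.restrictionboundary.d} for $\{S_\alpha\}$. Relations \eqref{t.restrictionboundary.a} and \eqref{t.restrictionboundary.b} are routine: \eqref{t.restrictionboundary.a} follows by telescoping, using that $S_a, S_b$ are isometries and $S_b$ is unitary, and \eqref{t.restrictionboundary.b} holds because relation~\eqref{t.baumsolrelsc} is the operator form of the single defining relation of $\Lambda$, so $\alpha\mapsto S_\alpha$ factors through the monoid. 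The hard part is \eqref{t.restrictionboundary.c} and \eqref{t.restrictionboundary.d} for all pairs and all finite exhaustive sets. For \eqref{t.restrictionboundary.c}, Proposition~\ref{p.qlo} reduces matters to disjoint pairs (where the right side is $0$ and I must establish orthogonality of the range projections) and meeting pairs (where $\alpha\vee\beta$ is a single explicit minimal common extension); after a shift putting both paths over a common initial segment, everything reduces to the base identity $S_{b^m} S_{b^m}^* S_{\alpha_k(i)} S_{\alpha_k(i)}^* = S_\gamma S_\gamma^*$ with $\gamma = b^m\vee\alpha_k(i)$ supplied explicitly by Lemmas~\ref{l.cged}, \ref{l.clessthand}, \ref{l.cltd}, \ref{l.casethree}, verified by rewriting with \eqref{t.baumsolrelsc} and unitarity of $S_b$. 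For \eqref{t.restrictionboundary.d}, since $S_b$ is unitary, a finite exhaustive set at $e$ amounts to a finite set of vertices of the directed tree $\Lambda/B$ meeting every infinite path, each vertex having exactly $d$ children; relation~\eqref{t.baumsolrelsd} is \eqref{t.restrictionboundary.d} for the first branching, and the general case should follow by induction on the maximal height in $F$, translating \eqref{t.baumsolrelsd} to deeper vertices via \eqref{t.restrictionboundary.b} and \eqref{t.restrictionboundary.c}. I expect this reduction of an arbitrary finite exhaustive set to iterated copies of the single branching relation to be the main obstacle.

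Finally, for the redundancy of \eqref{t.baumsolrelsb} in case (BS3), I would argue directly from \eqref{t.baumsolrelsa}, \eqref{t.baumsolrelsc}, \eqref{t.baumsolrelsd}, reading the summands of \eqref{t.baumsolrelsd} as $e_i := S_b^i S_a S_a^* (S_b^*)^i$ (legitimate before unitarity is known). Here relation~\eqref{t.baumsolrelsc} is $S_b^d S_a S_b^c = S_a$, so, using $S_b^c S_b^{*c}\le 1$ and $S_a S_a^*\le 1$,
\[
S_a S_a^* = S_b^d \bigl(S_a\,S_b^c S_b^{*c}\,S_a^*\bigr) S_b^{*d} \le S_b^d\,S_a S_a^*\,S_b^{*d} \le S_b^d S_b^{*d} \le S_b S_b^* =: P .
\]
Consequently $e_i \le S_b^i P S_b^{*i} = S_b^{i+1} S_b^{*(i+1)} \le P$, so $P e_i = e_i$, and multiplying $\sum_{i=0}^{d-1} e_i = 1$ on the left by $P$ yields $P = \sum_{i=0}^{d-1} P e_i = \sum_{i=0}^{d-1} e_i = 1$; that is, $S_b$ is unitary. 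This argument hinges on $a \in b\Lambda$ (Lemma~\ref{l.bincaseethree}), equivalently on the shape of \eqref{t.baumsolrelsc} in case (BS3). In cases (BS1) and (BS2) one has $a\notin b\Lambda$, the inequality $S_a S_a^* \le S_b S_b^*$ fails, and \eqref{t.baumsolrelsb} must be imposed separately.
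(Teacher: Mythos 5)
Your proposal is correct and takes essentially the same approach as the paper: the forward direction uses the same exhaustive sets $\{b\}$ and $\{b^i a : 0 \le i < d\}$, the converse defines $S_\alpha$ from form (L) and verifies Theorem~\ref{t.restrictionboundary}\eqref{t.restrictionboundary.c} by the same case analysis via Proposition~\ref{p.qlo} (using unitarity of $S_b$ to discard trailing powers of $S_b$), and your induction for Theorem~\ref{t.restrictionboundary}\eqref{t.restrictionboundary.d} is precisely the paper's observation that, after stripping trailing $b$'s, the projections $S_\alpha S_\alpha^*$ behave like characteristic functions of cylinder sets covering $[0,d)^\IN$. Your (BS3) redundancy argument is only a minor variant of the paper's: you derive $S_a S_a^* \le S_b S_b^*$ and absorb the identity, while the paper substitutes $S_b^c = S_a^*(S_b^*)^d S_a$ into the conjugated sum $S_b\bigl(\sum_i S_b^i S_a S_a^* S_b^{-i}\bigr)S_b^*$; both rest on the same use of relations \eqref{t.baumsolrelsa}, \eqref{t.baumsolrelsc}, and \eqref{t.baumsolrelsd}.
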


\begin{proof}
By Theorem \ref{t.restrictionboundary}, we know that representations of $C^*(\Lambda)$ are in one-to-one correspondence with families $\{ S_\alpha : \alpha \in \Lambda \}$ satisfying the relations (1) - (4) of  Theorem \ref{t.restrictionboundary}.  Given such a family, we check that the operators $S_a$ and $S_b$ satisfy conditions \eqref{t.baumsolrelsa} - \eqref{t.baumsolrelsd}.  From Theorem \ref{t.restrictionboundary}\eqref{t.restrictionboundary.a} it follows that \eqref{t.baumsolrelsa} holds, and that $S_b$ is an isometry.  Then \eqref{t.baumsolrelsb} follows from Theorem \ref{t.restrictionboundary}\eqref{t.restrictionboundary.d} applied to the finite exhaustive set $\{ b \}$.  From Theorem \ref{t.restrictionboundary}\eqref{t.restrictionboundary.b} we conclude that \eqref{t.baumsolrelsc} holds.  Since $b^i a \perp b^j a$ when $0 \le i \not= j < d$, it follows from Theorem \ref{t.restrictionboundary}\eqref{t.restrictionboundary.c} that $S_b^i S_a$ and $S_b^j S_a$ have orthogonal ranges when $0 \le i \not= j < d$.  Theorem \ref{t.restrictionboundary}\eqref{t.restrictionboundary.d} applied to the finite exhaustive set $\{ b^i a : 0 \le i < d \}$ verifies \eqref{t.baumsolrelsd}.

Conversely, suppose that $S_a$ and $S_b$ are given satisfying \eqref{t.baumsolrelsa} - \eqref{t.baumsolrelsd}.  We define $S_\alpha$ for all $\alpha \in \Lambda$ by setting $S_e = 1$, and for $\alpha =  b^{i_0} a \cdots b^{i_k} a b^p$ in form (L), setting $S_\alpha = S_b^{i_0} S_a \cdots S_b^{i_k} S_a S_b^p$.  The proof (e.g. in \cite{scottwall}) of Proposition \ref{p.baumslagsolitar} uses only the relation $a b^c = b^{\pm d} a$.  Thus relation \eqref{t.baumsolrelsc} (and relation \eqref{t.baumsolrelsb} in case (BS3)) implies that Theorem \ref{t.restrictionboundary}\eqref{t.restrictionboundary.b} holds.  Relations \eqref{t.baumsolrelsa} and \eqref{t.baumsolrelsb} imply that Theorem \ref{t.restrictionboundary}\eqref{t.restrictionboundary.a} holds.  

Now we verify Theorem \ref{t.restrictionboundary}\eqref{t.restrictionboundary.c}.  Let $\alpha$, $\beta \in \Lambda$.  If $\alpha \perp \beta$, let them be written as in the statement of Proposition \ref{p.qlo}.  Then by that Proposition, there is $\ell \le \min \{s,t\}$ such that $e_\mu = f_\mu$ for $\mu < \ell$, and $e_\ell \not= f_\ell$.  Then we may compute:  $S_\alpha^* S_\beta = \cdots (b^{e_\ell} a)^* (b^{f_\ell} a) \cdots = 0$, since \eqref{t.baumsolrelsd} implies that $S_b^{e_\ell} S_a$ and $S_b^{f_\ell} S_a$ have orthogonal ranges.  Since $\alpha \perp \beta$, this verifies Theorem \ref{t.restrictionboundary}\eqref{t.restrictionboundary.c} in this case.  Suppose that $\alpha \Cap \beta$.  If e.g. $\beta \in \alpha\Lambda$, then we find that $S_\alpha S_\alpha^* S_\beta S_\beta^* = S_\beta S_\beta^*$, and $\beta = \alpha \vee \beta$.  Suppose instead that neither of $\alpha$ and $\beta$ extends the other.  The proof of Proposition \ref{p.qlo} shows that (without loss of generality) we may assume that $\alpha = \gamma b^m$ and $\beta = \gamma a b^{i_1} a \cdots b^{i_k} a b^q$, and that $\alpha \vee \beta = \gamma a b^{i_1} a \cdots b^{i_k} a b^p$ for some $p$.  Since $S_b S_b^* = 1$ by \eqref{t.baumsolrelsb}, the final factors of $S_b$ do not affect the computation of $SS^*$.  Hence $S_\alpha S_\alpha^* S_\beta S_\beta^* = S_\beta S_\beta^* = S_{\alpha \vee \beta} S_{\alpha \vee \beta}^*$.  

Finally we verify Theorem \ref{t.restrictionboundary}\eqref{t.restrictionboundary.d}.  Let $F$ be a finite exhaustive set.  If $v \in F$ then Theorem \ref{t.restrictionboundary}\eqref{t.restrictionboundary.d} is immediately satisfied.  Suppose $v \not\in F$.  First we suppose that $b^m \in F$ for some $m \ge 1$.  Then the right-hand side of Theorem \ref{t.restrictionboundary}\eqref{t.restrictionboundary.d} dominates $S_{b^m} S_{b^m}^* = 1$, by \eqref{t.baumsolrelsb}.  Now suppose that $F \cap B = \emptyset$.  Since $S_{\alpha b} S_{\alpha b}^* = S_\alpha S_\alpha^*$, we may assume that elements of $F$ have the form $b^{i_0} a \cdots b^{i_k} a$ (in form (L)).  Let us identify such elements with cylinder sets in $[0,d)^\IN$ via the sequences $(i_0, \ldots, i_k)$.  Moreover, because $F$ is exhaustive we have that these cylinder sets form a cover of $[0,d)^\IN$.   Thus we see that $\bigvee_{\alpha \in F} S_\alpha S_\alpha^* = 1$.

For the final statement of the theorem, assume that we are in case (BS3).  Since $S_a$ and $S_b$ are isometries, it follows from \eqref{t.baumsolrelsc} that  $S_b^c = S_a^* (S_b^*)^d S_a$.  Now we have from \eqref{t.baumsolrelsc}:
\[
S_a = S_b^d S_a S_b^c = S_b^d S_a S_a^* (S_b^*)^d S_a,
\]
and hence $S_a S_a^* = S_b^d S_a S_a^* (S_b^*)^d S_a S_a^*$.  Thus $S_a S_a^* \le S_b^d S_a S_a^* (S_b^*)^d$.  Now using \eqref{t.baumsolrelsd} gives
\[
S_b S_b^* = \sum_{i = 1}^d S_b^i S_a S_a^* (S_b^*)^i
= \sum_{i = 1}^{d - 1} S_b^i S_a S_a^* (S_b^*)^i + S_b^d S_a S_a^* (S_b^*)^d
\ge \sum_{i = 1}^{d - 1} S_b^i S_a S_a^* (S_b^*)^i + S_a S_a^* = 1.
\]
This proves \eqref{t.baumsolrelsb}.
\end{proof}

\begin{Remark}
The ``Cuntz-Krieger'' relation (Theorem \ref{t.restrictionboundary}\eqref{t.restrictionboundary.d}), defining the $C^*$-algebra from the Toeplitz $C^*$-algebra of $\Lambda$, is represented by \eqref{t.baumsolrelsb} and the equality (as opposed to $\le$) in \eqref{t.baumsolrelsd}.  We point out here that  if the group falls under case (BS1) or (BS2), then both of these relations are necessary.  To see this, we consider the representations of $\CT C^*(\Lambda)$ on $\ell^2(\Lambda / B)$ and on $\ell^2(\Sigma)$.  The first of these satisfies \eqref{t.baumsolrelsb} but not \eqref{t.baumsolrelsd}; the second satisfies \eqref{t.baumsolrelsd} but not \eqref{t.baumsolrelsb}.
\end{Remark}

\begin{Remark}
\label{r.katsuraexamples}
The relations \ref{t.restrictionboundary}\eqref{t.restrictionboundary.a} - \eqref{t.restrictionboundary.d} are the same as those found by Katsura (\cite{kat}, Example A.6).  Specifically, our $C^*(\Lambda)$ is isomorphic to Katsura's $\CO(E_{n,m})$ for $m \not= 0$, with the identifications $d = n$, and $c = m$ in case (BS1) or (BS2) when $m > 0$, and $c = -m$ in case (BS3) when $m < 0$.   We remark that in the case where $m < 0$, our analysis shows that one of the relations for the $C^*$-algebra turns out to be redundant.  (In the case that $m = 0$, the group is not one of those discussed in \cite{baumsol}.  In fact, if $c = 0$, the group becomes $\IZ * \IZ / d\IZ$.  If $d > 1$ then $\Lambda$ is not a category of paths, since \eqref{d.categoryofpathsc} fails.  If $d = 1$, then $G = \IZ$, and $\Lambda$ is the path category of the directed graph having one vertex and one edge, giving the same result as \cite{kat}.)
\end{Remark}

\section{$K$-theory}
\label{s.ktheory}

Our next task is to compute the $K$-theory of $C^*(\Lambda) = C^*(G|_{\partial\Lambda})$.  We give a different calculation than that of \cite{kat}.  Thus we also compute the $K$-theory of the core algebra.  Let $A = C^*(\Lambda)$.  We let $\gamma$ denote the gauge action of $\IT$ on $A$ induced by the cocycle $\theta : \Lambda \to \IZ$.  Then Takai-Takesaki duality implies that $A \otimes \CK \cong (A \times_\gamma \IT) \times_{\widehat{\gamma}} \IZ$.

\begin{Lemma}
\label{l.fixedpointalgebra}
The fixed-point algebra $A^\gamma$ is Morita equivalent to $A \times_\gamma \IT$.
\end{Lemma}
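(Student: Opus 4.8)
The plan is to realize $A^\gamma$ as a full corner of $A \times_\gamma \IT$ and then invoke the standard fact that a full corner of a $C^*$-algebra is Morita equivalent to the ambient algebra. Since $\IT$ is compact and $A$ is unital, I would first introduce the averaging projection $p_0 = \int_\IT u_z\, dz \in A \times_\gamma \IT$ (with respect to normalized Haar measure, $u_z$ denoting the canonical unitaries implementing $\gamma$); a one-line computation with the convolution product shows $p_0 = p_0^* = p_0^2$. More generally the canonical copy of $C^*(\IT) \cong c_0(\IZ)$ in $M(A \times_\gamma \IT)$ supplies mutually orthogonal spectral projections $p_n = \int_\IT \bar z^{\,n} u_z\, dz$ ($n \in \IZ$), satisfying $\sum_n p_n = 1$ strictly. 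The commutation rule I will use repeatedly is that for $a$ in the spectral subspace $A_k = \{a : \gamma_z(a) = z^k a\}$ one has $a\, p_n = p_{n+k}\, a$; in particular $S_a\, p_n = p_{n+1}\, S_a$ and $S_a^*\, p_n = p_{n-1}\, S_a^*$, while $S_b\, p_n = p_n\, S_b$ since $\theta(b) = 0$.

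Second, I would identify the corner $p_0(A \times_\gamma \IT)p_0$ with $A^\gamma$. Writing $E(a) = \int_\IT \gamma_z(a)\, dz$ for the canonical conditional expectation onto $A^\gamma = A_0$, a direct integral computation gives $p_0\, a\, p_0 = E(a)\, p_0$ for every $a \in A$. Hence $p_0(A \times_\gamma \IT)p_0 = A^\gamma p_0$, and $x \mapsto x p_0$ is a $*$-isomorphism of $A^\gamma$ onto this corner.

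The crux --- and the only place the specific relations enter --- is to show that $p_0$ is \emph{full}, i.e. that the closed ideal $I$ it generates is all of $A \times_\gamma \IT$. I would prove by induction that every $p_n$ lies in $I$. Descending is immediate from the isometry relation Theorem~\ref{t.baumsolrelations}\eqref{t.baumsolrelsa}: since $S_a^* S_a = 1$, we get $S_a^*\, p_n\, S_a = p_{n-1} S_a^* S_a = p_{n-1} \in I$. Ascending is where the Cuntz--Krieger relation is needed: from $S_a\, p_n\, S_a^* = p_{n+1} S_a S_a^* \in I$ together with the defining relation $\sum_{i=0}^{d-1} S_b^i S_a S_a^* S_b^{-i} = 1$ of Theorem~\ref{t.baumsolrelations}\eqref{t.baumsolrelsd}, multiplying by $p_{n+1}$ and using that $S_b$ commutes with $p_{n+1}$ (relation \eqref{t.baumsolrelsb}) yields
\[
p_{n+1} = \sum_{i=0}^{d-1} S_b^i\bigl(S_a S_a^* p_{n+1}\bigr) S_b^{-i} \in I.
\]
Thus $p_n \in I$ for all $n$; since the partial sums $\sum_{|n| \le N} p_n$ increase strictly to $1$, they form an approximate unit lying in $I$, so $x = \lim_N \bigl(\sum_{|m|\le N} p_m\bigr)\, x\, \bigl(\sum_{|n|\le N} p_n\bigr) \in I$ for every $x$, giving $I = A \times_\gamma \IT$. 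Hence $p_0$ is a full projection, $A^\gamma \cong p_0(A \times_\gamma \IT)p_0$ is a full corner, and the Morita equivalence follows. I expect the bookkeeping with the spectral projections $p_n$ and the strict convergence $\sum_n p_n = 1$ to be the main technical nuisance, while the conceptual heart is the short ascending step powered by relation \eqref{t.baumsolrelsd}.
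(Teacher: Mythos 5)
Your argument is correct, and its skeleton matches the paper's: both exhibit $A^\gamma$ as the corner of $A \times_\gamma \IT$ cut by the averaging projection, and both derive fullness from the Cuntz--Krieger relation. (The paper never names $p_0$, but its copy of $A^\gamma$ --- the closed span of $\{\zeta^0 S_\alpha S_\beta^* : \theta(\alpha) = \theta(\beta)\}$ inside $C(\IT,A)$ --- is exactly your corner $p_0(A \times_\gamma \IT)p_0$.) Where you genuinely diverge is the fullness step. The paper stays inside the dense convolution algebra: for each element $\zeta^n S_\alpha S_\beta^*$ of a total set it chooses $\nu \in \Lambda$ with $\theta(\nu) = n + \theta(\beta) + k$, uses the iterate $\sum_{\mu \in M} S_\mu S_\mu^* = 1$ of relation \eqref{t.baumsolrelsd} (where $M$ is the set of paths $b^{i_1}a \cdots b^{i_k}a$) to write $S_\alpha S_\beta^* = \sum_{\mu \in M} S_{\alpha\mu} S_{\beta\mu}^*$, and then factors $\zeta^n S_\alpha S_\beta^* = \sum_{\mu \in M} (S_{\alpha\mu}S_\nu^*)(S_\nu S_\nu^*)(\zeta^n S_\nu S_{\beta\mu}^*)$ with the middle factor in $A^\gamma$ --- all $n$ handled at once, with no induction and no multiplier-algebra facts. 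You instead diagonalize the dual grading by the spectral projections $p_n$ and run a two-sided induction (relation \eqref{t.baumsolrelsa} to descend, relation \eqref{t.baumsolrelsd} to ascend), which isolates the role of each relation very cleanly but obliges you to justify the strict convergence $\sum_n p_n = 1$; note that this is not a statement about uniform convergence of Fourier series (which would be false), but follows from nondegeneracy of the canonical copy of $C^*(\IT) \cong c_0(\IZ)$ in $M(A \times_\gamma \IT)$, the partial sums being an approximate unit of $c_0(\IZ)$. What each route buys: yours is modular, quoting only standard compact-crossed-product facts (the corner identification $p_0 x p_0 = E(x)p_0$ and Morita equivalence of full corners), and it would apply verbatim to any unital $C^*$-algebra with a circle action, a degree-one isometry, and relation \eqref{t.baumsolrelsd}; the paper's is self-contained and purely algebraic on a total set. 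One small attribution slip: $S_b$ commutes with $p_n$ because $\theta(b) = 0$, as you say at first, not because of relation \eqref{t.baumsolrelsb}; unitarity of $S_b$ is needed only so that $S_b^{-i}$ makes sense.
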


\begin{proof}
Let $\zeta \in C(\IT)$ be the function $\zeta(z) = z$.  Then the collection $\{ \zeta^n S_\alpha S_\beta^* : n \in \IZ,\ \alpha,\ \beta \in \Lambda \} \subseteq C(\IT,A) \subseteq A \times_\gamma \IT$ is a total set.  A short calculation in the convolution algebra $C(\IT,A)$ shows that $(\zeta^m S_\alpha S_\beta^*)(\zeta^n S_\mu S_\nu^*) = \delta_{m,n - \theta(\mu) + \theta(\nu)} \zeta_n S_\alpha S_\beta^* S_\mu S_\nu^*$.  Of course, the collection $\{ S_\alpha S_\beta^* : \theta(\alpha) = \theta(\beta) \}$ is a total set in $A^\gamma$.  Now let $n \in \IZ$ and $\alpha$, $\beta \in \Lambda$.  Choose $k \ge 0$ such that $n + \theta(\beta) + k \ge 0$, and let $\nu \in \Lambda$ with $\theta(\nu) = n + \theta(\beta) + k$.  Let $M = \{ b^{i_1} a \cdots b^{i_k} a : i_j \in [0,d) \text{ for } 1 \le j \le k \}$.  Then $S_\alpha S_\beta^* = \sum_{\mu \in M} S_{\alpha \mu} S_{\beta \mu}^*$.  We have that
\[
\zeta^n S_\alpha S_\beta^*
= \sum_{\mu \in M} \zeta^n S_{\alpha \mu} S_{\beta \mu}^* 
= \sum_{\mu \in M} (S_{\alpha \mu} S_\nu^*) (S_\nu S_\nu^*) (\zeta^n S_\nu S_{\beta \mu}^*)
\]
is in the ideal generated by $A^\gamma$.  Thus $A^\gamma$ is a full hereditary subalgebra of $A \times_\gamma \IT$.
\end{proof}

We next compute the $K$-theory of $A^\gamma$.  We know that $A^\gamma = C^*(H)$, where $H = G^\gamma |_{\partial \Lambda}$.  Recall from the proof of Theorem \ref{t.boundaryamenable} that $H = \bigcup_{n = 0}^\infty H_n$, that $H_0 \cong \IZ \ltimes \partial \Lambda$, and that $H_n \cong ([0,d)^n \times [0,d)^n) \times H_0$.  So we begin with the computation of $K_*(C^*(H_0))$.  Since $C^*(H_0) = C(\partial \Lambda) \times_b \IZ$, and $\partial \Lambda$ is totally disconnected, we obtain from the Pimsner-Voiculescu exact sequence:
\[
0 \longrightarrow  K_1(C^*(H_0))  \longrightarrow  K_0(C(\partial \Lambda))  \xrightarrow{id - b_*}  K_0(C(\partial \Lambda)) \longrightarrow  K_0(C^*(H_0)) \longrightarrow 0
\]
\vspace*{.05 in}

Since $K_0(C(\partial \Lambda)) \cong C(\partial \Lambda, \IZ)$, it follows that $K_1(C^*(H_0)) \cong \IZ \text{-span} \{ \chi_E : E \subseteq \partial \Lambda$ is compact-open and $b$-invariant$ \}$, and $K_0(C^*(H_0)) \cong C(\partial \Lambda, \IZ) \big/ \IZ \text{-span} \{\chi_E - \chi_{bE} : E \subseteq \partial \Lambda \text{ compact-open} \}$.  We recall the action of $b$ on $\partial \Lambda = [0,d)^\IN$ from Lemmas \ref{l.cged}, \ref{l.cltd} and \ref{l.casethree}:  add 1 in the 0th coordinate; $d$ in the $j$th coordinate carries as $\pm c$ to the $(j+1)$st coordinate, using $+c$ in cases (BS1) and (BS2), and $-c$ in case (BS3) ($b^d$ acts as $\phi^{-1}$).  

\begin{Lemma}
\label{l.openinvariant}
Let $e = (c,d)$ be the greatest common divisor of $c$ and $d$ and let $c = c' e$, $d = d' e$.  For $\mu_1$, $\ldots$, $\mu_k \in [0,e)$, let $U(\mu_1, \ldots, \mu_k) = \{ i \in [0,d)^\IN : i_\ell \equiv \mu_\ell \pmod{e}, \ 1 \le \ell \le k \}$.  Then $U(\mu_1, \ldots, \mu_k)$ is $b$-invariant, and every open $b$-invariant set is a union of such sets.
\end{Lemma}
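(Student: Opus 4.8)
The plan is to treat the action of $b$ on $\partial\Lambda = [0,d)^\IN$ as a (nonstandard) odometer, exploiting the fact that a carry travels only from coordinate $j$ to coordinate $j+1$, never downward. The easy half of the lemma is immediate: a carry adds $\pm c$ to a coordinate, and $e\mid c$, so $b$ preserves the residue modulo $e$ of every coordinate of index $\ge 1$ (and so does $b^{-1}$, since $b$ is a homeomorphism by Remark \ref{r.odometer}). As $U(\mu_1,\dots,\mu_k)$ constrains exactly these residues, with coordinate $0$ and all coordinates of index $>k$ free, it is $b$-invariant; it is clopen because it is the finite union of the cylinders fixing coordinates $1,\dots,k$ to the $d'=d/e$ admissible values in each residue class.

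For the converse I would first record a reduction. Since the new value of coordinate $\ell$ depends only on the old values of coordinates $0,\dots,\ell$, the map $b$ descends to a well-defined map $\tilde b$ on $[0,d)^{m+1}$ satisfying $p\circ b = \tilde b\circ p$, where $p$ is the projection onto the first $m+1$ coordinates; lifting a target configuration, applying $b^{-1}$ and projecting shows $\tilde b$ is onto, hence (finiteness) a bijection. Consequently, for the cylinder $Z=\{j: j_\ell=i_\ell,\ 0\le\ell\le m\}$ one has $b^n Z = \{j: (j_0,\dots,j_m)=\tilde b^n(i_0,\dots,i_m)\}$, so the full $b$-orbit of $Z$ is precisely the set of $j$ whose truncation lies in the $\tilde b$-orbit of $(i_0,\dots,i_m)$. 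Granting that this $\tilde b$-orbit is all of $X_m := [0,d)\times\prod_{\ell=1}^m\{v: v\equiv i_\ell \pmod e\}$, the orbit of $Z$ equals $U(\bar i_1,\dots,\bar i_m)$, where $\bar i_\ell = i_\ell \bmod e$. The lemma then follows formally: given an open invariant $V$ and $i\in V$, openness provides a cylinder $Z\subseteq V$ about $i$; invariance gives $U(\bar i_1,\dots,\bar i_m)=\bigcup_n b^n Z\subseteq V$, and $i$ lies in this $U$-set, so $V$ is the union of the $U$-sets it contains.

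The crux, and the step I expect to be the main obstacle, is the transitivity of $\tilde b$ on $X_m$. Here I would use the number-theoretic input $\gcd(c',d')=1$: a carry shifts a coordinate's value by $\pm c=\pm c'e$, i.e. by $\pm c'$ in the $\IZ/d'$ parametrization of its residue class, and $\pm c'$ generates $\IZ/d'$. Writing $T_\ell$ for the length of one full cycle of coordinates $0,\dots,\ell$ and $o_\ell$ for the number of times coordinate $\ell$ overflows during it, I would verify $T_0=d$, $T_\ell=d'\,T_{\ell-1}$, and $o_0=1$, $o_\ell=c'\,o_{\ell-1}$ (an overflow is exactly a wrap of the $\IZ/d'$-parameter, and coordinate $\ell$ receives $d'o_{\ell-1}$ carries over $T_\ell$ steps). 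Thus $o_{m-1}=(c')^{m-1}$, the net shift of coordinate $m$ over one base period $T_{m-1}$ is $\pm(c')^m$, which is coprime to $d'$; so the orbit first closes up after $T_m=d(d')^m=|X_m|$ steps and therefore exhausts $X_m$. Only $\gcd(c',d')=1$ is used, so the sign ($+$ in cases (BS1), (BS2) and $-$ in case (BS3)) is irrelevant and all cases are uniform. The delicate point throughout is the bookkeeping of how many carries propagate into each coordinate per period; a possibly cleaner packaging is an induction on $m$ that views $\tilde b$ on $X_m$ as a skew product over $\tilde b$ on $X_{m-1}$ and checks that the fiber cocycle over one base period generates $\IZ/d'$.
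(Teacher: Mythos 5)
Your proof is correct, and while its outer scaffolding matches the paper's (invariance because every carry shifts a coordinate by $\pm c \equiv 0 \pmod{e}$; reduction of the converse to showing that the full $b$-orbit of one cylinder $Z(j_0,\mu_1,\dots,\mu_k)$ is all of $U(\mu_1,\dots,\mu_k)$; the same formal covering argument at the end), your treatment of the crux is genuinely different from the paper's. The paper proves the orbit claim by explicit navigation: it adjusts a target configuration one coordinate at a time, using that the orbits of addition by $c$ on $\IZ/d\IZ$ are exactly the congruence classes modulo $e$ (so coordinate $1$ can be moved to any value $\equiv \mu_1 \pmod{e}$), that adding multiples of $cd'=c'd$ then fixes coordinate $1$ while adding $cc'$ to coordinate $2$, that $\gcd(cc',d)=e$, and so on inductively; no periods are ever counted. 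You instead pass to the finite quotient system $\tilde b$ on $[0,d)^{m+1}$ (legitimate by the triangular dependence of coordinates, which is exactly the last assertion of Lemma \ref{l.cged}\eqref{l.cged.a}) and prove transitivity on $X_m$ by a cardinality count, via the recursions $T_\ell = d'T_{\ell-1}$ and $o_\ell = c'o_{\ell-1}$. This is sound: the delicate point you flag --- that coordinate $\ell$ receives the same number $o_{\ell-1}$ of carries in every base period --- holds because the state sequence of coordinates $0,\dots,\ell-1$ is exactly $T_{\ell-1}$-periodic, so its overflow pattern repeats verbatim; and the wrap-counting identity $d\cdot o_\ell = c\cdot(\text{carries in per } T_\ell)$ is insensitive to the fact that in case (BS1), where $c\ge d$, a single carry can produce several overflows at once. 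Each route buys something. Yours yields strictly finer information: it shows $\tilde b$ acts on $X_m$ as a single cycle of length $d(d')^m$, i.e., it identifies the exact period of every finite truncation and exhibits $b$ as a genuine odometer on each $U$-set, with the sign (hence the case split (BS1)--(BS3)) visibly irrelevant. The paper's navigation argument buys economy: it needs no bookkeeping of carries per period and no induction on period lengths, only the repeated observation that $\gcd(c(c')^{\ell},d)=e$, at the cost of giving transitivity without the sharper cyclic-structure statement.
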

\begin{proof}
Since the complement of $U(\mu_1, \ldots, \mu_k)$ is a union of such sets, it is enough to show that $b \cdot U(\mu_1, \ldots, \mu_k)$\allowbreak$ \subseteq U(\mu_1, \ldots, \mu_k)$; moreover, since the 0th coordinate is unrestricted, it is enough to check that $b^d \cdot U(\mu_1, \ldots, \mu_k)$\allowbreak$ \subseteq U(\mu_1, \ldots, \mu_k)$.  Let $i \in U(\mu_1, \ldots, \mu_k)$, and let $i' = b^d \cdot i$.  Then for each $\ell$ there is $k_\ell$ such that  $i'_\ell \equiv i_\ell + k_\ell c \pmod{d}$.  Since $e$ divides both $c$ and $d$, it follows that $i'_\ell \equiv i_\ell \equiv \mu_\ell \pmod{e}$ for $1 \le \ell \le k$.  Thus $i' \in U(\mu_1, \ldots, \mu_k)$.  Therefore $U(\mu_1, \ldots, \mu_k)$ is $b$-invariant.

Let us write $Z(j_0, \ldots, j_k)$ for the cylinder set $\{ i \in \partial \Lambda : i_\ell = j_\ell \text{ for } 0 \le \ell \le k \}$ in $\partial \Lambda$.  We claim that for $\mu_1$, $\ldots$, $\mu_k \in [0,e)$, and for any $j_0$, $U(\mu_1, \ldots, \mu_k) = \bigcup_{n \in \IZ} b^n \cdot Z(j_0, \mu_1, \ldots, \mu_k)$.  The containment $\supseteq$ follows from the fact that $U(\mu_1, \dots, \mu_k)$ is invariant and contains $Z(j_0, \mu_1, \ldots, \mu_k)$.  To see the containment $\subseteq$, notice first that we may adjust the 0th coordinate arbitrarily.   Viewing the remaining coordinates as copies of $\IZ / d \IZ$, note that addition by $c$ has $d'$ orbits.  Thus we may adjust the first coordinate to any element congruent to $\mu_1$ modulo $e$.  Then adding a multiple of $cd'$ will not further change the first coordinate.  Since $cd' = c'd$, this amounts to adding $cc'$ in the second coordinate.  Again, since $(cc',d) = e$, we may adjust the second coordinate to any element congruent to $\mu_2$ modulo $e$ without changing the first coordinate.  Repeating this argument, we see that  we may fill up $U(\mu_1, \ldots, \mu_k)$ by applying $b$ repeatedly to $Z(j_0, \mu_1, \ldots, \mu_k)$, proving the claim.

Now let $V$ be an open $b$-invariant set.  We write $V = \bigcup_p Z(p)$ as a union of cylinder sets, where the $p$ are tuples from $[0,d)$.  Then
\[
V
= \bigcup_{n \in \IZ} b^n \cdot V
= \bigcup_{n \in \IZ} \bigcup_p b^n \cdot Z(p)
= \bigcup_p U(p'),
\]
where if $p = (j_0, j_1, \ldots, j_k)$ then  $p' = (\mu_1, \ldots, \mu_k)$ for $\mu_\ell \in [0,e)$, $\mu_\ell \equiv j_\ell \pmod{\ell}$, $1 \le \ell \le k$.
\end{proof}

\begin{Definition}
\label{d.notation}
We will use the following notation.  For $j \in [0,d)^{k+1}$ let $Q_{Z(j_0, \ldots, j_k)} = S_{b^{j_0} a \cdots b^{j_k} a} S_{b^{j_0} a \cdots b^{j_k} a}^*$ in $C^*(H_0)$.  If $Z \subseteq \partial \Lambda$ is a compact-open subset, then $Z$ is a finite disjoint union of cylinder sets.  Since $Z(i_0, \ldots, i_k) = \bigsqcup_{j = 0}^{d - 1} Z(i_0, \ldots, i_k, j)$, and $Q_{Z(i_0, \ldots, i_k)} = \sum_{j = 0}^{d - 1} Q_{Z(i_0, \ldots, i_k, j)}$, we may define $Q_Z = \sum_j Q_{Z_j}$ if $Z = \bigsqcup_j Z_j$ for any finite disjoint collection of cylinder sets $\{ Z_j \}$.
\end{Definition}

Note that if $Z \subseteq \partial \Lambda$ is an invariant compact-open set, then $Q_Z$ and $S_b$ commute.  In the next few items we write $\IZ_e$ for  $(\IZ / e \IZ)^{\IZ^+} \cong [0,e)^{\IZ^+}$, though it is only the coordinate-wise group structure that will be convenient (and we omit the 0th coordinate).

\begin{Corollary}
\label{c.koneofhzero}
$K_1(C^*(H_0)) \cong C(\IZ_e, \IZ)$.
\end{Corollary}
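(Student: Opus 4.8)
The plan is to read off $K_1(C^*(H_0))$ from the Pimsner–Voiculescu sequence displayed above, and then identify the resulting group of invariant functions with $C(\IZ_e,\IZ)$ using Lemma \ref{l.openinvariant}. As already recorded in the discussion preceding the statement, since $\partial\Lambda$ is totally disconnected the sequence collapses to give $K_1(C^*(H_0)) \cong \ker(\mathrm{id} - b_*)$ acting on $K_0(C(\partial\Lambda)) \cong C(\partial\Lambda,\IZ)$, and this kernel is precisely the subgroup $I$ of $b$-invariant integer-valued functions, i.e. the $\IZ$-span of the characteristic functions $\chi_E$ of compact-open $b$-invariant sets $E$. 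Thus the entire task reduces to producing a natural isomorphism $I \cong C(\IZ_e,\IZ)$.

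First I would introduce the continuous surjection $q \colon \partial\Lambda = [0,d)^\IN \to \IZ_e$ defined by $q(i)_\ell = i_\ell \bmod e$ for $\ell \ge 1$ (discarding the $0$th coordinate, just as in the remark that $\IZ_e$ omits it). With this map the sets of Lemma \ref{l.openinvariant} are exactly the preimages of cylinders, $U(\mu_1,\ldots,\mu_k) = q^{-1}\bigl(\{x \in \IZ_e : x_\ell = \mu_\ell,\ 1 \le \ell \le k\}\bigr)$. Pulling back along $q$ then gives a group homomorphism $g \mapsto g \circ q$ from $C(\IZ_e,\IZ)$ into $C(\partial\Lambda,\IZ)$ whose image lands in $I$, since each $g \circ q$ is manifestly $b$-invariant.

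The crux is to show that this homomorphism is a bijection onto $I$, and here Lemma \ref{l.openinvariant} does the real work. For surjectivity, an element $f \in I$ is continuous with finite range, so each level set $f^{-1}(n)$ is a compact-open $b$-invariant subset of $\partial\Lambda$; by Lemma \ref{l.openinvariant} it is a union of sets $U(\mu_1,\ldots,\mu_k)$, and compactness reduces this to a finite union, whence $f^{-1}(n) = q^{-1}(W_n)$ for some clopen $W_n \subseteq \IZ_e$. Setting $g = \sum_n n\,\chi_{W_n} \in C(\IZ_e,\IZ)$ gives $f = g \circ q$. Injectivity is immediate from surjectivity of $q$: if $g \circ q = 0$ then $g$ vanishes on $q(\partial\Lambda) = \IZ_e$, so $g = 0$. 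This yields the required isomorphism $I \cong C(\IZ_e,\IZ)$.

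The main obstacle is the surjectivity step, and specifically the passage from a mere ``union of the $U$'s'' to a genuine preimage $q^{-1}(W_n)$: one must invoke compactness of the clopen invariant set $f^{-1}(n)$ to cut the a~priori infinite union supplied by Lemma \ref{l.openinvariant} down to a finite one, so that $W_n$ is itself clopen in $\IZ_e$. Everything else is bookkeeping — checking that $q$ is a well-defined continuous surjection (using $e \mid d$) and that pullback respects the group operations.
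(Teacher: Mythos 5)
Your proposal is correct and follows essentially the same route as the paper, which leaves this corollary as an immediate consequence of the Pimsner--Voiculescu computation (identifying $K_1(C^*(H_0))$ with the invariant functions) together with Lemma \ref{l.openinvariant}. Your explicit reduction map $q \colon [0,d)^\IN \to \IZ_e$ and the compactness argument cutting the union of $U(\mu_1,\ldots,\mu_k)$'s down to a finite one are exactly the details the paper suppresses, so this is a faithful filling-in rather than a different proof.
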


We note that under this isomorphism, we have that $[S_b Q_{U(\mu_1, \ldots, \mu_k)}]$ corresponds to $\chi_{Z(\mu_1 + e \IZ, \ldots, \mu_k + e \IZ)}$.

\begin{Lemma}
\label{l.kzeroofhzero}
$K_0(C^*(H_0)) \cong C(\IZ_e, \IZ[ \tfrac{1}{d'}])$.
\end{Lemma}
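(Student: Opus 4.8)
The plan is to identify the Pimsner--Voiculescu cokernel $C(\partial\Lambda,\IZ)/R$, where $R = \IZ\text{-span}\{\chi_E - \chi_{bE}\}$, by peeling off the $b$-action one coordinate at a time. Throughout I use that $S_b Q_E S_b^* = Q_{bE}$ with $S_b$ unitary, so $[\chi_E]=[\chi_{bE}]$ in $K_0$, and that $R$ is generated by the cylinder relations $\chi_Z - \chi_{bZ}$ (since $b$ is injective, a general compact-open $E$ decomposes into cylinders with disjoint images). Writing $Z(j_0,\dots,j_k)$ for cylinders in $\partial\Lambda=[0,d)^\IN$, I would first quotient out the $0$th coordinate: for $j_0<d-1$ one has $bZ(j_0,\dots,j_k)=Z(j_0+1,j_1,\dots,j_k)$ with no carrying, so these relations say $[\chi_{Z(j_0,\dots,j_k)}]$ is independent of $j_0$. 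Comparing presentations, the subgroup $R^{(0)}$ they generate yields $C(\partial\Lambda,\IZ)/R^{(0)}\cong C(Y',\IZ)$ with $Y'=[0,d)^{\{1,2,\dots\}}$, the isomorphism forgetting coordinate $0$ via $\chi_{Z(j_0,\dots,j_k)}\mapsto\chi_{Z'(j_1,\dots,j_k)}$. The surviving relations come from $j_0=d-1$, where $bZ(d-1,j_1,\dots,j_k)$ resets coordinate $0$ and carries $+c$ into coordinate $1$ (with propagation); after forgetting coordinate $0$ this becomes $T'Z'(j_1,\dots,j_k)$, where $T'\colon Y'\to Y'$ is the base-$d$, carry-$c$ odometer (the action of $b^d$ on coordinates $\ge 1$, cf. Lemmas \ref{l.cged}, \ref{l.cltd}, \ref{l.casethree}). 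Hence $K_0(C^*(H_0))\cong C(Y',\IZ)_{T'}$.

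Next I would separate the residues modulo $e$. Since $e\mid c$ and $e\mid d$, the map $T'$ preserves every coordinate modulo $e$ (the mechanism behind Lemma \ref{l.openinvariant}); writing $[0,d)\cong[0,e)\times[0,d')$ via $j_\ell=\mu_\ell+e t_\ell$ gives a $T'$-equivariant homeomorphism $Y'\cong \IZ_e\times W$, where $W=(\IZ/d')^{\{1,2,\dots\}}$ and $T'=\mathrm{id}_{\IZ_e}\times T$, with $T$ the base-$d'$, carry-$c'$ odometer on $W$. Because $\IZ_e$ is compact, totally disconnected, and carries the trivial action, coinvariants commute with the $\IZ_e$-direction: $C(\IZ_e\times W,\IZ)_{\mathrm{id}\times T}\cong C(\IZ_e,\IZ)\otimes_\IZ C(W,\IZ)_T\cong C(\IZ_e, C(W,\IZ)_T)$. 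So it remains to compute $C(W,\IZ)_T$.

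Finally, $C(W,\IZ)_T=\varinjlim_n \bigl(\IZ^{(d')^n}\bigr)_{T_n}$, where $T_n$ is the permutation induced by $T$ on $(\IZ/d')^n$ (well-defined because each new digit depends only on old digits of equal or lower index, and invertible because $\gcd(c',d')=1$). Granting that $T$ is minimal, i.e. each $T_n$ is a single $(d')^n$-cycle, one gets $\bigl(\IZ^{(d')^n}\bigr)_{T_n}\cong\IZ$ generated by the class of a point; since a point at level $n$ refines into $d'$ points at level $n+1$, the connecting map is multiplication by $d'$, so $C(W,\IZ)_T\cong\varinjlim(\IZ\xrightarrow{d'}\IZ\xrightarrow{d'}\cdots)=\IZ[\tfrac1{d'}]$. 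Assembling over $\IZ_e$ gives $K_0(C^*(H_0))\cong C(\IZ_e,\IZ[\tfrac1{d'}])$, and tracing the isomorphism back shows $[Q_{Z(j_0,\dots,j_k)}]$ corresponds to $(d')^{-k}\chi_{Z(\bar j_1,\dots,\bar j_k)}$, the $K_0$-analogue of the remark after Corollary \ref{c.koneofhzero}.

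The main obstacle is the minimality of the Baumslag--Solitar odometer $T$ in the last step: one must show that carry-$c'$ addition induces a single $(d')^n$-cycle on $(\IZ/d')^n$, which is exactly where $\gcd(c',d')=1$ enters and which is responsible for the answer being $\IZ[\tfrac1{d'}]$ rather than a larger group (I expect to prove this by inverting the digit map level by level, recovering old digits and carry bits from new ones). A secondary bookkeeping point is checking that the cascading carries in the first two steps introduce no relations beyond the $T'$-coboundaries, so that the passage to $C(Y',\IZ)_{T'}$ loses nothing.
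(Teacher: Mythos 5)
Your architecture is sound, and it is organized genuinely differently from the paper's argument. The paper writes down the explicit homomorphism $\chi_{Z(j_0, \ldots, j_k)} \mapsto (d')^{-k} \chi_{Z(j_1 + e\IZ, \dots, j_k + e\IZ)}$ from $C(\partial\Lambda,\IZ)$ onto $C(\IZ_e,\IZ[\tfrac{1}{d'}])$, checks that it respects cylinder refinement and kills the coboundaries, and then verifies that its kernel is exactly the $\IZ$-span of $\{\chi_Z - \chi_{b\cdot Z}\}$; you instead derive the answer in stages: quotient by the no-carry relations to get $C(Y',\IZ)$ with the residual relations being precisely the $T'$-coboundaries (this works because the first $k$ digits of $T'x$ depend only on the first $k$ digits of $x$, so carried cylinders stay cylinders of the same length), split off the trivially-acted-upon $\IZ_e$ factor by right-exactness of the tensor product, and compute the odometer coinvariants as $C(W,\IZ)_T \cong \varinjlim_n \bigl(\IZ^{(d')^n}\bigr)_{T_n} \cong \varinjlim \bigl( \IZ \xrightarrow{d'} \IZ \xrightarrow{d'} \cdots \bigr) = \IZ[\tfrac{1}{d'}]$. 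Your bookkeeping in these stages is correct, and your generator identification $[Q_{Z(j_0,\ldots,j_k)}] \leftrightarrow (d')^{-k}\chi_{Z(j_1+e\IZ,\ldots,j_k+e\IZ)}$ matches the paper's. What your version buys is a structural explanation of why the answer is a $\IZ_e$-indexed family of copies of $\IZ[\tfrac{1}{d'}]$, rather than a verification of a guessed formula.

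The one genuine gap is the step you yourself flag: minimality of the odometer, i.e.\ that each $T_n$ is a single $(d')^n$-cycle. Your proposed method --- ``inverting the digit map level by level, recovering old digits and carry bits from new ones'' --- proves only that $T_n$ is a bijection, which holds for \emph{every} $c'$ and cannot yield transitivity: addition by $2$ with carries in base $4$ is invertible level by level but already has two orbits on $\IZ/4$. Any correct proof must use $\gcd(c',d')=1$, for instance by induction on $n$: if $T_n$ is a single $(d')^n$-cycle, then over one full period digit $\ell$ receives total increment $(c')^{\ell}(d')^{n-\ell+1}$, so digit $n$ wraps $(c')^n$ times and $(T_{n+1})^{(d')^n}$ acts on digit $n+1$ as addition by $(c')^{n+1}$ modulo $d'$, again a full cycle (note carries can exceed $1$ when $c' > d'$, as happens in case (BS1); counting wraps as total increment divided by $d'$ handles this). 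Alternatively you need not prove anything new: the transitivity you need is exactly the claim $U(\mu_1,\ldots,\mu_k) = \bigcup_{n \in \IZ} b^n \cdot Z(j_0,\mu_1,\ldots,\mu_k)$ established inside the proof of Lemma \ref{l.openinvariant} --- not merely the invariance you cite from it --- and it is the same fact the paper's own proof invokes when it asserts that each ``inner sum'' lies in the span of the functions $\chi_Z - \chi_{b\cdot Z}$. With that step supplied by citation or by the induction above, your proof is complete.
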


\begin{proof}
We define a homomorphism $C(\partial \Lambda, \IZ) \to C(\IZ_e, \IZ[ \tfrac{1}{d'}])$ by $\chi_{Z(j_0, \ldots, j_k)} \mapsto (d')^{-k} \chi_{Z(j_1 + e\IZ, \dots, j_k + e\IZ)}$.  To see that it is well-defined, we note that we have to check that the relation $Z(j_0, \ldots, j_k) = \bigcup_{\ell = 0}^{d - 1} Z(j_0, \ldots, j_k, \ell)$ is respected.  For this we compute
\begin{align*}
\sum_{\ell = 0}^{d - 1} \chi_{Z(j_0, \ldots, j_k, \ell)} 
&\mapsto \sum_{\ell = 0}^{d - 1} (d')^{-k - 1} \chi_{Z(j_1 + e\IZ, \ldots, j_k + e\IZ, \ell + e\IZ)} \\
&= \sum_{\ell' = 0}^{e - 1} (d')^{-k - 1} d' \chi_{Z(j_1 + e\IZ, \ldots, j_k + e\IZ, \ell' + e\IZ)} 
= (d')^{-k} \chi_{Z(j_1 + e\IZ, \ldots, j_k + e\IZ)}.
\end{align*}
The map is clearly surjective, and its kernel contains $\chi_Z - \chi_{b \cdot Z}$ for every cylinder set $Z$.  We claim that its kernel is generated by the functions of the form $\chi_Z - \chi_{b \cdot Z}$; this will conclude the proof.  Let $f$ be an element of the kernel.  We may choose $k$ such that $f$ is a linear combination of characteristic functions of cylinder sets of length $k$:  $f = \sum_j n_j \chi_{Z(j)}$, where $j$ ranges over $[0,d)^{k + 1}$.  Let $[0,d)^{k + 1} = \bigsqcup_p E_p$ be the equivalence classes defined by congruence modulo $e$ in coordinates 1 through $k$.  Then $f$ maps to $(d')^{-k} \sum_p \bigl( \sum_{j \in E_p} n_j \bigr) \chi_{Z(p)}$.  Since $f$ is in the kernel, we have that $\sum_{j \in E_p} n_j = 0$ for each $p$.  But then $f = \sum_p \bigl( \sum_{j \in E_p} n_j \chi_{Z(j)} \bigr)$, and it is easy to see that each inner sum is in the span of the functions of the form $\chi_Z - \chi_{b \cdot Z}$.
\end{proof}

The computation of the $K$-theory of $A^\gamma$ uses the following elementary lemma.

\begin{Lemma}
\label{l.sublemma}
Let $M$ and $N$ be abelian groups, and $\eta : M \to M$, $\xi : N \to N$, and $I : M \to N$ homomorphisms, such that $I \circ \eta = \xi \circ I$.  Let $\displaystyle \widetilde{M} = \lim_{\buildrel \longrightarrow \over \eta} M$, $\displaystyle \widetilde{N} = \lim_{\buildrel \longrightarrow \over \xi} N$, and $\displaystyle \widetilde{I} = \lim_{\longrightarrow} I : \widetilde{M} \to \widetilde{N}$.  Suppose that
\begin{enumerate}
\item \label{l.sublemmaone}  $\ker(I) = \bigcup_n \ker(\eta^n)$.
\item \label{l.sublemmatwo} $N = \bigcup_n \xi^{-n}(I(M))$.
\end{enumerate}
Then $\widetilde{I}$ is an isomorphism.
\end{Lemma}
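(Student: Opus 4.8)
The plan is to work with the standard concrete model of a sequential direct limit. An element of $\widetilde{M}$ is a class $[m]_n$ of a pair $(m,n)$ with $m \in M$ placed at stage $n$, where $[m]_n = [m']_{n'}$ exactly when $\eta^{k-n}(m) = \eta^{k-n'}(m')$ for some $k \ge n, n'$; the canonical maps $\iota_n^M : M \to \widetilde{M}$ satisfy $\iota_n^M = \iota_{n+1}^M \circ \eta$, and similarly $\iota_n^N = \iota_{n+1}^N \circ \xi$ for $N$. In this model $\widetilde{I}$ sends $[m]_n$ to $[I(m)]_n$. The first thing I would record is that this is well-defined and a homomorphism precisely because $I \circ \eta = \xi \circ I$: applying $I$ to the equation $\eta^{k-n}(m) = \eta^{k-n'}(m')$ and using this relation gives $\xi^{k-n}(I(m)) = \xi^{k-n'}(I(m'))$. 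Iterating the hypothesis also yields the identity $I \circ \eta^{\,j} = \xi^{\,j} \circ I$ for all $j \ge 0$, which is the single algebraic fact driving both halves of the argument.

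For injectivity I would use hypothesis \eqref{l.sublemmaone}. Suppose $\widetilde{I}[m]_n = [I(m)]_n = 0$ in $\widetilde{N}$. By the description of the limit there is $j \ge 0$ with $\xi^{\,j}(I(m)) = 0$, and the identity above rewrites this as $I(\eta^{\,j}(m)) = 0$, so $\eta^{\,j}(m) \in \ker(I)$. Hypothesis \eqref{l.sublemmaone} then gives $p$ with $\eta^{\,p}(\eta^{\,j}(m)) = \eta^{\,p+j}(m) = 0$, whence $[m]_n = [\eta^{\,p+j}(m)]_{n+p+j} = 0$. Thus $\ker \widetilde{I} = 0$.

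For surjectivity I would use hypothesis \eqref{l.sublemmatwo}. Given an arbitrary $[y]_n \in \widetilde{N}$, condition \eqref{l.sublemmatwo} provides $p$ with $\xi^{\,p}(y) \in I(M)$, say $\xi^{\,p}(y) = I(x)$ for some $x \in M$. Using the transition relation $\iota_n^N = \iota_{n+p}^N \circ \xi^{\,p}$ we get $[y]_n = [\xi^{\,p}(y)]_{n+p} = [I(x)]_{n+p} = \widetilde{I}[x]_{n+p}$, so $\widetilde{I}$ is onto. Together with injectivity this shows $\widetilde{I}$ is an isomorphism.

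I do not expect a genuine obstacle here: the lemma is elementary once the concrete model is fixed, and the two hypotheses are tailored exactly to the two halves — \eqref{l.sublemmaone} converts "$\eta^{\,j}(m)$ is killed by $I$" into "$[m]_n = 0$", and \eqref{l.sublemmatwo} is precisely the cofinality of $I(M)$ under $\xi$ needed for surjectivity. The only care required is bookkeeping with the stage indices and the initial verification that $\widetilde{I}$ is well-defined; everything else follows mechanically from the identity $I \circ \eta^{\,j} = \xi^{\,j} \circ I$.
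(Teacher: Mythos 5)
Your proof is correct and follows essentially the same route as the paper's: injectivity is obtained by pushing a representative forward until the intertwining relation $I \circ \eta^{\,j} = \xi^{\,j} \circ I$ places it in $\ker(I)$ and then invoking hypothesis \eqref{l.sublemmaone}, while surjectivity uses hypothesis \eqref{l.sublemmatwo} to pull an arbitrary representative of $\widetilde{N}$ into the image of $I$ at a later stage. The only difference is notational (equivalence classes $[m]_n$ versus the paper's stage-indexed elements $x_n \in M^{(n)}$), plus your explicit check that $\widetilde{I}$ is well-defined, which the paper leaves implicit.
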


\begin{proof}
We let $M^{(n)}$ denote the $n$th copy of $M$ in the inductive limit, etc.  Let $x \in \ker(\widetilde{I})$.  Choose $x_n \in M^{(n)}$ with $x_n \mapsto x$.  There is $k$ such that $\xi^k (I(x_n)) = 0$.  Let $x_{n+k} = \eta^k(x_n)$.  Then $x_{n+k} \in \ker(I)$, so by \eqref{l.sublemmaone} there is $\ell$ such that $x_{n+k} \in \ker \eta^\ell$.  Then $0 = \eta^\ell(x_{n+k}) \mapsto x$, so $x = 0$.

Let $y \in \widetilde{N}$.  Choose $y_n \in N^{(n)}$ with $y_n \mapsto y$.  By \eqref{l.sublemmatwo} there is $k$ such that $y_n \in \xi^{-k}(I(M))$.  Then there is $z_{n+k} \in M^{(n+k)}$ such that $\xi^k(y_n) = I(z_{n+k})$.  Let $z_{n+k} \mapsto z \in \widetilde{M}$.  Then $y_n \mapsto \widetilde{I}(z)$, so that $y = \widetilde{I} (z)$.
\end{proof}

\begin{Theorem}
\label{t.kthyfixedpoint}
The $K$-theory of $A^\gamma$ is given by
\[
K_0(A^\gamma) \cong \IZ[\tfrac{1}{d}] \qquad \text{ and } \qquad
K_1(A^\gamma) \cong \IZ[\tfrac{1}{c}].
\]
Moreover, the generator $d^{-k}$ is represented in $K_0$ by $[S_{b^{j_0} a \cdots b^{j_k} a} S_ {b^{j_0} a \cdots b^{j_k} a}^*]$, and the generator $c^{-k}$ is represented in $K_1$ by $[S_{b^{i_1} a \cdots b^{i_k} a} S_b S_{b^{i_1} a \cdots b^{i_k} a}^*]$.
\end{Theorem}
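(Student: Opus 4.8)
The plan is to exploit continuity of $K$-theory under inductive limits. Recall from the proof of Theorem~\ref{t.boundaryamenable} that $H = \bigcup_n H_n$ is an increasing union of open subgroupoids with $H_n \cong \bigl([0,d)^n\times[0,d)^n\bigr)\times H_0$, so that $A^\gamma = C^*(H) = \overline{\bigcup_n C^*(H_n)}$ with $C^*(H_n)\cong M_{d^n}\bigl(C^*(H_0)\bigr)$. Hence $K_*(A^\gamma)\cong\varinjlim_n K_*(C^*(H_n))$, and via the stability isomorphisms $K_*(C^*(H_n))\cong K_*(C^*(H_0))$ this is an inductive limit of copies of the groups already computed in Corollary~\ref{c.koneofhzero} and Lemma~\ref{l.kzeroofhzero}, namely $C(\IZ_e,\IZ)$ in degree $1$ and $C(\IZ_e,\IZ[\tfrac{1}{d'}])$ in degree $0$. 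Everything therefore reduces to identifying the connecting endomorphism $\eta$ of these groups and computing the resulting limit.

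First I would compute $\eta$ on $K_0$. The embedding $C^*(H_n)\hookrightarrow C^*(H_{n+1})$ fixes the diagonal subalgebra $C(\partial\Lambda)$, so it sends each cylinder projection $Q_{Z(j_0,\ldots,j_k)}$ to itself; but under the stability identification of $K_0(C^*(H_{n+1}))$ with $K_0(C^*(H_0))$ the leading letter is absorbed into the matrix part, since $Q_{Z(j_0,\ldots,j_k)} = e_{j_0 j_0}\otimes Q_{Z(j_1,\ldots,j_k)}$ in $M_d(C^*(H_0))$. Feeding this through Lemma~\ref{l.kzeroofhzero} I expect to find that $\eta$ is $d'$ times the operation $\hat T$ that sums out the first $\IZ_e$-coordinate, so that $\eta(\chi_{Z(\mu_1,\ldots,\mu_k)}) = d'\,\chi_{Z(\mu_2,\ldots,\mu_k)}$ for $k\ge 1$ while $\eta(1) = d\cdot 1$. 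The same analysis applied to the generating unitaries $S_b Q_{U(\mu_1,\ldots,\mu_k)}$ of Corollary~\ref{c.koneofhzero} should give $\eta = c'\hat T$ on $K_1$, the factor $c$ rather than $d$ entering through the odometer carry (Remark~\ref{r.odometer}).

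With $\eta$ in hand I would invoke Lemma~\ref{l.sublemma}. For $K_0$, take $M = C(\IZ_e,\IZ[\tfrac{1}{d'}])$ with $\eta = d'\hat T$, take $N = \IZ[\tfrac{1}{d}]$ with $\xi$ equal to multiplication by $d$, and let $I\colon M\to N$ be normalized integration $\bar S(f) = e^{-k}\sum_{x\in(\IZ/e\IZ)^k} f(x)$ against Haar measure on $\IZ_e$. One checks that $I\circ\eta = d\,\bar S = \xi\circ I$, that $\ker I = \{f:\textstyle\sum f = 0\} = \bigcup_n\ker(\eta^n)$ (using that $\ker(\eta^n) = \ker(\hat T^n)$ since $d'$ is invertible in $\IZ[\tfrac{1}{d'}]$), and that $I$ is onto $\IZ[\tfrac{1}{d}]$ while $\xi$ is bijective; thus hypotheses~\eqref{l.sublemmaone} and~\eqref{l.sublemmatwo} hold and $\widetilde I$ is an isomorphism $K_0(A^\gamma)\cong\varinjlim_\xi N = \IZ[\tfrac{1}{d}]$. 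The degree-$1$ case is identical with $(d',d)$ replaced by $(c',c)$, giving $K_1(A^\gamma)\cong\IZ[\tfrac{1}{c}]$. Finally, chasing $[Q_{Z(j_0,\ldots,j_k)}]$ (which enters at level $0$ as $(d')^{-k}\chi_{Z(\mu_1,\ldots,\mu_k)}$) through $\widetilde I$ gives $\bar S$ of it equal to $(d'e)^{-k} = d^{-k}$, and chasing the unitary $[S_{b^{i_1}a\cdots b^{i_k}a}\,S_b\,S_{b^{i_1}a\cdots b^{i_k}a}^*]$ (which enters the system at level $k$) gives $c^{-k}$, yielding the stated generators.

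The main obstacle is the middle step: pinning down the connecting map $\eta$ exactly. The subtlety is entirely in the multiplicities---whether absorbing a coordinate contributes a factor $d'$ or a factor $d$---and this is precisely where the $b$-odometer (injective but not surjective, with carry multiplicity governed by $c$ and $d$) must be handled with care; a naive identification collapses the limit to the wrong localization $\IZ[\tfrac{1}{d'}]$ rather than $\IZ[\tfrac{1}{d}]$. For $K_1$ there is the additional task of verifying that the generating unitaries transform as claimed, which amounts to a monodromy computation showing that traversing the $d$ subcylinders of a cylinder once contributes the $c$-th power of the next-level generator.
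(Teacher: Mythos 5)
Your strategy is the paper's own: realize $A^\gamma = C^*(H)$ as $\varinjlim_n C^*(H_n)$ with $C^*(H_n)\cong M_{d^n}\otimes C^*(H_0)$, compute the connecting maps on the groups of Corollary \ref{c.koneofhzero} and Lemma \ref{l.kzeroofhzero}, and pass to the limit via Lemma \ref{l.sublemma} with $I$ given by integration. Your degree-zero computation is correct and essentially identical to the paper's: the inclusion sends $Q_{Z(j_0,\ldots,j_k)}$ to $e_{j_0 j_0}\otimes Q_{Z(j_1,\ldots,j_k)}$, which under the identification of Lemma \ref{l.kzeroofhzero} gives $\eta_0 = d'\hat{T}$; and your observation that $\ker(\eta_0^n)=\ker(\hat{T}^n)$, because multiplication by $d'$ is injective, is a legitimate shortcut to the paper's explicit kernel computation.

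The genuine gap is in degree one, at exactly the point you defer with ``should give $\eta = c'\hat{T}$.'' This is not ``the same analysis'' as $K_0$, because the generator $S_b Q_{U(\mu_1,\ldots,\mu_k)}$ does not simply have a matrix unit peeled off: the image of $S_b$ under $C^*(H_0)\hookrightarrow M_d\otimes C^*(H_0)$ is the shift-type matrix $\sum_{\ell=0}^{d-2} e_{\ell+1,\ell}\otimes 1 + e_{0,d-1}\otimes S_b^{\pm c}$ (sign $-$ in case (BS3)), so the image of $S_b Q_{U(\mu_1,\ldots,\mu_k)}$ is genuinely off-diagonal. The paper extracts its class in two nontrivial steps: first, raising to the $d$-th power to obtain $1\otimes S_b^{\pm c}Q_{\widetilde{U}(\mu_1,\ldots,\mu_k)} = 1\otimes\bigl(S_b^e Q_{\widetilde{U}(\mu_1,\ldots,\mu_k)}\bigr)^{\pm c'}$, where $\widetilde{U}(\mu_1,\ldots,\mu_k)$ is the union of the $(d')^k$ subcylinders $Z(\mu_1+cj_1,\ldots,\mu_k+cj_k)$; and second --- the step entirely absent from your outline --- proving that $[S_b^e Q_{\widetilde{U}(\mu_1,\ldots,\mu_k)}] = [S_b Q_{U(\mu_2,\ldots,\mu_k)}]$, which the paper does by decomposing $U(\mu_2,\ldots,\mu_k)=\bigsqcup_{i=0}^{e-1} b^i\,\widetilde{U}(\mu_1,\ldots,\mu_k)$, mapping $M_e(\IC)$ into $A^\gamma$ by $e_{ij}\mapsto S_b^i Q_{E_0}S_b^{-j}$, and constructing an explicit homotopy through a path of finite shift unitaries. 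This identification is the ``monodromy computation'' you name but do not perform, and without it your formula for the connecting map on $K_1$ is an unproven assertion. Two smaller inaccuracies in the same part: in case (BS3) the connecting map is $-c'\hat{T}$ and $\xi$ must be multiplication by $-c$ (harmless for the limit, but your ``identical to $K_0$'' claim drops it); and in degree one $I$ maps $C(\IZ_e,\IZ)$ onto $\IZ[\tfrac{1}{e}]$ only, so hypothesis \eqref{l.sublemmatwo} of Lemma \ref{l.sublemma} must be verified via $\bigcup_n \xi^{-n}\bigl(\IZ[\tfrac{1}{e}]\bigr) = \IZ[\tfrac{1}{c}]$ rather than by surjectivity of $I$, which is what your $K_0$ argument invoked.
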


\begin{proof}
Recall from the proof of Theorem \ref{t.boundaryamenable} that $C^*(H_n) \cong M_{d^n} \otimes C^*(H_0)$.  Explicitly, we have that $C^*(H_0) \cong C(\partial \Lambda) \times_b \IZ$ via
\[
\chi_{[e,e,\alpha \partial \Lambda]} \longleftrightarrow S_\alpha S_\alpha^* \qquad \text{ and } \qquad
\chi_{[b,e,\partial \Lambda]} \longleftrightarrow  S_b
\]
(where we view $C(\partial \Lambda) \times_b \IZ \subseteq C^*(\Lambda)$ by means of the generators of Theorem \ref{t.baumsolrelations}.  The inclusion $C^*(H_0) \hookrightarrow C^*(H_1) \cong M_d \otimes C^*(H_0)$ is described on these generators as follows.  Let $\alpha = b^{i_0} a \cdots b^{i_m} a$, and $\alpha' = b^{i_1} a \cdots b^{i_m} a$.  Then
\begin{align*}
S_\alpha S_\alpha^*
&\longleftrightarrow \chi_{[e,e,\alpha \partial \Lambda]} 
= \chi_{[b^{i_0} a, b^{i_0} a, \alpha' \partial \Lambda]} 
\longleftrightarrow e_{i_0,i_0} \otimes S_{\alpha'} S_{\alpha'}^*, \\
S_b
&\longleftrightarrow \chi_{[b,e,\partial \Lambda]} 
= \sum_{\ell = 0}^{d - 1} \chi_{[b,e,b^\ell a \partial \Lambda]} 
= \sum_{\ell = 0}^{d - 1} \chi_{[b^{\ell + 1} a, b^\ell a, \partial \Lambda]} \\
&\qquad \qquad = \sum_{\ell = 0}^{d - 2} \chi_{[b^{\ell + 1} a, b^\ell a, \partial \Lambda]} + \chi_{[a b^{\pm c}, b^{d - 1} a, \partial \Lambda]} 
\longleftrightarrow \sum_{\ell = 0} ^{d - 2} e_{\ell + 1, \ell} \otimes 1 + e_{0, d - 1} \otimes S_b^{\pm c},
\end{align*}
where the exponent $+c$ is used in cases (BS1) and (BS2), while $-c$ is used in case (BS3).  This dichomoty will continue throughout the proof.  In general, the inclusion $M_{d^n} \otimes C^*(H_0) \cong C^*(H_n) \hookrightarrow C^*(H_{n + 1}) \cong M_{d^{n + 1}} \otimes C^*(H_0)$ is given by tensoring by $M_{d^n}$ on the left of the above inclusion.

For the computation of $K_0(A^\gamma)$, we consider the map on $C(\IZ_e,\IZ[\tfrac{1}{d'}])$ induced by the above inclusion. The generator $\chi_{Z(\mu_1 + e\IZ, \ldots, \mu_k + e\IZ)}$ corresponds to $(d')^k [S_{b^{j_0} a b^{\mu_1} a \cdots b^{\mu_k} a} S_{b^{j_0} a b^{\mu_1} a \cdots b^{\mu_k} a}^*]$, for any choice of $j_0$.  The above inclusion sends this to
\[
(d')^k [e_{j_0,j_0} \otimes S_{b^{\mu_1} a \cdots b^{\mu_k} a} S_{b^{\mu_1} a \cdots b^{\mu_k} a}^*]
= (d')^k [S_{b^{\mu_1} a \cdots b^{\mu_k} a} S_{b^{\mu_1} a \cdots b^{\mu_k} a}^*],
\]
which corresponds to $(d')^k (d')^{-k + 1} \chi_{Z(\mu_2 + e\IZ, \ldots, \mu_k + e\IZ)} = d' \chi_{Z(\mu_2 + e\IZ, \ldots, \mu_k + e\IZ)}$.  Thus we have
\[
K_0(A^\gamma) = \lim_{\buildrel \longrightarrow \over {\eta_0}} C(\IZ_e,\IZ[\tfrac{1}{d'}]),
\]
via the map $\eta_0 : \chi_{Z(\mu_1 + e\IZ, \ldots, \mu_k + e\IZ)} \mapsto d' \chi_{Z(\mu_2 + e\IZ, \ldots, \mu_k + e\IZ)}$.  We will find $K = \bigcup_n \ker(\eta_0^n)$.  Let $\IZ_e$ have the usual product measure, assigning measure $e^{-k}$ to the set $Z(\mu_1 + e\IZ, \ldots, \mu_k + e\IZ)$.  We claim that $K = \{ f \in C(\IZ_e, \IZ[\tfrac{1}{d'}]) : \int f = 0 \}$.  To see this, first let $\int f = 0$.  Choose $k$ so that $f$ is constant on each cylinder set of length $k$.  Thus $f = \sum_{\mu_1, \ldots, \mu_k \in [0,e)} c_{\mu_1, \ldots, \mu_k} \chi_{Z(\mu_1 + e\IZ, \ldots, \mu_k + e\IZ)}$.  Then
\[
0 = \int f = e^{-k} \sum_{\mu_1, \ldots, \mu_k \in [0,e)} c_{\mu_1, \ldots, \mu_k},
\]
and hence
\[
\eta_0^k(f) = e^{-k} (d')^k \sum_{\mu_1, \ldots, \mu_k \in [0,e)} c_{\mu_1, \ldots, \mu_k} = 0.
\]
Conversely, let $f \in \ker(\eta_0^n)$ for some $n$.  Choose $k \ge n$ such that $f$ is constant on cylinder sets of length $k$.  Thus 
\[
f = \sum_{\mu_1, \ldots, \mu_k \in [0,e)} c_{\mu_1, \ldots, \mu_k} \chi_{Z(\mu_1 + e\IZ, \ldots, \mu_k + e\IZ)},
\]
and hence
\begin{align*}
0 = \eta_0^n(f) 
&= (d')^n \sum_{\mu_1, \ldots, \mu_k \in [0,e)} c_{\mu_1, \ldots, \mu_k} \chi_{Z(\mu_{n+1} + e\IZ, \ldots, \mu_k + e\IZ)} \\
&= (d')^n \sum_{\mu_{n + 1}, \ldots, \mu_k \in [0,e)} \bigl( \sum_{\mu_1, \ldots, \mu_n \in [0,e)} c_{\mu_1, \ldots, \mu_k} \bigr) \chi_{Z(\mu_{n + 1} + e\IZ, \ldots, \mu_k + e\IZ)}. 
\end{align*}
Thus the inner sum vanishes for each choice of $\mu_{n + 1}$, $\ldots$, $\mu_k$.  Therefore
\[
\int f = e^{-k} \sum_{\mu_1, \ldots, \mu_k \in [0,e)} c_{\mu_1, \ldots, \mu_k} 
= e^{-k} \sum_{\mu_{n + 1}, \ldots, \mu_k \in [0,e)} \bigl( \sum_{\mu_1, \ldots, \mu_n \in [0,e)} c_{\mu_1, \ldots, \mu_k} \bigr) 
= 0.
\]

Thus Lemma \ref{l.sublemma}\eqref{l.sublemmaone} holds, where $M = C(\IZ_e,\IZ[\tfrac{1}{d'}])$, $N = \IZ[\tfrac{1}{d}]$, $I$ is integration, $\eta = \eta_0$, and $\xi = d\cdot$.  Since $\xi$ is surjective, Lemma \ref{l.sublemma}\eqref{l.sublemmatwo} holds.  Therefore Lemma \ref{l.sublemma} implies that $K_0(A^\gamma) \cong \IZ[\tfrac{1}{d}]$.  From Lemma \ref{l.kzeroofhzero} we have that $[S_{b^{j_0} a \cdots b^{j_k} a} S_ {b^{j_0} a \cdots b^{j_k} a}^*] = (d')^{-k} \chi_{Z(j_1 + e\IZ, \ldots, j_k + e\IZ)}$ in $K_0(C^*(H_0))$.  The integral of this function is $(d')^{-k} e^{-k} = d^{-k}$, thus identifying generators of $K_0(A^\gamma)$.

For the computation of $K_1(A^\gamma)$ we consider the map on $C(\IZ_e, \IZ)$ induced by the inclusion of $C^*(H_n)$ into $C^*(H_{n + 1})$.  Recalling Definition \ref{d.notation}, the inclusion gives

\begin{align*}
Q_{U(\mu_1, \ldots, \mu_k)}
&= \sum_{i_0 = 0}^{d - 1} \sum_{j_1, \ldots, j_k = 0}^{d' - 1} S_{b^{i_0} a b^{\mu_1 + c j_1} a \cdots b^{\mu_k + c j_k} a} S_{b^{i_0} a b^{\mu_1 + c j_1} a \cdots b^{\mu_k + c j_k} a}^* \\
&\mapsto \sum_{i_0 = 0}^{d - 1} e_{i_0, i_0} \otimes \sum_{j_1, \ldots, j_k = 0}^{d' - 1} S_{b^{\mu_1 + c j_1} a \cdots b^{\mu_k + c j_k} a} S_{b^{\mu_1 + c j_1} a \cdots b^{\mu_k + c j_k} a}^* \\
&= 1 \otimes \sum_{j_1, \ldots, j_k = 0}^{d' - 1} S_{b^{\mu_1 + c j_1} a \cdots b^{\mu_k + c j_k} a} S_{b^{\mu_1 + c j_1} a \cdots b^{\mu_k + c j_k} a}^* 
= 1 \otimes Q_{\widetilde{U}(\mu_1, \ldots, \mu_k)},
\end{align*}

where $\widetilde{U}(\mu_1, \ldots, \mu_k) = \bigcup_{j_1, \ldots, j_k = 0}^{d' - 1} Z(\mu_1 + c j_1, \ldots, \mu_k + c j_k)$.  Thus the inclusion gives (in $d \times d$ matrices)
\[
S_b \chi_{U(\mu_1, \ldots, \mu_k)} \longmapsto
\begin{pmatrix}
0 & & & S_b^{\pm c} \\
1 & 0 & & \\
&\ddots & \ddots & \\
& & 1 & 0
\end{pmatrix}
\begin{pmatrix}
Q_{\widetilde{U}(\mu_1, \ldots, \mu_k)} & & &\\
& \ddots & & \\
& & \ddots & \\
& & & Q_{\widetilde{U}(\mu_1, \ldots, \mu_k)}
\end{pmatrix}.
\]
Hence 
\[
(S_b Q_{U(\mu_1, \ldots, \mu_k)})^d 
\longmapsto 1 \otimes S_b^{\pm c} Q_{\widetilde{U}(\mu_1, \ldots, \mu_k)}
= 1 \otimes (S_b^e Q_{\widetilde{U}(\mu_1, \ldots, \mu_k)})^{\pm c'},
\]
and therefore $[S_b Q_{U(\mu_1, \ldots, \mu_k)}] \longmapsto \pm c' [S_b^e Q_{\widetilde{U}(\mu_1, \ldots, \mu_k)}]$.  Let $E_i = b^i \widetilde{U}(\mu_1, \ldots, \mu_k)$ for $0 \le i < e$.  Then the $E_i$ are pairwise disjoint, and $\bigcup_{i = 0}^{e - 1} E_i = U(\mu_2, \ldots, \mu_k)$.  We claim that $[S_b^e Q_{\widetilde{U}(\mu_1, \ldots, \mu_k)}] = [S_b Q_{U(\mu_2, \ldots, \mu_k)}]$.  To see this, we first define a *-homomorphism $\tau : M_e(\IC) \to A^\gamma$ by $\tau(e_{ij}) = S_b^i Q_{E_0} S_b^{-j}$.  Let $v = \sum_{i = 0}^{e - 2} e_{i + 1,i} + e_{0, e - 1}$ be the shift matrix.  Then $\tau(v) = \sum_{i = 0}^{e - 2} S_b^{i + 1} Q_{E_0} S_b^{-j} + Q_{E_0} S_b^{-e + 1}$.   Let $v_t$, $0 \le t \le 1$ be a continuous path of unitary matrices from 1 to $v$.  We have
\[
S_b Q_{U(\mu_2, \ldots, \mu_k)} 
= \sum_{i = 0}^{e - 1} S_b Q_{E_i} 
= \sum_{i = 0}^{e - 1} S_b^{i + 1} Q_{E_0} S_b^{-i} 
= \sum_{i = 0}^{e - 2} \tau(e_{i+1,i}) + S_b^e \tau(e_{0,e-1}) 
= (S_b^e Q_{E_0} + \sum_{i = 1}^{e - 1} Q_{E_i}) \tau(v).
\]
Thus $S_b Q_{U(\mu_2, \ldots, \mu_k)}  \tau(v_t)^*$ is a continuous path from $S_b Q_{U(\mu_2, \ldots, \mu_k)} $ to $S_b^e Q_{E_0} + \sum_{i = 1}^{e - 1} Q_{E_i}$, proving the claim.  Now we find that
\begin{align*}
\chi_{Z(\mu_1 + e\IZ, \ldots, \mu_k + e\IZ)}
&\longleftrightarrow [S_b Q_{U(\mu_1, \ldots, \mu_k)}] \\
&\longmapsto \pm c'[S_b^e Q_{\widetilde{U}(\mu_1, \ldots, \mu_k)}] 
= \pm c'[S_b Q_{U(\mu_2, \ldots, \mu_k)}] 
\longleftrightarrow \pm c' \chi_{Z(\mu_2 + e\IZ, \ldots, \mu_k + e\IZ)}.
\end{align*}
Thus we have
\[
K_1(A^\gamma) = \lim_{\buildrel \longrightarrow \over {\eta_1}} C(\IZ_e,\IZ),
\]
via the map $\eta_1 : \chi_{Z(\mu_1 + e\IZ, \ldots, \mu_k + e\IZ)} \mapsto c' \chi_{Z(\mu_2 + e\IZ, \ldots, \mu_k + e\IZ)}$.  Note that
\[
\int \eta_1 (\chi_{Z(\mu_1 + e\IZ, \ldots, \mu_k + e\IZ)}) = \int \pm c' \chi_{Z(\mu_2 + e\IZ, \ldots, \mu_k + e\IZ)}
= \pm c' e^{-k + 1} = \pm c e^{-k} = \pm c \int \chi_{Z(\mu_1 + e\IZ, \ldots, \mu_k + e\IZ)}.
\]
Thus $I \circ \eta = \xi \circ I$, where $M = C(\IZ_e, \IZ)$, $N = \IZ[\tfrac{1}{c}]$, $\eta = \eta_1$, $\xi = \pm c \cdot$, and $I$ is given by integration.  Essentially the same computation as for $K_0$ shows that Lemma \ref{l.sublemma}\eqref{l.sublemmaone} holds.  Since Lemma \ref{l.sublemma}\eqref{l.sublemmatwo} clearly holds, we have that $K_1(A^\gamma) \cong \IZ[\tfrac{1}{c}]$.  Finally, we have $M^{(n)} = K_1(C^*(H_n)) \cong C(\IZ_e, \IZ)$, where 
\[
C(\IZ_e,\IZ) \ni 1 = [e_{i_1,i_1} \otimes \cdots \otimes e_{i_n,i_n} \otimes S_b] = [S_{b^{i_1} a \cdots b^{i_n} a} S_b S_{b^{i_1} a \cdots b^{i_n} a}^*] \longmapsto {\pm c}^{-n},
\]
thus identifying the generators of $K_1(A^\gamma)$.
\end{proof}

\begin{Theorem}
\label{t.kthy}
The $K$-theory of $C^*(\Lambda)$ is given as follows.
\begin{enumerate}
\item If either $c > 1$ or we are in case (BS3), and if $d > 1$, then $K_0(C^*(\Lambda)) \cong \IZ / (d - 1)\IZ$ and $K_1(C^*(\Lambda)) \cong \IZ / (\pm c - 1)\IZ$, where the minus sign is used in case (BS3).
\item If either $c > 1$ or we are in case (BS3), and if $d = 1$, then $K_0(C^*(\Lambda)) \cong \IZ$ and $K_1(C^*(\Lambda)) \cong \IZ / (\pm c - 1) \IZ \oplus \IZ$, where the minus sign is used in case (BS3).
\item If $c = 1$ in case (BS2) (so $d > 1$), then $K_0(C^*(\Lambda)) \cong \IZ / (d - 1)\IZ \oplus \IZ$ and $K_1(C^*(\Lambda)) \cong \IZ$.
\item If $c = 1$ in case (BS1) (so $d = 1$), then $K_0(C^*(\Lambda)) \cong \IZ \oplus \IZ$ and $K_1(C^*(\Lambda)) \cong \IZ \oplus \IZ$.
\end{enumerate}
In the first two cases, the class $[1]$ of the identity is given by 1, while in the last two cases it is given by $(1,0)$.
\end{Theorem}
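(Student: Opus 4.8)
The plan is to compute $K_*(A)$ by feeding the computation of $K_*(A^\gamma)$ from Theorem~\ref{t.kthyfixedpoint} into the Pimsner--Voiculescu sequence. Takai--Takesaki duality gives $A\otimes\CK\cong(A\rtimes_\gamma\IT)\rtimes_{\widehat\gamma}\IZ$, and Lemma~\ref{l.fixedpointalgebra} identifies $K_*(A\rtimes_\gamma\IT)$ with $K_*(A^\gamma)$; writing $\beta_*$ for the automorphism of $K_*(A^\gamma)$ obtained by transporting the dual action $\widehat\gamma$ through this Morita equivalence, the Pimsner--Voiculescu sequence becomes the cyclic six-term sequence
\[
K_0(A^\gamma)\xrightarrow{\ \mathrm{id}-\beta_*\ }K_0(A^\gamma)\longrightarrow K_0(A)\longrightarrow K_1(A^\gamma)\xrightarrow{\ \mathrm{id}-\beta_*\ }K_1(A^\gamma)\longrightarrow K_1(A)\longrightarrow K_0(A^\gamma),
\]
where $K_0(A^\gamma)\cong\IZ[\tfrac1d]$ and $K_1(A^\gamma)\cong\IZ[\tfrac1c]$.

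The heart of the argument --- and the step I expect to be the main obstacle --- is the identification of $\beta_*$ on these two groups. I would realize it through the endomorphism $\Theta(x)=\bigl(S_{b^ia}^*\,x\,S_{b^ja}\bigr)_{i,j=0}^{d-1}$ of $A^\gamma$ into $M_d(A^\gamma)$, built from the family $\{S_{b^ia}\}_{i=0}^{d-1}$, which consists of isometries with orthogonal ranges summing to $1$ by Theorem~\ref{t.baumsolrelations}\eqref{t.baumsolrelsd}; this is the standard endomorphism dual to a gauge action, so that $\beta_*=\Theta_*$. Using the generators pinned down in Theorem~\ref{t.kthyfixedpoint}, one computes $\Theta\bigl(Q_{Z(j_0,\dots,j_k)}\bigr)=e_{j_0j_0}\otimes Q_{Z(j_1,\dots,j_k)}$, whence $\Theta_*(d^{-k})=d^{-(k-1)}$, i.e. $\beta_*$ is multiplication by $d$ on $K_0(A^\gamma)$; and, with $\mu=b^{i_1}a\cdots b^{i_k}a$ and $\mu'=b^{i_2}a\cdots b^{i_k}a$, $\Theta\bigl(S_\mu S_b S_\mu^*\bigr)=e_{i_1i_1}\otimes S_{\mu'}S_b S_{\mu'}^*$, whence $\beta_*$ is multiplication by $\pm c$ on $K_1(A^\gamma)$, the sign being $-$ in case (BS3) (this is where the relation $S_a^*S_b^dS_a=S_b^{\pm c}$ is felt, through the normalization of the $K_1$-generators). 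These scalings are exactly the factors $d$ and $\pm c$ already appearing in the connecting maps $\eta_0,\eta_1$ of the proof of Theorem~\ref{t.kthyfixedpoint}, which is a useful consistency check. Since $d$ and $\pm c$ are units in $\IZ[\tfrac1d]$ and $\IZ[\tfrac1c]$, $\beta_*$ is an automorphism and $\mathrm{id}-\beta_*^{-1}$ has the same kernel and cokernel as $\mathrm{id}-\beta_*$, so the orientation of the dual action is immaterial.

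With $\beta_*$ in hand the sequence breaks into the two short exact sequences
\[
0\to\operatorname{coker}\bigl(\mathrm{id}-\beta_*|_{K_0}\bigr)\to K_0(A)\to\ker\bigl(\mathrm{id}-\beta_*|_{K_1}\bigr)\to 0
\]
and the analogous one for $K_1(A)$. On $K_0(A^\gamma)\cong\IZ[\tfrac1d]$ the map $\mathrm{id}-\beta_*$ is multiplication by $1-d$, with cokernel $\IZ/(d-1)$ and kernel $0$ when $d>1$, and the zero map (cokernel and kernel both $\IZ$) when $d=1$; on $K_1(A^\gamma)\cong\IZ[\tfrac1c]$ it is multiplication by $1-(\pm c)$, with cokernel $\IZ/(\pm c-1)$ and kernel $0$ unless $\pm c=1$ (which happens only when $c=1$ in case (BS1) or (BS2)), in which case it is the zero map with kernel $\IZ$. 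Because the quotient term $\ker(\mathrm{id}-\beta_*)$ is always free (either $0$ or $\IZ$), both short exact sequences split, and assembling the four combinations of $d=1$ versus $d>1$ and $\pm c=1$ versus $\pm c\neq1$ yields precisely the four cases of the theorem.

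Finally, the class $[1]$ is tracked as the image of $[1_{A^\gamma}]$ under the map $K_0(A^\gamma)\to K_0(A)$. Since $1_{A^\gamma}=\sum_{j=0}^{d-1}Q_{Z(j)}$ has class $d\cdot d^{-1}=1\in\IZ[\tfrac1d]$, its image lies in the summand $\operatorname{coker}(\mathrm{id}-\beta_*|_{K_0})$: it is the generator $1$ of $\IZ/(d-1)$ (or of $\IZ$) in cases (1)--(2), and it is the element $(1,0)$ when a second free summand --- coming from $\ker(\mathrm{id}-\beta_*|_{K_1})$ via the connecting map --- is present, namely in cases (3)--(4). The only delicate point throughout is the first one, verifying that the transported dual automorphism induces $\Theta_*$ and hence multiplication by $d$ and $\pm c$; the explicit generator computations above, cross-checked against the connecting maps of Theorem~\ref{t.kthyfixedpoint}, are what provide the needed control.
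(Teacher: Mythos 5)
Your proposal is correct, and its skeleton is the same as the paper's: Takai--Takesaki duality plus Lemma \ref{l.fixedpointalgebra} to replace $A$ (up to stabilization) by $(A\times_\gamma\IT)\times_{\widehat{\gamma}}\IZ$, then the Pimsner--Voiculescu sequence fed with Theorem \ref{t.kthyfixedpoint}; your case analysis, splitting argument, and tracking of $[1]$ all agree with the paper. The one point of divergence is how the transported dual automorphism $\beta_*$ is identified. The paper does this inside the crossed product itself: it exhibits the partial isometries $\zeta S_{b^ia}$, with $(\zeta S_{b^ia})^*(\zeta S_{b^ia})=\zeta 1$ and $(\zeta S_{b^ia})(\zeta S_{b^ia})^*=S_{b^ia}S_{b^ia}^*$, and uses $\widehat{\gamma}(\zeta^nS_\alpha S_\beta^*)=\zeta^{n+1}S_\alpha S_\beta^*$ to conclude that $\widehat{\gamma}_*$ is multiplication by $d^{-1}$ on $K_0$ and by $(\pm c)^{-1}$ on $K_1$. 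You instead compute with the endomorphism $\Theta(x)=\bigl(S_{b^ia}^*xS_{b^ja}\bigr)_{i,j}$ of $A^\gamma$ into $M_d(A^\gamma)$ and assert, as a standard duality fact, that $\beta_*=\Theta_*$. Two remarks on that step. First, your own generator computations (which are correct) give $\Theta_*=$ multiplication by $d$, resp.\ $\pm c$, which is the \emph{inverse} of the paper's $\widehat{\gamma}_*$; so the accurate statement is $\Theta_*=\beta_*^{-1}$, but your observation that $\mathrm{id}-\beta_*^{-1}=-\beta_*^{-1}(\mathrm{id}-\beta_*)$ has the same kernel and cokernel renders this harmless. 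Second, the ``standard'' identification is the entire content of this step and should not be left as an assertion; it is proved by exactly the paper's device: for a projection $p\in A^\gamma$, the row $W=\bigl(\zeta\,pS_{b^0a},\dots,\zeta\,pS_{b^{d-1}a}\bigr)$ over $A\times_\gamma\IT$ is a partial isometry with $WW^*=\zeta^0p$ and with $W^*W$ equal to $\widehat{\gamma}$ applied entrywise to $\zeta^0\Theta(p)$, whence $\iota_*[p]=\widehat{\gamma}_*\,\iota_*\,\Theta_*[p]$ and therefore $\Theta_*=\beta_*^{-1}$ (a parallel computation with unitaries handles $K_1$). With that short argument inserted, your proof is complete. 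What your packaging buys is that all $K$-theoretic computations take place inside $A^\gamma$ on the explicit generators of Theorem \ref{t.kthyfixedpoint}, with no crossed-product bookkeeping; what the paper's buys is that the duality step is proved outright rather than invoked as folklore.
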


\begin{proof}
Recall from Lemma \ref{l.fixedpointalgebra} that $A^\gamma$ is Morita equivalent to $A \times_\gamma \IT$.  In $A \times_\gamma \IT$ we have partial isometries $\zeta S_{b^i a}$, $0 \le i < d$, with $(\zeta S_{b^i a})^* (\zeta S_{b^i a}) = \zeta 1$ and $(\zeta S_{b^i a}) (\zeta S_{b^i a})^* = S_{b^i a} S_{b^i a}^*$.  Since $\widehat{\gamma} (\zeta^n S_\alpha S_\beta^*) = \zeta^{n + 1} S_\alpha S_\beta^*$, it follows that $\widehat{\gamma}_*$ is given by multiplication by $d^{-1}$ in $K_0$.  To calculate the effect of $\widehat{\gamma}_*$ on $K_1$, we consider the partial isometries $\zeta S_{b^i a}$, $0 \le i < d$.  We have
\[
(\zeta S_{b^i a}) (\zeta S_b) (\zeta S_{b^i a})^* 
= (\zeta S_{b^i a}) (\zeta S_b) (S_{b^i a}^*) 
= S_{b^i a} S_b S_{b^i a}^*.
\]
Thus $\widehat{\gamma}_* ([S_b]) = [\zeta S_b] = [S_{b^i a} S_b S_{b^i a}^*]$; i.e. $\widehat{\gamma}_*(1) = (\pm c)^{-1}$.  Thus $\widehat{\gamma}_*$ is given on $K_1$ by multiplication by $(\pm c)^{-1}$.  The Pimsner-Voiculescu exact sequence for $A \sim (A \times_\gamma \IT) \times_{\widehat{\gamma}} \IZ$ gives

\[
\begin{matrix}
\IZ[\tfrac{1}{c}] & \xrightarrow{\pm c - 1} & \IZ[\tfrac{1}{c}] & \longrightarrow & K_1(A) \\
\uparrow & & & & \downarrow \\
K_0(A) & \longleftarrow & \IZ[\tfrac{1}{d}] & \xleftarrow{d - 1} & \IZ[\tfrac{1}{d}] \\
\end{matrix} 
\]
\vspace*{.05 in}

The various cases of the theorem follow from this diagram.  The identification of the class of the identity in $K_0$ follows from the form of the generators given in Theorem \ref{t.kthyfixedpoint}.
\end{proof}

We end by deriving the essential properties of $C^*(\Lambda)$ from properties of the groupoid $G|_{\partial \Lambda}$. 

\begin{Theorem}
\label{t.groupoidproperties}
\begin{enumerate}
\item $G|_{\partial \Lambda}$ is minimal.
\label{t.groupoidpropertiesa}
\item $G|_{\partial \Lambda}$ is contractive if and only if $d > 1$.
\label{t.groupoidpropertiesb}
\item $G|_{\partial \Lambda}$ is topologically free if and only if $d \nmid c$.
\label{t.groupoidpropertiesc}
\end{enumerate}
\end{Theorem}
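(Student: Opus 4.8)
The plan is to work throughout in the boundary model $\partial\Lambda\cong[0,d)^\IN$ of Corollary~\ref{c.closedboundary}, under the identification $C_\infty(i)\leftrightarrow i$. In this model the groupoid $G|_{\partial\Lambda}$ is generated by three kinds of partial maps: ``prepend the digit $j$'' $C_\infty(i)\mapsto C_\infty(j,i_0,i_1,\dots)$ implemented by $[b^j a,e,\,\cdot\,]$ (degree $+1$); the one-sided shift $C_\infty(i)\mapsto C_\infty(\sigma i)$ implemented by the source shift (degree $-1$); and the odometer $b$, which by Corollary~\ref{c.closedboundary} satisfies $b^d=\phi^{-1}$ (degree $0$). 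For \eqref{t.groupoidpropertiesa} I would simply show every orbit is dense: given $x=C_\infty(i)$ and a basic cylinder $Z(j_0,\dots,j_k)$, the element $[\,b^{j_0}a\cdots b^{j_k}a,\,e,\,x\,]$ has source $x$ and range $C_\infty(j_0,\dots,j_k,i_0,i_1,\dots)\in Z(j_0,\dots,j_k)$, so every orbit meets every cylinder and $G|_{\partial\Lambda}$ is minimal.

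For \eqref{t.groupoidpropertiesb}, when $d=1$ the space $[0,1)^\IN$ is a single point and $G|_{\partial\Lambda}$ is a group, so it cannot be locally contracting. When $d>1$, given a nonempty open $U$ I would choose a cylinder $V=Z(j_0,\dots,j_k)\subseteq U$, set $w=b^{j_0}a\cdots b^{j_k}a$, and take the open bisection $S=\{[w,e,x]:x\in\partial\Lambda\}$. Then $\overline V=V\subseteq s(S)=\partial\Lambda$ and $r(S\,\overline V)=w\cdot V=Z(j_0,\dots,j_k,j_0,\dots,j_k)\subsetneq V$, the inclusion being proper precisely because $d>1$. This is the required contraction.

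For \eqref{t.groupoidpropertiesc} I would analyze the isotropy bundle by the degree cocycle $\theta$. The degree-$0$ isotropy is controlled by the odometer: from the structure $H_k\cong([0,d)^k\times[0,d)^k)\times H_0$ with $H_0\cong\IZ\ltimes\partial\Lambda$ in Theorem~\ref{t.boundaryamenable}, a degree-$0$ isotropy element is of the form $w b^m w^{-1}$ (with $w=b^{i_1}a\cdots b^{i_k}a$) and fixes $y=w\cdot z$ exactly when $b^m z=z$; thus nontrivial degree-$0$ isotropy occurs on an open set if and only if some $b^m$ ($m\neq0$) fixes an open set. If $d\mid c$, the recursion of Lemma~\ref{l.cged} (and Lemma~\ref{l.casethree} in case (BS3)) gives $\phi(i)_k\equiv i_k\mp c\,r(i)_{k-1}\equiv i_k\pmod d$, so $\phi=\mathrm{id}$ and hence $b^d=\phi^{-1}$ acts as the identity on $\partial\Lambda$; then $\{[b^d,e,x]:x\in\partial\Lambda\}$ is an open set of non-unit isotropy (it is $(d,x)$ with $d\neq0$ under $H_0\cong\IZ\ltimes\partial\Lambda$), so $G|_{\partial\Lambda}$ is not topologically free. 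Conversely, for $d\nmid c$ I must show the nontrivial-isotropy points are nowhere dense. For degree $n\neq0$ the relevant group element is hyperbolic on the Bass--Serre tree and fixes at most two boundary ends, so these elements contribute only countably many isotropy points. It therefore remains to prove the odometer is aperiodic.

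The hard part is exactly this aperiodicity of $b$ when $d\nmid c$. First, inspecting coordinate $0$ shows $b^n z=z$ forces $d\mid n$, say $n=dn'$, so $b^n=\phi^{-n'}$ and I reduce to showing $\phi$ has no periodic points. Here I would use Lemma~\ref{l.openinvariant} to partition $\partial\Lambda=\bigsqcup_\mu Y_\mu$ into closed invariant sets $Y_\mu=\{i:i_\ell\equiv\mu_\ell\ (\mathrm{mod}\ e)\}$, and observe that on each $Y_\mu$ the odometer is again an odometer whose digit sets have sizes $d,d',d',\dots$ with carry $c'=c/e$; since $\gcd(c',d')=1$, the invariant-open-set classification of Lemma~\ref{l.openinvariant} applies with trivial congruence data and yields minimality of $b|_{Y_\mu}$. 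Because $d\nmid c$ gives $d'=d/e\ge2$, each $Y_\mu$ is infinite, and a minimal homeomorphism of an infinite compact space has no periodic points. Hence $b$ is free, there is no degree-$0$ isotropy, and together with the countable degree-$\neq0$ contribution the points of trivial isotropy are dense; so $G|_{\partial\Lambda}$ is topologically free. I expect the verification that each component $Y_\mu$ is genuinely minimal (via the coprimality $\gcd(c/e,d/e)=1$) to be the main technical obstacle, since the carries $r(i)_k$ are unbounded and so the argument cannot be reduced to a finite-state odometer directly.
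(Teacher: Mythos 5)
Your proposal takes a genuinely different route from the paper, and its skeleton is sound. The paper proves all three statements by checking combinatorial hypotheses of general results in \cite{spi}: minimality is Theorem 10.14 there ($\Lambda$ has one vertex); contractivity is Theorem 10.16 (for $d>1$ the path $a$ is a non-exhaustive cycle); and topological freeness is verified through Remark 10.11 of \cite{spi} --- for $\alpha\neq\beta$ one exhibits $\gamma$ with $\alpha\gamma\perp\beta\gamma$ by normal-form manipulations, the key case $\beta=b^q$ using the least $k$ with $q(c/d)^k\notin\IZ$ --- while failure for $d\mid c$ comes from $\{b^d,e\}$-periodicity and Theorem 10.10 of \cite{spi}. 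You instead argue dynamically in $\partial\Lambda\cong[0,d)^\IN$. Your treatments of (1) and (2) are correct and self-contained; for (2) what you verify is local contractivity in the sense of \cite{ana}, which is what Corollary \ref{c.cstarproperties} ultimately needs, though you should confirm it matches the definition of ``contractive'' taken from \cite{spi}. For (3), the reduction of degree-zero isotropy to periodic points of the odometer is justified, as you say, by the $H_k$-structure in the proof of Theorem \ref{t.boundaryamenable} (equality $\alpha x=\beta x$ with $\theta(\alpha)=\theta(\beta)$ forces equal digit strings, so any degree-zero isotropy element is $[wb^p,wb^q,x]$), and your $d\mid c$ direction ($\phi=\mathrm{id}$, so $[b^d,e,x]$ is a non-unit isotropy element over every point) is a correct explicit substitute for the paper's appeal to Theorem 10.10. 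The nonzero-degree stratum does need the Bass--Serre setup you only gesture at: one must check that $g=\alpha\beta^{-1}$ fixes the end determined by $y=\alpha x=\beta x$ (it does, since for cofinally many vertices $\alpha_k(i)B$ on that end one has $g\alpha_k(i)B=\alpha\delta_k B$ with $\alpha\delta_k\in y$), that $\theta(g)=n\neq0$ prevents $g$ from being conjugate into $\langle b\rangle$, hence $g$ is hyperbolic and fixes at most two ends, and that $\partial\Lambda$ injects into the ends of the tree; countability follows, and a countable set is negligible in the perfect space $\partial\Lambda$. This is standard, but it is the one place where machinery outside the paper is required, and it is exactly what the paper's combinatorial criterion avoids.

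The step you flag as the main obstacle --- minimality of $b|_{Y_\mu}$ --- is in fact already available, with no finite-state or tower analysis. The claim established inside the proof of Lemma \ref{l.openinvariant} (used in its final display for an arbitrary cylinder) is that $\bigcup_{n\in\IZ}b^n\cdot Z(p)=U(p')$, where $p'$ is the reduction of $p$ modulo $e$ in coordinates $\ge1$. Hence if $z\in Y_\mu$ and $Z(j_0,\dots,j_k)$ is any cylinder with $j_\ell\equiv\mu_\ell\pmod{e}$ for $1\le\ell\le k$, then $z\in U(p')=\bigcup_n b^n Z(j_0,\dots,j_k)$, so some $b^{-n}z$ lies in $Z(j_0,\dots,j_k)\cap Y_\mu$ (the intersection because orbits stay in $Y_\mu$). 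Such relative cylinders form a base of $Y_\mu$, so every $b$-orbit in $Y_\mu$ is dense, i.e.\ $b|_{Y_\mu}$ is minimal. Combined with your observations that $b^nz=z$ forces $d\mid n$ (coordinate zero), that $d\nmid c$ gives $d'\ge2$ and hence $Y_\mu$ infinite, and that a finite orbit cannot be dense in an infinite Hausdorff space, this closes the aperiodicity argument completely. So your proof goes through; compared with the paper it buys an explicit picture of where isotropy can live (conjugates of odometer powers versus hyperbolic tree elements), at the price of importing Bass--Serre theory.
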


\begin{proof}
\eqref{t.groupoidpropertiesa}:  This follows from Theorem 10.14 of \cite{spi}, since $\Lambda$ has only one vertex.

\eqref{t.groupoidpropertiesb}:  We use Theorem 10.16 of \cite{spi}.  Since $\Lambda$ has only one vertex, every nontrivial path is a cycle. Any element which does not by itself form an exhaustive set will be a non-exhaustive cycle in $\Lambda$.  If $d > 1$, then $a$ is such an element.  Conversely, if $d = 1$, then the boundary of $\Lambda$ reduces to a point, and then it is clear that $G|_{\partial \Lambda}$ is not contractive (or even locally contractive).

\eqref{t.groupoidpropertiesc}:  For the \textit{only if} direction, note that if $d | c$, then $b^d a = a b^c = a (b^d)^{(\tfrac{c}{d})}$.  Hence for any $\gamma \in \Lambda$ we have $b^d \gamma = \gamma b^{\theta(\gamma) \tfrac{c}{d}}$.  Thus $b^d \gamma \Cap \gamma$ for all $\gamma$, so that $\Lambda$ has $\{b^d,e\}$-periodicity (as in \cite{spi}, Definition 10.8).  By \cite{spi}, Theorem 10.10, $G|_{\partial \Lambda}$ is not topologically free.  For the converse, suppose that $d \nmid c$.  Let $\alpha \not= \beta$.  We must find $\gamma$ such that $\alpha\gamma \perp \beta\gamma$ (as in \cite{spi}, Remark 10.11).  If $\alpha \perp \beta$, we may take $\gamma = e$.  So suppose that $\alpha \Cap \beta$.  By Proposition \ref{p.qlo} and left-cancellation we may assume that, say, $\theta(\alpha) = 0$.  We treat three cases.  (In the following, when we write $\pm c$ we mean $+c$ in cases (BS1) and (BS2), and $-c$ in case (BS3).)  First, suppose that $\alpha = e$ and $\theta(\beta) > 0$.  Then $\beta = b^{i} a \beta'$, where $i \in [0,d)$.  Let $j \in [0,d)$ with $j \not= i$ (since $d \not= 1$).  Then $\alpha b^j a = b^j a \perp b^i a \beta' b^j a = \beta b^j a$.  Second, suppose that $\alpha = e$ and $\theta(\beta) = 0$.  Then $\beta = b^q$ with $q > 0$.  Since $d \nmid c$, there is a least positive integer $k$ such that $q \bigl( \tfrac {c} {d} \bigr)^k \not\in \IZ$.  Then $q \bigl( \tfrac {\pm c} {d} \bigr)^{k-1} = j + rd$, where $0 < j < d$.  We have
\[
b^q a^k = a^{k-1} b^{q \bigl( \tfrac {\pm c} {d} \bigr)^{k-1}} a = a^{k-1} b^j a b^{rc} \perp a^k.
\]
Finally we suppose that $\alpha = b^p$ and $\theta(\beta) > 0$.  Then $\beta = a \beta'$ (otherwise we could cancel some element of $B$ on the left).  Let $p = i + rd$ with $i \in [0,d)$.  If $i \not= 0$, then $\alpha a = b^i a b^{\pm rc} \perp a \beta' a = \beta a$.  If $i = 0$, then $\alpha ba = b a b^{\pm rc} \perp a \beta' ba = \beta ba$.
\end{proof}

\begin{Corollary}
\label{c.cstarproperties}
$C^*(\Lambda)$ is a Kirchberg algebra if and only if $d \nmid c$.
\end{Corollary}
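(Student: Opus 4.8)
The plan is to combine the three groupoid-theoretic properties established in Theorem~\ref{t.groupoidproperties} with the standard passage from properties of an amenable \'etale groupoid to structural properties of its $C^*$-algebra. Recall that a Kirchberg algebra is a separable, nuclear, simple, purely infinite $C^*$-algebra. Two of these four properties hold unconditionally in the present setting: $C^*(\Lambda)$ is separable because $\Lambda$ is countable, and it is nuclear because $G$ is amenable (Theorem~\ref{t.gisamenable}), hence so is the restriction $G|_{\partial\Lambda}$. Thus the corollary reduces to showing that $C^*(\Lambda) = C^*(G|_{\partial\Lambda})$ is simultaneously simple and purely infinite precisely when $d \nmid c$.

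First I would prove the \emph{if} direction. Assume $d \nmid c$. This already forces $d > 1$, since $d = 1$ divides every integer. By Theorem~\ref{t.groupoidproperties} the groupoid $G|_{\partial\Lambda}$ is then simultaneously minimal, topologically free, and contractive (hence locally contractive). By the standard criterion for simplicity of the $C^*$-algebra of an amenable \'etale groupoid, minimality together with topological freeness gives that $C^*(G|_{\partial\Lambda})$ is simple, and adjoining local contractivity gives that it is purely infinite (see \cite{spi} and \cite{ren}). Combined with the separability and nuclearity noted above, this shows $C^*(\Lambda)$ is a Kirchberg algebra.

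For the \emph{only if} direction I would argue contrapositively. Suppose $d \mid c$. Then Theorem~\ref{t.groupoidproperties}\eqref{t.groupoidpropertiesc} shows that $G|_{\partial\Lambda}$ fails to be topologically free; indeed, the proof there exhibits the $\{b^d, e\}$-periodicity that obstructs topological freeness, producing genuine interior isotropy. For an amenable minimal \'etale groupoid, failure of topological freeness obstructs simplicity of the associated $C^*$-algebra, so $C^*(\Lambda)$ is not simple and therefore is not a Kirchberg algebra.

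The only delicate point is the bookkeeping that reconciles the two relevant hypotheses in the \emph{if} direction: pure infiniteness requires contractivity, which by Theorem~\ref{t.groupoidproperties}\eqref{t.groupoidpropertiesb} holds exactly when $d > 1$, whereas simplicity requires topological freeness, which holds exactly when $d \nmid c$. The observation that makes these compatible is that $d \nmid c$ automatically entails $d > 1$, so the single hypothesis $d \nmid c$ delivers all of the groupoid properties needed at once. The main obstacle is thus not a new computation but correctly invoking the groupoid-to-$C^*$-algebra dictionary, and in particular confirming that in the present amenable, minimal setting the converse implication, that failure of topological freeness forces non-simplicity, is indeed available.
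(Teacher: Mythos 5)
Your proposal is correct, and it is in substance the derivation the paper intends: the corollary is stated with no proof precisely because it is meant to fall out of Theorem~\ref{t.groupoidproperties} together with the standard dictionary between groupoid properties and $C^*$-properties --- Renault's theorem (minimal $+$ topologically free, with amenability giving nuclearity and full $=$ reduced, yields simplicity, \cite{ren}) and Anantharaman-Delaroche's theorem (topological freeness $+$ local contractivity yields an infinite projection in every nonzero hereditary subalgebra, \cite{ana}); note that \cite{ana} sits in the bibliography solely for this purpose. Your ``if'' direction is complete, and the bookkeeping observation that $d \nmid c$ forces $d > 1$, so that the single hypothesis delivers minimality, topological freeness, and contractivity simultaneously, is exactly the point.

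The one step that needs more than you give it is the converse you flag at the end: that for a minimal, amenable, second-countable Hausdorff \'etale groupoid, failure of topological freeness precludes simplicity. This statement is true, but it is not contained in the references the paper leans on --- \cite{ren} and \cite{ana} go only in the direction ``topologically free $\Rightarrow \ldots$'' --- and in general it is the hard half of the simplicity dichotomy for \'etale groupoid algebras (nontrivial interior isotropy generates a nontrivial ideal once full and reduced algebras coincide), proved by Brown, Clark, Farthing, and Sims only after this paper appeared. So as written your ``only if'' direction rests on an invocation you have not secured. It can, however, be secured elementarily with the paper's own tools. Suppose $d \mid c$. In $\CO_d$, with generators $t_0, \ldots, t_{d-1}$ (and the convention $\CO_1 = C(\IT)$, $t_0 = z$), set $S_a = t_0$ and $S_b = \sum_{i=0}^{d-1} t_{(i+1) \bmod d}\, t_i^*$. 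Then $S_b$ is a unitary with $S_b^d = 1$ and $S_b^i S_a = t_i$, so relations \eqref{t.baumsolrelsa}--\eqref{t.baumsolrelsd} of Theorem~\ref{t.baumsolrelations} hold: relation \eqref{t.baumsolrelsc} collapses to a tautology because $S_b^c = (S_b^d)^{c/d} = 1$, and relation \eqref{t.baumsolrelsd} is the Cuntz relation. By Theorem~\ref{t.baumsolrelations} this pair induces a surjection $C^*(\Lambda) \to \CO_d$ whose kernel contains $S_{b^d} - 1$. That element is nonzero: $[b^d,e,x] \neq [e,e,x]$ for every $x \in \partial\Lambda$ (since $b^d \neq e$ in $\Gamma$), so $\chi_{[b^d,e,\partial\Lambda]}$ and $\chi_{[e,e,\partial\Lambda]}$ are distinct elements of $C_c(G|_{\partial\Lambda})$, which embeds in $C^*(\Lambda)$ because the groupoid is Hausdorff and \'etale. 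Hence $C^*(\Lambda)$ has a nonzero proper ideal whenever $d \mid c$, which is all the ``only if'' direction requires, and no appeal to the general converse is needed.
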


\end{document}